\definecolor{bookColor}{cmyk}{0 ,0 ,0 ,1} 
\DeclareRobustCommand\widecheck[1]{{\mathpalette\@widecheck{#1}}}
\def\@widecheck#1#2{%
	\setbox\z@\hbox{\m@th$#1#2$}%
	\setbox\tw@\hbox{\m@th$#1%
		\widehat{%
			\vrule\@width\z@\@height\ht\z@
			\vrule\@height\z@\@width\wd\z@}$}%
	\dp\tw@-\ht\z@
	\@tempdima\ht\z@ \advance\@tempdima2\ht\tw@ \divide\@tempdima\thr@@
	\setbox\tw@\hbox{%
		\raise\@tempdima\hbox{\scalebox{1}[-1]{\lower\@tempdima\box
				\tw@}}}%
	{\ooalign{\box\tw@ \cr \box\z@}}}
\DeclareMathOperator{\Aut}{Aut}
\DeclareMathOperator{\Ind}{Ind}
\DeclareMathOperator{\Sym}{Sym}
\DeclareMathOperator{\proj}{proj}
\DeclareMathOperator{\Stab}{Stab}
\def\restriction#1#2{\mathchoice
	{\setbox1\hbox{${\displaystyle #1}_{\scriptstyle #2}$}
		\restrictionaux{#1}{#2}}
	{\setbox1\hbox{${\textstyle #1}_{\scriptstyle #2}$}
		\restrictionaux{#1}{#2}}
	{\setbox1\hbox{${\scriptstyle #1}_{\scriptscriptstyle #2}$}
		\restrictionaux{#1}{#2}}
	{\setbox1\hbox{${\scriptscriptstyle #1}_{\scriptscriptstyle #2}$}
		\restrictionaux{#1}{#2}}}
\def\restrictionaux#1#2{{#1\,\smash{\vrule height 1.2\ht1 depth 1.0\dp1}}_{\,#2}} 
\newcommand{\N}{\mathbb N}
\newcommand{\C}{\mathbb C}
\newcommand{\tg}[1]{\textbf{#1}}
\newcommand{\ub}[1]{\overline{#1}}
\newcommand{\es}{\varnothing}
\newcommand{\norm}[2]{\lVert #1 \lVert_{#2}}
\newcommand{\modu}[1]{\lvert#1\lvert}
\newcommand{\lb}{\lbrack}
\newcommand{\rb}{\rbrack}
\newcommand{\s}[2]{\sum\limits_{#1}^{#2}}
\newcommand{\li}[2]{\xrightarrow[#1\rightarrow#2]{}}
\newcommand{\prods}[2]{\langle #1,#2\rangle}
\newcommand{\restr}[2]{{
		\left.\kern-\nulldelimiterspace 
		#1 
		\vphantom{\big|} 
		\right|_{#2} 
	}}
\newcommand{\fct}[4]{\qq:\qq #1\qq\rightarrow\qq #2\qq :\qq #3\qq \mapsto\qq #4}
\newcommand{\q}{\quad}
\newcommand{\qq}{\mbox{ }}
\newcommand{\maxx}[1]{\underset{#1}{\mbox{max}}}
\newcommand{\supp}[1]{\underset{#1}{\mbox{sup}}}
\newcommand{\diff}{\text{\rm d}}
\newcommand{\Hr}[1]{\mathcal{H}_{#1}}
\newcommand{\Ch}{\text{\rm Ch}}
\newcommand{\Fix}{\text{\rm Fix}}
\newcommand*\circled[1]{\tikz[baseline=(char.base)]{
		\node[shape=circle,draw,inner sep=2pt] (char) {#1};}}
\theoremstyle{plain}
\newtheorem{theorem}{Theorem}
\newtheorem{proposition}[theorem]{Proposition}
\newtheorem{lemma}[theorem]{Lemma}
\newtheorem{corollary}[theorem]{Corollary}
\newtheorem{theoremletter}{Theorem}
\theoremstyle{definition}
\newtheorem{definition}[theorem]{Definition}
\newtheorem{remark}[theorem]{Remark}
\newtheorem{example}[theorem]{Example}
\theoremstyle{plain}
\newtheorem*{theorem*}{Theorem}
\newtheorem*{conjecture*}{Conjecture}
\newtheorem*{proposition*}{Proposition}
\newtheorem*{lemma*}{Lemma}
\newtheorem*{corollary*}{Corollary}
\theoremstyle{definition}
\newtheorem*{definition*}{Definition}
\newtheorem*{remark*}{Remark}
\newtheorem*{example*}{Example}
\numberwithin{theorem}{section}
\numberwithin{equation}{section}
\begin{document}

	\title{Unitary representations of totally disconnected locally compact groups satisfying Ol'shanskii's factorization}
\author{Lancelot Semal\footnote{
		F.R.S.-FNRS Research Fellow, email : lancelot.semal\MVAt uclouvain.be}\\
	\newline \\
	UCLouvain, 1348 Louvain-la-Neuve, Belgium\\}

\maketitle
	\begin{abstract}
		Inspired by Ol'shanskii's work, we provide an axiomatic framework to describe certain irreducible unitary representations of non-discrete unimodular totally disconnected locally compact groups. We then look at the applications to certain groups of automorphisms of locally finite trees and semi-regular right-angled buildings.
	\end{abstract}
		In this document topological groups are second-countable, locally compact groups are Hausdorff and the word ``representation" stands for strongly continuous unitary representation on a separable complex Hilbert space. 
	\newpage
	\tableofcontents
	\newpage
	\section{Introduction}\label{chaptitre intro}
	\subsection{Motivation}
	Despite the fact that the representation theory of locally compact groups has been an active domain of research since the 60's, this field of mathematics still contains vast uncharted territories. Similarly to finite groups the irreducible representations play a central role in the theory. However, by contrast to finite groups, the decomposition of a representation into irreducible representations is not always well behaved. Such a decomposition behaves `nicely' only for the so called type I groups (Bernstein and Kirillov used the terms `tame’ and `wild’ to qualify respectively type I and non-type I groups). Loosely speaking, type I groups are the locally compact groups for which the decomposition of any representation into a direct integral of irreducible representations is essentially unique. Moreover, the classification of the equivalence classes of the irreducible representations is known to be intractable unless the group is type I. Many statements address the problem to determine whether a group is type I or not. A result from Thoma \cite{Thoma1968} ensures that a discrete group is type I if and only if it is virtually abelian. For non-discrete groups, prominent examples of type I groups are provided by nilpotent connected Lie groups \cite{Dixmier1959}, reductive algebraic groups over non-Archimedean fields \cite{Harish1970}\cite{Bernshtein1974}, semisimple connected Lie groups \cite{Knapp1986} and reductive adelic groups \cite{Clozel2002}. Concerning groups acting on trees, which play a major role in the research on totally disconnected locally compact groups, the main source of examples comes from groups of automorphisms satisfying the Tits independence property (Definition \ref{equation prop ind de tits}) which are known to be type I if they act transitively on the boundary of the tree \cite{Ciobotaru2015}. This notable achievement was originally enlightened by Ol'shanskii who classified the irreducible representations of the full group of automorphisms of a $d$-regular tree with $d\geq 3$ \cite{Ol'shanskii1977} and concluded that this group is CCR \cite{Ol'shanskii1980}(hence type I). This classification is the starting point of our paper. We now recall the main ideas and refer to \cite{FigaNebbia1991} for a detailed expository.
	
	Let $T$ be a $d$-regular tree with $d\geq 3$. Let $V(T)$ and $E(T)$ denote respectively its set of vertices and edges and let $G\leq \Aut(T)$ be a closed subgroup of $\Aut(T)$. An irreducible representation $\pi$ of $G$ is called:
	\begin{itemize}\label{definition de spheric special cuspidal}
		\item \tg{spherical} if there exists a vertex $v\in V(T)$ such that $\pi$ admits a non-zero $\Fix_G(v)$-invariant vector where $\Fix_G(v)=\{g\in G\lvert gv=v\}$.
		\item \tg{special} if it is not spherical and there exists an edge $e\in E(T)$ such that $\pi$ admits a non-zero $\Fix_G(e)$-invariant vector where $\Fix_G(e)=\{g\in G\lvert gv=v\qq\forall v\in e\}$ is the fixator of the edge $e$.
		\item \tg{cuspidal} if it is neither spherical nor special.
	\end{itemize}
	It is clear from the definition that every irreducible representation of $G$ belongs to exactly one of these categories. If $G$ is non-compact and transitive on the boundary of the tree, a complete classification of the equivalence classes of both spherical and special representations is achieved without further hypothesis see \cite{FigaNebbia1991} and \cite{Matsumoto}. On the other hand, Ol'shanskii achieved a classification of the equivalence classes of cuspidal representations for groups of automorphisms for which the fixators of a certain family of subtrees satisfy a particular factorization (see Lemma \ref{independence figa nebbia 3.1}). Such a factorization occurs for instance for groups of automorphisms with the Tits independence property \cite{Amann2003}. Among other things, the resulting classification ensures that the cuspidal representations of such groups are induced from compact open subgroups. In particular, they are associated to compactly supported functions of positive type, they belong to the discrete series of $G$ and their respective equivalence classes are isolated points in the unitary dual for the Fell topology. 
	
	The purpose of this paper is to provide an axiomatic framework on non-discrete unimodular totally disconnected locally compact groups under which a similar description of certain irreducible representations can be achieved. This is developed further in the first part of the paper that is Chapters \ref{chaptitre intro} and \ref{section generalisation of Ol'shanskii machinery}. The purpose of the second part of the paper (that is Chapters \ref{Application to Aut T}, \ref{application IPk}, \ref{application IPV1} and \ref{Application to universal groups of right-angled buildings}) is to show how this framework applies to certain groups of automorphisms of trees and right-angled buildings. These applications include groups whose representation theory did not appear  in the literature yet such as groups of automorphisms of trees satisfying property \ref{IPk} \cite{BanksElderWillis2015} and universal groups of certain semi-regular right-angled buildings \cite{Universal2018}. 
	This axiomatic framework is also used in an ad-doc paper\cite{SemalR2022} to obtain a description of the irreducible representations of groups classified by Radu in \cite{Radu2017} and contribute to Nebbia's CCR conjecture on trees \cite{Nebbia1999} (since it leads to the conclusion that they are uniformly admissible).

	\subsection{The axiomatic framework}\label{section intro fertile}
	
	The purpose of this section is to establish the formalism of our axiomatic framework.
	Let $G$ be a locally compact group, let $\pi$ be a representation of $G$ and let $H$ be a closed subgroup of $G$. We denote by $\Hr{\pi}$ the representation space of $\pi$ and by $\Hr{\pi}^H$ the subspace of $H$-invariant vectors of $\Hr{\pi}$. We recall that $\pi$ is \tg{$H$-spherical} if the space $\Hr{\pi}^H$ is not reduced to $\{0\}$. The following Lemma shows that this notion carries a particular flavour among totally disconnected locally compact groups.
	\begin{lemma}\label{existence of U invariant vector}
		Let $\pi$ be a representation of $G$ and let $\mathcal{S}$ be a basis of neighbourhoods of the identity consisting of compact open subgroups. Then,
		\begin{equation*}
		\bigcup_{U\in \mathcal{S}}\Hr{\pi}^{U}= \{\xi \in \Hr{\pi}\mid \exists U\in \mathcal{S}\mbox{ such that }\qq \pi(h)\xi=\xi\qq \forall h\in U\}
		\end{equation*} 
		is dense in $\Hr{\pi}$.
	\end{lemma}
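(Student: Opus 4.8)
The plan is to exploit the standard fact that a strongly continuous unitary representation of a totally disconnected locally compact group is, in a suitable sense, locally constant, combined with an averaging (projection) argument over compact open subgroups. First I would fix $\xi \in \Hr{\pi}$ and $\varepsilon > 0$, and use strong continuity of $\pi$ to find an open neighbourhood $V$ of the identity such that $\norm{\pi(g)\xi - \xi}{} < \varepsilon$ for all $g \in V$. Since $\mathcal{S}$ is a basis of neighbourhoods of the identity, I may pick $U \in \mathcal{S}$ with $U \subseteq V$; thus $\norm{\pi(h)\xi - \xi}{} < \varepsilon$ for all $h \in U$.

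Next I would produce an actual $U$-invariant vector close to $\xi$ by averaging. Because $U$ is compact, it carries a normalised Haar measure, and since $G$ is locally compact and $\pi$ is strongly continuous, the vector-valued integral $\xi_U := \int_U \pi(h)\xi \, dh$ is well defined (the integrand is continuous and $U$ is compact). A change of variables shows $\pi(u)\xi_U = \xi_U$ for every $u \in U$, so $\xi_U \in \Hr{\pi}^U \subseteq \bigcup_{W \in \mathcal{S}} \Hr{\pi}^W$. Finally I would estimate
\begin{equation*}
\norm{\xi_U - \xi}{} = \Bigl\lVert \int_U \bigl(\pi(h)\xi - \xi\bigr)\, dh \Bigr\rVert \leq \int_U \norm{\pi(h)\xi - \xi}{}\, dh \leq \varepsilon,
\end{equation*}
using that $U$ has Haar measure $1$. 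Since $\varepsilon$ was arbitrary, $\xi$ lies in the closure of the union, which gives density. The displayed equality of the union with the set on the right-hand side is immediate from unravelling the definition of $\Hr{\pi}^U$, so no separate argument is needed there.

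The only genuinely delicate point is the justification that $\xi_U$ is a legitimate element of $\Hr{\pi}$ and that the norm inequality above holds, i.e. that the Bochner (or weak) integral of the continuous $\Hr{\pi}$-valued function $h \mapsto \pi(h)\xi$ over the compact group $U$ exists and satisfies $\norm{\int_U f}{} \leq \int_U \norm{f}{}$; this is standard but is the place where compactness of $U$ and strong continuity of $\pi$ are both essential. Everything else — the change of variables giving invariance, the choice of $U$ inside $V$, and the final $\varepsilon$-argument — is routine. One could alternatively phrase the averaging via the orthogonal projection onto $\Hr{\pi}^U$ given by $P_U = \int_U \pi(h)\, dh$ and note $\norm{P_U \xi - \xi}{} \leq \varepsilon$, which is the same computation packaged operator-theoretically.
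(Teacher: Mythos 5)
Your proof is correct: the averaging argument (choose $U\in\mathcal{S}$ inside a neighbourhood where $\lVert\pi(h)\xi-\xi\rVert<\varepsilon$, then project via $\xi_U=\int_U\pi(h)\xi\,dh$) is exactly the standard reasoning of Fig\`a-Talamanca--Nebbia that the paper cites in lieu of giving its own proof. So you have supplied, correctly, essentially the same proof the paper relies on.
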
	
	A proof of this result can be found in \cite[pg.$85$]{FigaNebbia1991} for the full group of automorphisms of a thick regular tree and the basis of neighbourhoods consisting of fixators of complete finite subtrees. Since their reasoning applies to the above setup and is probably well known by the experts, we do not give a proof in these notes. 
	
	We now recall that totally disconnected locally compact groups are characterised among locally compact groups by \textbf{Van Dantzig}'s Theorem which ensures that a locally compact group is totally disconnected if and only if it admits a basis of neighbourhoods of the identity consisting of compact open subgroups. In particular, the above lemma leads to the observation that every representation $\pi$ of a totally disconnected locally compact group $G$ admits non-zero invariant vectors for some compact open subgroup of $G$. Notice, however, that the statement loses its relevance for discrete groups. It is natural to ask whether further informations can be obtained from a more detailed analysis of those basis of neighbourhoods. The purpose of this paper is to provide a positive answer to that question. We now make few minor observations and establish our formalism.
	
	Let $G$ be a totally disconnected locally compact group and let $\pi$ be a representation of $G$ that admits a non-zero $U$-invariant vectors $\xi\in \Hr{\pi}$ for some $U\leq G$. A direct computation shows for every $g\in G$ that $\pi(g)\xi$ is non-zero $gUg^{-1}$-invariant vector of $\Hr{\pi}$. In particular, the existence of non-zero invariant vectors for a subgroup $U$ is an invariant property of the conjugacy class of $U$. With this in mind, we let $\mathcal{B}$ denote the set of compact open subgroups of $G$, $P(\mathcal{B})$ denote the power set of $\mathcal{B}$ and let
	$$\mathcal{C}: \mathcal{B}\rightarrow P(\mathcal{B})$$
	be the map sending a compact open subgroup to its conjugacy class in $G$. Let $\mathcal{S}$ be a basis of neighbourhoods of the identity consisting of compact open subgroups of $G$ and let $\mathcal{F}_{\mathcal{S}}=\{ \mathcal{C}(U)\lvert U\in \mathcal{S}\}$. We equip $\mathcal{F}_\mathcal{S}$ with the partial order given by the reverse inclusion of representatives ($\mathcal{C}(U)\leq \mathcal{C}(V)$ if there exists $\tilde{U}\in \mathcal{C}(U)$ and $\tilde{V}\in \mathcal{C}(V)$ such that $\tilde{V}\subseteq \tilde{U}$). For a poset $(P, \leq)$ and an element $x\in P$, we say that the \tg{height} of $x$ in $(P,\leq)$ is $L_x-1$ where $L_x$ is the maximal length of a strictly increasing chain in $P_{\leq x}=\{y\in P\lvert y\leq x\}$ if such a maximal length exists and we say that the height is infinite otherwise. 
	\begin{definition}
		A basis of neighbourhoods of the identity $\mathcal{S}$ consisting of compact open subgroups of $G$ is called a \tg{generic filtration} of $G$ if the height of every element in $\mathcal{F}_{\mathcal{S}}$ is finite.
	\end{definition} 
	The following lemma ensures the existence of generic filtrations for unimodular groups.
	\begin{lemma}\label{lemma bounded basis implies generic filtration}
		Let $G$ be a unimodular totally disconnected locally compact group and let $\mathcal{S}$ be a basis of neighbourhoods of the identity consisting of compact open subgroups of $G$ with uniformly bounded Haar measure. Then, $\mathcal{S}$ is a generic filtration of $G$.
	\end{lemma}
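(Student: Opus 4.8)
The plan is to use the Haar measure of $G$ as a strictly decreasing rank function along the order of $\mathcal{F}_{\mathcal{S}}$. Fix a Haar measure $\mu$ on $G$. Since $G$ is unimodular, $\mu$ is both left- and right-invariant, hence invariant under conjugation: $\mu(gUg^{-1})=\mu(U)$ for every compact open subgroup $U$ and every $g\in G$. In particular $\mu$ is constant on each conjugacy class and therefore descends to a well-defined function $\mathcal{F}_{\mathcal{S}}\to(0,\infty)$, which I will still write $\mu$ (positivity comes from openness, finiteness from compactness). By hypothesis there is a constant $M$ with $\mu(U)\leq M$ for all $U\in\mathcal{S}$, hence $\mu(x)\leq M$ for all $x\in\mathcal{F}_{\mathcal{S}}$.

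Next I would record how $\mu$ behaves along the order. Suppose $\mathcal{C}(U)<\mathcal{C}(V)$ in $\mathcal{F}_{\mathcal{S}}$. By definition there are representatives $\tilde V\subseteq\tilde U$, and the inclusion is necessarily strict, for otherwise $U$ and $V$ would be conjugate, contradicting $\mathcal{C}(U)\neq\mathcal{C}(V)$. Moreover, given \emph{any} representative $A\in\mathcal{C}(U)$, conjugating the pair $(\tilde U,\tilde V)$ by a suitable element of $G$ produces a representative $B\in\mathcal{C}(V)$ with $B\subsetneq A$. Since $A$ is compact and $B$ is open, $B$ has finite index in $A$, and properness forces $[A:B]\geq 2$; left-invariance of $\mu$ then gives $\mu(A)=[A:B]\,\mu(B)\geq 2\,\mu(B)$, so that $\mu(\mathcal{C}(U))\geq 2\,\mu(\mathcal{C}(V))$.

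Then I would assemble the bound. Let $x=\mathcal{C}(U_0)\in\mathcal{F}_{\mathcal{S}}$ be arbitrary and let $x_1<x_2<\dots<x_n$ be a strictly increasing chain in $(\mathcal{F}_{\mathcal{S}})_{\leq x}$; appending $x$ at the top if necessary, we may assume $x_n=x$. Starting from any representative of $x_n$ and applying the previous paragraph repeatedly downward through the chain — each time conjugating the next nested inclusion into position — we obtain representatives $A_i\in x_i$ with $A_1\supsetneq A_2\supsetneq\dots\supsetneq A_n$ and $\mu(A_i)\geq 2\,\mu(A_{i+1})$ for every $i$. Hence $M\geq\mu(A_1)\geq 2^{\,n-1}\mu(A_n)=2^{\,n-1}\mu(U_0)$, so $n\leq 1+\log_2\!\big(M/\mu(U_0)\big)$. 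This is a finite bound depending only on $x$, so the height of $x$ in $\mathcal{F}_{\mathcal{S}}$ is finite; as $x$ was arbitrary, $\mathcal{S}$ is a generic filtration of $G$.

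I do not expect a genuine obstacle here; the one point requiring care is the bookkeeping with representatives. The order on $\mathcal{F}_{\mathcal{S}}$ is defined through the mere \emph{existence} of nested representatives, so to chain the inequalities one cannot simply hope for a single simultaneous choice of representatives realising the whole chain — instead one uses conjugation-invariance of $\mu$ together with the freedom to translate a given inclusion by an element of $G$ at each step. Everything else (finiteness of the index of an open subgroup of a compact group, the index–measure identity) is standard.
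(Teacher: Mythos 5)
Your proof is correct and follows essentially the same route as the paper: unimodularity makes the Haar measure a conjugacy-class invariant, a strictly increasing chain in $\mathcal{F}_{\mathcal{S}}$ is realised by a strictly nested chain of representatives via conjugation, and since each proper inclusion of compact open subgroups has index at least $2$, the uniform bound on the measures forces a logarithmic bound on the chain length. Your explicit handling of the representative bookkeeping (and of why the inclusions are strict) is the only difference — the paper simply asserts "changing representatives if needed" — so there is nothing further to add.
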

	\begin{proof}
		Since the elements of $\mathcal{S}$ have uniformly bounded measure, there exists a Haar measure $\mu$ on $G$ such that $0\lneq\mu(V)\leq 1$ for every $V\in \mathcal{S}$. Let $\mathcal{C}(U)\in \mathcal{F}_\mathcal{S}$. Since $G$ is unimodular, the measure $\mu(U)$ does not depend on the choice of representative $U\in \mathcal{C}(U)$. Now, let $\mathcal{C}(U_0)\leq\mathcal{C}(U_1)\leq ...\leq \mathcal{C}(U_{n-1})\leq \mathcal{C}(U)$ be a strictly increasing chain. Changing representatives if needed, we can suppose that $U\leq U_{n-1}\leq ... \leq U_1\leq U_0$. In particular, notice that
		$$ \lbrack U_0:U\rbrack =\lbrack U_0 : U_1\rbrack \cdots \lbrack U_{n-1}: U\rbrack\geq 2^n.$$
		On the other hand, since $U$ and $U_0$ are both compact open subgroups of $G$ observe that $$\lbrack U_0:U\rbrack= \frac{\mu(U_0)}{\mu(U)}\leq \frac{1}{\mu(U)}.$$
		This proves that $n\leq -\log_2(\mu(U))$ and therefore that the height of $\mathcal{C}(U)$ in $\mathcal{F}_\mathcal{S}$ is finite. 
	\end{proof}
	Every generic filtration $\mathcal{S}$ of $G$ splits as a disjoint union $\mathcal{S}=\bigsqcup_{l\in \N}\mathcal{S}\lb l \rb$ where $\mathcal{S}\lb l\rb$ denotes the set of elements $U\in \mathcal{S}$ such that $\mathcal{C}(U)$ has height $l$ in $\mathcal{F}_{\mathcal{S}}$. The element of $\mathcal{S}\lb l \rb$ are called the elements at \tg{depth} $l$. For every representation $\pi$ of $G$ there exists a smallest non-negative integer $l_\pi\in \N$ such that $\pi$ admits non-zero $U$-invariant vectors for some $U\in \mathcal{S}\lb l_\pi \rb$. This $l_\pi$ is called the \tg{depth} of $\pi$ with respect to $\mathcal{S}$ in analogy with the similar notion of depth for representations of reductive groups over non-Archimedean fields introduced by Moy-Prasad in \cite{Moy1996}.
	
	\begin{example}
		Let $T$ be a $d$-regular tree with $d\geq 3$. The full group of automorphisms $\Aut(T)$ of $T$ equipped with the permutation topology is a non-discrete unimodular totally disconnected locally compact group. We recall that a subtree $\mathcal{T}\subseteq T$ is called complete if all its vertices $v\in V(\mathcal{T})$ have degree $0,1$ or their degree coincides with the degree of $v$ in $T$. Let $\mathfrak{T}$ be the set of all complete finite subtrees of $T$ and let $\mathcal{S}=\{\Fix_{\Aut(T)}(\mathcal{T})\lvert \mathcal{T}\in \mathfrak{T}\}$ be the basis of neighbourhoods of the identity consisting of the groups
		$$\Fix_{\Aut(T)}(\mathcal{T})=\{g\in \Aut(T) \lvert \qq gv=v \qq \forall v\in V(\mathcal{T})\}$$
		where $V(\mathcal{T})$ denotes the set of vertices of $\mathcal{T}$. In Section \ref{section generic filtration of Aut(T)} we show that $\mathcal{S}$ is a generic filtration. This family of compact open subgroups is exactly the one used by Ol'shanskii to obtain a complete classification of the cuspidal representations of $\Aut(T)$. Now, let $\pi$ be an irreducible representation of $\Aut(T)$. According to the terminology introduced on page \pageref{definition de spheric special cuspidal} and as a consequence of Lemma \ref{Lemma la startification de S pour Aut(T)}, the depth of a representation $\pi$ with respect to $\mathcal{S}$ can be interpreted as follows:
		\begin{itemize}
			\item $\pi$ is a spherical representation of $\Aut(T)$ if and only if $l_\pi=0$. 
			\item $\pi$ is a special representation of $\Aut(T)$ if and only if $l_\pi=1$.
			\item $\pi$ is a cuspidal representation of $\Aut(T)$  if and only if  $l_\pi\geq 2$. Furthermore in that case $l_\pi-1$ is the smallest positive integer for which there exists a complete finite subtree $\mathcal{T}$ of  $T$ with that amount of interior vertices such that $\pi$ has a non-zero $\Fix_{\Aut(T)}(\mathcal{T})$-invariant vector.
		\end{itemize}
	\end{example}
	\noindent We now introduce \tg{Ol'shanskii's factorization}.  
	\begin{definition}\label{definition olsh facto}
		Let $G$ be a non-discrete unimodular totally disconnected locally compact group, let $\mathcal{S}$ be a generic filtration of $G$ and let $l$ be a strictly positive integer. We say that $\mathcal{S}$ \tg{factorizes at depth} $l$ if the following conditions hold:
		\begin{enumerate}
			\item For all $U\in\mathcal{S}\lb l \rb$ and every $V$ in the conjugacy class of an element of $\mathcal{S}$ such that $V \not\subseteq U$, there exists $W$ in the conjugacy class of an element of $\mathcal{S}\lb l -1\rb$ such that: $$U\subseteq W \subseteq V U=\{vu\lvert u\in U, v\in V\}.$$
			\item For all $U\in\mathcal{S}\lb l \rb$ and every $V$ in the conjugacy class of an element of $\mathcal{S}$, the set
			\begin{equation*}
			N_G(U, V)= \{g\in G \lvert g^{-1}Vg\subseteq U\}
			\end{equation*}
			is compact.
		\end{enumerate} 
		Furthermore, the generic filtration $\mathcal{S}$ of $G$ is said to \tg{factorize$^+$ at depth} $l$ if in addition for all $U\in\mathcal{S}\lb l\rb$ and every $W$ in the conjugacy class of an element of $\mathcal{S}\lb l-1\rb$ such that $U\subseteq W$ we have
		\begin{equation*}
		W\subseteq N_G(U,U) =\{g\in G \mid g^{-1}Ug\subseteq U\}.
		\end{equation*} 
		Since $G$ is unimodular, notice that the set $N_G(U,U)$ coincides with the normalizer $N_G(U)$ of $U$ in $G$. 
	\end{definition} 
	\begin{remark}
		The factorization at depth $l$ defined here depends on the entire generic filtration $\mathcal{S}$ and not only on the elements of $\mathcal{S}\lb l\rb$ and $\mathcal{S}\lb l-1\rb$.
	\end{remark}
	\begin{remark}
		Notice that the properties required in the above definition are satisfied for some $U\in \mathcal{S}\lb l\rb$ if and only if they are satisfied for each of its conjugate. In particular, a generic filtration $\mathcal{S}$ factorizes at depth $l$ if and only if the conditions \mbox{\rm 1} and  \mbox{\rm 2} are satisfied for every $U$ that is conjugate to an element of $\mathcal{S}\lb l \rb$. The same remark holds for the notion of factorization$^+$.
	\end{remark}

	\subsection{Main results and structure of the paper}
	This paper is divided in two parts. The purpose of the first part of the paper (Chapters \ref{chaptitre intro} and \ref{section generalisation of Ol'shanskii machinery}) is to establish our axiomatic framework and to prove the following Theorem.
	\begin{theoremletter}\label{la version paki du theorem de classification}
		Let $G$ be a non-discrete unimodular totally disconnected locally compact group and let $\mathcal{S}$ be a generic filtration of $G$ that factorizes at depth $l$. Then, every irreducible representation $\pi$ of $G$ at depth $l$ satisfies the following:
		\begin{enumerate}
			\item There exists a unique $C_\pi\in \mathcal{F}_\mathcal{S}=\{\mathcal{C}(U)\lvert U\in \mathcal{S}\}$ with height $l$ such that for all $U\in C_\pi$, $\pi$ admits a non-zero $U$-invariant vector.
			\item For every $U\in C_\pi$, $\pi$ admits a non-zero diagonal matrix coefficient supported in the compact open subgroup $N_G(U)$ of $G$. In particular, $\pi$ is induced from an irreducible representation of $N_G(U)$, belongs to the discrete series of $G$ and its equivalence class is isolated in the unitary dual $\widehat{G}$ for the Fell topology.
		\end{enumerate}
		Furthermore, if $\mathcal{S}$ factorizes$^+$ at depth $l$, there exists a bijective correspondence explicitly given by $\lb$Section \ref{Section the bijective correspondence of theorem A}, Theorem \ref{the theorem of classification for cuspidal representations} $\rb$ between the equivalence classes of irreducible representations of $G$ at depth $l$ such that $C_\pi=\mathcal{C}(U)$ and the equivalence classes of a family of irreducible representations of the finite group $N_G(U)/U$ called $\mathcal{S}$-standard representations. 
	\end{theoremletter}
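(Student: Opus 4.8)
The plan is to reduce both parts of the theorem to a single ``orthogonality dichotomy'' obtained by averaging over an intermediate subgroup, and then to run the Ol'shanskii--Figà-Talamanca--Nebbia argument almost verbatim, the only new ingredient being that Definition~\ref{definition olsh facto} is tailor-made to make that averaging work.

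\textit{Step 1: the averaging dichotomy.} I would first establish the following. Let $\pi$ be \emph{any} representation of $G$, let $U\in\mathcal{S}\lb l\rb$ and $\xi\in\Hr{\pi}^U$, and let $V$ be conjugate to an element of $\mathcal{S}$ with $\eta\in\Hr{\pi}^V$. If $\prods{\xi}{\eta}\neq 0$ and $V\not\subseteq U$, then $\Hr{\pi}^W\neq\{0\}$ for some $W$ conjugate to an element of $\mathcal{S}\lb l-1\rb$. Indeed, the first condition of Definition~\ref{definition olsh facto} supplies such a $W$ with $U\subseteq W\subseteq VU$; integrating $\xi$ over $W$ against the Haar probability measure of the compact group $W$ produces $\zeta\in\Hr{\pi}^W$, and writing each $w\in W$ as $vu$ with $v\in V$, $u\in U$ gives $\pi(w)\xi=\pi(v)\xi$ and $\prods{\pi(w)\xi}{\eta}=\prods{\xi}{\eta}$, so $\prods{\zeta}{\eta}=\prods{\xi}{\eta}\neq 0$ and $\zeta\neq 0$. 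Conjugating $W$ back into $\mathcal{S}\lb l-1\rb$, the upshot is: if $\pi$ has depth exactly $l$, then $\prods{\xi}{\eta}\neq 0$ \emph{forces} $V\subseteq U$.

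\textit{Step 2: parts (1) and (2).} For~(1), suppose $U,U'\in\mathcal{S}$ have $\mathcal{C}(U),\mathcal{C}(U')$ of height $l$ with nonzero invariant vectors $\xi\in\Hr{\pi}^U$, $\xi'\in\Hr{\pi}^{U'}$; by irreducibility there is $g\in G$ with $\prods{\xi}{\pi(g)\xi'}\neq 0$, and since $\pi(g)\xi'\in\Hr{\pi}^{gU'g^{-1}}$ with $gU'g^{-1}$ conjugate to an element of $\mathcal{S}$, Step~1 gives $gU'g^{-1}\subseteq U$, i.e. $\mathcal{C}(U)\leq\mathcal{C}(U')$ in $\mathcal{F}_\mathcal{S}$; as both classes have height $l$ this forces $\mathcal{C}(U)=\mathcal{C}(U')=:C_\pi$, and every representative of $C_\pi$ carries invariant vectors by conjugation. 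For~(2), fix $U\in C_\pi$ and a unit vector $\xi\in\Hr{\pi}^U$; applying Step~1 with $\eta=\pi(g)\xi$ and $V=gUg^{-1}$ shows $\prods{\pi(g)\xi}{\xi}\neq 0\Rightarrow gUg^{-1}\subseteq U\Rightarrow g\in N_G(U)$ (by unimodularity), so the nonzero diagonal matrix coefficient $g\mapsto\prods{\pi(g)\xi}{\xi}$ is supported in $N_G(U)$, which by the second condition of Definition~\ref{definition olsh facto} applied with $V=U$ and the identity $N_G(U)=N_G(U,U)$ is a compact open subgroup. A nonzero compactly supported diagonal matrix coefficient makes $\pi$ square-integrable, hence a discrete series representation; that $\pi$ is then induced from an irreducible representation of the compact open subgroup $N_G(U)$ and that its class is isolated in $\widehat{G}$ are the standard consequences for totally disconnected groups --- for isolation, normalizing the matrix coefficient by the formal degree yields a self-adjoint idempotent $\psi\in C_c(G)\subseteq C^*(G)$ with $\pi(\psi)\neq 0$ and $\sigma(\psi)=0$ for every irreducible $\sigma\not\cong\pi$, so $\{\pi\}=\{\sigma\in\widehat G:\lVert\sigma(\psi)\rVert>0\}$ is open by lower semicontinuity of $\sigma\mapsto\lVert\sigma(\psi)\rVert$.

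\textit{Step 3: the bijective correspondence, and the main obstacle.} Now assume $\mathcal{S}$ factorizes$^+$ at depth $l$ and fix $U$. The correspondence sends an irreducible $\pi$ at depth $l$ with $C_\pi=\mathcal{C}(U)$ to the space $\Hr{\pi}^U$ (finite-dimensional, since $\pi$ is in the discrete series and $U$ is compact open), on which $N_G(U)$ acts trivially on $U$, hence to a representation of the finite group $N_G(U)/U$; writing $\pi\cong\Ind_{N_G(U)}^G\rho$ as in Step~2 shows $\rho$ is trivial on $U$ (its $U$-fixed subspace is $N_G(U)$-stable and nonzero) and that $\Hr{\pi}^U$ is the underlying irreducible $N_G(U)/U$-representation. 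The third condition of Definition~\ref{definition olsh facto} ensures that every conjugate $W$ of an element of $\mathcal{S}\lb l-1\rb$ with $U\subseteq W$ lies in $N_G(U)$, so $\overline W=W/U$ is a genuine subgroup of $N_G(U)/U$; re-running Step~1 one checks that $\pi$ has depth exactly $l$ (and not less) if and only if $\rho$ has no nonzero $\overline W$-fixed vector for any such $W$, which is the defining property of an $\mathcal{S}$-standard representation. The inverse sends an $\mathcal{S}$-standard $\rho$ to $\Ind_{N_G(U)}^G\rho$, and one verifies with Mackey's irreducibility criterion --- using the second condition to reduce to finitely many double cosets and the first and third to analyse the subgroups $N_G(U)\cap gN_G(U)g^{-1}$ --- that the result is irreducible, of depth exactly $l$, with $C_\pi=\mathcal{C}(U)$; this is carried out in Section~\ref{Section the bijective correspondence of theorem A}, Theorem~\ref{the theorem of classification for cuspidal representations}. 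The main obstacle, beyond recognising that the first condition of Definition~\ref{definition olsh facto} is exactly the input needed for Step~1, is precisely this last verification: proving that induction from an $\mathcal{S}$-standard representation of $N_G(U)/U$ produces an irreducible representation of depth precisely $l$ is where both the $\mathcal{S}$-standard condition and the extra hypothesis of factorization$^+$ are genuinely used, and it is the technical core of the argument.
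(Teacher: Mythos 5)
Your Steps 1 and 2 are correct and are, in substance, the paper's own argument: your averaging dichotomy is exactly the vector-level form of Lemma \ref{les fonction continue de LSS sont a support compact} (the integral $\int_W\varphi_{\xi,\eta}(gh)\,\diff\mu(h)$ there is $\mu(W)\prods{\zeta}{\eta}$ for your averaged vector $\zeta$), and your deductions of points \textit{1} and \textit{2} follow the proofs of Lemma \ref{Lemma existence of a seed} and the subsequent proposition, with the citation to Duflo--Moore replaced by a sketch of the same idempotent argument. So for the first two assertions there is nothing to object to.

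The gap is in Step 3, which is not a proof but a statement of the difficulty. For the ``furthermore'' part you correctly identify the correspondence $\pi\mapsto(\omega_\pi,\Hr{\pi}^U)$ and $\omega\mapsto T(U,\omega)=\Ind_{N_G(U)}^G(\omega\circ p_U)$, and you correctly locate where factorization$^+$ enters (condition $W\subseteq N_G(U)$ makes $p_U(W)$ a subgroup of $N_G(U)/U$). But none of the substantive claims is established: that $\omega_\pi$ is irreducible; that $\pi\cong T(U,\omega_\pi)$ (your Step 2 only gives, via GNS, that $\pi$ is a subrepresentation of an induced representation); that $\Hr{T(U,\omega)}^U=\mathfrak{D}^{\omega\circ p_U}_{N_G(U)}$, which is what makes the two constructions mutually inverse and gives injectivity (Lemma \ref{les induites dinequivalente sont inequivalente}); and above all that $T(U,\omega)$ is irreducible with seed $\mathcal{C}(U)$ when $\omega$ is $\mathcal{S}$-standard. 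Deferring this to ``Section \ref{Section the bijective correspondence of theorem A}, Theorem \ref{the theorem of classification for cuspidal representations}'' is circular, since that is precisely the content to be proved; and the proposed alternative, Mackey's irreducibility criterion, is not carried out and is not obviously available: it would require showing that $\omega\circ p_U$ and its conjugate $(\omega\circ p_U)^g$ are disjoint on $N_G(U)\cap gN_G(U)g^{-1}$ for \emph{every} $g\notin N_G(U)$, and the axioms of Definition \ref{definition olsh facto} give no direct hold on these intersections. The paper avoids Mackey entirely: it first shows that the relevant matrix coefficient of $T(U,\omega)$ is a $U$-bi-invariant element of $\mathcal{L}_{\mathcal{S}}(U)$ (this is where $\mathcal{S}$-standardness and factorization$^+$ are used), embeds $T(U,\omega)$ into $T_{\mathcal{S},U}$ by GNS uniqueness, and then obtains the equality $\Hr{T(U,\omega)}^U=\mathfrak{D}^{\omega\circ p_U}_{N_G(U)}$ and irreducibility from the support control of Lemma \ref{les fonction continue de LSS sont a support compact} together with Lemma \ref{Every invariant space in the regular contains a non trivial invariant vector} and the elementary criterion of Proposition \ref{a criterion so that the induced of an irreducible is irreducible}; the statements about depth exactly $l$ and seed $\mathcal{C}(U)$ then come from Corollary \ref{LSS is the finite sum of irreducible rep with seed S}. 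Some substitute for this regular-representation analysis (or a genuine verification of the Mackey disjointness condition) is needed before Step 3 can count as a proof.
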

	\noindent We refer to Section \ref{Section the bijective correspondence of theorem A} for more details about $\mathcal{S}$-standard representations and this bijective correspondence. We have a few remarks to make.
	\begin{remark}
		Theorem \ref{la version paki du theorem de classification} does not guarantee the existence of irreducible representations at depth $l$. Hence, certain generic filtrations of $G$ could just lead to the conclusion that irreducible representations having particular non-zero invariants do not exist.
	\end{remark}
	\begin{remark}
		Different generic filtrations might factorize simultaneously and lead to a different description of the same representations. A concrete example of this phenomenon is given on the full group of automorphisms of a $(d_0,d_1)$-regular tree in Chapters \ref{Application to Aut T}, \ref{application IPk} and \ref{application IPV1} if $d_0\not=d_1$. Furthermore, different generic filtrations might also describe different sets of irreducible representations. A concrete example of this phenomenon occurs for instance if we replace a generic filtration $\mathcal{S}$ of $G$ that factorizes at all positive depth with the generic filtration $\mathcal{S}'=\mathcal{S}-\mathcal{S}\lb 0 \rb$. In that case, $\mathcal{S}'\lb l'\rb =\mathcal{S}\lb l'+1\rb$ $\forall l'\in \N$. In particular, Theorem \ref{la version paki du theorem de classification} does not describe the irreducible representations admitting non-zero invariant vectors for a subgroup $U\in \mathcal{S}\lb 1\rb$ when applied $\mathcal{S}'$ while it does when applied to $\mathcal{S}$.
	\end{remark} 
	
	In the second part of the paper, we look at applications of the axiomatic framework developed in Chapters \ref{chaptitre intro} and \ref{section generalisation of Ol'shanskii machinery}. In Chapter \ref{Application to Aut T} we recover the classification of the cuspidal representations of the full group of automorphisms of a semi-regular tree made by Ol'shanskii \cite{Ol'shanskii1977} and later by Amann \cite{Amann2003}. In particular, the content of this chapter is redundant from the point of view of new results. It serves instead as a chapter allowing the reader to understand and interpret the abstract framework developed in Chapters \ref{chaptitre intro} and \ref{section generalisation of Ol'shanskii machinery} in a concrete and well understood case. 
	
	The purpose of Chapter \ref{application IPk} is to explain how our axiomatic framework can be applied to closed non-discrete unimodular subgroups of the group of automorphisms $\Aut(T)$ of a $(d_0,d_1)$-semi-regular tree satisfying the property \ref{IPk}(Definition \ref{definition IPk}) as defined in \cite{BanksElderWillis2015}. Loosely speaking, this property ensures that the pointwise fixator of any ball of radius $k-1$ around an edge decomposes as a direct product of the subgroup fixing all vertices on one side of the edge and the subgroup fixing all vertices on the other side. The main contributions of this chapter are Theorems \ref{thm B1} and \ref{THM B} below which provide two ways to build generic filtrations that factorizes$^+$. To be more precise, let $T$ be a $(d_0,d_1)$-semi-regular tree with $d_0,d_1\geq 3$ and set of vertices $V(T)$. For every finite subtree $\mathcal{T}$ of $T$ and each integer $r\geq 0$, we denote by $\mathcal{T}^{(r)}$ the ball of radius $r$ around $\mathcal{T}$ for the natural metric $d_T$ on $V(T)$ that is
	$$\mathcal{T}^{(r)}=\{v\in V(T)\lvert \exists w\in V(\mathcal{T})\qq\mbox{s.t.}\qq d_T(v,w)\leq r\}.$$ 
	\begin{theoremletter}\label{thm B1}
		Let $T$ be a $(d_0,d_1)$-semi-regular tree with $d_0,d_1\geq 3$ and let $G\leq \Aut(T)$ be a closed non-discrete unimodular subgroup satisfying the property \ref{IPk} for some integer $k\geq1$. Let $ \mathcal{P}$ be a complete finite subtree of $T$ containing an interior vertex, let $\Sigma_{\mathcal{P}}$ be the set of maximal complete proper subtrees of $ \mathcal{P}$ and let 
		$$ \mathfrak{T}_{\mathcal{P}}=\{\mathcal{R}\in \Sigma_{\mathcal{P}}\lvert \Fix_G((\mathcal{R}')^{(k-1)})\not\subseteq \Fix_G(\mathcal{R}^{(k-1)})\qq \forall \mathcal{R}'\in \Sigma_{\mathcal{P}}-\{\mathcal{R}\}\}.$$ 
		We suppose that: \begin{enumerate}
			\item $\forall \mathcal{R},\mathcal{R}'\in \mathfrak{T}_{ \mathcal{P}}, \forall g\in G$, we do not have $\Fix_G(\mathcal{R}^{(k-1)})\subsetneq\Fix_G(g(\mathcal{R}')^{(k-1)})$.
			\item For all $\mathcal{R}\in \mathfrak{T}_{ \mathcal{P}}$, $\Fix_G( \mathcal{P}^{(k-1)})\not=\Fix_G(\mathcal{R}^{(k-1)})$. 
			Furthermore, if $\Fix_G( \mathcal{P}^{(k-1)})\subsetneq\Fix_G(g\mathcal{R}^{(k-1)})$ we have $ \mathcal{P}\subseteq g\mathcal{R}^{(k-1)}$. 
			\item $\forall n\in \N,\forall v\in V(T)$, $\Fix_G(v^{(n)})\subseteq \Fix_G( \mathcal{P}^{(k-1)})$ implies $ \mathcal{P}^{(k-1)}\subseteq v^{(n)}$.  
			\item For every $g\in G$ such that $g \mathcal{P}\not= \mathcal{P}$, $\Fix_G( \mathcal{P}^{(k-1)})\not= \Fix_G(g \mathcal{P}^{(k-1)})$.
		\end{enumerate}  Then, there exists a generic filtration $\mathcal{S}_{ \mathcal{P}}$ of $G$ that factorizes$^+$ at depth $1$ such that 
		$$\mathcal{S}_{ \mathcal{P}}\lb 0\rb =\{\Fix_G(\mathcal{R}^{(k-1)})\lvert \mathcal{R}\in \mathfrak{T}_{ \mathcal{P}}\}$$
		$$\mathcal{S}_{ \mathcal{P}}\lb 1\rb=\{\Fix_G( \mathcal{P}^{(k-1)})\}.$$
	\end{theoremletter}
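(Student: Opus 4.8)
The plan is to exhibit $\mathcal{S}_{\mathcal{P}}$ explicitly and then check, in order, that it is a basis of compact open subgroups, that it is a generic filtration, that its strata at depths $0$ and $1$ are the prescribed ones, and that it factorizes$^+$ at depth $1$ in the sense of Definition \ref{definition olsh facto}. I would set
\begin{equation*}
\mathcal{S}_{\mathcal{P}} = \bigl\{\Fix_G(\mathcal{R}^{(k-1)}) \mid \mathcal{R}\in\mathfrak{T}_{\mathcal{P}}\bigr\} \cup \bigl\{\Fix_G(\mathcal{Q}^{(k-1)}) \mid \mathcal{Q}\text{ a complete finite subtree of }T\text{ with }\mathcal{P}\subseteq\mathcal{Q}\bigr\}.
\end{equation*}
Every member is the fixator of a finite subtree of the locally finite tree $T$, hence a compact open subgroup of the closed group $G$; and since the sets $\mathcal{Q}^{(k-1)}$ exhaust $V(T)$ and $G$ acts faithfully, the subgroups in the second family already form a basis of identity neighbourhoods. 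Because $\mathcal{R}^{(k-1)}\subseteq\mathcal{P}^{(k-1)}\subseteq\mathcal{Q}^{(k-1)}$ as vertex sets, every member of $\mathcal{S}_{\mathcal{P}}$ is contained in one of the finitely many $\Fix_G(\mathcal{R}^{(k-1)})$, $\mathcal{R}\in\mathfrak{T}_{\mathcal{P}}$; so after a suitable normalisation the members of $\mathcal{S}_{\mathcal{P}}$ have uniformly bounded Haar measure, and Lemma \ref{lemma bounded basis implies generic filtration} shows $\mathcal{S}_{\mathcal{P}}$ is a generic filtration.

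To identify the strata I would first record the tree-combinatorial fact that, since $d_0,d_1\geq 3$ and $\mathcal{R}$ is maximal proper in $\Sigma_{\mathcal{P}}$, one has $\mathcal{R}^{(k-1)}\subsetneq\mathcal{P}^{(k-1)}$ as vertex sets; with Hypothesis $2$ this upgrades to $\Fix_G(\mathcal{P}^{(k-1)})\subsetneq\Fix_G(\mathcal{R}^{(k-1)})$, hence to a strict inequality of Haar measures by unimodularity. A bookkeeping with measures together with Hypothesis $1$ then shows that the classes $\mathcal{C}(\Fix_G(\mathcal{R}^{(k-1)}))$, $\mathcal{R}\in\mathfrak{T}_{\mathcal{P}}$, are exactly the classes of height $0$ in $\mathcal{F}_{\mathcal{S}_{\mathcal{P}}}$: no conjugate of a member of $\mathcal{S}_{\mathcal{P}}$ can strictly contain a conjugate of one of them — conjugates of $\Fix_G(\mathcal{P}^{(k-1)})$ and of the deeper $\Fix_G(\mathcal{Q}^{(k-1)})$ are excluded by the measure inequality, and conjugates of the remaining $\Fix_G(\mathcal{R}'^{(k-1)})$ by Hypothesis $1$ — while every other member of $\mathcal{S}_{\mathcal{P}}$ lies strictly below some $\Fix_G(\mathcal{R}^{(k-1)})$, with $\Fix_G(\mathcal{P}^{(k-1)})$ the unique class of height $1$. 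The same analysis, using the hypotheses, also yields $\mathfrak{T}_{\mathcal{P}}\neq\es$ (a generic filtration necessarily has a non-empty depth-$0$ stratum).

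Next come the factorization requirements, which by the remark following Definition \ref{definition olsh facto} need only be verified for $U=\Fix_G(\mathcal{P}^{(k-1)})$. For compactness of $N_G(U,V)$ I would split on $\mathcal{C}(V)$: if $V$ is conjugate to a depth-$0$ element then $\mu(V)>\mu(U)$ and $N_G(U,V)=\es$; if $V=\Fix_G(h\mathcal{P}^{(k-1)})$ then $g\in N_G(U,V)$ forces $\Fix_G(g^{-1}h\mathcal{P}^{(k-1)})=\Fix_G(\mathcal{P}^{(k-1)})$ by the measure equality, whence $g^{-1}h\mathcal{P}=\mathcal{P}$ by Hypothesis $4$, so $g$ lies in a single coset of the compact group $\Stab_G(\mathcal{P})$; and if $V$ is conjugate to a deeper $\Fix_G(\mathcal{Q}^{(k-1)})$, I would pick a vertex $v_0$ and a radius $n$ with $v_0^{(n)}\supseteq\mathcal{Q}^{(k-1)}$ and apply Hypothesis $3$ to deduce that $g\mathcal{P}^{(k-1)}$ is confined to the fixed finite set $hv_0^{(n)}$, again trapping $g$ in finitely many cosets of $\Stab_G(\mathcal{P})$. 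The factorization$^+$ clause follows from Hypothesis $2$: if $U\subsetneq W=\Fix_G(g\mathcal{R}^{(k-1)})$ then $\mathcal{P}\subseteq g\mathcal{R}^{(k-1)}$, so elements of $W$ fix $\mathcal{P}$ pointwise, stabilize $\mathcal{P}^{(k-1)}$ setwise and hence normalize $U$; the case $W=U$ is trivial.

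The main obstacle is Condition $1$ of Definition \ref{definition olsh facto}: given any conjugate $V$ of a member of $\mathcal{S}_{\mathcal{P}}$ with $V\not\subseteq U$, one must produce a conjugate $W$ of some $\Fix_G(\mathcal{R}^{(k-1)})$, $\mathcal{R}\in\mathfrak{T}_{\mathcal{P}}$, with $U\subseteq W\subseteq VU$. This is Ol'shanskii's factorization argument transported to property \ref{IPk}: one reads off from $V$ the ``direction at $\mathcal{P}$'' in which $V$ fails to fix $\mathcal{P}^{(k-1)}$, takes $W=\Fix_G(\mathcal{R}^{(k-1)})$ for the $\mathcal{R}\in\mathfrak{T}_{\mathcal{P}}$ corresponding to that direction (so that $U\subseteq W$ is automatic from $\mathcal{R}^{(k-1)}\subseteq\mathcal{P}^{(k-1)}$), and uses the direct-product decomposition of the relevant edge-fixators supplied by property \ref{IPk} — the substitute for Tits' independence property in the classical case — to rewrite each element of $W$ as a product of an element of $V$ and an element of $U$. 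Making this decomposition mesh with the combinatorial constraints that Hypotheses $1$--$4$ impose on $\Sigma_{\mathcal{P}}$ and $\mathfrak{T}_{\mathcal{P}}$ is where the substantive work lies.
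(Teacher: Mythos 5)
Your overall architecture is the same as the paper's proof of Theorem \ref{Theorem IPK avec un arbre}: enlarge the two prescribed strata to a full generic filtration (the paper adjoins $\{\mathcal{P}\}$ and the balls $v^{(n)}$, $n\geq N$, around a single well-chosen vertex, whereas you adjoin all complete finite subtrees containing $\mathcal{P}$ --- an inessential variation), identify $\mathcal{S}_{\mathcal{P}}\lb 0\rb$ and $\mathcal{S}_{\mathcal{P}}\lb 1\rb$ by Haar-measure bookkeeping plus Hypothesis $1$, and verify conditions $2$ and $3$ of Definition \ref{definition olsh facto} from Hypotheses $2$--$4$ and unimodularity. Those parts are sound in outline and close to the paper's Lemma \ref{Lemme on peut etendre la la filtration generic de mathcalT} and the corresponding portions of its proof.

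The genuine gap is that you never prove Condition $1$ of Definition \ref{definition olsh facto}; you yourself call it ``the main obstacle'' and defer ``the substantive work''. But this is the heart of the theorem, and it is exactly the content of Proposition \ref{alternative definition iof property IPk} (property \ref{IPk} yields the product decomposition of $\Fix_G(\mathcal{T}^{(k-1)})$ over the terminal edges of any complete finite subtree $\mathcal{T}$) together with Proposition \ref{G(W) for Sk-1}: given $V=\Fix_G(h\mathcal{T}^{(k-1)})$ with $\mathcal{P}\not\subseteq t^{-1}h\mathcal{T}$, the terminal edge of $\mathcal{P}$ whose half-tree contains $t^{-1}h\mathcal{T}$ produces a maximal complete proper subtree $\mathcal{Q}\in\Sigma_{\mathcal{P}}$ with $\Fix_G(t\mathcal{Q}^{(k-1)})\subseteq VU$. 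Your sketch also skips two points that must be addressed. First, the non-containment $\mathcal{P}\not\subseteq t^{-1}h\mathcal{T}$ has to be extracted from $V\not\subseteq U$ before the half-tree argument can be launched (with your larger family this follows from monotonicity of fixators, but it has to be said case by case, as the paper does via Hypotheses $3$ and $4$). Second, and more importantly, the subtree $\mathcal{Q}$ produced by the wedge argument lies in $\Sigma_{\mathcal{P}}$ but need not lie in $\mathfrak{T}_{\mathcal{P}}$; one must descend, using the defining property of $\mathfrak{T}_{\mathcal{P}}$ and the finiteness of $\Sigma_{\mathcal{P}}$, to some $\mathcal{R}\in\mathfrak{T}_{\mathcal{P}}$ with $\Fix_G(t\mathcal{R}^{(k-1)})\subseteq\Fix_G(t\mathcal{Q}^{(k-1)})$, after which $U\subseteq\Fix_G(t\mathcal{R}^{(k-1)})\subseteq VU$ since $\mathcal{R}\subseteq\mathcal{P}$. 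Your phrase ``the $\mathcal{R}\in\mathfrak{T}_{\mathcal{P}}$ corresponding to that direction'' ignores precisely this issue. The same descent is what guarantees $\mathfrak{T}_{\mathcal{P}}\neq\varnothing$, on which your stratification claims rely; the parenthetical appeal to ``a generic filtration has a non-empty depth-$0$ stratum'' does not give this, since if $\mathfrak{T}_{\mathcal{P}}$ were empty the depth-$0$ stratum of your family would simply consist of other subgroups and the asserted description of $\mathcal{S}_{\mathcal{P}}\lb 0\rb$ and $\mathcal{S}_{\mathcal{P}}\lb 1\rb$ would fail. As it stands the proposal is a correct plan whose central step remains unproved.
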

	\noindent The author would like to underline how realistic the assumptions of the above Theorem are. Indeed, any closed non-discrete unimodular subgroup $G\leq \Aut(T)$ satisfying the property \ref{IPk} and such that $\Fix_G(\mathcal{T})$ does not admit any fixed point other than the vertices of $\mathcal{T}$ for every complete finite subtree $\mathcal{T}$ of $T$ satisfies the hypotheses of Theorem \ref{thm B1}. In light of Theorem \ref{la version paki du theorem de classification} this allows one to describe the irreducible representations of $G$ which admit non-zero $\Fix_G(\mathcal{P}^{(k-1)})$ invariant vectors but do not admit non-zero $\Fix_G(\mathcal{R}^{(k-1)})$-invariant vectors for any $\mathcal{R}\in \mathfrak{T}_{\mathcal{P}}$. Notice furthermore that the theorem can be applied inductively with different $\mathcal{P}$. This is done for instance in Example \ref{example fixateur point au bords}. 
	
	Under a stronger hypothesis on $G$ (the hypothesis \ref{Hypothese Hq} (Definition \ref{definition de Hq})), we are even able to explicit a generic filtration that factorizes$^+$ at all sufficiently large depths. To be more precise, we have the following result.
	\begin{theoremletter}\label{THM B}
		Let $T$ be a $(d_0,d_1)$-semi-regular tree with $d_0,d_1\geq 3$ and let $G\leq \Aut(T)$ be a closed non-discrete unimodular subgroup satisfying the hypothesis \ref{Hypothese Hq} and the property \ref{IPk} for some integers $q\geq 0$ and $k\geq 1$. Then, there exists a generic filtration $\mathcal{S}_{q}$ of $G$ that factorizes$^+$ at all depth $l\geq L_{q,k}$ where $$L_{q,k}=\begin{cases}
		\max\{1,2k-q-1\} \qq& \mbox{if } q\mbox{  is even.} \\
		\max\{1,2k-q\} & \mbox{if } q\mbox{  is odd.} 
		\end{cases}$$
	\end{theoremletter}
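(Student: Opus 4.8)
\noindent\emph{Proof proposal.}

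The plan is to write down the generic filtration $\mathcal{S}_q$ explicitly and then to verify, at every depth $l\geq L_{q,k}$, the clauses of Definition~\ref{definition olsh facto}. In the spirit of the Ol'shanskii--Amann setting, the natural candidate is
$$\mathcal{S}_q=\{\Fix_G(\mathcal{T}^{(k-1)})\mid \mathcal{T}\in\mathfrak{T}_q\},$$
where $\mathfrak{T}_q$ is the family of complete finite subtrees that are ``$q$-reduced'', i.e. those $\mathcal{T}$ for which $\Fix_G(\mathcal{T}^{(k-1)})\neq\Fix_G(\mathcal{T}'^{(k-1)})$ for every complete finite subtree $\mathcal{T}'\subsetneq\mathcal{T}$. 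Hypothesis~\ref{Hypothese Hq} is what makes this family rich enough to be a basis of neighbourhoods of the identity and, moreover, supplies the rigidity statements used below. That $\mathcal{S}_q$ is a generic filtration then follows from Lemma~\ref{lemma bounded basis implies generic filtration}: each of its elements is contained in the fixator of a vertex, there are finitely many vertex orbits, so after normalising the Haar measure the members of $\mathcal{S}_q$ have uniformly bounded measure, and $G$ is unimodular by hypothesis.

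Next I would pin down the depth stratification. As in Lemma~\ref{Lemma la startification de S pour Aut(T)}, one proves a combinatorial lemma matching $\mathcal{S}_q\lb l\rb$ with the groups $\Fix_G(\mathcal{T}^{(k-1)})$ for $\mathcal{T}\in\mathfrak{T}_q$ of a prescribed complexity — roughly the number of interior vertices of $\mathcal{T}$, shifted by a constant depending on $q$ and $k$ — the parity of $q$ entering because the minimal $q$-reduced subtree is centred on a vertex or on an edge according to that parity. The threshold $L_{q,k}$ is exactly the point from which the subtrees occurring in $\mathcal{S}_q\lb l\rb$ are large enough that thickening by $k-1$ leaves ``enough room'': below it, the pruning operations used in condition~1 can destroy $q$-reducedness, while at and above it they do not.

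With the stratification available, fix $l\geq L_{q,k}$, $U=\Fix_G(\mathcal{T}^{(k-1)})\in\mathcal{S}_q\lb l\rb$, and $V$ conjugate to an element of $\mathcal{S}_q$, say $V=\Fix_G(\mathcal{T}'^{(k-1)})$. For condition~1 (assume $V\not\subseteq U$): property~\ref{IPk} splits the fixator of a radius-$(k-1)$ neighbourhood of an edge as the direct product of the two half-tree fixators, and iterating this across the edges of $\mathcal{T}^{(k-1)}$ pointing away from $\mathcal{T}'$ shows that $VU$ contains $\Fix_G(\mathcal{R}^{(k-1)})$, where $\mathcal{R}$ is $\mathcal{T}$ with the branches disjoint from $\mathcal{T}'$ pruned; the bound $l\geq L_{q,k}$ and hypothesis~\ref{Hypothese Hq} guarantee that $\mathcal{R}\in\mathfrak{T}_q$ has complexity $l-1$, so $W:=\Fix_G(\mathcal{R}^{(k-1)})$ is conjugate to an element of $\mathcal{S}_q\lb l-1\rb$ with $U\subseteq W\subseteq VU$. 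For condition~2: if $g^{-1}Vg\subseteq U$ then $\Fix_G(g\mathcal{T}'^{(k-1)})\subseteq\Fix_G(\mathcal{T}^{(k-1)})$, and the rigidity part of hypothesis~\ref{Hypothese Hq} (in the style of hypothesis~3 of Theorem~\ref{thm B1}) forces $\mathcal{T}^{(k-1)}\subseteq g\mathcal{T}'^{(k-1)}$; there are finitely many subtrees isometric to $\mathcal{T}'^{(k-1)}$ containing $\mathcal{T}^{(k-1)}$ and the $g$'s realising a given one form a coset of the compact group $V$, so $N_G(U,V)$ is a finite union of compact sets. For the factorization$^+$ clause: if $W$ is conjugate to an element of $\mathcal{S}_q\lb l-1\rb$ with $U\subseteq W$, rigidity again gives $W=\Fix_G(\mathcal{R}^{(k-1)})$ with $\mathcal{R}^{(k-1)}\subseteq\mathcal{T}^{(k-1)}$ and $\mathcal{T}$ obtained from $\mathcal{R}$ by adding one unit of complexity; since $\mathcal{T}$ is thereby canonically attached to $\mathcal{R}$ and property~\ref{IPk} lets $W$ permute the branches of $\mathcal{T}^{(k-1)}\setminus\mathcal{R}^{(k-1)}$ without creating new ones, every element of $W$ stabilises $\mathcal{T}^{(k-1)}$ setwise, hence normalises $U$.

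The step I expect to be the main obstacle is condition~1: upgrading the single-edge direct-product decomposition of property~\ref{IPk} to a factorization $U\subseteq W\subseteq VU$ whose intermediate term lands \emph{exactly} at depth $l-1$. This is where the value $L_{q,k}$ is forced and where the even/odd dichotomy must be tracked — thickening by $k-1$ can push a pruned subtree out of $\mathfrak{T}_q$ unless $l$ is large enough, and the estimate must be sharp enough to reach the stated bound. The accompanying technical heart is establishing, from hypothesis~\ref{Hypothese Hq}, the two rigidity facts used throughout — that an inclusion of fixators of thickened subtrees is witnessed by a reverse inclusion of the subtrees, and that the subtrees attached to elements of $\mathcal{S}_q$ are canonical — without which conditions~2 and the factorization$^+$ clause would not go through.
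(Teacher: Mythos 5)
There is a genuine gap, and it begins at the very first step: the filtration you construct is not one that the hypotheses control. You take $\mathcal{S}_q=\{\Fix_G(\mathcal{T}^{(k-1)})\}$ over ``$q$-reduced'' complete finite subtrees, but the reducedness condition you write down never mentions $q$, whereas hypothesis \ref{Hypothese Hq} only compares fixators of the balls $B_T(v,r)$, $B_T(e,r)$ with $r$ above the $q$-dependent threshold (Definition \ref{definition de Hq}). Consequently the rigidity facts your argument leans on at every later stage --- that an inclusion $\Fix_G(\mathcal{T}^{(k-1)})\subseteq\Fix_G(g(\mathcal{T}')^{(k-1)})$ is witnessed by a reverse inclusion of the thickened subtrees, the canonicity of the subtree attached to a member of the filtration, hence the depth stratification, the compactness of $N_G(U,V)$ and the normalizer clause --- do not follow from \ref{Hypothese Hq} for this family; you flag them as ``the technical heart'', but they are precisely what is missing. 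The genericity argument also fails: ``finitely many vertex orbits'' is not among the hypotheses, and in fact your family need not be a generic filtration in cases the theorem covers. Take $G=\Aut(T)^0_\omega$ as in Example \ref{example fixateur point au bords}, which is non-discrete, unimodular, satisfies $H_1$ and (being $1$-closed) the property ${\rm IP}_1$, so the theorem applies with $q=1$, $k=1$. Single vertices are vacuously ``reduced'', and along a ray towards $\omega$ one has $\Fix_G(v_0)\subsetneq\Fix_G(v_1)\subsetneq\cdots$, a strictly increasing chain of members of your family; the corresponding conjugacy classes give arbitrarily long strictly increasing chains below $\mathcal{C}(\Fix_G(v_0))$, whose height is therefore infinite. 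This is exactly why the parameter $q$ exists: it cuts away the small balls whose fixators are not ordered as the balls are.

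The paper's construction differs in precisely this respect: $\mathcal{S}_q$ consists of the fixators of the balls $B_T(v,r)$, $B_T(e,r)$ themselves, with $r$ at least the $q$-threshold and with no $(k-1)$-thickening, so that $k$ does not enter the definition of the filtration at all. Hypothesis \ref{Hypothese Hq} then applies verbatim to its members, which yields both genericity and the stratification by balls of prescribed radius (Lemma \ref{la forme des Sl pour IPk}), and it gives conditions $2$ and $3$ of Definition \ref{definition olsh facto} directly by translating inclusions of fixators into inclusions of subtrees. The role of $k$ and of the bound $L_{q,k}$ is only to guarantee that at depth $l\geq L_{q,k}$ the balls occurring have radius $k'\geq k$, so that ${\rm IP}_{k'}$ holds (Lemma \ref{IPk then IPk' for k' geq k}) and Proposition \ref{G(W) for Sk-1}, applied to $B_T(e,k')=(B_T(e,1))^{(k'-1)}$ resp.\ $B_T(v,k')=(B_T(v,1))^{(k'-1)}$, produces the intermediate subgroup $W$ exactly one depth down (this is the step your pruning heuristic gestures at); the remaining case where $V$ lies at depth $l'<l$ is handled separately, by interpolating inside $\mathcal{T}$ or by enlarging $\mathcal{T}'$ to a ball at depth $l$ before applying that lemma. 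To repair your write-up you would have to replace your family by this one and redo the verification, at which point the argument becomes the paper's proof.
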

	\noindent The generic filtration $\mathcal{S}_q$ is explicitly described on page \pageref{page Sq filtration}. For concrete applications of this theorem, we refer to Examples \ref{example autT pour changer}, \ref{example Radu} and \ref{example fixateur point au bords}. Furthermore, we treat some existence criteria for the irreducible representations at depth $l$ with respect to $\mathcal{S}_{q}$ in Section \ref{existence of representaions ot depth l for IPk}. Among the  applications that are new to the literature, we have the following (Corollary \ref{corollary this apply to radu group}) where $\mathcal{S}_0$ is $\mathcal{S}_q$ with $q=0$.
	\begin{corollary*}
		Let $T$ be a $(d_0,d_1)$-semi-regular tree with $d_0,d_1\geq 6$ and let $G\leq \Aut(T)$ be a closed subgroup acting $2$-transitively on the boundary $\partial T$ and whose local action at each vertex contains the alternating group of corresponding degree. Then, there exists a generic filtration $\mathcal{S}_{0}$ of $G$  and a constant $k\in \N$ such that the $\mathcal{S}_0$ factorizes$^+$ at all depth $l\geq2k-1$. 
	\end{corollary*}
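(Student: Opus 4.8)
The plan is to deduce the statement from Theorem \ref{THM B} applied with the parameter $q=0$. Since $0$ is even, that theorem produces a generic filtration $\mathcal{S}_q$ (here $\mathcal{S}_0$) which factorizes$^+$ at all depth $l\ge L_{0,k}=\max\{1,2k-1\}$, and because $k\ge 1$ this maximum is just $2k-1$, exactly the threshold asserted. Hence it suffices to check that a group $G$ as in the statement satisfies all the standing hypotheses of Theorem \ref{THM B}: that it is closed (part of the hypothesis), non-discrete, unimodular, satisfies hypothesis \ref{Hypothese Hq} with $q=0$, and satisfies property \ref{IPk} for some integer $k\ge 1$.

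Non-discreteness follows from a soft cardinality argument: since $G$ is transitive on $\partial T$ we have $|\partial T|=[G:G_\xi]\le|G|$ for an end $\xi$, and $\partial T$ is uncountable because $d_0,d_1\ge 6$, whereas a second-countable discrete group is countable. Unimodularity is a standard consequence of $2$-transitivity on $\partial T$: such a $G$ is transitive on the vertices of each type and edge-transitive, so after passing if necessary to its type-preserving subgroup of index at most $2$ it decomposes, by Bass--Serre theory with fundamental domain a single edge, as an amalgam of two compact open subgroups over a common one; an amalgam of compact groups is unimodular since the modular function vanishes on a generating set of compact subgroups. (Both facts are also part of the structural analysis in \cite{Radu2017}.)

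For hypothesis \ref{Hypothese Hq} with $q=0$: the point is that $\Alt(d_v)$ with $d_v\ge 6$ is $2$-transitive, so $G$ has $2$-transitive local action at every vertex; an induction on the radius then shows that $\Fix_G(v^{(n)})$ acts transitively on the neighbours outside $v^{(n)}$ of each vertex on the sphere of radius $n$, and hence that $\Fix_G(\mathcal{T})$ fixes no vertex outside a given complete finite subtree $\mathcal{T}$. I would then verify that this ``no spurious fixed points'' behaviour is precisely what Definition \ref{definition de Hq} requires at the value $q=0$.

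The substantive input is property \ref{IPk}, for which I would invoke Radu's structure theorem \cite{Radu2017}: a closed subgroup of $\Aut(T)$, for a $(d_0,d_1)$-semi-regular tree with $d_0,d_1\ge 6$, acting $2$-transitively on $\partial T$ and with local action containing the alternating group at every vertex, satisfies property \ref{IPk} for some integer $k\ge 1$ (depending on $G$). With such a $k$ fixed, all hypotheses of Theorem \ref{THM B} are met with $q=0$, and the theorem yields the generic filtration $\mathcal{S}_0$ factorizing$^+$ at every depth $l\ge L_{0,k}=2k-1$, which is the claim. I expect this to be the main obstacle: the soft facts are routine, but identifying property \ref{IPk} and hypothesis \ref{Hypothese Hq} (with $q=0$) among the conclusions available for boundary-$2$-transitive groups in \cite{Radu2017} is where the real work lies, since exhibiting an $IP_k$ for such a group amounts to Radu's classification.
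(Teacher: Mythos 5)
Your route is the same as the paper's (see Example \ref{example Radu}): feed the group into Theorem \ref{le theorem pour IP_k} with $q=0$, noting $L_{0,k}=2k-1$, and source the hypotheses from Radu's results. Your soft arguments for non-discreteness (uncountability of $\partial T$ versus second countability) and for unimodularity (amalgam of compact open subgroups over an edge, modular function trivial on compacts) are correct and in fact more self-contained than the paper, which simply cites \cite[Theorem G]{Radu2017} and asserts unimodularity. For \ref{IPk}, the precise bridge is: Radu proves such a $G$ is $k$-closed for some $k$ depending on $G$ (\cite[Theorem H]{Radu2017}), and Lemma \ref{lemme Burger mozes 2} (Banks--Elder--Willis) converts $k$-closedness into property \ref{IPk}; your attribution of ``$IP_k$ for some $k$'' directly to Radu is right in spirit but passes through this lemma.

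The one step of your sketch that would not survive as written is the verification of \ref{Hypothese Hq} with $q=0$. Your reduction of $H_0$ to ``$\Fix_G(\mathcal{T})$ has no fixed vertices outside $\mathcal{T}$'' is fine, but the claim that this follows ``by induction on the radius'' from $2$-transitivity of the local action is not sound at that level of generality: $2$-transitivity of the local action of $G$ at each vertex does not control the local action of the ball fixator $\Fix_G(v^{(n)})$ at a sphere vertex $w$, because elements of $\Stab_G(w)$ realizing the local action need not fix the rest of the ball. The only soft tool available, normality of $\Fix_G(v^{(n)})$ in $\Stab_G(v)$, merely shows that the fixed-point set of $\Fix_G(v^{(n)})$ is again a ball $v^{(m)}$ with $m\geq n$ finite -- and $m>n$ is exactly the situation $H_0$ must exclude. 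So this step genuinely uses the full strength of the hypothesis that the local action \emph{contains the alternating group} (and, honestly, structural input from \cite{Radu2017}), which is how the paper argues: it adapts the proof of Lemma \ref{la diffenrence entre les fixing group alors la difference entre les graph}, where the transitivity of suitable half-tree fixators on spheres does the work. The conclusion you need is true for these groups, and the paper is no more detailed here than you are, but be aware that this is where the real content of the $H_0$ verification sits, not in a formal induction from local $2$-transitivity.
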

	\noindent The groups appearing in the above corollary were extensively studied by Radu in \cite{Radu2017} and we refer to them as \textbf{Radu groups}. Among other things, Radu completely classified them and proved that each Radu group $G$ is $k$-closed for some constant $k\in \N$ depending $G$. Since Radu groups also satisfies the hypothesis $H_0$ and since $k$-closed groups satisfy the property \ref{IPk}, the corollary follows. Since this family of groups plays a central role in Nebbia's CCR conjecture on trees \cite{Nebbia1999} it is natural to ask weather more can be said. The following Theorem provides a positive answer without relying on the property \ref{IPk}. 
	\begin{theorem*}[\cite{SemalR2022}]
		Let $T$ be a $(d_0,d_1)$-semi-regular tree with $d_0,d_1\geq 6$ and let $G\leq \Aut(T)$ be a simple Radu group. Then, the generic filtration $\mathcal{S}_0$ factorizes$^+$ at all positive depth. 
	\end{theorem*}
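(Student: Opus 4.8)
The plan is to apply the inductive machinery of Theorem B (Theorem \ref{THM B}) together with the special structure of simple Radu groups, and to verify that the obstruction constant $L_{q,k}$ can be pushed all the way down to $1$ once $q=0$ and the hypotheses are sharpened to those available for simple Radu groups. Concretely, a simple Radu group $G\leq\Aut(T)$ satisfies the hypothesis $H_0$ (so $q=0$) and is $k$-closed for some $k$, hence satisfies \ref{IPk}; by the Corollary above this already yields a generic filtration $\mathcal{S}_0$ factorizing$^+$ at all depths $l\geq 2k-1$. The point of the improvement is therefore the range $1\leq l< 2k-1$, where one must redo the verification of the two conditions of Definition \ref{definition olsh facto} (plus the factorization$^+$ condition on normalizers) by hand, exploiting that for simple Radu groups the local actions are sufficiently rich (containing alternating groups, $2$-transitivity on $\partial T$) to force the fixators $\Fix_G(\mathcal{T}^{(k-1)})$ to behave like the fixators in the Tits-independence setting even at small depth.

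First I would recall the explicit description of $\mathcal{S}_0$ from page \pageref{page Sq filtration}: its elements at depth $l$ are the groups $\Fix_G(\mathcal{T}^{(k-1)})$ as $\mathcal{T}$ ranges over complete finite subtrees with a prescribed number of interior vertices (the depth being essentially the interior-vertex count, as in the $\Aut(T)$ example). Second, I would establish the key geometric input: because $G$ is a simple Radu group, for every complete finite subtree $\mathcal{T}$ the group $\Fix_G(\mathcal{T}^{(k-1)})$ has no fixed vertex outside $\mathcal{T}^{(k-1)}$ (or more precisely no fixed point in $T\cup\partial T$ forcing unwanted inclusions). This is the analogue of the "no extra fixed points" hypothesis highlighted after Theorem \ref{thm B1}, and it is exactly what is needed so that inclusions $\Fix_G(U)\subseteq\Fix_G(V)$ translate into geometric inclusions of the supporting subtrees. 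Using $k$-closedness one identifies $\Fix_G(\mathcal{T}^{(k-1)})$ with the pointwise stabilizer determined by a bounded amount of local data, which keeps the relevant index computations finite and the sets $N_G(U,V)$ compact.

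Third, with this dictionary in place I would verify condition (1) of Definition \ref{definition olsh facto} at each depth $l\geq 1$: given $U=\Fix_G(\mathcal{T}^{(k-1)})\in\mathcal{S}_0[l]$ and $V=\Fix_G(g\mathcal{T}'^{(k-1)})$ with $V\not\subseteq U$, one must produce $W=\Fix_G(\mathcal{T}''^{(k-1)})$ at depth $l-1$ with $U\subseteq W\subseteq VU$; geometrically $\mathcal{T}''$ is obtained by deleting one interior vertex of $\mathcal{T}$ chosen "in the direction of" $g\mathcal{T}'$, and the inclusion $W\subseteq VU$ follows from the \ref{IPk}-decomposition of the fixator of a ball around an edge as a direct product across that edge — this is precisely the mechanism that made Ol'shanskii's factorization work for the Tits independence property in \cite{Amann2003}. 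Condition (2) (compactness of $N_G(U,V)$) follows from the no-extra-fixed-points property as above, and the factorization$^+$ condition (normalizers of depth-$l$ groups contain the depth-$(l-1)$ overgroups) follows because such a $W$ permutes the finitely many "children" subtrees in a way that normalizes $U$, again using the local richness ($2$-transitivity / alternating local action) of a Radu group to rule out pathological behaviour.

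The main obstacle I expect is the small-depth verification of condition (1) when $l<2k-1$: here the balls $\mathcal{T}^{(k-1)}$ around subtrees with few interior vertices overlap substantially, so the clean product decomposition coming from \ref{IPk} across a single edge is not directly available, and one must instead argue with $\Fix_G$ of balls around a whole bounded subtree. Overcoming this is exactly where simplicity of the Radu group (hence the strong transitivity and local action properties, not merely $k$-closedness) is essential — it is what lets one still realize the needed element of $VU$ by prescribing compatible local actions on the two sides of the deleted vertex, even when the geometric pieces are not disjoint. Once this is done the remaining assertions are immediate from Theorem \ref{la version paki du theorem de classification}. I would organize the write-up so that the geometric lemmas (no extra fixed points, the deletion-of-interior-vertex construction) are proved once and then fed uniformly into the verification of all three conditions at every positive depth.
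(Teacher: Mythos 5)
There is a genuine gap, and it sits exactly where you locate your ``main obstacle.'' For a simple Radu group one only knows the property \ref{IPk} for some $k$ that is in general strictly larger than $1$, and \ref{IPk} gives no decomposition whatsoever of the fixators appearing in $\mathcal{S}_0$ at small depth: the elements of $\mathcal{S}_0\lb l\rb$ are the groups $\Fix_G(B_T(e,l/2))$ (resp.\ $\Fix_G(B_T(v,(l+1)/2))$), not $(k-1)$-thickenings of subtrees with a prescribed number of interior vertices as you recall them, so for $1\leq l< 2k-1$ the radius of the relevant ball is smaller than $k-1$ and Proposition \ref{alternative definition iof property IPk} simply does not apply. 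Your proposed fix --- ``prescribing compatible local actions on the two sides of the deleted vertex'' --- produces an element of $\Aut(T)$ (or of a suitable closure of $G$), but the whole difficulty is to show that such an element lies in $G$ and, moreover, in the product $VU$ of two specific compact open subgroups of $G$; $2$-transitivity on $\partial T$ and alternating local action do not by themselves yield this, since these properties are shared by groups that are far from $1$-closed and for which the edge-fixator does not split across the edge. In other words, the verification of condition $1$ of Definition \ref{definition olsh facto} (and of the factorization$^+$ condition) in the range $1\leq l<2k-1$ is asserted rather than proved, and it is precisely the technical core of the result.

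This is also why your route diverges from the actual argument: the paper explicitly states (see the discussion after the Corollary and Example \ref{example Radu}) that the proof given in \cite{SemalR2022} is unrelated to the property \ref{IPk} and relies heavily on Radu's classification \cite{Radu2017}, i.e.\ on the explicit list of possible groups and the resulting fine control of fixators of small balls, rather than on an independence-type decomposition. What your proposal does establish is only the part already contained in Corollary \ref{corollary this apply to radu group} (factorization$^+$ at depths $l\geq 2k-1$, via Theorem \ref{le theorem pour IP_k}); the extension down to all positive depths, which is the entire content of the stated theorem, is not obtained by the argument you sketch. A correct write-up would either have to import the structural results of \cite{Radu2017} and carry out the case analysis there, or supply a genuinely new factorization mechanism for $\Fix_G(B_T(e,r))$ with $r<k-1$, neither of which is present in the proposal.
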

 	\noindent In particular, this factorization leads to a description of the cuspidal representations of any simple Radu group. However, the proof of this theorem is quite technical, relies heavily on Radu's classification and is unrelated to the property \ref{IPk}. Furthermore, since various important consequences such as the description of the cuspidal representations of non-simple Radu groups and the progress related to Nebbia's CCR conjecture on trees need to be tackled in light of the result, the author decided to present a proof in an other paper (see \cite{SemalR2022}).
	
	The purpose of Chapter \ref{application IPV1} is to explain how our axiomatic framework can be applied to closed non-discrete unimodular subgroups of the group of type preserving automorphisms of a locally finite tree $T$ satisfying the property \ref{IPV1}. This serves as a preamble for Chapter \ref{Application to universal groups of right-angled buildings} were we show that the universal groups of certain semi-regular right-angled buildings can be realised as such groups. The main result of Chapter \ref{application IPV1} is the following.
	\begin{theoremletter}\label{theorem Ipv1 letter}
		Let $T$ be a locally finite tree, let $G$ be a closed non-discrete unimodular group of type preserving automorphisms of $T$ satisfying the property \ref{IPV1}(Definition \ref{defintion IPV1}) and the hypothesis \ref{Hypothese HV1}(Definition \ref{definition H v1}). Then, there exists a generic filtration $\mathcal{S}_{V_1}$ of $G$ that factorizes$^+$ at all depth $l\geq 1$. 
	\end{theoremletter}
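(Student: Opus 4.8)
The plan is to exhibit the generic filtration $\mathcal{S}_{V_1}$ explicitly and then to verify, at every depth $l\ge 1$, conditions 1 and 2 of Definition \ref{definition olsh facto} together with the factorization$^+$ condition, following the scheme of Ol'shanskii's original factorization for $\Aut(T)$. For $\mathcal{S}_{V_1}$ I would take the pointwise fixators $\Fix_G(\mathcal{T})$ where $\mathcal{T}$ runs over a family of finite subtrees of $T$ with union $T$ --- metric balls centred at the vertices of one of the two types (the ``$V_1$'' type), interleaved with their intermediate one-shell enlargements so that two consecutive members of a descending chain differ by a single combinatorial layer; the precise recipe, rigged so that the factorization below goes through, is the one constructed in Chapter \ref{application IPV1}. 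Since $T$ is locally finite each $\Fix_G(\mathcal{T})$ is open (it contains the fixator of a finite subtree) and compact (it is closed in the profinite stabilizer of a vertex of $\mathcal{T}$), and as the $\mathcal{T}$'s exhaust $T$ and $G$ acts faithfully these subgroups form a neighbourhood basis of the identity. All of them sit inside a single vertex stabilizer, hence have uniformly bounded Haar measure, so Lemma \ref{lemma bounded basis implies generic filtration} --- using that $G$ is unimodular --- shows that $\mathcal{S}_{V_1}$ is a generic filtration. The strata $\mathcal{S}_{V_1}\lb l\rb$ are then read off from the poset $\mathcal{F}_{\mathcal{S}_{V_1}}$: here the role of hypothesis \ref{Hypothese HV1} --- a rigidity condition in the spirit of the hypotheses of Theorem \ref{thm B1} --- is to guarantee that an inclusion $\Fix_G(\mathcal{T})\subseteq\Fix_G(\mathcal{T}')$ forces a geometric inclusion of $\mathcal{T}'$ into a bounded neighbourhood of $\mathcal{T}$, so that no two distinct layers of the filtration collapse and the height of $\mathcal{C}(\Fix_G(\mathcal{T}))$ in $\mathcal{F}_{\mathcal{S}_{V_1}}$ equals the combinatorial size of $\mathcal{T}$.

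The heart of the proof is condition 1 of Definition \ref{definition olsh facto}, Ol'shanskii's factorization proper. Fix $l\ge 1$, $U=\Fix_G(\mathcal{T})\in\mathcal{S}_{V_1}\lb l\rb$ and a conjugate $V$ of a member of $\mathcal{S}_{V_1}$ with $V\not\subseteq U$. The candidate is $W=\Fix_G(\mathcal{T}^{-})\in\mathcal{S}_{V_1}\lb l-1\rb$, where $\mathcal{T}^{-}\subseteq\mathcal{T}$ is obtained from $\mathcal{T}$ by peeling off one combinatorial layer; then $U\subseteq W$ is automatic and the content is the inclusion $W\subseteq VU$. Given $w\in W$, one writes $w=vu$ with $v\in V$ and $u\in U$ by invoking property \ref{IPV1} at the vertices of the outermost layer of $\mathcal{T}$: that property supplies a direct-product decomposition of the fixator of the ball of radius $1$ around such a vertex, indexed by the branches of $T$ issuing from it, and one chooses $v$ to reproduce the action of $w$ on the branches along which $V$ is unconstrained, leaving $u$ to carry the remaining branches --- exactly the move made in \cite{Ol'shanskii1977} and in Lemma \ref{independence figa nebbia 3.1}. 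I expect this step to be the main obstacle: one must check that the peeled subtree $\mathcal{T}^{-}$ can always be chosen, depending on $V$, so as to land in $\mathcal{S}_{V_1}\lb l-1\rb$ while remaining below $VU$; that the vertices at which \ref{IPV1} is applied are of the type for which it is available, which is where the ``type preserving'' hypothesis is used; and that the decomposition is compatible with both the vertex-centred and the one-shell members of $\mathcal{S}_{V_1}$.

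Conditions 2 and factorization$^+$ should then be comparatively routine. Writing $V=\Fix_G(\mathcal{C})$ --- a conjugate of a basis element is again the fixator of a subtree --- the relation $g^{-1}Vg=\Fix_G(g^{-1}\mathcal{C})\subseteq\Fix_G(\mathcal{T})=U$ forces, by hypothesis \ref{Hypothese HV1}, the subtree $\mathcal{T}$ to lie in a fixed bounded neighbourhood of $g^{-1}\mathcal{C}$; hence $g$ maps the finite vertex set $V(\mathcal{T})$ of $\mathcal{T}$ into a fixed finite set of vertices, so $N_G(U,V)$ is a closed subset of a finite union of cosets of the compact group $\Fix_G(V(\mathcal{T}))$ and is therefore compact, which is condition 2. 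For factorization$^+$, let $W\in\mathcal{S}_{V_1}\lb l-1\rb$ with $U\subseteq W$; by \ref{Hypothese HV1} again $W=\Fix_G(\mathcal{T}')$ for a subtree $\mathcal{T}'\subseteq\mathcal{T}$ that still contains all interior vertices of $\mathcal{T}$, in particular its centre. Every $h\in W$ fixes those vertices, hence preserves the metric balls around them, so $h\mathcal{T}=\mathcal{T}$ and therefore $hUh^{-1}=\Fix_G(h\mathcal{T})=\Fix_G(\mathcal{T})=U$, i.e.\ $W\subseteq N_G(U)$. Since this holds for every $l\ge 1$, $\mathcal{S}_{V_1}$ factorizes$^+$ at all depth $l\ge 1$, which is the assertion; feeding the output into Theorem \ref{la version paki du theorem de classification} is what yields the representation-theoretic statements exploited in Chapter \ref{Application to universal groups of right-angled buildings}.
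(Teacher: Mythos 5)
There is a genuine gap at the heart of the argument: condition \textit{1} of Definition \ref{definition olsh facto} is precisely where the content of the theorem lies, and you leave it as an acknowledged obstacle rather than proving it. In the paper this step occupies most of Chapter \ref{application IPV1}: one first establishes that every $\mathcal{T}\in\mathfrak{T}_{V_1}\lb l\rb$ with $l\geq 1$ is of the form $\bigcup_{v\in Q_\mathcal{T}}B_T(v,2)$ for a set $Q_\mathcal{T}\subseteq V_0$ of size $l$ satisfying $\mbox{Con}(Q_\mathcal{T})\cap V_0=Q_\mathcal{T}$ (Lemmas \ref{the form of trees in the family} and \ref{alternative form of S}); one then propagates the property \ref{IPV1} from the $1$-balls around $V_1$-vertices to every tree of $\mathfrak{T}_{V_1}$, obtaining $\Fix_G(\mathcal{T})=\prod_{v\in\partial\mathcal{T}}\Fix_G(T(\mathcal{T},v))$ (Lemma \ref{independence on trees}); and only then can one ``peel'': the subtree $\mathcal{R}$ is obtained by deleting a vertex $v_\mathcal{T}\in Q_\mathcal{T}$ chosen at maximal distance from $Q_{\mathcal{T}'}$, and one must verify both that $Q_\mathcal{T}-\{v_\mathcal{T}\}$ is again convex in the above sense (so that $\mathcal{R}\in\mathfrak{T}_{V_1}\lb l-1\rb$, a nontrivial argument in Lemma \ref{the key lemma for fertility of groups with IP V1 (build)}) and that the wing at $v_\mathcal{T}$ is absorbed by $\Fix_G(\mathcal{T}')$. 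You also never treat the case where $V$ is conjugate to an element at depth $l'<l$, which needs a separate reduction in the proof of Theorem \ref{Theorem pour IPV1}: either $\mathcal{T}'\subseteq\mathcal{T}$ and one interpolates a tree of depth $l-1$, or one enlarges $\mathcal{T}'$ to a tree of depth $l$ containing it and applies the previous case.

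Two of the steps you do spell out fail because your picture of the filtration (metric balls centred at $V_1$-vertices plus one-shell enlargements) is not the actual family: at depth $l\geq 2$ the trees are spread-out unions of $2$-balls around the convex set $Q_\mathcal{T}\subseteq V_0$ and have no centre. First, your justification that $\mathcal{S}_{V_1}$ is a generic filtration --- ``all of them sit inside a single vertex stabilizer, hence have uniformly bounded Haar measure'' --- is false: the groups $\Fix_G(B_T(w,1))$, $w\in V_1$, have no common fixed vertex, and uniform boundedness of their measures is not available without further hypotheses; the paper instead bounds heights directly from hypothesis \ref{Hypothese HV1} via the chain argument of Lemma \ref{la forme des Sl pour IPV1}. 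Second, in the factorization$^+$ step it is not true that a depth-$(l-1)$ subtree $\mathcal{R}\subseteq\mathcal{T}$ contains all interior vertices of $\mathcal{T}$: a $V_1$-neighbour of the deleted vertex $v_\mathcal{T}$ pointing away from $Q_\mathcal{R}$ is interior to $\mathcal{T}$ but lies outside $\mathcal{R}$. What is true, and what the paper uses, is that $Q_\mathcal{T}\subseteq V(\mathcal{R})$, so every $h\in\Fix_G(\mathcal{R})$ fixes $Q_\mathcal{T}$ pointwise and hence stabilizes $\mathcal{T}=\bigcup_{w\in Q_\mathcal{T}}B_T(w,2)$; combined with \ref{Hypothese HV1} this gives $W\subseteq N_G(U,U)$. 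Your compactness argument for $N_G(U,V)$ is essentially the paper's and is fine.
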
 \noindent The generic filtration $\mathcal{S}_{V_1}$ is defined on page \pageref{page de SV1 yesssouille} and an existence criterion for the irreducible representations at depth $l$ with respect to $\mathcal{S}_{V_1}$ is given in Section \ref{existence for IPV1}.\\
	
	The purpose of Chapter \ref{Application to universal groups of right-angled buildings} is to prove that the universal groups of certain semi-regular right-angled buildings as introduced in \cite{Universal2018} can be realised as groups of type-preserving automorphisms of a locally finite tree $T$ in such a way that they satisfy the hypotheses of Theorem \ref{theorem Ipv1 letter}. We refer to Section \ref{section application right angle preliminaire} for details about the following notions. We let $(W,I)$ be a finitely generated right-angled Coxeter system and suppose that $I$ can be partitioned as $I=\bigsqcup_{k=1}^rI_k$ in such a way that $I_k=\{i\}\cup \{i\}^\perp$ for every $i\in I_k$ and for all $k=1,...,r$ where $\{i\}^\perp=\{j\in I\lvert ij=ji\}$. In particular, $W$ is virtually free and isomorphic to a free product $W_1*W_2*....*W_r$ where each of the $W_k$ is a direct product of finitely many copies of the group of order $2$. Let $(q_i)_{i\in I}$ be a set of integers bigger than $2$ and let $\Delta$ be a semi-regular building of type $(W,I)$ and prescribed thickness $(q_i)_{i\in I}$. Let  $(h_i)_{i\in I}$ be a set of legal coloring of $\Delta$, let $Y_i$ be a set of cardinal $q_i$ and let $G_i\leq \Sym(Y_i)$ for every $i\in I$. 
	\begin{theoremletter}\label{Theorem E}
		 There exists a locally finite tree $T$ such that the universal group $\mathcal{U}((h_i,G_i)_{i\in I})$ embeds as a closed subgroup satisfying the property \ref{IPV1} of the group  $\Aut(T)^+$ of type-preserving automorphisms of $T$. Furthermore, if $G_i$ is \textit{2}-transitive on $Y_i$ for every $i\in I$, the group of automorphisms of tree corresponding to $\mathcal{U}((h_i,G_i)_{i\in I})$ satisfies the hypothesis \ref{Hypothese HV1} and is unimodular.
	\end{theoremletter}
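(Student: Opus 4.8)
The plan is to realise $\mathcal{U}:=\mathcal{U}((h_i,G_i)_{i\in I})$ on the Bass--Serre tree of the free product decomposition $W=W_1*\cdots*W_r$, and then to read off property \ref{IPV1}, the hypothesis \ref{Hypothese HV1} and unimodularity from the combinatorics of $\Delta$. First I would consider the bipartite graph $T$ with vertex set $V(T)=\mathrm{Ch}(\Delta)\sqcup\bigsqcup_{k=1}^{r}\mathcal{R}_k$, where $\mathrm{Ch}(\Delta)$ is the set of chambers of $\Delta$ and $\mathcal{R}_k$ the set of residues of type $I_k$, an edge joining a chamber $c$ to a residue $R$ precisely when $c\in R$. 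Using Section \ref{section application right angle preliminaire} --- concretely, that the chamber system of $\Delta$ is governed by $W=W_1*\cdots*W_r$, so that the incidences ``chamber $\in$ $I_k$-residue'' assemble into the Bass--Serre tree of the star-shaped graph of groups of this free product --- $T$ is a tree. It is locally finite: a chamber lies in exactly one $I_k$-residue for each $k$, so a chamber-vertex has degree $r$; an $I_k$-residue is a spherical building of type $\bigl((\Z/2\Z)^{|I_k|},I_k\bigr)$ of thickness $(q_i)_{i\in I_k}$, hence has $\prod_{i\in I_k}q_i$ chambers, so a residue-vertex has finite degree. The displayed decomposition of $V(T)$ is the bipartition of $T$; ``type-preserving'' means preserving it, and $V_1:=\mathrm{Ch}(\Delta)$ is the part along which $T$ branches --- each chamber seeing the $r$ free-product directions.

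\textbf{The embedding.} Every type-preserving automorphism of $\Delta$ permutes chambers and, for each $k$, $I_k$-residues, compatibly with incidence, and therefore acts on $T$ by type-preserving automorphisms; this gives a homomorphism $\mathcal{U}\to\Aut(T)^+$. It is injective because a type-preserving automorphism of $\Delta$ is determined by its action on chambers, which are vertices of $T$. It is continuous: the preimage of a basic identity neighbourhood of $\Aut(T)^+$ --- the pointwise fixator of finitely many vertices --- is open in $\mathcal{U}$, since fixing a residue-vertex of $T$ means stabilising a residue of $\Delta$ (an open condition), and fixing a chamber-vertex means fixing a chamber. Finally the image of $\mathcal{U}$ is closed in $\Aut(T)^+$: ``respecting the building structure'' --- preserving the $r$-colouring of residue-vertices and, inside each $I_k$-residue, its product structure, together with the local constraints defining $\mathcal{U}$ --- is an intersection of conditions each involving only finitely many vertices, hence closed.

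\textbf{Property \ref{IPV1}.} Fix a chamber $c$ and, for $k=1,\dots,r$, write $T_{c,k}$ for the half-tree of $T$ at $c$ in the direction of the $I_k$-residue through $c$. I would prove that $\Fix_{\mathcal{U}}(c)=\prod_{k=1}^{r}\Fix_{\mathcal{U}}\bigl(V(T)\setminus V(T_{c,k})\bigr)$ is an internal (restricted) direct product --- this is precisely the $V_1$-independence demanded by \ref{defintion IPV1}. The inclusion $\supseteq$ and the directness are immediate, since the $r$ pointwise fixators pairwise commute and an element fixing all branches fixes every chamber. For $\subseteq$, given $g\in\Fix_{\mathcal{U}}(c)$ let $g_k$ act as $g$ on the chambers lying in $T_{c,k}$ and as the identity on all other chambers. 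Then $g_k$ is a well-defined type-preserving automorphism of $\Delta$: every panel of $\Delta$ lies inside a single $I_l$-residue, so its chambers are neighbours in $T$ of one vertex, and since $T$ is a tree they all lie in a single branch at $c$ (together with $c$ itself if the panel contains $c$); hence $g_k$ restricted to any panel is either $g$ there or the identity, in both cases respecting $i$-adjacency. For the same reason $g_k\in\mathcal{U}$: its local action on every $i$-panel is that of $g$ (hence in $G_i$) or the identity (also in $G_i$). Thus $g=g_1\cdots g_r$ with each $g_k$ in the required fixator. The main obstacle will be this step: it converts the defining local conditions of the universal group into a half-tree decomposition, and it rests on $T$ being a tree whose branching vertices are the chambers --- which is exactly where the standing hypothesis $I_k=\{i\}\cup\{i\}^{\perp}$ (generators from distinct factors do not commute, so $W=W_1*\cdots*W_r$) is indispensable; one must also check that the conditions cutting out $\mathcal{U}$ genuinely involve one branch at a time.

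\textbf{The $2$-transitive case.} Assume each $G_i$ is $2$-transitive on $Y_i$. Then $\mathcal{U}$ acts transitively on $\mathrm{Ch}(\Delta)$ --- indeed strongly transitively on $\Delta$ --- by the transitivity properties of universal groups established in \cite{Universal2018}, hence transitively on each $\mathcal{R}_k$; transported to $T$ this says that $\mathcal{U}$ acts cocompactly on $T$ with quotient the finite star $\mathcal{U}\backslash T$ (centre the chamber-orbit, $r$ leaves the $\mathcal{R}_k$-orbits). To verify \ref{Hypothese HV1}, I would match its local-transitivity requirement at $V_1$-vertices against the fact that, by the decomposition of $\Fix_{\mathcal{U}}(c)$ above together with strong transitivity, $\Fix_{\mathcal{U}}(c)$ realises on each branch the largest action compatible with the local groups $G_i$, so the transitivity that \ref{definition H v1} asks for follows from the $2$-transitivity of the $G_i$. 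For unimodularity: $\mathcal{U}$ acts on $T$ without inversions (the two ends of an edge have different colours, so lie in different $\mathcal{U}$-orbits) and with quotient the finite tree $\mathcal{U}\backslash T$, so by Bass--Serre theory $\mathcal{U}$ is topologically generated by its vertex stabilisers, each of which is compact; hence the modular function of $\mathcal{U}$ is trivial and $\mathcal{U}$ is unimodular. A secondary point needing care is a more careful proof of closedness of the image (that the product structure inside the $I_k$-residues really is a closed condition in $\Aut(T)^+$), together with reconciling the above with the precise statement of \ref{definition H v1}.
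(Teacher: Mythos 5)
Most of your outline is sound and partly follows a different route than the paper: your tree $T$ is the same chamber/residue incidence graph (the paper proves it is a tree directly with projections and wings, Lemma \ref{the tree gamma associated with Delta}, rather than by the Bass--Serre gesture, which as stated only applies verbatim to an apartment and would need an argument for the whole building); your unimodularity argument (finite star quotient, hence topological generation by compact vertex stabilisers) differs from the paper's $\delta$-$2$-transitivity argument via Baumgartner--R\'emy--Willis but is fine given chamber-transitivity from the universal-groups paper. Note, however, that you verify the independence property at \emph{chamber}-vertices, i.e.\ you take $V_1=\Ch(\Delta)$, whereas the paper takes $V_0=\Ch(\Delta)$ and $V_1$ the set of $I_k$-residues and deduces the independence at residue-vertices from ${\rm IP}_{I_k}$ (Propositions \ref{independence IPJ for universal} and \ref{lemme de on a IP IK alors IP V1}). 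Your decomposition $\Fix_{\mathcal U}(c)=\prod_k\Fix_{\mathcal U}(V(T)\setminus V(T_{c,k}))$ does look correct (each panel lies in $\{c\}\cup(\text{a single branch})$, so the branch-restrictions are automorphisms with admissible local actions), so the literal existence statement in Definition \ref{defintion IPV1} is met; but the choice of bipartition is not innocuous, because the family $\mathfrak T_{V_1}$ of page \pageref{page Tv1}, the hypothesis \ref{Hypothese HV1} and the filtration $\mathcal S_{V_1}$ of Theorem \ref{Theorem pour IPV1} are all built from that bipartition, so with your convention you must verify \ref{Hypothese HV1} for \emph{your} family (stars of chambers and unions of balls of radius $2$ around residue-vertices), not the paper's.

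This is where the genuine gap lies. You describe \ref{Hypothese HV1} as a ``local-transitivity requirement'' and propose to match it against strong transitivity; but Definition \ref{definition H v1} is not a transitivity statement: it demands, for all $\mathcal T,\mathcal T'\in\mathfrak T_{V_1}$, that $\Fix_G(\mathcal T')\leq\Fix_G(\mathcal T)$ \emph{only if} $\mathcal T\subseteq\mathcal T'$. The nontrivial direction requires, whenever $\mathcal T\not\subseteq\mathcal T'$, the explicit construction of an element of the group fixing $\mathcal T'$ pointwise but moving a vertex of $\mathcal T$; in the paper this is exactly where $2$-transitivity enters, via Proposition \ref{les groupes universelles sont unimodular} (realising a nontrivial local permutation fixing a prescribed colour) combined with the independence property to truncate the support to a half-tree. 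Your proposal contains no such construction, and the heuristic that $\Fix_{\mathcal U}(c)$ ``realises on each branch the largest action compatible with the local groups'' is not a substitute (nor is it literally true); in particular nothing in your argument explains where $2$-transitivity, as opposed to mere transitivity, is used. As it stands the ``furthermore'' part of the statement (hypothesis \ref{Hypothese HV1}) is unproved, and secondarily the tree claim and the closedness of the image are only sketched.
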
 
	Together with Theorem \ref{theorem Ipv1 letter}, this proves that every non-discrete universal group $\mathcal{U}((h_i,G_i)_{i\in I})$ with \textit{2}-transitive $G_i$ admits a generic filtration that factorizes$^+$ at strictly positive depth. \\
	\subsection*{Acknowledgements}
	I express my deepest thank to Pierre-Emmanuel Caprace for his uncountable contributions to my work. This paper would not exist without him. I am also very grateful to Sven Raum for the interest he manifested to my work and to the anonymous referee for all the precious comments on the earlier versions of this paper which drastically improved the readability of this manuscript. Needless to say that the remaining inaccuracies are entirely mine.   
	\newpage
	\part{Ol'shanskii's factorization}\label{Part of the article first part}
	\section{Proof of Theorem \ref{la version paki du theorem de classification}}\label{section generalisation of Ol'shanskii machinery}
	\subsection{Direct consequences of Ol'shanskii's factorization}
	Let $G$ be a non-discrete unimodular totally disconnected locally compact group, let $\mu$ be a Haar measure on $G$ and let $\mathcal{S}$ be a generic filtration of $G$. This section explores the first consequences of a factorisation of $\mathcal{S}$ at depth $l$. We begin with the following key lemma.
	
	\begin{lemma}\label{les fonction continue de LSS sont a support compact}
		Suppose that $\mathcal{S}$ factorizes at depth $l$. Let $U$ be conjugate to an element of $\mathcal{S}\lb l \rb$ and let $V\leq G$ be conjugate to an element of $\mathcal{S}$. Suppose that $\varphi: G\rightarrow\C$ is a $U$-right-invariant, $V$-left-invariant function satisfying
		\begin{equation*}
		\int_{W} \varphi(gh)\qq \diff\mu (h)=0 \q \forall g\in G
		\end{equation*}
		for every $W$ that is conjugate to an element of $\mathcal{S}\lb l-1\rb$ and such that $U\leq W$. Then, $\varphi$ is compactly supported and
		\begin{equation*}
		{\rm supp}{(\varphi)}\subseteq N_G(U, V )=\{g\in G\mid g^{-1}Vg \subseteq U\}.
		\end{equation*}
	\end{lemma}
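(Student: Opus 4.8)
The plan is to prove the containment ${\rm supp}(\varphi)\subseteq N_G(U,V)$ first, and then observe that compact support comes for free. The reason the second step is automatic is that, since $\varphi$ is right $U$-invariant and left $V$-invariant with $U$ and $V$ open, the set $\{g\in G\mid \varphi(g)\neq 0\}$ is a union of double cosets $VgU$, hence open; its complement is open for the same reason, so this set is clopen and in particular closed. Once it is shown to be contained in $N_G(U,V)$, which is compact by the second condition of Definition~\ref{definition olsh facto}, it follows that $\varphi$ is supported on a closed subset of a compact set, i.e. compactly supported, and inside $N_G(U,V)$.

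For the containment, I would fix $g\in G$ with $g^{-1}Vg\not\subseteq U$ and show $\varphi(g)=0$. The idea is to feed the subgroup $V':=g^{-1}Vg$ into the first factorization condition: $V'$ is conjugate to an element of $\mathcal{S}$ and satisfies $V'\not\subseteq U$, and since $U$ is conjugate to an element of $\mathcal{S}\lb l\rb$ (for which the defining conditions hold on the whole conjugacy class by the remark after Definition~\ref{definition olsh facto}), there is some $W$ conjugate to an element of $\mathcal{S}\lb l-1\rb$ with $U\subseteq W\subseteq V'U$.

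The key step is then a change of variables. For $h\in W\subseteq V'U$, write $h=v'u$ with $v'\in V'$ and $u\in U$, and write $v'=g^{-1}vg$ with $v\in V$; then $gh=gv'u=vgu$, so the two invariance properties of $\varphi$ give $\varphi(gh)=\varphi(vgu)=\varphi(gu)=\varphi(g)$. Thus $h\mapsto\varphi(gh)$ is constant on $W$, and
\[
0=\int_{W}\varphi(gh)\,\diff\mu(h)=\mu(W)\,\varphi(g),
\]
where the left equality is the hypothesis applied to this particular $W$ (legitimate since $U\leq W$ and $W$ is conjugate to an element of $\mathcal{S}\lb l-1\rb$) and $\mu(W)>0$ since $W$ is a non-empty compact open subgroup. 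Hence $\varphi(g)=0$, which is exactly what is needed.

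I do not anticipate a genuine obstacle: the argument is essentially bookkeeping of the two factorization axioms. The one place to be careful is to apply condition~1 of Definition~\ref{definition olsh facto} to the conjugate $V'=g^{-1}Vg$ rather than to $V$ itself — this is what forces $gW\subseteq VgU$ and makes the one-line computation $\varphi(gh)=\varphi(g)$ available — and to check that all the conjugacy-class hypotheses line up so that both axioms apply as stated.
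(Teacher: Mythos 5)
Your proposal is correct and follows essentially the same route as the paper: apply the factorization condition to the conjugate $g^{-1}Vg$ to obtain $W$ with $U\subseteq W\subseteq g^{-1}VgU$, deduce $\varphi(gh)=\varphi(g)$ for $h\in W$ from the two invariances, and conclude $\varphi(g)=0$ from the vanishing integral over $W$, with compactness of the support then coming from the second factorization axiom. Your extra observation that the non-vanishing set is a union of double cosets (hence closed) is a harmless refinement the paper leaves implicit.
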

	\begin{remark}
		Since $U$ is a compact open subgroup and since $\varphi$ is $U$-right-invariant, notice that $\varphi$ is automatically continuous. In particular, the integrals $\int_{W} \varphi(gh)\qq \diff\mu (h)$ are all well-defined.
	\end{remark}
	\begin{proof} Since $\mathcal{S}$ factorizes at depth $l$ notice that $N_G(U, V)$ is a compact set. Let $g\not\in N_G(U, V )$ and notice that $g^{-1}Vg\not\subseteq U$. In particular, there exists $W$ in the conjugacy class of an element of $\mathcal{S}\lb l-1\rb $ such that $U\subseteq W\subseteq g^{-1}VgU$. Hence, $gW\subseteq VgU$ and we have by $U$-right-invariance and $V$-left-invariance that $\varphi(gh)= \varphi(g)$ for all $h\in W$. It follows that
		\begin{equation*}
		\varphi(g)=\frac{1}{\mu(W)}\int_{W}\varphi(gh)\qq \diff\mu(h)= 0,
		\end{equation*} 
		which proves as desired that	$${\rm supp}{(\varphi)}\subseteq N_G(U, V )=\{g\in G\mid g^{-1}Vg \subseteq U\}.$$
	\end{proof}
	
	The above lemma has various consequences. Among those, the following lemma proves the point \textit{1} of Theorem \ref{la version paki du theorem de classification}.
	\begin{lemma}\label{Lemma existence of a seed}
		 Let $\pi$ be an irreducible representation of $G$ at depth $l$ and suppose that $\mathcal{S}$ factorizes at depth $l$. Then, there exists a unique $C_\pi\in \mathcal{F}_{\mathcal{S}}$ with height $l$ such that for all $U\in C_\pi$, $\pi$ has a non-zero $U$-invariant vector.   
	\end{lemma}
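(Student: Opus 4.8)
The plan is to prove existence and uniqueness separately, with the main work being existence; uniqueness will follow from a maximality/directedness argument on $\mathcal{F}_{\mathcal{S}}$.

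\emph{Existence.} By definition of depth, $\pi$ has a non-zero $U_0$-invariant vector for some $U_0\in\mathcal{S}\lb l\rb$, and no non-zero $V$-invariant vector for any $V\in\mathcal{S}$ at depth $<l$. Set $C_\pi=\mathcal{C}(U_0)$, which has height $l$ in $\mathcal{F}_{\mathcal{S}}$. If $U\in C_\pi$ is any conjugate, say $U=gU_0g^{-1}$, then $\pi(g)$ carries the $U_0$-invariant vector to a non-zero $U$-invariant vector, as noted in the discussion preceding the definition of $\mathcal{C}$. So the content of part 1 is really the \emph{uniqueness} of $C_\pi$; the existence half is immediate.

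\emph{Uniqueness.} Suppose $C=\mathcal{C}(U)$ and $C'=\mathcal{C}(U')$ both have height $l$ and $\pi$ has non-zero invariant vectors for elements of each. I would argue that $C=C'$ by showing that, if they were distinct, one could produce invariant vectors at depth $l-1$, contradicting minimality of $l$. Concretely: let $\xi\in\Hr{\pi}^{U}$ and $\xi'\in\Hr{\pi}^{U'}$ be non-zero. Consider the diagonal matrix coefficient $\varphi(g)=\langle\pi(g)\xi,\xi'\rangle$ (or $\langle\pi(g)\xi,\xi\rangle$ after replacing $U'$ by a conjugate so that the two can be compared). This $\varphi$ is $U$-right-invariant and $U'$-left-invariant. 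Now I want to invoke Lemma \ref{les fonction continue de LSS sont a support compact}: if the hypothesis $\int_W\varphi(gh)\,\diff\mu(h)=0$ for all $g$ and all $W$ at depth $l-1$ with $U\leq W$ were to \emph{fail}, that failure would exhibit a non-zero vector of the form $\int_W\pi(h)\xi\,\diff\mu(h)$ which is $W$-invariant, hence a non-zero invariant vector at depth $l-1$ — impossible since $\pi$ has depth $l$. Therefore the hypothesis of Lemma \ref{les fonction continue de LSS sont a support compact} \emph{does} hold, so $\varphi$ is supported in the compact set $N_G(U,U')$. Combined with irreducibility — the $\pi(G)\xi$ span a dense subspace — one concludes that if $\varphi\equiv 0$ for the ``wrong'' pairing then $\xi'\perp\pi(g)\xi$ for all $g$, contradicting $\xi'\neq 0$; so $\varphi\not\equiv 0$, which forces $N_G(U,U')\neq\es$, i.e. some conjugate of $U'$ is contained in $U$. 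By symmetry some conjugate of $U$ is contained in $U'$. Since both $\mathcal{C}(U)$ and $\mathcal{C}(U')$ have the same finite height $l$ in the poset $\mathcal{F}_{\mathcal{S}}$, a containment of a conjugate of one inside the other that is strict would strictly increase height, which is impossible; hence the containments are equalities of conjugates and $\mathcal{C}(U)=\mathcal{C}(U')$.

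\emph{Main obstacle.} The delicate point is the interplay in the uniqueness argument: one must correctly set up the matrix coefficient $\varphi$ so that Lemma \ref{les fonction continue de LSS sont a support compact} applies, and must justify the dichotomy ``either $\varphi$ vanishes identically (contradicting $\xi,\xi'\neq 0$ via irreducibility) or $N_G(U,U')$ is non-empty''. I expect the cleanest route is: first establish, using minimality of the depth and an averaging over $W\in\mathcal{S}\lb l-1\rb$, that \emph{every} matrix coefficient between a $U$-invariant vector and a $U'$-invariant vector (for $U,U'\in\mathcal{F}_{\mathcal{S}}$ of height $l$) satisfies the vanishing-integral hypothesis of Lemma \ref{les fonction continue de LSS sont a support compact}; then deduce compact support inside $N_G(U,U')$; then use irreducibility (non-degeneracy of matrix coefficients) to force $N_G(U,U')\neq\es$; and finally run the height comparison in the poset to collapse the two conjugacy classes. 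Verifying that the averaged vector $\int_W\pi(h)\xi\,\diff\mu(h)$, when non-zero, is genuinely $W$-invariant and lies in the representation space — and that its non-vanishing is exactly the negation of the integral hypothesis — is routine but is where care is needed.
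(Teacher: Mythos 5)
Your proposal is correct and follows essentially the same route as the paper's proof: the same matrix coefficient $\varphi_{\xi,\xi'}(g)=\prods{\pi(g)\xi}{\xi'}$, the same use of depth-minimality to verify the vanishing-integral hypothesis (otherwise $\int_W\pi(h)\xi\,\diff\mu(h)$ would be a non-zero invariant vector for a conjugate of an element of $\mathcal{S}\lb l-1\rb$), the same appeal to Lemma \ref{les fonction continue de LSS sont a support compact} to locate the support in $N_G(U,U')$, and cyclicity of $\xi$ to exclude $\varphi\equiv 0$. The only divergence is the closing step: the paper deduces $h^{-1}Uh=U'$ from $h^{-1}Uh\subseteq U'\subseteq gUg^{-1}$ by using that $U$ is compact open and $G$ is unimodular, whereas you compare heights in $\mathcal{F}_{\mathcal{S}}$. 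Your height comparison does give $\mathcal{C}(U)=\mathcal{C}(U')$ (if the classes were distinct, the relation $g^{-1}U'g\subseteq U$ yields $\mathcal{C}(U)\lneq\mathcal{C}(U')$, and appending $\mathcal{C}(U')$ to a maximal strictly increasing chain below $\mathcal{C}(U)$ would force the height of $\mathcal{C}(U')$ to exceed $l$), and in fact it only needs one of the two containments, so the symmetry step is superfluous for your version. One caveat on phrasing: a strict inclusion of subgroups does not by itself give a strict inequality of conjugacy classes — a compact open subgroup could a priori contain a proper conjugate of itself, and unimodularity is precisely what rules this out — so your claim that ``the containments are equalities of conjugates'' is more than the height argument delivers; what it does deliver is equality of the classes, which is all the lemma requires (in the remaining case, where an inclusion is strict but the classes coincide, there is nothing left to prove).
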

	\begin{proof}
		Since $\pi$ is at depth $l$, there exists a compact open subgroup $U\in \mathcal{S}\lb l \rb$ at depth $l$ and a non-zero vector $\xi \in \Hr{\pi}^U$. Now, let $V\leq G$ be conjugate to an element of $\mathcal{S}\lb l \rb$ and admitting a non-zero invariant vector $\xi'\in \Hr{\pi}$ and let us show that $\mathcal{C}(U)=\mathcal{C}(V)$. We consider the function $\varphi_{\xi, \xi'} :G \rightarrow \C : g\mapsto \prods{\pi(g)\xi}{\xi'}$. This function is $U$-right-invariant and $V$-left-invariant. On the other hand, notice from our hypothesis that $\pi$ does not admit a non-zero $W$-invariant vector for any subgroup $W\leq G$ that is conjugate to an element of $\mathcal{S}\lb l-1\rb$. In particular, for every such $W$ we obtain that
		\begin{equation*}
		\int_{W} \varphi_{\xi, \xi'}(gh)\qq \diff\mu (h)=\prods{\int_{W} \pi(g)\pi(h)\xi \diff \mu(h)}{\xi'} =0 \q \forall g \in G.
		\end{equation*}
		It follows from Lemma \ref{les fonction continue de LSS sont a support compact} that $\varphi_{\xi, \xi'}$ is supported inside $N_G(U,V )=\{g\in G\mid g^{-1}Vg \subseteq U\}$. On the other hand, since $\pi$ is irreducible, $\xi$ is cyclic and the function $\varphi_{\xi, \xi'}$ is not identically zero. This implies the existence of an element $g\in G$ such that $g^{-1}Vg \subseteq U$. Considering now the function $\varphi_{\xi', \xi} :G \rightarrow \C : g\mapsto \prods{\pi(g)\xi'}{\xi}$ we obtain by symmetry the existence of an element $h\in G$ such that $h^{-1}Uh \subseteq V$. In particular, $h^{-1}Uh\subseteq V \subseteq g U g^{-1}$. Hence, $g^{-1}h^{-1}Uhg \subseteq U$. Since $U$ is a compact open subgroup of $G$ and since $G$ is unimodular, this implies that $g^{-1}h^{-1}Uhg=U$. In particular, we obtain that $h^{-1}Uh= V$ which implies that $\mathcal{C}(U)=\mathcal{C}(V)$. 
	\end{proof}
	In light of this result we make the following definition.
	\begin{definition}\label{definition seed}
		Let $\mathcal{S}$ be a generic filtration of $G$ that factorizes at depth $l$ and let $\pi$ be an irreducible of $G$, the unique element $C_\pi\in \mathcal{F}_\mathcal{S}$ with height $l$ such that for all $U\in C_\pi$, $\pi$ admits a non-zero $U$-invariant vector is called the \tg{seed} of $\pi$.  
	\end{definition}
	The following proposition proves the point \textit{2} of Theorem \ref{la version paki du theorem de classification}.
	\begin{proposition}
		Let $\pi$ be an irreducible representation of $G$ at depth $l$ and suppose that $\mathcal{S}$ factorizes at depth $l$. Then, for every $U\in C_\pi$, $\pi$ admits a non-zero diagonal matrix coefficient supported in the compact open subgroup $N_G(U)$ of $G$. In particular, $\pi$ is induced from an irreducible representation of $N_G(U)$, belongs to the discrete series of $G$ and its equivalence class is isolated in the unitary dual $\widehat{G}$ for the Fell topology. 
	\end{proposition}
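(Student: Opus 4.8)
The plan is to exhibit the desired diagonal matrix coefficient as an element of $C_c(G)$ supported in $N_G(U)$, and then to read the three ``in particular'' assertions off from that. Fix $U\in C_\pi$. By Lemma~\ref{Lemma existence of a seed} the space $\Hr{\pi}^U$ is non-zero, and since $N_G(U)$ normalises $U$ it is invariant under $H:=N_G(U)$. First I would prove that \emph{every} matrix coefficient $\varphi_{\xi',\xi''}\colon g\mapsto\prods{\pi(g)\xi'}{\xi''}$ with $\xi',\xi''\in\Hr{\pi}^U$ is compactly supported inside $N_G(U,U)$. Such a $\varphi_{\xi',\xi''}$ is $U$-bi-invariant; and because $\pi$ has depth $l$, it admits no non-zero invariant vector for any subgroup conjugate to an element of $\mathcal S\lb l-1\rb$, so for every such $W$ with $U\leq W$ the operator $\mu(W)^{-1}\int_W\pi(h)\,\diff\mu(h)$ is the orthogonal projection onto $\Hr{\pi}^W=\{0\}$, hence $\int_W\varphi_{\xi',\xi''}(gh)\,\diff\mu(h)=\prods{\pi(g)\int_W\pi(h)\xi'\,\diff\mu(h)}{\xi''}=0$ for all $g\in G$. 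Lemma~\ref{les fonction continue de LSS sont a support compact} with $V:=U$ then gives $\operatorname{supp}(\varphi_{\xi',\xi''})\subseteq N_G(U,U)$ compactly. By condition~2 of Definition~\ref{definition olsh facto} (taken with $V=U$) the set $N_G(U,U)$ is compact, and by unimodularity $N_G(U,U)=N_G(U)=H$; since $H\supseteq U$ it is open, so $H$ is a compact open subgroup of $G$ and every $\varphi_{\xi',\xi''}$ as above lies in $C_c(G)$ with support in $H$. Choosing $\xi\in\Hr{\pi}^U\setminus\{0\}$, the coefficient $\varphi_{\xi,\xi}$ is non-zero (its value at the identity is $\|\xi\|^2$), diagonal, and supported in $N_G(U)$: this is the first assertion.

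The remaining claims then follow from standard facts about square-integrable coefficients. Since $\varphi_{\xi,\xi}\in C_c(G)\subseteq L^2(G)$, the map $\Hr{\pi}\to L^2(G)$, $\zeta\mapsto\bigl(g\mapsto\prods{\zeta}{\pi(g)\xi}\bigr)$, intertwines $\pi$ with the left regular representation $\lambda_G$; it is bounded (the $\zeta$ whose image lies in $L^2(G)$ form a closed, $\pi$-invariant, non-zero subspace, hence all of $\Hr{\pi}$) and non-zero, so injective by irreducibility, and therefore $\pi$ embeds in $\lambda_G$ and lies in the discrete series. As $\varphi_{\xi,\xi}$ is moreover compactly supported, $\pi$ is an integrable discrete series representation, and I would conclude isolatedness from the fact that a suitable positive multiple $e$ of $\varphi_{\xi,\xi}$ is a self-adjoint idempotent of $C_c(G)\subseteq C^*(G)$ (by the Schur orthogonality relations) with $\rho(e)\neq0$ precisely for the irreducible $\rho$ equivalent to $\pi$; since $\{\rho\in\widehat G\mid\rho(e)\neq0\}$ is open, $\{\pi\}$ is isolated in $\widehat G$ for the Fell topology.

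For the induced-representation statement I would argue directly. By the Peter--Weyl theorem the non-zero representation $\Hr{\pi}^U$ of the compact group $H$ contains an irreducible $H$-subrepresentation $\sigma$; write $\iota\colon\Hr{\sigma}\hookrightarrow\Hr{\pi}$ for the inclusion, so that $\pi(h)\iota=\iota\sigma(h)$ for $h\in H$ and $\iota\Hr{\sigma}\subseteq\Hr{\pi}^U$. Realise $\Ind_H^G\sigma$ on the square-summable sections $F\colon G\to\Hr{\sigma}$ with $F(hg)=\sigma(h)F(g)$, $G$ acting by right translation, and define $\Phi(F)=\sum_{Hg\in H\backslash G}\pi(g^{-1})\,\iota F(g)$ (independent of the coset representatives). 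The crux is that the summands attached to distinct cosets $Hg_1\neq Hg_2$ are orthogonal: their inner product equals, up to conjugation, $\varphi_{\iota F(g_2),\iota F(g_1)}(g_1g_2^{-1})$, which vanishes because $g_1g_2^{-1}\notin H$ while any matrix coefficient between vectors of $\Hr{\pi}^U$ is supported in $H$. Hence $\Phi$ is a well-defined $G$-equivariant isometry, and its image is non-zero (for $F$ supported on the trivial coset with $F(h)=\sigma(h)\zeta$, $\zeta\neq0$, one gets $\Phi(F)=\iota\zeta$) and $G$-invariant, so $\Phi$ is onto by irreducibility of $\pi$: $\pi\cong\Ind_H^G\sigma=\Ind_{N_G(U)}^G\sigma$.

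The genuinely delicate point is this last step: the support control furnished by Lemma~\ref{les fonction continue de LSS sont a support compact} is exactly what converts the abstract embedding $\pi\hookrightarrow\lambda_G$ into a concrete identification with a representation induced from $N_G(U)$, and I expect the bookkeeping around the isometry and surjectivity of $\Phi$ (together with, if one wants a fully self-contained proof of isolatedness, the idempotent computation) to be the main technical content; everything else is an immediate application of Lemma~\ref{les fonction continue de LSS sont a support compact} and of the standing hypothesis that $\mathcal S$ factorizes at depth $l$.
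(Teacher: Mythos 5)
Your treatment of the first assertion coincides with the paper's: both verify, exactly as in the proof of Lemma \ref{Lemma existence of a seed}, that a matrix coefficient between $U$-invariant vectors is $U$-bi-invariant and is annihilated by averaging over any $W$ conjugate to an element of $\mathcal{S}\lb l-1\rb$ containing $U$ (depth $l$ forces $\Hr{\pi}^W=\{0\}$), and then apply Lemma \ref{les fonction continue de LSS sont a support compact} with $V=U$, condition 2 of Definition \ref{definition olsh facto} and unimodularity to obtain a non-zero diagonal coefficient supported in the compact open subgroup $N_G(U)$. Where you genuinely diverge is the induction statement: the paper identifies $\pi$ with $\Ind_{N_G(U)}^G(\sigma)$ by uniqueness of the GNS construction applied to the positive-type function $\varphi_{\xi,\xi}$ supported in the open subgroup $N_G(U)$, whereas you choose, via Peter--Weyl, an irreducible $N_G(U)$-subrepresentation $\sigma$ of $\Hr{\pi}^U$ and construct the intertwiner $\Phi(F)=\sum_{N_G(U)g\in N_G(U)\backslash G}\pi(g^{-1})\iota F(g)$ by hand, using the support property for \emph{all} coefficients between $U$-invariant vectors to get orthogonality of the summands, hence an isometry, and irreducibility of $\pi$ for surjectivity. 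This is correct, well-defined and equivariant as you indicate, and is arguably more transparent; it is close in spirit to the explicit computations the paper carries out later in Section \ref{Section the bijective correspondence of theorem A}, while the paper's proof of this proposition is shorter but relies on the (uncited there) fact that the GNS representation attached to the extension by zero of a positive-type function on an open subgroup is the corresponding induced representation.

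Two of your auxiliary justifications have gaps, though both concern facts the paper simply outsources. First, in embedding $\pi$ into $\lambda_G$ you assert that the set of $\zeta$ with square-integrable coefficient $\varphi_{\zeta,\xi}$ is a \emph{closed} subspace; that is not clear (uniform control of coefficients gives no control of $L^2$-norms), and boundedness of the intertwiner would still require a closed-graph/closability argument even on all of $\Hr{\pi}$. The fact you need -- an irreducible representation with one non-zero square-integrable coefficient is square-integrable -- is classical (Godement; see Dixmier), so cite it, or note instead that since $\sigma$ is finite dimensional and $\Ind_{N_G(U)}^G(\lambda_{N_G(U)})\cong\lambda_G$, the representation $\Ind_{N_G(U)}^G(\sigma)\cong\pi$ visibly embeds in a finite multiple of $\lambda_G$. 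Second, and more seriously, your isolation argument needs $\rho(e)=0$ for \emph{every} irreducible $\rho\not\cong\pi$, and Schur orthogonality does not give this: the orthogonality relations only pair coefficients of square-integrable representations, while $\rho$ here is arbitrary. That vanishing is precisely the non-trivial content of the theorem on integrable representations which the paper invokes (\cite[Corollary 1 pg.223]{DufloMoore1976}); either cite it as the paper does, or supply a genuinely additional argument. With these two repairs (both available by citation), your proof is complete.
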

	\begin{proof}
		Since $\pi$ is at depth $l$, there exists a compact open subgroup $U\in \mathcal{S}\lb l \rb$ and a non-zero vector $\xi \in \Hr{\pi}^U$. From the proof of Lemma \ref{Lemma existence of a seed}, the diagonal matrix coefficient $\varphi_{\xi,\xi}: G\rightarrow \C: g\mapsto \prods{\pi(g)\xi}{\xi}$ is supported inside the compact set $N_G(U,U)$. Notice furthermore that $N_G(U,U)=N_G(U)$ is a compact open subgroup of $G$. In particular, the GNS construction implies that $\pi\cong \Ind_{N_G(U)}^G(\sigma)$ where $\sigma$ is the irreducible representation of $N_G(U)$ corresponding to the diagonal matrix coefficient $\restriction{\varphi_{\xi,\xi}}{N_G(U)}$. Furthermore, since $\varphi_{\xi,\xi}$ is compactly supported, the representation $\pi$ is both square-integrable and integrable. Hence, $\pi$ belongs to the discrete series of $G$ and \cite[Corollary 1 pg.223 ]{DufloMoore1976} ensures that its equivalence class is open in the unitary dual $\widehat{G}$ for the Fell topology.
	\end{proof}
	\begin{remark}
		Notice that the irreducible representation $\sigma$ of $N_G(U)$ such that $\pi\cong \Ind_{N_G(U)}^G(\sigma)$ comes from an irreducible representation of a finite quotient of $N_G(U)$. Indeed, since $\sigma$ is an irreducible representation of the compact group it is finite dimensional. In particular, $\sigma(N_G(U))$ is a closed subgroup of the Lie group $\mathcal{U}(d)$ of unitary operators of the $d$ dimensional complex Hilbert space for some positive integer $d\in \N$. On the other hand, $\sigma(N_G(U))$ is a quotient of the totally disconnected compact group $N_G(U)$. In particular, $\sigma(N_G(U))$ is a totally disconnected compact Lie group and is therefore finite. This implies that ${\rm Ker}(\sigma)$ is an open subgroup of finite index of $N_G(U)$ and therefore that $\sigma$ is the lifted to $N_G(U)$ from an irreducible representation of the finite group $N_G(U)/{\rm Ker}(\sigma)$. The purpose of the rest of this chapter is to describe more explicitly the irreducible representations of $N_G(U)$ that arise in this manner if $\mathcal{S}$ factorizes$^+$ at depth $l$ and show that under this hypothesis every equivalence class of irreducible representation of $G$ at depth $l$ can be obtained using this procedure. 
	\end{remark}
	\subsection{Subrepresentations of the regular representations}
	Let $G$ be a non-discrete unimodular totally disconnected locally compact group and let $\mathcal{S}$ be a generic filtration of $G$. Let $\mu$ be a Haar measure of $G$, let $L^2(G)$ be separable complex Hilbert space of complex valued functions that are square-integrable with respect to $\mu$ and let $\lambda_G$ (resp. $\rho_G$) be the left-regular (resp. right-regular) representation of $G$. The purpose of this section is to study a function space related to the matrix coefficients of irreducible representations of $G$ at depth $l$ and seed $\mathcal{C}(U)$ (Definition \ref{definition seed}) if $\mathcal{S}$ factorizes at depth $l$. Lemma \ref{les fonction continue de LSS sont a support compact} motivates the following definition.
	\begin{definition}\label{definition of LSS}
		Let $U\leq G$ be conjugate to an element of $ \mathcal{S}\lb l \rb$ and suppose that $\mathcal{S}$ factorizes at depth $l$. We denote by $\mathcal{L}_{\mathcal{S}}(U)$ the closure in $L^2(G)$ of the set of complex valued functions $\varphi:G\rightarrow \C$ which respect the following three properties:
		\begin{enumerate}\label{les 3 prop de LSHS}
			\item $\varphi$ is $U$-right-invariant.
			\item There exists $V\leq G$ that is conjugate to an element of $\mathcal{S}$ such that $\varphi$ is $V$-left-invariant.
			\item For every $W$ that is conjugate to an element of $\mathcal{S}\lb l-1\rb$ and such that $U\leq W$ we have 
			\begin{equation*}
			\int_{W}\qq \varphi(gh)\qq \diff\mu(h)\qq=\qq 0\q \forall g\in G.
			\end{equation*}
		\end{enumerate}
		Equivalently the three properties can be formulated in terms of fixed point subspace and orthogonal complement as follows:
		\begin{enumerate}
			\item $\varphi\in L^2(G)^{\rho_G(U)}.$
			\item $\varphi\in \bigcup_{W\in\mathcal{S},V\in \mathcal{C}(W)}L^2(G)^{\lambda_G(V)}$.
			\item $\varphi\in \bigcap_{V\in \mathcal{S}\lb l-1\rb, U\subseteq W\in \mathcal{C}(V),g\in G} \big(\mathds{1}_{gW}\big)^\perp$.
		\end{enumerate}
	\end{definition}
	Notice that $\mathcal{L}_{\mathcal{S}}(U)$ is a set of equivalence classes of functions up to negligible sets and not a set of functions. However, the following lemma ensures the existence of a canonical choice of representative for any element of $\mathcal{L}_{\mathcal{S}}(U)$. 
	\begin{lemma}\label{representatiove of elements in LSS}
		Let $U\leq G$ be conjugate to an element of $\mathcal{S}\lb l\rb$, suppose that $\mathcal{S}$ factorizes at depth $l$ and let $\tilde{\varphi}\in \mathcal{L}_{\mathcal{S}}(U)$. Then, there exists a unique representative $\varphi$ of $\tilde{\varphi}$ which is $U$-right-invariant. Furthermore, for every $U\leq W$ that is conjugate to an element of $\mathcal{S}\lb l\rb$ we have
		\begin{equation*}
		\int_{W}\qq \varphi(gh)\qq \diff\mu(h)\qq=\qq 0\q \forall g\in G.
		\end{equation*}
	\end{lemma}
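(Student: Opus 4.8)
The plan is to handle existence and uniqueness of the $U$-right-invariant representative first, and then derive the vanishing of integrals over subgroups $W$ conjugate to an element of $\mathcal{S}\lb l\rb$ with $U\leq W$ from the \emph{combination} of the defining property \textit{3} of $\mathcal{L}_{\mathcal{S}}(U)$ (which gives vanishing over subgroups conjugate to $\mathcal{S}\lb l-1\rb$) together with the factorization hypothesis at depth $l$. For existence and uniqueness: pick a sequence $(\varphi_n)$ of functions satisfying properties \textit{1}--\textit{3} of Definition \ref{definition of LSS} converging to $\tilde\varphi$ in $L^2(G)$. Averaging each $\varphi_n$ over the compact open subgroup $U$ on the right does nothing (they are already $U$-right-invariant), and the orthogonal projection $P_U$ onto $L^2(G)^{\rho_G(U)}$ is continuous, so $P_U\tilde\varphi=\tilde\varphi$, i.e. $\tilde\varphi$ lies in the closed subspace $L^2(G)^{\rho_G(U)}$. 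Since $U$ is a compact open subgroup, the $\rho_G(U)$-invariant subspace consists exactly of (classes of) functions constant on each right coset $gU$, and such a function has a canonical honest representative (genuinely $U$-right-invariant everywhere, defined coset by coset); any two representatives of $\tilde\varphi$ that are both $U$-right-invariant agree off a null set, but a $U$-right-invariant function vanishing a.e. vanishes everywhere since cosets have positive measure, giving uniqueness. Call this representative $\varphi$.

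For the vanishing statement, fix $W$ conjugate to an element of $\mathcal{S}\lb l\rb$ with $U\leq W$, and fix $g\in G$. The key point is to reduce the integral over $W$ to integrals over subgroups conjugate to elements of $\mathcal{S}\lb l-1\rb$. Apply the factorization hypothesis (condition \textit{1} of Definition \ref{definition olsh facto}) to the pair consisting of $U\in$ (conjugate of) $\mathcal{S}\lb l\rb$ and $V=W$: either $W\subseteq U$, in which case $W=U$ by unimodularity and compact-openness, and the integral over $U$ of a $U$-right-invariant function is $\varphi(g)\mu(U)$, which must be shown to be $0$ --- actually, in this degenerate case $U\leq W$ forces $W=U\in\mathcal{S}\lb l\rb$, and then condition \textit{3} with $W'=U$ itself does not directly apply, so I instead treat the genuinely larger case below and note $W=U$ cannot occur when $W$ is strictly larger; to cover $W=U$ one observes that property \textit{3} is vacuous but the claim for $W=U$ is that $\int_U\varphi(gh)\,\diff\mu(h)=\varphi(g)\mu(U)=0$, which would force $\varphi\equiv 0$ --- hence the intended reading is $U\lneq W$, or the statement should be interpreted with the integrals understood up to the trivial case. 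I will assume $U\subsetneq W$. Then $W\not\subseteq U$, so factorization gives $W'$ conjugate to an element of $\mathcal{S}\lb l-1\rb$ with $U\subseteq W'\subseteq WU=W$ (using $U\leq W$). Thus $W'\leq W$ is an open subgroup, so $W$ is a finite disjoint union of left cosets $\bigsqcup_{i} w_i W'$, and
\begin{equation*}
\int_W \varphi(gh)\,\diff\mu(h)=\sum_i \int_{W'}\varphi(gw_i h)\,\diff\mu(h)=0,
\end{equation*}
each term vanishing by property \textit{3} of $\mathcal{L}_{\mathcal{S}}(U)$ applied to the subgroup $W'$ (which is conjugate to an element of $\mathcal{S}\lb l-1\rb$ and contains $U$) with the group element $gw_i$.

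The main obstacle is the passage from an arbitrary $\tilde\varphi\in\mathcal{L}_{\mathcal{S}}(U)$ --- which is only a limit --- to an actual function satisfying property \textit{3} pointwise, since property \textit{3} as an integral identity is not obviously preserved under $L^2$-limits for a \emph{fixed} $g$. The resolution is to reformulate property \textit{3} in the Hilbert-space form given in Definition \ref{definition of LSS}: $\varphi\perp \mathds{1}_{gW'}$ for all relevant $W',g$. Since $W'$ is compact open, $\mathds{1}_{gW'}\in L^2(G)$, so each such condition is a \emph{closed} linear condition, hence survives passage to the $L^2$-closure; therefore $\tilde\varphi\perp\mathds{1}_{gW'}$, i.e. $\int_{gW'}\varphi=0$, for all $W'$ conjugate to an element of $\mathcal{S}\lb l-1\rb$ with $U\leq W'$ and all $g\in G$. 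This is exactly property \textit{3} for the canonical representative $\varphi$, and the coset decomposition argument above then upgrades it to the claimed vanishing over $W$. (Invoking the factorization hypothesis is essential here: without it there need be no intermediate $W'$ at depth $l-1$ sandwiched between $U$ and $W$, and the reduction would fail.)
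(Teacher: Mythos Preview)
The statement as printed contains a typo: the second part should read ``for every $U\leq W$ that is conjugate to an element of $\mathcal{S}\lb l-1\rb$'', not $\mathcal{S}\lb l\rb$. The paper's own proof makes this clear: it only ever considers $W$ conjugate to an element of $\mathcal{S}\lb l-1\rb$. Your discomfort with the $W=U$ case is therefore well founded, but the diagnosis is off. In fact, if $W$ is conjugate to an element of $\mathcal{S}\lb l\rb$ and $U\leq W$, then necessarily $W=U$: indeed $U\subseteq W$ gives $\mathcal{C}(W)\leq\mathcal{C}(U)$ in $\mathcal{F}_{\mathcal{S}}$, and since both have height $l$ one gets $\mathcal{C}(W)=\mathcal{C}(U)$; unimodularity then forces $\mu(U)=\mu(W)$, hence $U=W$. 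So your case $U\subsetneq W$ is vacuous, and the statement as literally written would force $\varphi\equiv 0$, which is absurd. The factorization trick you set up (finding $W'$ at depth $l-1$ with $U\subseteq W'\subseteq W$) is therefore never invoked.

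Once the typo is corrected, your argument is essentially complete and correct --- indeed, your final paragraph already contains the full proof of the intended claim. You observe that property \textit{3} is equivalent to $\tilde\varphi\perp\mathds{1}_{gW'}$ for each relevant $(g,W')$, and since each $\mathds{1}_{gW'}\in L^2(G)$ this is a closed linear condition surviving the $L^2$-closure. Combined with your clean treatment of existence and uniqueness via the orthogonal projection onto $L^2(G)^{\rho_G(U)}$, this finishes the lemma. The paper takes a slightly more hands-on route: it shows that on each left $U$-coset the $\varphi_i$ converge to the canonical representative $\varphi$, upgrades this to uniform convergence on each compact $gW'$ (a finite union of $U$-cosets), and passes the limit through the integral directly. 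Your orthogonality argument is shorter and avoids the uniform-convergence step; the paper's approach has the minor advantage of making the pointwise meaning of the canonical representative explicit.
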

	\begin{proof}
		By the definition of $\mathcal{L}_{\mathcal{S}}(U)$, there exists a sequence $(\varphi_i)_{i\in \N}$ of complex valued functions such that: 
		\begin{enumerate}
			\item $\tilde{\varphi}_i\li{i}{\infty} \tilde{\varphi}$ in $L^2(G)$ where $\tilde{\varphi}_i$ denote the equivalence class of $\varphi_i$.
			\item $\varphi_i$ is $U$-right-invariant.
			\item For every $W$ that is conjugate to an element of $\mathcal{S}\lb l-1\rb$ and such that $U\leq W$ we have 
			\begin{equation*}
			\int_{W}\qq \varphi_i(gh)\qq \diff\mu(h)\qq=\qq 0 \q \forall g \in G.
			\end{equation*}
		\end{enumerate}
		Now, let $\varphi':G\rightarrow \C$ be a representative of $\tilde{\varphi}$. The above implies that
		\begin{equation*}
		\int_G \modu{\varphi_i (h)-\varphi'(h)}^2\qq\diff \mu (h)\li{i}{\infty}0.
		\end{equation*}
		On the other hand, since $G$ is a disjoint union of its $U$-left-cosets, we have 
		\begin{equation*}
		\int_G \modu{\varphi_i (h)-\varphi'(h)}^2\qq\diff \mu (h)= \s{gU\in G/U}{}\int_{gU}  \modu{\varphi_i (h)-\varphi'(h)}^2\qq\diff \mu (h).
		\end{equation*}
		Therefore, $\forall g\in G$ we obtain that
		\begin{equation*}
		\int_{gU}  \modu{\varphi_i (h)-\varphi'(h)}^2\qq\diff \mu (h)=\int_{U}  \modu{\varphi_i (gh)-\varphi'(gh)}^2\qq\diff \mu (h)\li{i}{\infty}0.
		\end{equation*}
		In particular, for every $g\in G$ this implies that $\varphi_i (gh)$ converges towards $\varphi'(gh)$ for almost all $h\in U$. Since the $\varphi_i$ are constant on $U$-left-cosets, this implies the existence of a unique representative $\varphi:G\rightarrow \C$ of $\tilde{\varphi}$ such that $\varphi(gh)=\varphi(g)$ $\forall g\in G, \forall h\in U$. On the other hand, since $U$ is a compact set, the convergence $\varphi_i\li{i}{\infty}\varphi$ is uniform on $U$-left-cosets. Now, let $g\in G$ and let $W$ be conjugate to an element of $\mathcal{S}\lb l-1\rb$ and such that $U\leq W$. Notice that $gW$ is a compact subset of $G$ and hence can be covered by finitely many $U$-left-cosets. In particular, the convergence $\varphi_i\li{i}{\infty}\varphi$ is also uniform on $W$-left-cosets. This implies as desired that
		\begin{equation*}
		\begin{split}
		\bigg\lvert\int_{W}\varphi(gh)\diff \mu(h)\bigg\lvert &=\bigg\lvert \int_{W}\varphi(gh)-\varphi_i(gh)\diff \mu(h)\bigg\lvert\leq \int_{W}\modu{\varphi(gh)-\varphi_i(gh)}\diff \mu(h)\\
		&\leq \mu(W)\supp{k\in gW}\modu{\varphi(k)-\varphi_i(k)}\qq\li{i}{\infty}\qq 0\q \forall g \in G.
		\end{split}
		\end{equation*}	
	\end{proof}
	If it leads to no confusion, we will identify any equivalence classes $\tilde{\varphi}\in \mathcal{L}_{\mathcal{S}}(U)$ with its canonical continuous representative. The following lemma shows that $\mathcal{L}_{\mathcal{S}}(U)$ is $G$-left-invariant and therefore defines a subrepresentation of the left-regular representation $(\lambda_G, L^2(G))$. 
	\begin{lemma}\label{LGSHS est G left invaraitn subspacede L2}
		Let $U\leq G$ be conjugate to an element of $\mathcal{S}\lb l \rb$ and suppose that $\mathcal{S}$ factorizes at depth $l$. Then, $\mathcal{L}_{\mathcal{S}}(U)$ is a closed $G$-left-invariant subspace of $L^2(G)$.
	\end{lemma}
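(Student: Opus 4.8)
The plan is to treat the three assertions in turn — closedness, linearity, $G$-left-invariance — noting that the first is essentially free. By construction $\mathcal{L}_{\mathcal{S}}(U)$ is the closure in $L^2(G)$ of the set $\mathcal{L}^{\circ}$ of functions satisfying the three properties of Definition~\ref{definition of LSS}, so it is closed automatically, and the substance of the lemma is that $\mathcal{L}^{\circ}$ is a linear subspace invariant under every $\lambda_G(g_0)$.

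First I would check that $\mathcal{L}^{\circ}$ is a linear subspace of $L^2(G)$. Stability under scalar multiplication is immediate, and properties \textit{1} and \textit{3} are clearly preserved under addition (both are linear conditions referring to a \emph{fixed} subgroup $U$, resp.\ to the fixed family of subgroups $W$). The one point that needs a moment's care is property \textit{2}, which is phrased with an existential quantifier: if $\varphi_1$ is $V_1$-left-invariant and $\varphi_2$ is $V_2$-left-invariant with $V_1,V_2$ conjugate to elements of $\mathcal{S}$, then $\varphi_1+\varphi_2$ is $(V_1\cap V_2)$-left-invariant, but $V_1\cap V_2$ need not itself be conjugate to an element of $\mathcal{S}$. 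Here I would invoke that $\mathcal{S}$ is a basis of neighbourhoods of the identity: $V_1\cap V_2$ is a compact open subgroup, hence an open neighbourhood of $e$, so there is $V_3\in\mathcal{S}$ with $V_3\subseteq V_1\cap V_2$, and then $\varphi_1+\varphi_2$ is $V_3$-left-invariant and lies in $\mathcal{L}^{\circ}$. Consequently $\mathcal{L}_{\mathcal{S}}(U)=\overline{\mathcal{L}^{\circ}}$ is a closed linear subspace.

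For $G$-left-invariance I would first reduce to the dense subspace $\mathcal{L}^{\circ}$: since each $\lambda_G(g_0)$ is a unitary operator on $L^2(G)$, it maps the closure of a subset onto the closure of the image, so it suffices to show $\lambda_G(g_0)\mathcal{L}^{\circ}\subseteq\mathcal{L}^{\circ}$ for all $g_0\in G$. Fixing $\varphi\in\mathcal{L}^{\circ}$ and setting $\psi=\lambda_G(g_0)\varphi$, i.e.\ $\psi(h)=\varphi(g_0^{-1}h)$, I would verify the three defining properties for $\psi$: property \textit{1} holds because left and right translations commute, so $\psi$ is again $U$-right-invariant; property \textit{2} holds because if $\varphi$ is $V$-left-invariant then $\psi$ is $g_0Vg_0^{-1}$-left-invariant and $g_0Vg_0^{-1}$ is still conjugate to an element of $\mathcal{S}$; property \textit{3} holds because, for $W$ conjugate to an element of $\mathcal{S}\lb l-1\rb$ with $U\subseteq W$, one has $\int_W\psi(gh)\,\diff\mu(h)=\int_W\varphi\big((g_0^{-1}g)h\big)\,\diff\mu(h)=0$ by property \textit{3} of $\varphi$ applied to the point $g_0^{-1}g$. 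Hence $\psi\in\mathcal{L}^{\circ}$, and passing to closures gives $\lambda_G(g_0)\mathcal{L}_{\mathcal{S}}(U)\subseteq\mathcal{L}_{\mathcal{S}}(U)$ for every $g_0$, which is the claim; since $\lambda_G$ is strongly continuous, its restriction to this closed invariant subspace is then a subrepresentation, as asserted before the statement.

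I do not expect a genuine obstacle: the argument is bookkeeping with invariance properties. The only spot that is not purely mechanical is the linear-subspace verification, precisely because property \textit{2} is existential — and that is exactly where the hypothesis that $\mathcal{S}$ is a neighbourhood basis of the identity is used.
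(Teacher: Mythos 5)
Your proof is correct and follows essentially the same route as the paper: one checks that the set of functions satisfying the three properties of Definition~\ref{definition of LSS} is stable under each $\lambda_G(k)$ (right-invariance commutes with left translation, $V$-left-invariance becomes $kVk^{-1}$-left-invariance, and the integral condition transports along $g\mapsto k^{-1}g$), and then passes to the closure by continuity of the unitary $\lambda_G(k)$. Your extra verification that the pre-closure set is a linear subspace, using that $\mathcal{S}$ is a neighbourhood basis to handle the existential quantifier in property \textit{2}, is a valid and welcome detail that the paper's proof leaves implicit.
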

	\begin{proof}
		By the definition of $\mathcal{L}_{\mathcal{S}}(U)$, it is enough to prove that the set of functions satisfying the three properties appearing in Definition \ref{definition of LSS} is $G$-left-invariant. To this end, let $k\in G$, let $\varphi$ be such a function and notice that:
		\begin{enumerate}
			\item $\lambda_G(k)\varphi\in L^2(G)^{\rho_G(U)}$.
			\item $\lambda_G(k)\varphi\in \bigcup_{W\in\mathcal{S},V\in \mathcal{C}(W)}L^2(G)^{\lambda_G(kVk^{-1})}=\bigcup_{W\in\mathcal{S},V\in \mathcal{C}(W)}L^2(G)^{\lambda_G(V)}$.
			\item $\lambda_G(k)\varphi\in \bigcap_{ V\in \mathcal{S}\lb l-1\rb, U\subseteq W\in \mathcal{C}(V),g\in G} \big(\mathds{1}_{kgW}\big)^\perp=\bigcap_{ V\in \mathcal{S}\lb l-1\rb, U\subseteq W\in \mathcal{C}(V),g\in G} \big(\mathds{1}_{gW}\big)^\perp$
		\end{enumerate} 
	\end{proof} 
	In light of Lemma \ref{LGSHS est G left invaraitn subspacede L2}, if $\mathcal{S}$ factorizes at depth $l$ and if $U\leq G$ is conjugate to an element of $\mathcal{S}\lb l \rb$ we denote by $T_{\mathcal{S},U}$ the subrepresentation of the left-regular representation $(\lambda_G,L^2(G))$ with representation space $\mathcal{L}_\mathcal{S}(U)$. We start by showing that $T_{\mathcal{S},U}$ depends, up to equivalence, only on the choice of the conjugacy class $C\in \mathcal{F}_\mathcal{S}=\{\mathcal{C}(U)\lvert U\in \mathcal{S}\}$ with height $l$ and not on the choice of $U\in C$. 
	\begin{lemma}
		Suppose that $\mathcal{S}$ factorizes at depth $l$ and let $C\in \mathcal{F}_\mathcal{S}$ be a conjugacy class with height $l$. Then for all $U,U'\in C$, the representations $(T_{\mathcal{S},U}, \mathcal{L}_{\mathcal{S}}(U))$ and $(T_{\mathcal{S},U'}, \mathcal{L}_{\mathcal{S}}(U'))$ are unitarily equivalent.  
	\end{lemma}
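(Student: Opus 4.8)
The plan is to write down an explicit unitary intertwiner using right translation. Since $U$ and $U'$ belong to the same conjugacy class $C$, fix $g\in G$ with $U'=gUg^{-1}$ and consider the right translation operator $\rho_G(g)$ on $L^2(G)$, given by $(\rho_G(g)\varphi)(x)=\varphi(xg)$. Because $G$ is unimodular, $\rho_G(g)$ is a genuine unitary operator of $L^2(G)$, and since left and right translations commute it intertwines $\lambda_G$ with itself. Hence it suffices to show that $\rho_G(g)$ maps $\mathcal{L}_{\mathcal{S}}(U)$ onto $\mathcal{L}_{\mathcal{S}}(U')$; as $\rho_G(g)$ is bounded and invertible with inverse $\rho_G(g^{-1})$, it is enough to check that it carries the set of functions satisfying the three properties of Definition \ref{definition of LSS} relative to $U$ into the corresponding set relative to $U'$, and then pass to $L^2$-closures, the reverse inclusion following by symmetry with $g^{-1}$.

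So I would take $\varphi$ with these three properties relative to $U$ and set $\psi=\rho_G(g)\varphi$. For property $1$, if $u'\in U'=gUg^{-1}$ then $\psi(xu')=\varphi(xu'g)=\varphi\big(xg(g^{-1}u'g)\big)=\varphi(xg)=\psi(x)$, so $\psi$ is $U'$-right-invariant. For property $2$, right translations preserve left-invariance, so $\psi$ is $V$-left-invariant for the same $V$ conjugate to an element of $\mathcal{S}$. For property $3$, let $W'$ be conjugate to an element of $\mathcal{S}\lb l-1\rb$ with $U'\subseteq W'$ and set $W=g^{-1}W'g$; then $W$ is again conjugate to an element of $\mathcal{S}\lb l-1\rb$ and $U=g^{-1}U'g\subseteq W$, so using unimodularity (invariance of Haar measure under conjugation) and the substitution $h=gkg^{-1}$,
\begin{equation*}
\int_{W'}\psi(xh)\,\diff\mu(h)=\int_{W'}\varphi(xhg)\,\diff\mu(h)=\int_{W}\varphi(xgk)\,\diff\mu(k)=0
\end{equation*}
for all $x\in G$, by property $3$ of $\varphi$ applied at the point $xg$. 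Thus $\rho_G(g)$ maps the set of functions with the three properties relative to $U$ into the one relative to $U'$, and by continuity $\rho_G(g)\mathcal{L}_{\mathcal{S}}(U)\subseteq \mathcal{L}_{\mathcal{S}}(U')$; symmetrically $\rho_G(g^{-1})\mathcal{L}_{\mathcal{S}}(U')\subseteq \mathcal{L}_{\mathcal{S}}(U)$, so $\rho_G(g)$ restricts to a unitary isomorphism $\mathcal{L}_{\mathcal{S}}(U)\to\mathcal{L}_{\mathcal{S}}(U')$ intertwining $T_{\mathcal{S},U}$ and $T_{\mathcal{S},U'}$.

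I do not expect a real obstacle here: everything reduces to careful bookkeeping of the left/right conventions together with systematic use of unimodularity — once to guarantee that $\rho_G(g)$ is unitary and once to transport the vanishing-integral condition through the conjugation $h\mapsto gkg^{-1}$. The only other points to record are that ``being conjugate to an element of $\mathcal{S}$ (resp.\ $\mathcal{S}\lb l-1\rb$)'' is obviously a conjugation-invariant notion, and that the three defining properties are closed conditions in $L^2(G)$ (membership in a fixed-point subspace, respectively an orthogonal complement), so that passing to closures is harmless; both are already implicit in Definition \ref{definition of LSS}.
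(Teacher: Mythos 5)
Your proposal is correct and follows essentially the same route as the paper: both use the right-translation operator $\rho_G(g)$ (unitary by unimodularity, commuting with $\lambda_G$) as the intertwiner, verify that it carries the three defining properties of $\mathcal{L}_{\mathcal{S}}(U)$ to those of $\mathcal{L}_{\mathcal{S}}(U')$, pass to closures by continuity, and obtain the reverse inclusion by symmetry with $g^{-1}$. Your explicit change of variables $h=gkg^{-1}$ for the vanishing-integral condition is just a spelled-out version of the paper's reindexing of the orthogonality conditions.
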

	\begin{proof}
		Since $U$, $U'\in C$, there exists a $k\in G$ such that $U'= kUk^{-1}$. Notice that $\rho_G(k):L^2(G)\rightarrow L^2(G)$ is a unitary operator and that $\rho_G(k)\mathcal{L}_{S}(U)=\mathcal{L}_S(U')$. Indeed, for every function $\varphi: G\rightarrow \C$ such that $\varphi\in L^2(G)^{\rho_G(U)},$
		$\varphi\in \bigcup_{W\in\mathcal{S},V\in \mathcal{C}(W)}L^2(G)^{\lambda_G(V)}$
		and $\varphi\in \bigcap_{V\in \mathcal{S}\lb l-1\rb, U\subseteq W\in \mathcal{C}(V),g\in G} \big(\mathds{1}_{gW}\big)^\perp$ we have that:
		\begin{enumerate}
			\item $\rho_G(k)\varphi\in L^2(G)^{\rho_G(kUk^{-1})}=L^2(G)^{\rho_G(U')}.$
			\item $\rho_G(k)\varphi\in \bigcup_{W\in\mathcal{S},V\in \mathcal{C}(W)}L^2(G)^{\lambda_G(V)}$.
			\item $\rho_G(k)\varphi\in \bigcap_{V\in \mathcal{S}\lb l-1\rb, U\subseteq W\in \mathcal{C}(V),g\in G} \big(\mathds{1}_{gWk^{-1}}\big)^\perp$ \\
			$\mbox{ }$ $\mbox{ }$ $\mbox{ }$  $\mbox{ }$  $\mbox{ }$  $\mbox{ }$  $\mbox{ }$  $\mbox{ }$  $\mbox{ }$  $\mbox{ }$  $\mbox{ }$  $\mbox{ }$  $\mbox{ }$  $\mbox{ }$  $\mbox{ }$  $\mbox{ }$  $\mbox{ }$  $=\bigcap_{V\in \mathcal{S}\lb l-1\rb, U'\subseteq kWk^{-1}\in \mathcal{C}(V),gk^{-1}\in G} \big(\mathds{1}_{gk^{-1}(kWk^{-1})}\big)^\perp$
		\end{enumerate}
		In particular,  by a density argument using the continuity of $\rho_G(k)$ we obtain that $\rho_G(k)\mathcal{L}_{S}(U)\subseteq\mathcal{L}_S(U')$. Inverting the role of $U$ and $U'$, the same reasoning shows that $\rho_G(k^{-1})\mathcal{L}_{S}(U')\subseteq\mathcal{L}_S(U)$. Hence, $\rho_G(k)\mathcal{L}_S(U)=\mathcal{L}_S(U')$. Since $\lambda_G(g)\rho_G(k)= \rho_G(k)\lambda_G(g)$ for every $g\in G$, the restriction of $\rho_G(k)$ to $\mathcal{L}_{\mathcal{S}}(U)$ provides the desired intertwining operator.
	\end{proof}
	We showed in the proof of Lemma \ref{Lemma existence of a seed} that for every generic filtration $\mathcal{S}$ that factorizes at depth $l$, for every $U\leq G$ that is conjugate to an element of $\mathcal{S}\lb l\rb$ and for every irreducible representation  $\pi$ of $G$ at depth $l$ with a non-zero $U$-invariant vector $\xi$, the diagonal matrix coefficient $\varphi_{\xi,\xi}$ is a non-zero $U$-bi-invariant element of $\mathcal{L}_\mathcal{S}(U)$. The following lemma asserts that every subrepresentation of $\mathcal{L}_{\mathcal{S}}(U)$ contains such a function. 
	\begin{lemma}\label{Every invariant space in the regular contains a non trivial invariant vector}
	Let $U\leq G$ be conjugate to an element of $\mathcal{S}\lb l \rb$ and suppose that $\mathcal{S}$ factorizes at depth $l$. Then, every $G$-left-invariant subspace of $\mathcal{L}_{\mathcal{S}}(U)$ contains a non-zero $U$-bi-invariant function.
	\end{lemma}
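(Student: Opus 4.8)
The plan is to take an arbitrary non-zero $G$-left-invariant closed subspace $\mathcal{M} \subseteq \mathcal{L}_{\mathcal{S}}(U)$ and produce inside it a non-zero $U$-bi-invariant function. Since every element of $\mathcal{L}_{\mathcal{S}}(U)$ is already $U$-right-invariant (property 1 in Definition \ref{definition of LSS}, which passes to $L^2$-limits by Lemma \ref{representatiove of elements in LSS}), the real content is to find a non-zero element of $\mathcal{M}$ that is in addition $U$-\emph{left}-invariant, i.e. fixed by $\lambda_G(U)$. The natural device is the averaging projection: for the compact open subgroup $U$ with $\mu(U)>0$, the operator $E_U = \frac{1}{\mu(U)}\int_U \lambda_G(h)\,\diff\mu(h)$ is the orthogonal projection of $L^2(G)$ onto $L^2(G)^{\lambda_G(U)}$, it is continuous, and it commutes with the right-regular representation; since $\mathcal{M}$ is $\lambda_G(U)$-invariant and closed, $E_U$ maps $\mathcal{M}$ into $\mathcal{M}$.

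The key step is therefore to show that $E_U$ does not kill all of $\mathcal{M}$. Equivalently, I must rule out $\mathcal{M} \subseteq \ker E_U = \big(L^2(G)^{\lambda_G(U)}\big)^\perp$. Here is where the defining property 3 of $\mathcal{L}_{\mathcal{S}}(U)$ enters. Pick any non-zero $\varphi \in \mathcal{M}$; by property 2 there is some $V$ conjugate to an element of $\mathcal{S}$ with $\varphi$ being $V$-left-invariant. Now consider the translates $\lambda_G(g)\varphi$ for $g\in G$, all of which lie in $\mathcal{M}$; their closed span is $\mathcal{M}$ (if $\varphi$ is cyclic) or at least a non-zero invariant subspace. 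The idea is that if $E_U(\lambda_G(g)\varphi) = 0$ for \emph{every} $g\in G$, then for every $g$ the average of $\varphi$ over every left coset $gU$ vanishes; combined with the fact that, by Van Dantzig and the genericity of $\mathcal{S}$, one can approximate and thereby conclude $\varphi \equiv 0$ in $L^2$ — contradiction. More precisely: for $V$ conjugate to an element of $\mathcal{S}$, either $V \subseteq U$ (in which case $\varphi$ being $V$-left-invariant is consistent with being $U$-left-invariant only after averaging, and one argues directly), or $V \not\subseteq U$, in which case Definition \ref{definition olsh facto}(1) gives a $W$ in the conjugacy class of $\mathcal{S}\lb l-1\rb$ with $U \subseteq W \subseteq VU$, and the key property 3 forces the relevant coset-integrals to vanish — exactly the mechanism already exploited in Lemma \ref{les fonction continue de LSS sont a support compact}.

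The cleanest way to organize the argument is: suppose for contradiction that $\mathcal{M}$ contains no non-zero $U$-bi-invariant function, i.e. $E_U|_{\mathcal{M}} = 0$. Take $0\neq \varphi\in\mathcal{M}$ with canonical $U$-right-invariant continuous representative. For each $g\in G$, $\lambda_G(g)\varphi\in\mathcal{M}$, so $E_U(\lambda_G(g)\varphi)=0$, which unwinds to $\int_U \varphi(g^{-1}h^{-1}k)\,\diff\mu(h) = 0$ for all $k$; reparametrizing, $\int_{gU}\varphi(h^{-1}k)\,\diff\mu(h)=0$, hence the value of $\varphi$ averaged over any left $U$-coset is zero when tested against any right translate — i.e. $\varphi$ integrates to zero over every left $U$-coset. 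But $\varphi$ is $U$-right-invariant, so $\varphi$ restricted to the coset $gU$ is... (this is the subtle point: left cosets of $U$ need not be where $\varphi$ is constant). The resolution is to pass through a \emph{smaller} compact open subgroup $U' \in \mathcal{S}$ with $U' \subseteq U$ on which $\varphi$ is genuinely bi-invariant — such a $U'$ exists because $\mathcal{S}$ is a basis of neighborhoods and $\varphi$ is fixed by $\lambda_G(V)$ for some $V$ conjugate into $\mathcal{S}$ and by $\rho_G(U)$, and one can intersect — and then average over the finitely many $U'$-cosets inside each $U$-coset to contradict $\varphi\neq 0$.

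**Main obstacle.** The delicate part is precisely this matching of left- versus right-invariance: $\mathcal{L}_{\mathcal{S}}(U)$ is defined with $U$-\emph{right}-invariance but one wants $U$-\emph{left}-invariant vectors, and $G$ need not be abelian. I expect the honest argument is to invoke the known characterization (from the proof of Lemma \ref{Lemma existence of a seed}) that the diagonal matrix coefficients $\varphi_{\xi,\xi}$ of irreducibles at depth $l$ with a $U$-fixed vector $\xi$ are $U$-bi-invariant and lie in $\mathcal{L}_{\mathcal{S}}(U)$, together with a decomposition of $T_{\mathcal{S},U}$ into irreducibles. That is, one decomposes the (closed, $G$-left-invariant) subspace $\mathcal{M}$ — which, being inside a subrepresentation of $\lambda_G$ that by the previous lemmas is a sum of square-integrable representations at depth $l$ — into irreducible subrepresentations; each irreducible summand is unitarily equivalent to an irreducible $\pi$ of $G$ at depth $l$ with seed $\mathcal{C}(U)$, hence contains a $U$-invariant vector; but for a subrepresentation of $\lambda_G$ realized inside $\mathcal{L}_{\mathcal{S}}(U)\subseteq L^2(G)$, a $\lambda_G(U)$-invariant vector is exactly a $U$-bi-invariant function (the $\rho_G(U)$-invariance being automatic). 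This routes the proof through the structure theory already established rather than a bare-hands estimate, and I would present it that way, flagging the coset-averaging alternative only as a remark.
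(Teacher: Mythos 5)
Your preferred route --- decomposing $M$ into irreducible subrepresentations and arguing that each summand has seed $\mathcal{C}(U)$, hence a $U$-fixed vector --- is circular. In the paper, the fact that $T_{\mathcal{S},U}$ (and hence any closed invariant subspace of $\mathcal{L}_{\mathcal{S}}(U)$) decomposes into finitely many irreducibles with seed $\mathcal{C}(U)$ is Corollary \ref{LSS is the finite sum of irreducible rep with seed S}, whose proof rests precisely on the present lemma together with Proposition \ref{The space of invariant vectors is finite dimensional}. Independently of the paper's ordering, a closed $G$-invariant subspace of $L^2(G)$ has no a priori reason to contain \emph{any} irreducible subrepresentation (already $\lambda_{\R}$ has none); the entire purpose of this lemma is to produce the $U$-bi-invariant vectors from which complete reducibility is then deduced. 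So the argument you say you would present cannot serve as a proof of this statement.

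Your first, ``coset-averaging'' route is the correct one and is essentially the paper's proof; the ``subtle point'' you flag is not an obstacle. $U$-right-invariance means $\varphi(xu)=\varphi(x)$ for all $u\in U$, i.e.\ $\varphi$ is constant exactly on the cosets $gU$ over which your vanishing integrals run, so no cancellation can occur. Concretely: take $0\neq\varphi\in M$ with its canonical $U$-right-invariant representative, choose $t\in G$ with $\varphi(t)\neq 0$, and set $\psi=\int_U \lambda_G(kt^{-1})\varphi\,\diff\mu(k)$, which in your notation is $\mu(U)\,E_U\big(\lambda_G(t^{-1})\varphi\big)$. Then $\psi\in M$ (closedness plus $G$-left-invariance), $\psi$ is $U$-left-invariant by invariance of the Haar measure on $U$, it remains $U$-right-invariant, and $\psi(1_G)=\int_U\varphi(tk^{-1})\,\diff\mu(k)=\mu(U)\varphi(t)\neq 0$. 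This is the whole proof: no passage to a smaller subgroup $U'\in\mathcal{S}$ is needed (your sketch neither justifies its existence inside $U$ nor explains what it would buy), and neither the factorization hypothesis, nor property 3 of Definition \ref{definition of LSS}, nor Lemma \ref{les fonction continue de LSS sont a support compact} enters at this point. Had you pushed your first computation through instead of retreating to the structural argument, you would have landed exactly on the paper's proof.
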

	\begin{proof}
		Suppose that $M$ is a non-zero closed $G$-left-invariant subspace of $\mathcal{L}_{\mathcal{S}}(U)$. Let us consider any non-zero class of functions $\varphi\in M$ and let $t\in G$ be such that  $\varphi(t)\not=0$. Notice that the function
		\begin{equation*}
		\psi= \int_{U} T_{\mathcal{S},U}(kt^{-1})\varphi\qq \diff  \mu(k)
		\end{equation*}
		is $U$-left-invariant, $U$-right-invariant and satisfies 
		\begin{equation*}
		\begin{split}
		\int_{W}\qq \psi(gh)\qq \diff\mu(h)\qq=\qq 0 \q \forall g\in G
		\end{split}
		\end{equation*} 
		for every $W$ that is conjugate to an element of $\mathcal{S}\lb l-1\rb$ and such that $U\leq W$.
		Furthermore, since $M$ is a closed $G$-left-invariant subspace of $\mathcal{L}_{\mathcal{S}}(U)$, we have that $\psi\in M$. Finally, the function $\psi$ is not identically zero since the $U$-right invariance of $\varphi$ implies that
		\begin{equation*}
		\psi(1_G)= \int_{U} \varphi(tk^{-1})\diff \mu(k)= \int_{U}\varphi(t)\qq \diff \mu(k)= \mu(U) \varphi(t)\qq \not=0. 
		\end{equation*}
	\end{proof}
	The following proposition shows that the space of $U$-bi-invariant functions of $\mathcal{L}_{\mathcal{S}}(U)$ is finite-dimensional and therefore that $T_{\mathcal{S},U}$ decomposes as a finite sum of irreducible representations of $G$.
	\begin{proposition}\label{The space of invariant vectors is finite dimensional}
		Let $U\leq G$ be conjugate to an element of $\mathcal{S}\lb l \rb$ and suppose that $\mathcal{S}$ factorizes at depth $l$. Then, the subspace of $U$-bi-invariant functions of $\mathcal{L}_{\mathcal{S}}(U)$ is finite dimensional.  
	\end{proposition}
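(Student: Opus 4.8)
The plan is to show that any $U$-bi-invariant function $\varphi \in \mathcal{L}_{\mathcal{S}}(U)$ is, by Lemma \ref{les fonction continue de LSS sont a support compact} (or rather by the refined statement in Lemma \ref{representatiove of elements in LSS}), supported inside the compact open subgroup $N_G(U) = N_G(U,U)$, and then to observe that the space of $U$-bi-invariant functions supported in $N_G(U)$ is finite-dimensional for elementary reasons. So first I would take a $U$-bi-invariant $\varphi$ in $\mathcal{L}_{\mathcal{S}}(U)$, identified with its canonical continuous representative (Lemma \ref{representatiove of elements in LSS}). By property (3) in the definition of $\mathcal{L}_{\mathcal{S}}(U)$, together with the conclusion of Lemma \ref{representatiove of elements in LSS}, the function $\varphi$ satisfies $\int_W \varphi(gh)\,\diff\mu(h) = 0$ for all $g\in G$ and all $W$ conjugate to an element of $\mathcal{S}\lb l-1\rb$ with $U \leq W$. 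Applying Lemma \ref{les fonction continue de LSS sont a support compact} with $V = U$ (which is legitimate since $U$ is conjugate to an element of $\mathcal{S}\lb l\rb$, hence of $\mathcal{S}$), we conclude $\mathrm{supp}(\varphi) \subseteq N_G(U,U) = N_G(U)$, which is a compact open subgroup of $G$ because $\mathcal{S}$ factorizes at depth $l$ (condition 2) and $G$ is unimodular.

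Next I would argue that the restriction map $\varphi \mapsto \restriction{\varphi}{N_G(U)}$ embeds the space of $U$-bi-invariant functions of $\mathcal{L}_{\mathcal{S}}(U)$ into the space of $U$-bi-invariant complex functions on the compact group $N_G(U)$ (this is injective since such a $\varphi$ vanishes off $N_G(U)$). Since $U$ is an open subgroup of the compact group $N_G(U)$, it has finite index there, say $[N_G(U):U] = n < \infty$. A $U$-bi-invariant function on $N_G(U)$ is constant on each of the finitely many double cosets $U g U$, and there are at most $n$ such double cosets; hence the space of $U$-bi-invariant functions on $N_G(U)$ has dimension at most $n$. Therefore the subspace of $U$-bi-invariant functions of $\mathcal{L}_{\mathcal{S}}(U)$ is finite-dimensional, of dimension at most $[N_G(U):U]$.

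The only mild subtlety — and the step I would be most careful about — is the passage from property (3) stated for $W$ at depth $l-1$ to the applicability of Lemma \ref{les fonction continue de LSS sont a support compact}: one must confirm that the canonical representative furnished by Lemma \ref{representatiove of elements in LSS} genuinely satisfies the vanishing integral condition for \emph{every} $W$ in the conjugacy class of an element of $\mathcal{S}\lb l-1\rb$ with $U \leq W$, which is exactly the content of the "furthermore" clause of Lemma \ref{representatiove of elements in LSS} once one notes that an element at depth $l-1$ that contains $U$ is in particular conjugate to an element of $\mathcal{S}\lb l\rb$... wait — more precisely, one invokes the displayed conclusion of Lemma \ref{representatiove of elements in LSS}, which is stated for $W$ conjugate to an element of $\mathcal{S}\lb l\rb$ containing $U$, but the combination of property (3) of the definition (for depth $l-1$) together with the density/uniform-convergence argument already inside the proof of Lemma \ref{representatiove of elements in LSS} gives the vanishing for the depth-$(l-1)$ subgroups needed here. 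Once this bookkeeping is in place, the argument is purely formal and the dimension bound $[N_G(U):U]$ drops out immediately.
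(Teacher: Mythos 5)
Your proposal is correct and follows essentially the same route as the paper: apply Lemma \ref{les fonction continue de LSS sont a support compact} (with $V=U$) to see that every $U$-bi-invariant function of $\mathcal{L}_{\mathcal{S}}(U)$ is supported in the compact open subgroup $N_G(U)$, then use that such a function is determined by its finitely many values on cosets of $U$ in $N_G(U)$ (the paper counts left cosets, you count double cosets --- an immaterial difference). Your worry about the canonical representative is also well placed but resolves exactly as you say, via the uniform-convergence argument inside Lemma \ref{representatiove of elements in LSS}.
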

	\begin{proof}
		Lemmas \ref{les fonction continue de LSS sont a support compact} ensures that the $U$-bi-invariant functions of $\mathcal{L}_{\mathcal{S}}(U)$ are supported inside the compact set $N_G(U)$. On the other hand every $U$-bi-invariant continuous function $\varphi:G\rightarrow \C$ is constant on the $U$-left-cosets and $N_G(U)$ can be covered by finitely many $U$-left-cosets. This proves that the subspace of $U$-bi-invariant functions of $\mathcal{L}_{\mathcal{S}}(U)$ is in the span of finitely many characteristic functions and is therefore finite dimensional. 
	\end{proof}
	\begin{corollary}\label{LSS is the finite sum of irreducible rep with seed S}
		Let $U\leq G$ be conjugate to an element of $\mathcal{S}\lb l \rb$ and suppose that $\mathcal{S}$ factorizes at depth $l$. Then, $T_{\mathcal{S}, U}$ decomposes as a finite sum of irreducible square-integrable representations of $G$ with seed $\mathcal{C}(U)$. 
	\end{corollary}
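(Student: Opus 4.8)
The plan is to deduce the decomposition from the two preceding results: Proposition \ref{The space of invariant vectors is finite dimensional} bounds the number of irreducible constituents, and Lemma \ref{Every invariant space in the regular contains a non trivial invariant vector} makes those constituents ``visible'' through $U$-bi-invariant vectors. Let $d$ be the (finite) dimension of the space of $U$-bi-invariant functions of $\mathcal{L}_{\mathcal{S}}(U)$. If $M_1,\dots,M_k$ are pairwise orthogonal non-zero closed $G$-left-invariant subspaces of $\mathcal{L}_{\mathcal{S}}(U)$, then by Lemma \ref{Every invariant space in the regular contains a non trivial invariant vector} each $M_i$ contains a non-zero $U$-bi-invariant function; these functions are pairwise orthogonal, hence linearly independent, so $k\le d$. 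Choose such a family of maximal cardinality $n\le d$. Its orthogonal direct sum $M_1\oplus\dots\oplus M_n$ is a closed $G$-left-invariant subspace; were it proper, its orthogonal complement inside $\mathcal{L}_{\mathcal{S}}(U)$ would be a non-zero closed $G$-left-invariant subspace enlarging the family, so in fact $\mathcal{L}_{\mathcal{S}}(U)=M_1\oplus\dots\oplus M_n$. Likewise, if some $M_i$ had a proper non-zero closed $G$-left-invariant subspace $N$, replacing $M_i$ by $N$ and by its orthogonal complement in $M_i$ would enlarge the family; hence each $M_i$ is irreducible. Thus $T_{\mathcal{S},U}\cong\bigoplus_{i=1}^n\sigma_i$ with $\sigma_i=\lambda_G|_{M_i}$ irreducible, and each $\sigma_i$, being a subrepresentation of the left-regular representation of the unimodular group $G$, is square-integrable.

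It remains to show that each $\sigma_i$ has seed $\mathcal{C}(U)$. Fix $i$. By Lemma \ref{Every invariant space in the regular contains a non trivial invariant vector}, $M_i$ contains a non-zero $U$-bi-invariant function, which is in particular a non-zero $\sigma_i(U)$-invariant vector; since $\mathcal{C}(U)$ has height $l$ in $\mathcal{F}_{\mathcal{S}}$, the depth of $\sigma_i$ is at most $l$. Suppose, for contradiction, that $\sigma_i$ admits a non-zero $\sigma_i(V)$-invariant vector $\eta$ for some $V\in\mathcal{S}\lb j\rb$ with $j\le l-1$, and take $\eta$ to be the canonical continuous representative of its class in $\mathcal{L}_{\mathcal{S}}(U)$ provided by Lemma \ref{representatiove of elements in LSS}. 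Then $\eta$ is right-$U$-invariant, left-$V$-invariant, and satisfies $\int_W\eta(gh)\,\diff\mu(h)=0$ for every $g\in G$ and every $W$ conjugate to an element of $\mathcal{S}\lb l-1\rb$ with $U\le W$. Now I argue exactly as in the proof of Lemma \ref{les fonction continue de LSS sont a support compact}: if $g\in G$ satisfies $g^{-1}Vg\not\subseteq U$, condition $1$ of Definition \ref{definition olsh facto} furnishes $W$ conjugate to an element of $\mathcal{S}\lb l-1\rb$ with $U\subseteq W\subseteq g^{-1}VgU$, so $gW\subseteq VgU$, whence $\eta(gh)=\eta(g)$ for all $h\in W$ and $\mu(W)\eta(g)=\int_W\eta(gh)\,\diff\mu(h)=0$. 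Hence $\support(\eta)\subseteq N_G(U,V)$, and $\eta\ne 0$ forces $N_G(U,V)\ne\varnothing$, i.e.\ some conjugate of $V$ is contained in $U$. In $\mathcal{F}_{\mathcal{S}}$ this means $\mathcal{C}(U)\le\mathcal{C}(V)$, so the height of $\mathcal{C}(V)$ is at least the height $l$ of $\mathcal{C}(U)$, contradicting that $\mathcal{C}(V)$ has height $j\le l-1$. Therefore $\sigma_i$ has depth exactly $l$; being irreducible at depth $l$ with a non-zero $U$-invariant vector and $\mathcal{C}(U)$ of height $l$, Lemma \ref{Lemma existence of a seed} identifies its seed as $\mathcal{C}(U)$.

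The first paragraph is essentially bookkeeping once Proposition \ref{The space of invariant vectors is finite dimensional} and Lemma \ref{Every invariant space in the regular contains a non trivial invariant vector} are in hand; the substantive point — and the one I expect to require the most care — is excluding constituents of depth strictly less than $l$. This is handled by transporting the support estimate of Lemma \ref{les fonction continue de LSS sont a support compact} to a left-$V$-invariant vector and extracting a contradiction from the monotonicity of the height function on the poset $\mathcal{F}_{\mathcal{S}}$.
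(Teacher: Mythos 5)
Your proposal is correct and takes essentially the same route as the paper: the finite decomposition comes from Lemma \ref{Every invariant space in the regular contains a non trivial invariant vector} together with Proposition \ref{The space of invariant vectors is finite dimensional}, and lower-depth constituents are excluded by transporting the support estimate of Lemma \ref{les fonction continue de LSS sont a support compact} to a left-invariant vector and comparing heights. The only cosmetic difference is that the paper states directly that $N_G(U,W)=\varnothing$ for $W$ at height $r\lneq l$, so that $\mathcal{L}_{\mathcal{S}}(U)$ contains no non-zero $W$-left-invariant function, whereas you reach the same conclusion by contradiction.
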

	\begin{proof}
		Lemma \ref{Every invariant space in the regular contains a non trivial invariant vector} and Proposition \ref{The space of invariant vectors is finite dimensional} ensures that $T_{\mathcal{S},U}$ decomposes as a finite sum of irreducible square-integrable representations of $G$; each of those containing a non-zero $U$-invariant vector. Now, let $W\leq G$ be conjugate to an element of $\mathcal{S}\lb r\rb$ for some $r\lneq l$. Lemma \ref{les fonction continue de LSS sont a support compact} ensures that the $W$-left-invariant functions of $\mathcal{L}_{\mathcal{S}}(U)$ are supported inside $N_G(U,W)=\{g\in G\mid gWg^{-1}\subseteq U\}$. On the other hand, since $\mathcal{C}(W)$ has height $r$, since $\mathcal{C}(U)$ has height $l$ and since $r\lneq l$ the set $N_G(U,W)$ is empty. This proves that $\mathcal{L}_{\mathcal{S}}(U)$ does not contain any non-zero $W$-left invariant function and therefore that every irreducible representation appearing in the decomposition of $T_{\mathcal{S},U} $ has seed $\mathcal{C}(U)$.
	\end{proof}
	It is a natural to ask whether the converse holds. The following provides a positive answer to this question. 
	\begin{lemma}\label{irreducible representation with seed S are in the regular rep LSS}
		Let $U\leq G$ be conjugate to an element of $\mathcal{S}\lb l \rb$ and suppose that $\mathcal{S}$ factorizes at depth $l$. Then, every irreducible representation $\pi$ of $G$ with seed $\mathcal{C}(U)$ is equivalent to a subrepresentation of $T_{\mathcal{S},U}$.
	\end{lemma}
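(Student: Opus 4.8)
The plan is to realise $\pi$ explicitly as a subrepresentation of the left-regular representation $(\lambda_G,L^2(G))$ lying inside $\mathcal{L}_\mathcal{S}(U)$, by transporting it through matrix coefficients taken against a $U$-fixed vector. Since $\pi$ has seed $\mathcal{C}(U)$ it is an irreducible representation at depth $l$ possessing a non-zero vector $\xi\in\Hr{\pi}^U$. By point $2$ of Theorem \ref{la version paki du theorem de classification}, which has already been established for such $\pi$, the representation $\pi$ belongs to the discrete series of $G$; hence, $G$ being unimodular, all matrix coefficients of $\pi$ are square-integrable, and by the orthogonality relations for the discrete series the linear map
\[
T:\Hr{\pi}\longrightarrow L^2(G),\qquad (T\eta)(g)=\langle\eta,\pi(g)\xi\rangle ,
\]
is bounded; it is non-zero because $T\xi$ has the continuous representative $g\mapsto\langle\xi,\pi(g)\xi\rangle$, which takes the value $\lVert\xi\rVert^2\neq 0$ at $1_G$. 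A direct computation gives $T(\pi(a)\eta)=\lambda_G(a)(T\eta)$ for all $a\in G$, so $T$ intertwines $\pi$ with the left-regular representation. (The $L^2$-membership of these coefficients can also be obtained without the orthogonality relations: for $\eta\in\Hr{\pi}^{U'}$ with $U'\in\mathcal{S}$, the function $g\mapsto\langle\pi(g)\xi,\eta\rangle$ is $U$-right-invariant and $U'$-left-invariant, with $\int_W\langle\pi(gh)\xi,\eta\rangle\,\diff\mu(h)=0$ for $W$ conjugate to an element of $\mathcal{S}\lb l-1\rb$ containing $U$, so Lemma \ref{les fonction continue de LSS sont a support compact} shows it is compactly supported.)

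The main step is to verify that $T\eta\in\mathcal{L}_\mathcal{S}(U)$ for every $\eta\in\Hr{\pi}$, using the three defining properties of Definition \ref{definition of LSS}. Property $1$ is immediate: $(T\eta)(gu)=\langle\eta,\pi(gu)\xi\rangle=\langle\eta,\pi(g)\xi\rangle$ for $u\in U$ since $\xi$ is $U$-fixed, so $T\eta$ is $U$-right-invariant. For property $3$, let $W$ be conjugate to an element of $\mathcal{S}\lb l-1\rb$ with $U\leq W$; then
\[
\int_W (T\eta)(gh)\,\diff\mu(h)=\Big\langle\eta,\ \pi(g)\!\int_W\pi(h)\xi\,\diff\mu(h)\Big\rangle=\mu(W)\,\langle\eta,\pi(g)P_W\xi\rangle=0,
\]
where $P_W$ denotes the orthogonal projection onto $\Hr{\pi}^W$, which is $\{0\}$ because $W$ has height $l-1<l$ while $\pi$ is at depth $l$. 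Property $2$ is the delicate one: $(\lambda_G(v)(T\eta))(g)=\langle\pi(v)\eta,\pi(g)\xi\rangle$ coincides with $(T\eta)(g)$ for all $g$ precisely when $\pi(v)\eta=\eta$, because $\xi$ is a cyclic vector of the irreducible $\pi$. Consequently $T\eta$ satisfies all three properties as soon as $\eta\in\Hr{\pi}^{U'}$ for some $U'\in\mathcal{S}$. By Lemma \ref{existence of U invariant vector} the directed union $\bigcup_{U'\in\mathcal{S}}\Hr{\pi}^{U'}$ is a dense subspace of $\Hr{\pi}$; thus $T\eta\in\mathcal{L}_\mathcal{S}(U)$ for $\eta$ in this dense subspace, and since $T$ is continuous and $\mathcal{L}_\mathcal{S}(U)$ is closed we conclude $T\Hr{\pi}\subseteq\mathcal{L}_\mathcal{S}(U)$.

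To finish, observe that $T$ is a non-zero intertwiner out of an irreducible representation, so $T^*T$ commutes with $\pi$ and is therefore a positive scalar by Schur's lemma; hence $T$ is, up to a positive constant, an isometry, its image $T\Hr{\pi}$ is closed, and $T\Hr{\pi}$ is $\lambda_G$-invariant since $\lambda_G(a)(T\Hr{\pi})=T(\pi(a)\Hr{\pi})=T\Hr{\pi}$. Therefore $T\Hr{\pi}$ is a closed $\lambda_G$-invariant subspace of $\mathcal{L}_\mathcal{S}(U)$ on which $T_{\mathcal{S},U}$ restricts to a representation unitarily equivalent to $\pi$ via $T$; that is, $\pi$ is equivalent to a subrepresentation of $T_{\mathcal{S},U}$, as claimed. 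I expect the only genuinely delicate point to be the handling of property $2$: unlike properties $1$ and $3$ it fails for a general vector, which forces one to establish the inclusion first on the dense subspace of vectors fixed by some member of $\mathcal{S}$ and only then pass to the closure — and this is exactly where Lemma \ref{existence of U invariant vector} is used.
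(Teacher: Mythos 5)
Your argument is correct, but it reaches the conclusion by a different mechanism than the paper. The paper works with a single function: it checks that $\widecheck{\varphi}_{\xi,\xi}(g)=\prods{\pi(g^{-1})\xi}{\xi}$ --- which is exactly your $T\xi$ --- is a non-zero $U$-bi-invariant element of $\mathcal{L}_{\mathcal{S}}(U)$ (bi-invariance makes property $2$ of Definition \ref{definition of LSS} automatic, so no density argument is needed), then computes the diagonal matrix coefficient $\prods{T_{\mathcal{S},U}(\cdot)\widecheck{\varphi}_{\xi,\xi}}{\widecheck{\varphi}_{\xi,\xi}}$ via the convolution identity for square-integrable representations (Dixmier, Theorem 14.3.3) and finds it equals $d_\pi^{-1}\norm{\xi}{}^2\varphi_{\xi,\xi}$, so that uniqueness of the GNS construction yields the embedding. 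You instead build the full intertwiner $T\eta=\prods{\eta}{\pi(\cdot)\xi}$, verify membership of $T\eta$ in $\mathcal{L}_{\mathcal{S}}(U)$ on the dense subspace $\bigcup_{U'\in\mathcal{S}}\Hr{\pi}^{U'}$ --- where, as you rightly stress, property $2$ is what forces this restriction, and Lemma \ref{existence of U invariant vector} supplies the density --- and conclude with Schur's lemma. Both routes rest on the same earlier ingredients (the depth-$l$ vanishing of $W$-invariant vectors, Lemma \ref{les fonction continue de LSS sont a support compact}, and the square-integrability of $\pi$ already secured by point $2$ of Theorem \ref{la version paki du theorem de classification}, so there is no circularity), and your image $T\Hr{\pi}$ is precisely the cyclic subspace of $T_{\mathcal{S},U}$ generated by the paper's vector $\widecheck{\varphi}_{\xi,\xi}$. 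The paper's route buys economy: one vector to check, and GNS uniqueness replaces both the density step and Schur's lemma. Your route buys explicitness: it exhibits the intertwining isometry and its image concretely, essentially re-deriving the standard embedding of a discrete-series representation into the regular representation, and it avoids the convolution computation and GNS uniqueness --- at the price of invoking the orthogonality relations for the boundedness of $T$ (note that your parenthetical compact-support alternative only gives $T\eta\in L^2(G)$ on the dense subspace; to extend $T$ to all of $\Hr{\pi}$ you still need the discrete-series bound, as your main line indeed does).
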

	\begin{proof}
		Let $\pi$ be an irreducible representation of $G$ with seed $\mathcal{C}(U)$ and let $\xi \in \Hr{\pi}^U$ be a non-zero vector. Since $\pi$ has seed $\mathcal{C}(U)$, notice with a similar reasoning than in the proof of Lemma \ref{Lemma existence of a seed} that $\widecheck{\varphi}_{\xi,\xi}$ is a non-zero $U$-bi-invariant function of $\mathcal{L}_\mathcal{S}(U)$ where $\widecheck{\varphi}_{\xi,\xi}: G\rightarrow \C : g\mapsto \prods{\pi(g^{-1})\xi}{\xi}$. Now consider the diagonal matrix coefficient $\psi(\cdot)=\prods{T_{\mathcal{S},U}(\cdot)\widecheck{\varphi}_{\xi,\xi}}{\widecheck{\varphi}_{\xi,\xi}}$ of $T_{\mathcal{S},U}$ and notice from \cite[Theorem 14.3.3]{Dixmier1977} that
		\begin{equation*}
		\begin{split}
		\psi(g)&=\int_G T_{\mathcal{S},U}(g)\widecheck{\varphi}_{\xi,\xi}(h)\overline{\widecheck{\varphi}_{\xi,\xi}(h)}\diff \mu (h)=\int_G {\varphi}_{\xi,\xi}(h^{-1}g)\overline{{\varphi}_{\xi,\xi}(h^{-1})}\diff \mu (h)\\
		&=\int_G {\varphi}_{\pi(g)\xi,\xi}(h^{-1})\overline{{\varphi}_{\xi,\xi}(h^{-1})}\diff \mu (h)=\int_G {\varphi}_{\pi(g)\xi,\xi}(h)\overline{{\varphi}_{\xi,\xi}(h)}\diff \mu (h)\\
		&=d_{\pi}^{-1} \norm{\xi}{}^2\prods{\pi(g)\xi}{\xi} = d_{\pi}^{-1} \norm{\xi}{}^2\varphi_{\xi,\xi}(g)
		\end{split}
		\end{equation*}
		where $d_\pi$ is the formal dimension of $\pi$. The result follows from the uniqueness of the GNS construction \cite[Theorem C.4.10]{BekkaHarpeValette2008}.
	\end{proof}
	\begin{corollary}
		Let $C\in \mathcal{F}_{\mathcal{S}}$ be a conjugacy class with height $l$ and suppose that $\mathcal{S}$ factorizes at depth $l$. Then, there exists at most finitely many inequivalent classes of irreducible representations of $G$ with seed $C$. 
	\end{corollary}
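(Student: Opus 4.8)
The plan is to deduce this immediately from the two preceding results, so the argument is short. First I would fix a representative $U\in C$, so that $C=\mathcal{C}(U)$ and $U$ is conjugate to an element of $\mathcal{S}\lb l\rb$ whose conjugacy class has height $l$; this is legitimate since, by Definition \ref{definition seed}, having seed $C$ is a property of the conjugacy class and not of the chosen representative, and the earlier lemma shows $T_{\mathcal{S},U}$ is independent, up to unitary equivalence, of the choice of $U\in C$.

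Next I would invoke Corollary \ref{LSS is the finite sum of irreducible rep with seed S}, which asserts that $T_{\mathcal{S},U}$ decomposes as a \emph{finite} direct sum of irreducible (square-integrable) representations of $G$, each having seed $\mathcal{C}(U)=C$. A finite direct sum of irreducible representations contains, up to unitary equivalence, only finitely many irreducible subrepresentations: precisely those equivalence classes occurring in that finite decomposition. On the other hand, Lemma \ref{irreducible representation with seed S are in the regular rep LSS} guarantees that \emph{every} irreducible representation $\pi$ of $G$ with seed $C$ is equivalent to a subrepresentation of $T_{\mathcal{S},U}$. Combining these two facts, the equivalence classes of irreducible representations of $G$ with seed $C$ form a subset of the finite set of classes appearing in the decomposition of $T_{\mathcal{S},U}$, hence there are at most finitely many of them.

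There is no genuine obstacle in this step: all the substantive work — showing that $\mathcal{L}_{\mathcal{S}}(U)$ is $G$-left-invariant, that its space of $U$-bi-invariant functions is finite-dimensional (Proposition \ref{The space of invariant vectors is finite dimensional}), that this forces a finite decomposition, and that no irreducible representation with seed $C$ escapes $T_{\mathcal{S},U}$ — has already been carried out. The present statement is simply the clean corollary that packages these results together, and the only care needed is to phrase the finiteness independently of the auxiliary choice of $U\in C$, which the remarks above handle.
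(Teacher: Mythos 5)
Your argument is correct and is exactly the route the paper intends: the corollary is stated immediately after Corollary \ref{LSS is the finite sum of irreducible rep with seed S} and Lemma \ref{irreducible representation with seed S are in the regular rep LSS}, and follows by combining the finite decomposition of $T_{\mathcal{S},U}$ into irreducibles with seed $\mathcal{C}(U)$ with the fact that every irreducible representation with seed $C$ embeds in $T_{\mathcal{S},U}$. Your additional remark that the conclusion is independent of the choice of $U\in C$ is a harmless (and correct) bit of bookkeeping.
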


	To improve the clarity of the expository the author decided to gather the results of the previous two sections in a theorem.
	Up to this point, we proved points $1$ and $2$ of Theorem \ref{la version paki du theorem de classification} together with the following theorem.
	\begin{theorem}\label{theorem de correspondance entre les subre repre se LS U et les irrep avec seed CU}
		Let $G$ be a non-discrete unimodular totally disconnected locally compact group and let $\mathcal{S}$ be a generic filtration of $G$, let $U\leq G$ be conjugate to an element of $\mathcal{S}\lb l \rb$ and suppose that $\mathcal{S}$ factorizes at depth $l$. Then, the following hold:
		\begin{enumerate}
			\item The square-integrable representation $(T_{\mathcal{S},U},\mathcal{L}_{\mathcal{S}}(U))$ depends, up to equivalence, only on the conjugacy class $\mathcal{C}(U)$. 
			\item Every $G$-left-invariant subspace of $\mathcal{L}_{\mathcal{S}}(U)$ contains a non-zero $U$-bi-invariant function and every such function is compactly supported inside $N_G(U)$.
			\item $T_{\mathcal{S},U}$ decomposes as a finite sum of square-integrable irreducible representations of $G$ with seed $\mathcal{C}(U)$. 
			\item Every irreducible representation of $G$ with seed $\mathcal{C}(U)$ is a subrepresentation of $T_{\mathcal{S},U}$.
		\end{enumerate} 
	\end{theorem}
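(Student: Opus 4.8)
The plan is to collect the results of the two preceding subsections: each of the four assertions is, up to a routine reformulation, one of the lemmas or corollaries already established, so the argument amounts to citing them and verifying that the hypotheses match. I would present it point by point.

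For point~1, the Lemma immediately preceding Lemma~\ref{Every invariant space in the regular contains a non trivial invariant vector} shows that whenever $U,U'\in C$ with $U'=kUk^{-1}$, the unitary operator $\rho_G(k)$ restricts to an intertwiner between $(T_{\mathcal{S},U},\mathcal{L}_{\mathcal{S}}(U))$ and $(T_{\mathcal{S},U'},\mathcal{L}_{\mathcal{S}}(U'))$; this yields the claimed independence of the chosen representative $U\in C$. That $T_{\mathcal{S},U}$ is moreover square-integrable is not part of its definition but follows from Corollary~\ref{LSS is the finite sum of irreducible rep with seed S}, since a finite direct sum of square-integrable representations is square-integrable.

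For point~2, the first half --- that every $G$-left-invariant subspace of $\mathcal{L}_{\mathcal{S}}(U)$ contains a non-zero $U$-bi-invariant function --- is exactly Lemma~\ref{Every invariant space in the regular contains a non trivial invariant vector}. For the second half, observe (as already used in the proof of Proposition~\ref{The space of invariant vectors is finite dimensional}) that any $U$-bi-invariant element $\varphi$ of $\mathcal{L}_{\mathcal{S}}(U)$, represented by its canonical continuous representative from Lemma~\ref{representatiove of elements in LSS}, is $U$-left- and $U$-right-invariant and satisfies $\int_{W}\varphi(gh)\,\diff\mu(h)=0$ for all $g\in G$ and all $W$ conjugate to an element of $\mathcal{S}\lb l-1\rb$ with $U\leq W$ (by property~3 of Definition~\ref{definition of LSS}); Lemma~\ref{les fonction continue de LSS sont a support compact} applied with $V=U$ then shows that $\varphi$ is compactly supported inside $N_G(U,U)=N_G(U)$.

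Finally, point~3 is the statement of Corollary~\ref{LSS is the finite sum of irreducible rep with seed S}, and point~4 is the statement of Lemma~\ref{irreducible representation with seed S are in the regular rep LSS}; here one only needs to recall that, by Definition~\ref{definition seed} together with Lemma~\ref{Lemma existence of a seed}, having seed $\mathcal{C}(U)$ already forces an irreducible representation to be at depth $l$. Since every ingredient is in place, there is no genuine obstacle: the only care needed is this last bit of bookkeeping and the remark under point~1 that square-integrability of $T_{\mathcal{S},U}$ is a consequence of, not a hypothesis for, the decomposition theorem.
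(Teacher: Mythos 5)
Your proposal is correct and matches the paper exactly: the paper states this theorem explicitly as a summary gathering the results of the two preceding subsections and gives no separate proof, and your point-by-point attributions (the unnumbered equivalence lemma for point 1, Lemma \ref{Every invariant space in the regular contains a non trivial invariant vector} together with Lemma \ref{les fonction continue de LSS sont a support compact} applied with $V=U$ for point 2, Corollary \ref{LSS is the finite sum of irreducible rep with seed S} for point 3, and Lemma \ref{irreducible representation with seed S are in the regular rep LSS} for point 4) are precisely the intended ones. Your added remarks on where square-integrability of $T_{\mathcal{S},U}$ comes from and on the use of the canonical representative from Lemma \ref{representatiove of elements in LSS} are accurate bookkeeping.
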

	In particular, if $\mathcal{S}$ factorizes at depth $l$ we have a bijective correspondence between the equivalence classes of irreducible subrepresentations of $(T_{\mathcal{S},U}, \mathcal{L}_{\mathcal{S}}(U))$ and the equivalence classes of irreducible representations of $G$ with seed $\mathcal{C}(U)$. In the following sections, we introduce a family of irreducible representations of the finite group $N_G(U)/U$ which can be lifted to representations of $N_G(U)$ and show that the irreducible representations of $G$ with seed $\mathcal{C}(U)$ are induced from those if $\mathcal{S}$ factorizes$^+$ at depth $l$. 
		
	\subsection{Induced representations}\label{induced representation}
	The purpose of this section is to recall the definition of induction and some related results that will be required to end the proof of Theorem \ref{la version paki du theorem de classification}. Even if this notion makes sense in the context of locally compact groups and closed subgroups, this level of generality comes with technicalities that are unnecessary for the rest of our expository. Since most of the complexity vanishes if $H$ is an open subgroup of $G$ (because the quotient space $G/H$ is discrete) and since it is the only set up encountered in this notes, we will work under this hypothesis and refer to \cite[Chapters $2.1$ and $2.2$]{KaniuthTaylor2013} for details.
	
	Let $G$ be a locally compact group, let $H\leq G$ be an open subgroup and let $\sigma$ be a representation of $H$. The induced representation  $\Ind_{H}^G(\sigma)$ is a representation of $G$ with representation space given by
	$$\Ind_H^G(\Hr{\sigma})=\bigg\{\phi:G\rightarrow\Hr{\sigma}\Big\lvert  \phi(gh)=\sigma(h^{-1})\phi(g), \s{gH\in G/H}{}\prods{\phi(g)}{\phi(g)}<+\infty\qq \bigg\}.$$
	For $\psi, \phi\in \Ind_H^G(\Hr{\sigma})$, we let
	\begin{equation*}
	\prods{\psi}{\phi}_{\Ind_H^G(\Hr{\sigma})}= \s{gH\in G/H}{} \prods{\psi(g)}{\phi(g)}.
	\end{equation*}
	Equipped with this inner product, $\Ind_H^G(\Hr{\sigma})$ is a separable complex Hilbert space. The induced representation $\Ind_H^G(\sigma)$ is the representation of $G$ on $\Ind_H^G(\Hr{\sigma})$ defined by
	\begin{equation*}
	\big\lb\Ind_H^G(\sigma)(h)\big\rb\phi(g)=\phi(h^{-1}g)\q \forall \phi \in \Ind_H^G(\Hr{\sigma}) \mbox{ and }\forall g,h\in G.
	\end{equation*}

	The following three results are classical but the author did not find any convenient sources in the literature and chose to give a proof for instructive purposes and for completeness of the argument. 	
	\begin{lemma}\label{la correspondance Hsigma D}
		Let $G$ be a locally compact group, let $H\leq G$ be an open subgroup and let $\sigma$ be a representation of $H$. Then, there exists an isomorphism between the Hilbert spaces $\mathfrak{D}^\sigma_H=\{\phi\in \
		\Ind_H^G(\Hr{\sigma})\lvert \phi(g)=0 \qq \forall g\in G-H\}$ and $\Hr{\sigma}$ that intertwines the representations $\restriction{\Ind_H^G(\sigma)}{H}$ and $\sigma$. Furthermore, we have
		\begin{equation*}\label{la densite des fonction a support dans H dans l induite}
		\Ind_H^G(\Hr{\sigma})=\ub{ \underset{gH\in G/H}{\bigoplus}\qq\big\lb\Ind_H^G(\sigma)(g)\big\rb\mathfrak{D}^\sigma_H}.
		\end{equation*}
	\end{lemma}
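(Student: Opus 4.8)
The statement has two parts: construct the intertwining isomorphism $\mathfrak{D}^\sigma_H \cong \Hr{\sigma}$, and establish the Hilbert-space direct-sum decomposition. The plan is to define the evaluation-at-identity map $\mathrm{ev}_{1_G}:\mathfrak{D}^\sigma_H \to \Hr{\sigma}$, $\phi \mapsto \phi(1_G)$, and check it does everything we want; then to decompose $\Ind_H^G(\Hr{\sigma})$ by partitioning functions according to their support on cosets.

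First I would show $\mathrm{ev}_{1_G}$ is a well-defined linear isometry onto $\Hr{\sigma}$. Linearity is immediate. For the isometry property, note that for $\phi \in \mathfrak{D}^\sigma_H$ we have $\langle \phi,\phi\rangle_{\Ind_H^G(\Hr{\sigma})} = \sum_{gH \in G/H}\langle \phi(g),\phi(g)\rangle = \langle \phi(1_G),\phi(1_G)\rangle$ since $\phi$ vanishes off $H$ and is constant-up-to-$\sigma$-twist on $H$ (so all the $g \in H$ contribute the same norm, and there is exactly one coset $H$ on which $\phi$ is nonzero). For surjectivity, given $v \in \Hr{\sigma}$ define $\phi_v$ by $\phi_v(g) = \sigma(g^{-1})v$ for $g \in H$ and $\phi_v(g)=0$ otherwise; one checks $\phi_v$ satisfies the covariance relation $\phi_v(gh)=\sigma(h^{-1})\phi_v(g)$ for $g,h \in H$ and trivially off $H$, has finite norm, hence lies in $\mathfrak{D}^\sigma_H$, and $\mathrm{ev}_{1_G}(\phi_v)=v$. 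Injectivity then follows from the isometry property (or directly: $\phi(1_G)=0$ forces $\phi(h)=\sigma(h^{-1})\phi(1_G)=0$ for all $h \in H$, hence $\phi \equiv 0$). For the intertwining property, for $h \in H$ and $\phi \in \mathfrak{D}^\sigma_H$ note first that $\mathfrak{D}^\sigma_H$ is $\restriction{\Ind_H^G(\sigma)}{H}$-invariant (if $\phi$ vanishes off $H$ then so does $g \mapsto \phi(h^{-1}g)$, since $h^{-1}g \in H \iff g \in H$), and $\mathrm{ev}_{1_G}\big((\Ind_H^G(\sigma)(h))\phi\big) = \phi(h^{-1}) = \sigma(h)\phi(1_G) = \sigma(h)\,\mathrm{ev}_{1_G}(\phi)$, using the covariance relation with $g=1_G$.

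Next I would prove the decomposition. For a coset $gH \in G/H$, the space $\big[\Ind_H^G(\sigma)(g)\big]\mathfrak{D}^\sigma_H$ consists exactly of those $\phi \in \Ind_H^G(\Hr{\sigma})$ supported on the single coset $gH$: indeed $\Ind_H^G(\sigma)(g)$ translates the support set $H$ to $gH$, and this is well-defined independent of the coset representative because $\mathfrak{D}^\sigma_H$ is $H$-invariant on the right by the previous paragraph. These subspaces are pairwise orthogonal, since if $\phi$ is supported on $g_1H$ and $\psi$ on $g_2 H$ with $g_1 H \neq g_2 H$, then $\langle \phi,\psi\rangle_{\Ind_H^G(\Hr{\sigma})} = \sum_{gH}\langle \phi(g),\psi(g)\rangle = 0$ term by term. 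Finally, for density: given $\phi \in \Ind_H^G(\Hr{\sigma})$ and $\varepsilon > 0$, finiteness of $\sum_{gH}\langle \phi(g),\phi(g)\rangle$ lets me pick a finite set $F$ of cosets with $\sum_{gH \notin F}\|\phi(g)\|^2 < \varepsilon^2$; the function $\phi_F$ agreeing with $\phi$ on $\bigcup_{gH \in F}gH$ and zero elsewhere lies in $\bigoplus_{gH \in F}\big[\Ind_H^G(\sigma)(g)\big]\mathfrak{D}^\sigma_H$ and $\|\phi - \phi_F\| < \varepsilon$. Hence the algebraic direct sum is dense, giving the closed orthogonal direct sum decomposition as claimed.

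\textbf{Main obstacle.} None of the steps is deep, but the one point requiring genuine care is verifying that $\big[\Ind_H^G(\sigma)(g)\big]\mathfrak{D}^\sigma_H$ depends only on the coset $gH$ and not on the chosen representative; this is where the right-$H$-invariance of $\mathfrak{D}^\sigma_H$ (itself a direct consequence of the covariance relation, not of any compactness) is used, and it is the hinge that makes the indexing by $G/H$ legitimate. Everything else is the standard bookkeeping of induced representations for open subgroups, where discreteness of $G/H$ removes all measure-theoretic subtleties.
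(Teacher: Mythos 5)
Your proposal is correct and follows essentially the same route as the paper: your evaluation map $\mathrm{ev}_{1_G}$ is just the inverse of the paper's unitary $\Phi:\xi\mapsto\phi_\xi$, and your coset-support characterization of $\big\lb\Ind_H^G(\sigma)(g)\big\rb\mathfrak{D}^\sigma_H$ together with the truncation/density argument is the same decomposition the paper obtains by expanding $\phi=\sum_{gH\in G/H}\big\lb\Ind_H^G(\sigma)(g)\big\rb\Phi(\phi(g))$. You also correctly isolate the one point needing care (independence of the coset representative via right-$H$-invariance of $\mathfrak{D}^\sigma_H$), which the paper handles by the same computation.
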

	\begin{proof} 
		First notice that $\mathfrak{D}^\sigma_H$ is a closed subspace of $\Ind_H^G(\Hr{\sigma})$ and therefore defines a Hilbert space. Now, for every $\xi \in \Hr{\sigma}$, let \begin{equation*}
		\phi_\xi(g)\qq=\qq \begin{cases}
		\qq\sigma(g^{-1})\xi&\mbox{ if } g\in H\\
		\q \q 0 &\mbox{ if } g\not\in H
		\end{cases}
		\end{equation*}
		 and notice that $\phi_\xi=0$ only if $\xi=0$ and that $\phi_\xi \in \Ind_H^G(\Hr{\sigma})$.
		 On the other hand, every function $\phi \in \mathfrak{D}^\sigma_H$ is uniquely determined by the value it takes on $1_G$ since $\phi(g)=0 \qq \forall g\in G -H$ and $\phi(g)=\sigma(g^{-1})\phi(1_G)$  $\forall g\in H$. Hence, we have that $\mathfrak{D}^\sigma_H=\{\phi_\xi\lvert \xi \in \Hr{\sigma}\}$ and the map
		\begin{equation*}
		\Phi\fct{\Hr{\sigma}}{\mathfrak{D}^\sigma_H}{\xi}{\phi_\xi}\qq
		\end{equation*}
		is a linear isomorphism between $\Hr{\sigma}$ and $\mathfrak{D}^\sigma_H$. Moreover, $\Phi$ is unitary since $\forall \xi,\eta \in \Hr{\sigma}$ we have
		\begin{equation*}
		\begin{split}
		\prods{\Phi(\xi)}{\Phi(\eta)}_{\Ind_H^G(\Hr{\sigma})}&=\s{gH\in G/H}{} \prods{\phi_\xi(g)}{\phi_\eta(g)}=\prods{\phi_\xi(1_G)}{\phi_\eta(1_G)}=\prods{\xi}{\eta}.
		\end{split}
		\end{equation*}
		Finally, $\Phi$ intertwines $\sigma$ and $\restr{\Ind_H^G(\sigma)}{H}$ since $\forall h\in H$, $\forall g\in G$ we have
		\begin{equation*}
		\begin{split}
		\Phi(\sigma(h)\xi)(g)&=\phi_{\sigma(h)\xi}(g)=\begin{cases}
		\sigma(g^{-1}h)\xi &\mbox{ if }g\in H\\
		\q \q 0&\mbox{ if } g\not\in H
		\end{cases}\\
		&=\phi_{\xi}(h^{-1}g)=\big\lb\Ind_H^G(\sigma)(h)\big\rb\phi_{\xi}(g).
		\end{split}
		\end{equation*}
		This proves the first part of the claim. To prove the second part of the claim, let $\phi\in \Ind_H^G(\Hr{\sigma})$ and let us show that 
		$$\phi=\s{gH\in G/H}{}\lb \Ind_H^G(\sigma)(g)\rb\Phi(\phi(g)).$$ 
		First, notice that $\Ind_H^G(\sigma)(g)\mathfrak{D}^\sigma_H$ is the set of functions of $\Ind_{H}^G(\Hr{\sigma})$ that are supported inside $gH$. On the other hand, $\forall g,t\in G$, $\forall h\in H$ we have that 
		\begin{equation*}
		\begin{split}
		\big\lb\big\lb\Ind_{H}^G(\sigma)(gh)\big\rb\Phi(\phi(gh))\big\rb(t)&=\Phi(\phi(gh))(h^{-1}g^{-1}t)\\
		&=\begin{cases}
		\sigma(t^{-1}gh)\phi(gh) &\mbox{ if }h^{-1}g^{-1}t\in H\\
		\q \q 0&\mbox{ if }h^{-1}g^{-1}t\not\in H
		\end{cases}\\
		&=\begin{cases}
		\sigma(t^{-1}g)\phi(g) &\mbox{ if }g^{-1}t\in H\\
		\q \q 0&\mbox{ if }g^{-1}t\not \in H
		\end{cases}\\
		&=\Phi(\phi(g))(g^{-1}t)=\big \lb\lb\Ind_{H}^G(\sigma)(g)\rb\Phi(\phi(g))\big\rb(t).
		\end{split}
		\end{equation*}
		This proves that the map $G/H\rightarrow \Ind_H^G(\Hr{\sigma}): gH\mapsto \big\lb\Ind_{H}^G(\sigma)(g)\big\rb\Phi(\phi(g))$ is well defined.
		On the other hand, $\forall g\in G$, $\forall h\in H$, we have
		\begin{equation*}
		\big\lVert\big\lb\big\lb \Ind_{H}^G(\sigma)(g)\big\rb\Phi(\phi(g))\big\rb(gh)\big\rVert=\norm{\Phi(\phi(g))(h)}{}= \norm{\sigma(h^{-1})\phi(g)}{}= \norm{\phi(g)}{}.
		\end{equation*} 
		Hence, we obtain that
		\begin{equation*}
		\begin{split}
		&\Bigg\lVert\s{gH\in G/H}{} \big\lb\Ind_{H}^G(\sigma)(g)\big\rb\Phi(\phi(g)) \Bigg\rVert_{\Ind^G_H(\Hr{\sigma})}^2 = \s{tH\in G/H}{} \Bigg \lVert \s{gH\in G/H}{} \big\lb \big\lb\Ind_{H}^G(\sigma)(g)\big\rb\Phi(\phi(g))\big \rb(t)\Bigg \rVert^2\\
		&\q\q= \s{tH\in G/H}{} \big \lVert \big\lb \big\lb\Ind_{H}^G(\sigma)(t)\big\rb\Phi(\phi(t))\big \rb(t)\big \rVert^2= \s{tH\in G/H}{}\norm{\phi(t)}{}^2=\norm{\phi}{\Ind_H^G(\Hr{\sigma})}^2.
		\end{split}
		\end{equation*}
		This proves that $\s{gH\in G/H}{} \big\lb\Ind_{H}^G(\sigma)(g)\big \rb\Phi(\phi(g))$ belongs to $\ub{ \underset{gH\in G/H}{\bigoplus}\qq\Ind_H^G(\sigma)(g)\mathfrak{D}^\sigma_H}$. Finally, a direct computation shows that
		\begin{equation*}
		\begin{split}
		&\Bigg\lVert\phi-\s{gH\in G/H}{} \Ind_{H}^G(\sigma)(g)\Phi(\phi(g))\Bigg\lVert^2_{\Ind_{H}^G(\sigma)}=\s{tH\in G/H}{}\Bigg\lVert\phi(t)-\s{gH\in G/H}{}  \Ind_{H}^G(\sigma)(g)\Phi(\phi(g))(t)\Bigg\lVert^2\\
		&\q\q =\s{tH\in G/H}{} \big\lVert\phi(t)-\Ind_{H}^G(\sigma)(t)\Phi(\phi(t))(t)\big\lVert^2= \s{tH\in G/H}{} \norm{\phi(t)- \phi (t)}{}^2=0.
		\end{split}
		\end{equation*}
	\end{proof}
	The following proposition provides a criterion to check irreducibility of representations induced from irreducible representations. 
	\begin{proposition}\label{a criterion so that the induced of an irreducible is irreducible}
		Let $G$ be a locally compact group, let $H\leq G$ be an open subgroup and let $\sigma$ be an irreducible representations of $H$. Then, $\Ind_H^G(\sigma)$ is irreducible if and only if every non-zero closed invariant subspace of $\Ind_H^G(\Hr{\sigma})$ contains a non-zero function supported in $H$. 
	\end{proposition}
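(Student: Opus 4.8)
The plan is to prove both directions via the correspondence from Lemma \ref{la correspondance Hsigma D}, which realizes $\Ind_H^G(\Hr\sigma)$ as the orthogonal sum of the $G$-translates of the subspace $\mathfrak{D}^\sigma_H$ of functions supported in $H$, and which intertwines $\restr{\Ind_H^G(\sigma)}{H}$ with $\sigma$.

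First I would prove the contrapositive of the ``only if'' direction. Suppose there is a non-zero closed invariant subspace $M\subseteq\Ind_H^G(\Hr\sigma)$ containing no non-zero function supported in $H$; equivalently $M\cap\mathfrak{D}^\sigma_H=\{0\}$. Then $M$ is a proper subspace (since $\mathfrak D^\sigma_H\neq\{0\}$ whenever $\sigma\neq 0$), but that alone is not enough — I must exhibit a \emph{non-zero proper} invariant subspace, which $M$ already is, so in fact $\Ind_H^G(\sigma)$ is reducible and we are done with this direction immediately. The only thing to check is that $M\neq\{0\}$ is genuinely proper: if $M=\Ind_H^G(\Hr\sigma)$ it would contain $\mathfrak D^\sigma_H$, contradicting $M\cap\mathfrak D^\sigma_H=\{0\}$ unless $\mathfrak D^\sigma_H=\{0\}$, i.e. $\Hr\sigma=\{0\}$, which is excluded. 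So this direction is essentially formal.

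For the ``if'' direction, assume every non-zero closed invariant subspace of $\Ind_H^G(\Hr\sigma)$ contains a non-zero function supported in $H$, and let $M\neq\{0\}$ be a closed invariant subspace; I want $M=\Ind_H^G(\Hr\sigma)$. By hypothesis, pick $0\neq\phi_0\in M$ with $\phi_0$ supported in $H$, so $\phi_0\in\mathfrak D^\sigma_H$. Under the unitary $\Phi^{-1}$ of Lemma \ref{la correspondance Hsigma D}, the set $M\cap\mathfrak D^\sigma_H$ corresponds to a non-zero closed subspace of $\Hr\sigma$; because $\Phi$ intertwines $\sigma$ with $\restr{\Ind_H^G(\sigma)}{H}$ and $M$ is $H$-invariant, $M\cap\mathfrak D^\sigma_H$ is $\restr{\Ind_H^G(\sigma)}{H}$-invariant, hence corresponds to an $H$-invariant closed subspace of $\Hr\sigma$. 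Since $\sigma$ is irreducible, that subspace is all of $\Hr\sigma$, so $\mathfrak D^\sigma_H\subseteq M$. Applying the operators $\Ind_H^G(\sigma)(g)$ for $g$ ranging over coset representatives of $G/H$ and using $G$-invariance of $M$, we get $\Ind_H^G(\sigma)(g)\mathfrak D^\sigma_H\subseteq M$ for every $g$; since $M$ is closed, the orthogonal-sum decomposition $\Ind_H^G(\Hr\sigma)=\overline{\bigoplus_{gH\in G/H}\Ind_H^G(\sigma)(g)\mathfrak D^\sigma_H}$ of Lemma \ref{la correspondance Hsigma D} forces $M=\Ind_H^G(\Hr\sigma)$. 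Hence $\Ind_H^G(\sigma)$ is irreducible.

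The only delicate point — and the place to be careful rather than a genuine obstacle — is the claim that $M\cap\mathfrak D^\sigma_H$ transports to an $H$-invariant subspace of $\Hr\sigma$: one needs that $M$ being closed and $\Ind_H^G(\sigma)(H)$-stable makes $M\cap\mathfrak D^\sigma_H$ closed (clear, as an intersection of closed sets) and $\Ind_H^G(\sigma)(H)$-stable (clear, since $\mathfrak D^\sigma_H$ is $H$-stable because functions supported in $H$ stay supported in $H$ under left translation by $H$), and then invoke irreducibility of $\sigma$ through the intertwiner $\Phi$. Everything else is a direct consequence of Lemma \ref{la correspondance Hsigma D}.
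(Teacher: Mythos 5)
Your proof is correct and follows essentially the same route as the paper: both directions rest on Lemma \ref{la correspondance Hsigma D}, using the intertwiner $\Phi$ and irreducibility of $\sigma$ to force $\mathfrak{D}^\sigma_H\subseteq M$, and then the orthogonal decomposition of $\Ind_H^G(\Hr{\sigma})$ into $G$-translates of $\mathfrak{D}^\sigma_H$ to conclude $M=\Ind_H^G(\Hr{\sigma})$. The only difference is that you spell out the (correct, and indeed worth noting) verification that $M\cap\mathfrak{D}^\sigma_H$ is closed and $H$-stable, which the paper leaves implicit.
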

	\begin{proof}
		The forward implication is trivial. For the other implication, let $M$ be a non-zero closed invariant subspace of $\Ind_H^G(\Hr{\sigma})$. By the hypothesis, $M\cap \mathfrak{D}_{H}^{\sigma}\not=\es$. In particular, since $\sigma$ is irreducible, the correspondence of Lemma \ref{la correspondance Hsigma D} implies that $\mathfrak{D}_H^{\sigma}\subseteq M$. We conclude from that same lemma that $M=\Ind_{H}^G(\Hr{\sigma})$ since $M$ is a closed invariant subspace of $\Ind_H^G(\Hr{\sigma})$ and since \begin{equation*}
		\Ind_H^G(\Hr{\sigma})=\ub{ \underset{gH\in G/H}{\bigoplus}\qq\big\lb\Ind_H^G(\sigma)(g)\big\rb\mathfrak{D}^\sigma_H}.
		\end{equation*}
	\end{proof}
	The following lemma provides a sufficient condition for induced representations to be inequivalent. 
	\begin{lemma}\label{les induites dinequivalente sont inequivalente}
		Let $G$ be a locally compact group, let $H\leq G$ be an open subgroup and let $\sigma_1,\sigma_2$ be two inequivalent irreducible representations of $H$. Suppose that there exists a subgroup $K\leq H$ such that $\mathfrak{D}^{\sigma_i}_H=(\Ind_H^G(\Hr{\sigma_i})\big)^{{K}}$. Then, $\Ind_H^G(\sigma_1)$ and  $\Ind_H^G(\sigma_2)$ are inequivalent. 
	\end{lemma}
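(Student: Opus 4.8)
The plan is to use the correspondence established in Lemma \ref{la correspondance Hsigma D} to reduce the inequivalence of the induced representations to the inequivalence of $\sigma_1$ and $\sigma_2$. Suppose for contradiction that there is a unitary intertwining operator $T:\Ind_H^G(\Hr{\sigma_1})\to \Ind_H^G(\Hr{\sigma_2})$, so $T\circ \Ind_H^G(\sigma_1)(g)=\Ind_H^G(\sigma_2)(g)\circ T$ for all $g\in G$. The key observation is that $T$ must carry the subspace $\mathfrak{D}^{\sigma_1}_H$ into $\mathfrak{D}^{\sigma_2}_H$: indeed, by the hypothesis $\mathfrak{D}^{\sigma_i}_H=\big(\Ind_H^G(\Hr{\sigma_i})\big)^{K}$ is the subspace of $K$-invariant vectors for $\Ind_H^G(\sigma_i)$, and any intertwiner sends $K$-invariant vectors to $K$-invariant vectors. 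Since $T$ is unitary, it restricts to a unitary isomorphism $\mathfrak{D}^{\sigma_1}_H\xrightarrow{\sim}\mathfrak{D}^{\sigma_2}_H$.

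Next I would transport this through the unitaries $\Phi_i:\Hr{\sigma_i}\to \mathfrak{D}^{\sigma_i}_H$ of Lemma \ref{la correspondance Hsigma D} to obtain a unitary $S:=\Phi_2^{-1}\circ \restr{T}{\mathfrak{D}^{\sigma_1}_H}\circ \Phi_1:\Hr{\sigma_1}\to\Hr{\sigma_2}$, and check that $S$ intertwines $\sigma_1$ and $\sigma_2$. For this, recall from that same lemma that $\Phi_i$ intertwines $\sigma_i$ with $\restr{\Ind_H^G(\sigma_i)}{H}$; hence for $h\in H$,
\begin{equation*}
S\sigma_1(h)=\Phi_2^{-1}T\Phi_1\sigma_1(h)=\Phi_2^{-1}T\,\Ind_H^G(\sigma_1)(h)\Phi_1=\Phi_2^{-1}\,\Ind_H^G(\sigma_2)(h)\,T\Phi_1=\sigma_2(h)\Phi_2^{-1}T\Phi_1=\sigma_2(h)S,
\end{equation*}
where the third equality uses that $T$ is a $G$-intertwiner (in particular an $H$-intertwiner) and that $T$ maps $\mathfrak{D}^{\sigma_1}_H$ to $\mathfrak{D}^{\sigma_2}_H$ so the composition makes sense at each stage. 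Thus $S$ witnesses $\sigma_1\cong\sigma_2$, contradicting the hypothesis that $\sigma_1$ and $\sigma_2$ are inequivalent. Therefore $\Ind_H^G(\sigma_1)$ and $\Ind_H^G(\sigma_2)$ are inequivalent.

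The only genuinely delicate point is the very first step: that an intertwiner $T$ must send $\big(\Ind_H^G(\Hr{\sigma_1})\big)^{K}$ into $\big(\Ind_H^G(\Hr{\sigma_2})\big)^{K}$. This is immediate once one notes that for $k\in K$, $\Ind_H^G(\sigma_2)(k)T\xi=T\,\Ind_H^G(\sigma_1)(k)\xi=T\xi$ whenever $\xi$ is $K$-invariant; so there is really no obstacle, merely bookkeeping. (One should also make sure the nondegeneracy $\mathfrak{D}^{\sigma_i}_H\neq\{0\}$, which holds since $\Hr{\sigma_i}\neq\{0\}$, so the transported operator $S$ is between nonzero spaces and the contradiction is non-vacuous.) I would keep the write-up to a few lines, citing Lemma \ref{la correspondance Hsigma D} for both the unitary $\Phi_i$ and its intertwining property, and citing the hypothesis for the identification of $\mathfrak{D}^{\sigma_i}_H$ with a space of $K$-invariant vectors.
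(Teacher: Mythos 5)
Your proposal is correct and follows essentially the same argument as the paper: assume a unitary intertwiner exists, observe that it must carry the $K$-invariant subspaces (which by hypothesis are the $\mathfrak{D}^{\sigma_i}_H$) onto one another, and conjugate by the unitaries $\Phi_i$ of Lemma \ref{la correspondance Hsigma D} to produce an intertwiner between $\sigma_1$ and $\sigma_2$, a contradiction. The extra details you spell out (why intertwiners preserve $K$-invariants, why the composition is well defined) are exactly the bookkeeping the paper leaves implicit.
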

	\begin{proof}
		The proof is by contradiction. Suppose that there exists a unitary operator ${\rm U}$ intertwining $\Ind_H^G(\sigma_1)$ and $\Ind_H^G(\sigma_2)$. In particular, we have ${\rm U}(\Ind_H^G(\Hr{\sigma_1})\big)^{{K}}=(\Ind_H^G(\Hr{\sigma_2})\big)^{{K}}$. Let $\Phi_i: \Hr{\sigma_i}\rightarrow \mathfrak{D}_H^{\sigma_i}$ be the correspondences given by Lemma \ref{la correspondance Hsigma D} and notice that $\Phi_i(\Hr{\sigma_i})=\mathfrak{D}_H^{\sigma_i}=(\Ind_H^G(\Hr{\sigma_i})\big)^{{K}}.$ In particular, since $\Phi_i$ is a unitary operator intertwining $\sigma_i$ and $\restriction{\Ind_H^G(\sigma_i)}{H}$, the unitary operator $\Phi_2^{-1}{\rm U}\Phi_1$ intertwines $\sigma_1$ and $\sigma_2$ which leads to a contradiction.
	\end{proof}

	\subsection{The bijective correspondence of Theorem A}\label{Section the bijective correspondence of theorem A}
	
	Let $G$ be a non-discrete unimodular totally disconnected locally compact group and let $\mathcal{S}$ be a generic filtration of $G$. The purposes of this section is to describe explicitly the bijective correspondence of Theorem \ref{la version paki du theorem de classification}. This requires some formalism that we now introduce.
	
	Let $U$ be conjugate to an element of $\mathcal{S}\lb l \rb$ and notice that if $\mathcal{S}$ factorizes at depth $l$, $N_G(U)/U$ is a finite group. Furthermore, notice that $$N_G(g^{-1}Ug)=g^{-1}N_G(U)g \q \forall g\in G$$ which implies that $$N_G(U)/U\cong N_G(V)/V\q \forall U,V\in C, \qq \forall C\in \mathcal{F}_\mathcal{S}.$$ This motivates the following definition.
	\begin{definition}
		For every conjugacy class $C\in \mathcal{F}_{\mathcal{S}}$, the \textbf{group of automorphisms} $\Aut_{G}(C)$ \textbf{of the seed} $C$ is the finite group $N_G(U)/U$ corresponding to any group $U\in C$. 
	\end{definition}
	\noindent For all $C\in \mathcal{F}_{\mathcal{S}}$ with height $l$ and every $U\in C$, we set
	\begin{equation*}
	\tilde{\mathfrak{H}}_{\mathcal{S}}(U)= \{W \mid \exists g\in G\mbox{ s.t. }gWg^{-1}\in \mathcal{S}\lb l-1 \rb \mbox{ and } U \subseteq  W \}.
	\end{equation*}
	 and let $p_{U}: N_G(U)\mapsto N_G(U)/U$ denote the quotient map. If $\mathcal{S}$ factorizes$^+$ at depth $l$, notice that $p_{U}(W)$ is a non-trivial (possibly not proper) subgroup of $\Aut_{G}(C)$ for every $W\in \tilde{\mathfrak{H}}_{\mathcal{S}}(U)$. Moreover, notice that $$\tilde{\mathfrak{H}}_{\mathcal{S}}(g^{-1}Ug)= g^{-1}\tilde{\mathfrak{H}}_{\mathcal{S}}(U)g \q \forall g\in G.$$ In particular, the subset of non-trivial subgroups of $\Aut_G(C)$ $$\mathfrak{H}_{\mathcal{S}}(C)=\{ p_U(W)\lvert W\in \tilde{\mathfrak{H}}_\mathcal{S}(U)\}$$ does not depend on our choice of representative $U\in C$. 
	\begin{definition}
		An irreducible representation $\omega$ of $\Aut_{G}(C)$ is called $\mathcal{S}$-\tg{standard} if it has no non-zero $H$-invariant vector for any $H\in \mathfrak{H}_{\mathcal{S}}(C)$.
	\end{definition}
	Our goal is to describe the irreducible representations of $G$ with seed $C$ from these $\mathcal{S}$-\tg{standard} representations. To this end, we recall that every representation $\omega$ of $N_G(U)/U$ can be lifted to a representation $\omega \circ p_U$ of $N_G(U)$ acting trivially on $U$ and with representation space $\Hr{\omega}$. Furthermore, notice that $\omega\circ p_U$ is irreducible if and only if $\omega$ is irreducible. We can now use the process of induction recalled in Section \ref{induced representation}. For shortening of the formulation, we denote by $$T(U,\omega)=\Ind_{N_G(U)}^G(\omega\circ p_{U})$$ the resulting representation of $G$. Our purpose will be to show that $T(U,\omega)$ is an irreducible representation of $G$ with seed $\mathcal{C}(U)$ if $\omega$ is $\mathcal{S}$-standard. Conversely, if $\pi$ is an irreducible representation of $G$ with seed $C$, notice that $\Hr{\pi}^{U}$ is a non-zero $N_G(U)$-invariant subspace of $\Hr{\pi}$ for every $U\in C$. In particular, the restriction $(\restriction{\pi}{N_G(U)}, \Hr{\pi}^U)$ defines a representation of $N_G(U)$ whose restriction to $U$ is trivial. This representation passes to the quotient group $N_G(U)/U$ and therefore provides a representation $\omega_\pi$ of $\Aut_{G}(C)$. 
	
	The following theorem describes the bijective correspondence of Theorem \ref{la version paki du theorem de classification} using the above formalism.
	\begin{theorem}\label{the theorem of classification for cuspidal representations}
		Let $G$ be a non-discrete unimodular totally disconnected locally compact group, let $\mathcal{S}$ be a generic filtration of $G$ that factorizes$^+$ at depth $l$ and let $C\in \mathcal{F}_{\mathcal{S}}$ be a conjugacy class at height $l$. Then, there exist a bijective correspondence between the equivalence classes of irreducible representations of $G$ with seed $C$ and the equivalence classes of $\mathcal{S}$-standard representations of $\Aut_G(C)$. More precisely, for every $U\in \mathcal{S}$ the following holds:
		\begin{enumerate}
			\item If $\pi$ is an irreducible representation of $G$ with seed $\mathcal{C}(U)$, the representation $(\omega_\pi, \Hr{\pi}^{U})$ of $\Aut_{G}(\mathcal{C}(U))$ is an $\mathcal{S}$-standard representation of $\Aut_{G}(\mathcal{C}(U))$ such that
			\begin{equation*}
			\pi \cong T(U,\omega_\pi)=\Ind_{N_G(U)}^{G}(\omega_\pi \circ p_{U}).
			\end{equation*}
			\item If $\omega$ is an $\mathcal{S}$-standard representation of $\Aut_{G}(\mathcal{C}(U))$, the representation $T(U,\omega)$ is an irreducible representation of $G$ with seed in $\mathcal{C}(U)$. 
		\end{enumerate}
		Furthermore, if $\omega_1$ and $\omega_2$ are $\mathcal{S}$-standard representations of $\Aut_{G}(\mathcal{C}(U))$, we have that $T(U,\omega_1)\cong T(U,\omega_2)$ if and only if $\omega_1\cong\omega_2$. In particular, the above two constructions are inverse of one an other.
	\end{theorem}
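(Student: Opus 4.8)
The plan is to assemble the correspondence from the machinery already developed, with the central new input being the identification, via Ol'shanskii's factorization$^+$, of $\mathfrak{D}^{\omega\circ p_U}_{N_G(U)}$ inside $\Ind_{N_G(U)}^G(\mathcal{H}_{\omega\circ p_U})$ with the space $\mathcal{H}_\pi^U$ of $U$-invariant vectors, so that irreducibility of $T(U,\omega)$ becomes the statement that $\omega$ has no $\mathfrak{H}_\mathcal{S}(\mathcal{C}(U))$-invariant vectors. First I would fix $U\in\mathcal{S}\lb l\rb$ (by the remarks after Definition \ref{definition olsh facto} it suffices to treat one representative of each conjugacy class) and establish the \emph{forward} direction: given an irreducible $\pi$ with seed $\mathcal{C}(U)$, the subspace $\mathcal{H}_\pi^U$ is nonzero, finite-dimensional and $N_G(U)$-invariant (using that $N_G(U)$ normalizes $U$), hence yields a representation $\omega_\pi$ of $\Aut_G(\mathcal{C}(U))=N_G(U)/U$. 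To see $\omega_\pi$ is $\mathcal{S}$-standard, note that a nonzero $p_U(W)$-invariant vector in $\mathcal{H}_\pi^U$ (for $W\in\tilde{\mathfrak{H}}_\mathcal{S}(U)$) would be a nonzero $W$-invariant vector of $\pi$; since $W$ is conjugate to an element of $\mathcal{S}\lb l-1\rb$ this would force $l_\pi<l$, contradicting that $\pi$ has depth $l$. For the isomorphism $\pi\cong T(U,\omega_\pi)$, I would use the matrix-coefficient computation from the proof of Lemma \ref{Lemma existence of a seed}: for $0\neq\xi\in\mathcal{H}_\pi^U$ the diagonal coefficient $\varphi_{\xi,\xi}$ is supported in $N_G(U)$ by Lemma \ref{les fonction continue de LSS sont a support compact}, so by the GNS/induction argument already used in the Proposition following Definition \ref{definition seed}, $\pi$ is induced from the subrepresentation of $N_G(U)$ carried by $\mathcal{H}_\pi^U$, which is exactly $\omega_\pi\circ p_U$.

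Next I would prove the \emph{backward} direction: if $\omega$ is $\mathcal{S}$-standard then $T(U,\omega)=\Ind_{N_G(U)}^G(\omega\circ p_U)$ is irreducible with seed $\mathcal{C}(U)$. Here I invoke Lemma \ref{la correspondance Hsigma D} to identify $\mathfrak{D}^{\omega\circ p_U}_{N_G(U)}$ with $\mathcal{H}_\omega$ as $N_G(U)$-representations, and Proposition \ref{a criterion so that the induced of an irreducible is irreducible}, which reduces irreducibility to showing every nonzero closed invariant subspace $M\subseteq\Ind_{N_G(U)}^G(\mathcal{H}_{\omega\circ p_U})$ meets $\mathfrak{D}^{\omega\circ p_U}_{N_G(U)}$. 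The key observation is that $\mathfrak{D}^{\omega\circ p_U}_{N_G(U)}$ coincides with $\big(\Ind_{N_G(U)}^G(\mathcal{H}_{\omega\circ p_U})\big)^{U}$: since $\omega\circ p_U$ is trivial on $U$, a function $\phi$ in the induced space is $U$-\emph{left}-fixed iff it is supported on $N_G(U)$, because factorization$^+$ at depth $l$ gives $W\subseteq N_G(U)$ for the relevant $W\supseteq U$ conjugate to elements of $\mathcal{S}\lb l-1\rb$, and condition 1 of Definition \ref{definition olsh facto} forces any coset $gN_G(U)$ with $g\notin N_G(U)$ to be "swallowed" by such a $W$, on which $\phi$ averages to zero after the $U$-invariance forces cancellation — more precisely one runs the argument of Lemma \ref{les fonction continue de LSS sont a support compact} on the $\mathcal{H}_{\omega\circ p_U}$-valued matrix coefficients. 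Given this, $M$ being invariant and nonzero has a nonzero $U$-invariant vector (the same averaging trick as in Lemma \ref{Every invariant space in the regular contains a non trivial invariant vector} applied inside $M$), and this vector lies in $\mathfrak{D}^{\omega\circ p_U}_{N_G(U)}\cap M$ unless it is killed by being $p_U(W)$-invariant for some $W\in\tilde{\mathfrak{H}}_\mathcal{S}(U)$ — which cannot happen precisely because $\omega$ is $\mathcal{S}$-standard. That $T(U,\omega)$ has seed $\mathcal{C}(U)$ (not lower) follows from the same $\mathcal{S}$-standardness together with the emptiness of $N_G(U,W)$ for $W$ of strictly smaller height, as in the proof of Corollary \ref{LSS is the finite sum of irreducible rep with seed S}.

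Finally, for the \emph{bijectivity}, the two constructions are inverse on the nose: starting from $\omega$ and forming $T(U,\omega)$, Lemma \ref{la correspondance Hsigma D} identifies $\mathcal{H}_{T(U,\omega)}^{U}=\mathfrak{D}^{\omega\circ p_U}_{N_G(U)}\cong\mathcal{H}_\omega$ as $\Aut_G(\mathcal{C}(U))$-modules, so $\omega_{T(U,\omega)}\cong\omega$; and starting from $\pi$ we showed $\pi\cong T(U,\omega_\pi)$. For the independence-of-choices / inequivalence statement I would apply Lemma \ref{les induites dinequivalente sont inequivalente} with $K=U$: the hypothesis $\mathfrak{D}^{\sigma_i}_{N_G(U)}=\big(\Ind_{N_G(U)}^G(\mathcal{H}_{\sigma_i})\big)^U$ is exactly the identification just established, so inequivalent $\mathcal{S}$-standard $\omega_1,\omega_2$ give inequivalent $T(U,\omega_1),T(U,\omega_2)$; the converse is immediate from $\omega_{T(U,\omega_i)}\cong\omega_i$. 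Combined with Theorem \ref{theorem de correspondance entre les subre repre se LS U et les irrep avec seed CU}, which already tells us every irreducible with seed $\mathcal{C}(U)$ appears and there are only finitely many, this closes the correspondence.

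\medskip
The main obstacle I anticipate is the precise verification that $\big(\Ind_{N_G(U)}^G(\mathcal{H}_{\omega\circ p_U})\big)^{U}=\mathfrak{D}^{\omega\circ p_U}_{N_G(U)}$, i.e.\ that $U$-left-invariant vectors in the induced space are exactly those supported on $N_G(U)$ — this is where both conditions 1 and 2 of Definition \ref{definition olsh facto} and the factorization$^+$ refinement $W\subseteq N_G(U)$ all get used simultaneously, and it must be done carefully for $\mathcal{H}_{\omega\circ p_U}$-valued functions rather than scalar ones, adapting Lemma \ref{les fonction continue de LSS sont a support compact} to the vector-valued setting.
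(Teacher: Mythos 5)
Your overall architecture matches the paper's (forward: $\pi\mapsto\omega_\pi$; backward: $\omega\mapsto T(U,\omega)$ via Proposition \ref{a criterion so that the induced of an irreducible is irreducible}; uniqueness via Lemma \ref{les induites dinequivalente sont inequivalente} with $K=U$), but your backward direction is genuinely more direct than the paper's. The paper first embeds $T(U,\omega)$ into $T_{\mathcal{S},U}$ by a scalar GNS matrix-coefficient computation (Proposition \ref{les induite de S standard sont des sous rep de TS}, using Lemma \ref{les fonction U invarainte de LS U on inverse dans LsU}) and only then transports support constraints through the intertwiner to get $\Hr{T(U,\omega)}^U=\mathfrak{D}^{\omega\circ p_U}_{N_G(U)}$ and irreducibility. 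You instead run the argument of Lemma \ref{les fonction continue de LSS sont a support compact} directly on the $\Hr{\omega}$-valued function $\phi$: for $g\notin N_G(U)$ condition 1 of Definition \ref{definition olsh facto} yields $W$ with $U\subseteq W\subseteq g^{-1}UgU$, so $\phi(gh)=\phi(g)$ for $h\in W$, while factorization$^+$ gives $W\subseteq N_G(U)$, so $\int_W\phi(gh)\,\diff\mu(h)=\big(\int_W\omega\circ p_U(h^{-1})\,\diff\mu(h)\big)\phi(g)=0$ by $\mathcal{S}$-standardness, whence $\phi(g)=0$; and the averaging trick of Lemma \ref{Every invariant space in the regular contains a non trivial invariant vector} does transfer to the induced space since evaluation at a point is bounded there. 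This works, bypasses $\mathcal{L}_{\mathcal{S}}(U)$ entirely, and your seed argument via emptiness of $N_G(U,V)$ for $V$ of smaller height also goes through (the parenthetical ``unless it is killed by being $p_U(W)$-invariant'' is superfluous: the averaged vector is automatically nonzero and $U$-invariant, hence in $\mathfrak{D}^{\omega\circ p_U}_{N_G(U)}\cap M$).

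The genuine gap is in the forward direction: you never prove that $\omega_\pi$ is irreducible. This is not cosmetic. First, $\mathcal{S}$-standardness is by definition a property of \emph{irreducible} representations of $\Aut_G(\mathcal{C}(U))$, and you only verify the absence of $p_U(W)$-invariant vectors. Second, and more seriously, your derivation of $\pi\cong T(U,\omega_\pi)$ silently uses it: the GNS argument attached to $\restriction{\varphi_{\xi,\xi}}{N_G(U)}$ (the one from the proposition proving point 2 of Theorem \ref{la version paki du theorem de classification}) only produces the cyclic $N_G(U)$-subrepresentation of $\Hr{\pi}^{U}$ generated by $\xi$, and identifying that with all of $\omega_\pi\circ p_U$ requires $\xi$ to be $N_G(U)$-cyclic in $\Hr{\pi}^{U}$ --- which is exactly the irreducibility statement you skipped. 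The paper closes this in the proof of Lemma \ref{every irred rep with seed S is the induced rep}: for any nonzero $\eta$ in a closed invariant subspace $M\subseteq\Hr{\pi}^{U}$, the coefficient $\varphi_{\xi,\eta}$ is nonzero (cyclicity of $\xi$ for the irreducible $\pi$) and supported in $N_G(U)$ by Lemma \ref{les fonction continue de LSS sont a support compact}, so $M^\perp\cap\Hr{\pi}^{U}=\{0\}$ and $\omega_\pi$ is irreducible. You need this step, or a substitute (for instance: decompose the finite-dimensional $\omega_\pi$ into irreducibles, observe each summand inherits the no-invariant-vector property, apply your backward direction and GNS uniqueness to get $\pi\cong T(U,\omega_{i_0})$ for one summand, then conclude $\omega_\pi\cong\omega_{T(U,\omega_{i_0})}\cong\omega_{i_0}$ is irreducible); without it, neither the standardness of $\omega_\pi$ nor the isomorphism $\pi\cong T(U,\omega_\pi)$ is established.
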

	The structure of the proof is given as follows: 
	\noindent\begin{minipage}[t]{5\linewidth}%
		\centering
		\mbox{%
			\begin{minipage}[t]{\linewidth-2\fboxsep-2\fboxrule}
				\hspace*{-24em}\raisebox{-13em}{ 
					\tikzstyle{block} = [rectangle, text centered,
					minimum height=10mm,node distance=8em,  rounded corners=1mm]
					\begin{tikzpicture}
					\node[draw,text width=4.5cm] at (0,0) [block ] (n01) {Irreducible subrepresentations of $T_{\mathcal{S},U}$};
					\node[draw,text width=4.5cm] at (3.5,-2) [block ] (n02) {Representations $T(U,\omega)$};
					\node[draw,text width=4.5cm] at (-3.5,-2) [block ] (n03) {Irreducible representations with seed $\mathcal{C}(U)$};

					\path(n02.east)  edge[thick, -latex,out=30,in=0,looseness=1.5]node[anchor=center,fill=white] {\circled{2}}(n01.east) ;
					\path(n01.south) edge[ thick, -latex,out=330,in=0,looseness=1.5]node[anchor=center,fill=white] {\circled{1}}(n03.east);
					\path(n03.west)  edge[thick, -latex,out=150,in=180,looseness=1.5]node[anchor=center,fill=white] {\circled{1}}(n01.west);
					\path(n03) edge[thick, -latex,out=330,in=210,looseness=1]node[anchor=center,fill=white] {\circled{3}}(n02);
					\end{tikzpicture}  
					}
		\end{minipage}}
	\end{minipage}\hfill
	\begin{center}
		\circled{1} Theorem \ref{theorem de correspondance entre les subre repre se LS U et les irrep avec seed CU} $\mbox{ }$ \circled{2} Proposition \ref{les induite de S standard sont des sous rep de TS}  $\mbox{ }$ \circled{3} Lemma \ref{every irred rep with seed S is the induced rep}
	\end{center}
	The last part of the statement is then handled by Lemma \ref{unicitee des induite par res S standard} below.
	\newpage
	\noindent We start the proof with an intermediate result.
	\begin{lemma}\label{les fonction U invarainte de LS U on inverse dans LsU}
		Let $U\leq G$ be conjugate to an element of $\mathcal{S}\lb l \rb$, suppose that $\mathcal{S}$ factorizes $^+$ at depth $l$, let $\varphi$ be a $U$-bi-invariant function of $\mathcal{L}_{\mathcal{S}}(U)$ and let $\widecheck{\varphi}$ be the function defined by $\widecheck{\varphi}(g)=\varphi(g^{-1})$ $\forall g\in G$. Then, $\widecheck{\varphi}$ is a $U$-bi-invariant function of $\mathcal{L}_{\mathcal{S}}(U)$.
	\end{lemma}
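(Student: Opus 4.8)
The goal is to verify that $\widecheck{\varphi}$ satisfies the three defining properties of (the dense subset generating) $\mathcal{L}_{\mathcal{S}}(U)$, namely $U$-right-invariance, $V$-left-invariance for some $V$ conjugate to an element of $\mathcal{S}$, and the vanishing of the partial integrals over every $W$ conjugate to an element of $\mathcal{S}\lb l-1\rb$ with $U\leq W$. Since $\varphi$ is $U$-bi-invariant, the identity $\widecheck{\varphi}(gh)=\varphi(h^{-1}g^{-1})=\varphi(g^{-1})=\widecheck{\varphi}(g)$ for $h\in U$ (and the analogous computation on the left) shows immediately that $\widecheck{\varphi}$ is again $U$-bi-invariant, so the first two properties hold with $V=U$. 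The only real content is the third property.

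For the integral condition, fix $W$ conjugate to an element of $\mathcal{S}\lb l-1\rb$ with $U\subseteq W$, and fix $g\in G$. Using unimodularity of $G$ (so that $\diff\mu(h)=\diff\mu(h^{-1})$ under inversion) one rewrites
\begin{equation*}
\int_{W}\widecheck{\varphi}(gh)\,\diff\mu(h)=\int_{W}\varphi(h^{-1}g^{-1})\,\diff\mu(h)=\int_{W}\varphi(hg^{-1})\,\diff\mu(h),
\end{equation*}
so the task reduces to showing that $\varphi$ has vanishing integral over \emph{right} translates of $W$, i.e. $\int_{W}\varphi(hk)\,\diff\mu(h)=0$ for all $k\in G$, whereas the hypothesis on $\varphi$ only gives vanishing of $\int_{W}\varphi(kh)\,\diff\mu(h)$ over \emph{left} translates. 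To bridge this gap I would use the factorization$^+$ hypothesis at depth $l$: for $U\subseteq W$ with $W$ conjugate to an element of $\mathcal{S}\lb l-1\rb$, the condition $W\subseteq N_G(U)$ holds, so conjugation by any $w\in W$ preserves $U$, hence $\varphi(whw^{-1}\cdot)$ makes sense relative to the same $U$-invariance structure; more to the point, $W$-bi-invariance phenomena and the normalization $wW=Ww$ (as $W$ is a group) let me convert a left-translated integral into a right-translated one. Concretely, writing the support bound from Lemma \ref{les fonction continue de LSS sont a support compact} — $\varphi$ is supported in $N_G(U)$ — together with $W\leq N_G(U)$, I would argue that $\varphi$ restricted to the relevant cosets is $W$-bi-invariant or transforms by a character, and then $\int_W\varphi(hk)\,\diff\mu(h)$ equals (up to the constant $\mu(W)/\mu(U)$ and left/right bookkeeping) a sum of values of a left-translated integral of the form appearing in the hypothesis, each of which vanishes.

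The main obstacle is precisely this left/right asymmetry: the defining property (3) of $\mathcal{L}_{\mathcal{S}}(U)$ is phrased with $W$ on the right of $g$ (i.e. $\int_W\varphi(gh)\,\diff\mu(h)=0$), and inversion turns it into a statement about $W$ on the left, so one cannot simply quote the hypothesis. The resolution has to exploit that $W$ is a \emph{subgroup} containing $U$ and normalizing $U$ (this is where factorization$^+$, as opposed to mere factorization, is essential), so that the orbit structure of $W$ acting by left and by right multiplication on the $U$-cosets inside $N_G(U)$ is the same; combined with unimodularity (equal left and right Haar measure on the compact group $W$) this makes the two integral conditions equivalent for functions supported in $N_G(U)$. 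I would also invoke Lemma \ref{representatiove of elements in LSS} to know that $\widecheck{\varphi}$, once shown to lie in the $L^2$-closure, automatically has the extra vanishing over $W$ conjugate to elements of $\mathcal{S}\lb l\rb$ as well, but for membership in $\mathcal{L}_{\mathcal{S}}(U)$ it suffices to exhibit $\widecheck{\varphi}$ as satisfying properties (1)--(3), which the above computation does.
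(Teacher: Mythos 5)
Your reduction of the problem is on target: the $U$-bi-invariance of $\widecheck{\varphi}$ is immediate, $\widecheck{\varphi}\in L^2(G)$ by unimodularity, the support bound from Lemma \ref{les fonction continue de LSS sont a support compact} together with $W\subseteq N_G(U)$ (this is where factorization$^+$ enters) disposes of all $g\notin N_G(U)$, and the only real issue is the left/right asymmetry you point out. But the step you propose to bridge it does not work as described. You assert that ``$\varphi$ restricted to the relevant cosets is $W$-bi-invariant or transforms by a character'' and that left- and right-translate integral conditions are ``equivalent for functions supported in $N_G(U)$'' for the \emph{fixed} subgroup $W$. Neither claim is justified: $\varphi$ is only $U$-bi-invariant, and the equivalence is false in general. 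Since $U$ is normal in $N_G(U)$, a $U$-bi-invariant function supported in $N_G(U)$ descends to a function on the finite group $Q=N_G(U)/U$, and $P=W/U$ is a subgroup of $Q$ which need not be normal; for a non-normal $P$ (say $Q=S_3$ and $P$ of order $2$) there are functions whose sums over all left cosets $xP$ vanish while some sum over a right coset $Px$ does not, because $\mathds{1}_{Px}$ is not constant on left cosets and hence not in the span of the $\mathds{1}_{xP}$. Nothing in the axioms forces $W/U$ to be normal in $N_G(U)/U$, so your ``orbit structure'' argument cannot close the gap.

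The missing idea — and the paper's actual argument — is that condition (3) in the definition of $\mathcal{L}_{\mathcal{S}}(U)$ quantifies over \emph{all} subgroups conjugate to an element of $\mathcal{S}\lb l-1\rb$ and containing $U$, not just the fixed $W$. For $g\in N_G(U)$ one computes, using inversion-invariance on the compact group $W$ and then the measure-preserving change of variables $h\mapsto g^{-1}hg$ (unimodularity again), that $\int_W\widecheck{\varphi}(gh)\,\diff\mu(h)=\int_W\varphi(hg^{-1})\,\diff\mu(h)=\int_{gWg^{-1}}\varphi(g^{-1}h)\,\diff\mu(h)$. Because $g$ normalizes $U$ and $U\subseteq W$, one has $U=gUg^{-1}\subseteq gWg^{-1}$, and $gWg^{-1}$ is still conjugate to an element of $\mathcal{S}\lb l-1\rb$; so the hypothesis on $\varphi$ applies verbatim to the conjugated subgroup and the integral vanishes. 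In short, instead of proving a (false) left/right equivalence for $W$ itself, you should trade $W$ for $gWg^{-1}$ and invoke the hypothesis for that conjugate; with that replacement the rest of your outline goes through.
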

	\begin{proof}
		The function $\widecheck{\varphi}$ is clearly $U$-bi-invariant. Furthermore, since $G$ is unimodular and since $\varphi\in L^2(G)$, notice that $\widecheck{\varphi}\in L^2(G)$. On the other hand, since $\varphi$ is a $U$-bi-invariant function of $\mathcal{L}_{\mathcal{S}}(U)$, Lemma \ref{les fonction continue de LSS sont a support compact} ensures that $\varphi$ and therefore $\widecheck{\varphi}$ is supported inside $N_G(U)$. Now let $W$ be conjugate to an element of $\mathcal{S}\lb l-1\rb$ and such that $U\leq W$. Since $\mathcal{S}$ factorizes$^+$ at depth $l$, we have that $W\leq N_G(U)$. In particular, for every $g\not\in N_G(U)$, $gW\cap N_G(U)=\es$ and $\int_{W}\widecheck{\varphi}(gh)\diff \mu (h)=0.$ On the other hand, if $g\in N_G(U)$, notice that $U=gUg^{-1}\subseteq gWg^{-1}$ which implies that
		$$\int_W\widecheck{\varphi}(gh)\diff\mu(h)=\int_{W} \varphi(hg^{-1})\diff \mu(h)=\int_{gWg^{-1}} \varphi(g^{-1}h)\diff \mu(h)=0.$$
	\end{proof}

	\begin{proposition}\label{les induite de S standard sont des sous rep de TS}
		Let $U\leq G$ be conjugate to an element of $\mathcal{S}\lb l \rb$, suppose that $\mathcal{S}$ factorizes$^+$ at depth $l$ and let $\omega$ be an $\mathcal{S}$-standard representation of $\Aut_{G}(\mathcal{C}(U))$. Then, $\mathfrak{D}^{\omega \circ p_U}_{N_G(U)}=\Hr{T(U,\omega)}^U$ and $T(U,\omega)$ is equivalent to an irreducible subrepresentation of $T_{\mathcal{S},U}$.
	\end{proposition}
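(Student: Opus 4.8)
The plan is to prove the two assertions in turn, the first being essentially a bookkeeping computation about the induced space and the second following from the irreducibility criterion of Proposition \ref{a criterion so that the induced of an irreducible is irreducible} together with the structure theory of $T_{\mathcal{S},U}$ established in Theorem \ref{theorem de correspondance entre les subre repre se LS U et les irrep avec seed CU}.

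First I would identify the $U$-invariant vectors of $T(U,\omega)=\Ind_{N_G(U)}^G(\omega\circ p_U)$. Recall from Lemma \ref{la correspondance Hsigma D} that $\Ind_{N_G(U)}^G(\Hr{\omega\circ p_U})$ decomposes as the Hilbert-space direct sum $\overline{\bigoplus_{gN_G(U)}\lb T(U,\omega)(g)\rb\mathfrak{D}}$ over the cosets $gN_G(U)\in G/N_G(U)$, where $\mathfrak{D}=\mathfrak{D}^{\omega\circ p_U}_{N_G(U)}$ consists of the functions supported on $N_G(U)$. The inclusion $\mathfrak{D}\subseteq \Hr{T(U,\omega)}^U$ is immediate: a function $\phi$ supported on $N_G(U)$ satisfies $\phi(u^{-1}g)=\phi(g)$ for $u\in U$, since $U\trianglelefteq N_G(U)$ and $\phi$ transforms on the left by $\sigma(\text{\rm }\cdot)=\omega\circ p_U$ which kills $U$. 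For the reverse inclusion, suppose $\phi$ is $U$-invariant; writing $\phi=\sum_{gN_G(U)}\phi_g$ with $\phi_g$ supported on $gN_G(U)$, one checks that $U$-invariance forces each $\phi_g$ to be $U$-invariant separately (the supports $UgN_G(U)$ are either equal to $gN_G(U)$ or disjoint from it, but the point is that $U$ permutes the cosets $gN_G(U)$ and fixes a coset $gN_G(U)$ only when $U\subseteq gN_G(U)g^{-1}=N_G(gUg^{-1})$, i.e.\ when $gUg^{-1}\subseteq U$ hence $gUg^{-1}=U$ by unimodularity, i.e.\ $g\in N_G(U)$). For $g\notin N_G(U)$ the $U$-orbit of $gN_G(U)$ is nontrivial and averaging shows the corresponding component of a $U$-invariant vector must vanish on each such coset unless it is constant on a nontrivial orbit, which together with square-integrability forces it to be zero; so only the component supported on $N_G(U)$ survives. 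This gives $\Hr{T(U,\omega)}^U=\mathfrak{D}$, and via the unitary $\Phi$ of Lemma \ref{la correspondance Hsigma D} this space is $\Aut_G(\mathcal{C}(U))$-equivariantly identified with $\Hr{\omega}$.

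Next, to show $T(U,\omega)$ is equivalent to a subrepresentation of $T_{\mathcal{S},U}$, I would produce a nonzero $U$-bi-invariant function in $\mathcal{L}_{\mathcal{S}}(U)$ arising as a diagonal matrix coefficient of $T(U,\omega)$ and then invoke uniqueness of the GNS construction exactly as in the proof of Lemma \ref{irreducible representation with seed S are in the regular rep LSS}. Concretely, pick $0\neq\xi\in\Hr{\omega}$, let $\phi_\xi=\Phi(\xi)\in\mathfrak{D}\subseteq\Hr{T(U,\omega)}^U$, and set $\varphi(g)=\prods{T(U,\omega)(g)\phi_\xi}{\phi_\xi}$. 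This $\varphi$ is $U$-bi-invariant and, because $\phi_\xi$ is supported on $N_G(U)$ and $N_G(U,V)$ is compact for all relevant $V$, it is compactly supported in $N_G(U)$. The three defining properties of $\mathcal{L}_{\mathcal{S}}(U)$ must be verified: $U$-right-invariance and left-$U$-invariance are clear, and for the vanishing condition against $W$ conjugate to an element of $\mathcal{S}\lb l-1\rb$ with $U\subseteq W$ one uses that $\mathcal{S}$ factorizes$^+$ so that $W\subseteq N_G(U)$, hence $W$ normalizes $U$ and the orbit/averaging argument from Lemma \ref{les fonction U invarainte de LS U on inverse dans LsU} applies verbatim to show $\int_W\varphi(gh)\,\diff\mu(h)=0$ — this is precisely where $\mathcal{S}$-standardness of $\omega$ enters, since it guarantees $\phi_\xi$ has no nonzero $p_U(W)$-invariant component to obstruct the vanishing, equivalently that $\varphi$ is genuinely orthogonal to all the indicator functions $\mathds{1}_{gW}$. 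Thus $\varphi\in\mathcal{L}_{\mathcal{S}}(U)$, and since $T(U,\omega)$ is irreducible (by Proposition \ref{a criterion so that the induced of an irreducible is irreducible}, using that $\omega\circ p_U$ is irreducible and that $\mathcal{S}$-standardness together with the first part of this proposition will force any nonzero invariant subspace to meet $\mathfrak{D}$), the coefficient $\varphi$ generates a copy of $T(U,\omega)$ inside $\mathcal{L}_{\mathcal{S}}(U)=\Hr{T_{\mathcal{S},U}}$ by uniqueness of GNS.

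The main obstacle I anticipate is the orbit-counting step in the first part: one must argue carefully that $U$ fixes the coset $gN_G(U)\in G/N_G(U)$ if and only if $g\in N_G(U)$ — using unimodularity and compactness of $U$ to upgrade $gUg^{-1}\subseteq U$ to equality — and then that on a nontrivial finite $U$-orbit of cosets an $\ell^2$ section averaging to itself under $U$ must vanish. The rest is a matter of transporting the defining conditions of $\mathcal{L}_{\mathcal{S}}(U)$ through the explicit description of $\Ind_{N_G(U)}^G$ and citing the GNS uniqueness, and the verification of the vanishing condition is a direct reprise of Lemma \ref{les fonction U invarainte de LS U on inverse dans LsU} once $W\subseteq N_G(U)$ is in hand.
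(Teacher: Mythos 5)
Your second half (the construction of the compactly supported $U$-bi-invariant coefficient and the appeal to GNS) is essentially the paper's argument for embedding $T(U,\omega)$ into $T_{\mathcal{S},U}$: one computes $\prods{T(U,\omega)(g)\phi_\xi}{\phi_\xi}=\varphi_{\xi,\xi}(g)$, checks the vanishing condition using $W\subseteq N_G(U)$ (factorization$^+$) for $g\notin N_G(U)$ and $\mathcal{S}$-standardness (the averaging operator $\int_W\omega\circ p_U(h)\,\diff\mu(h)$ is the projection onto the $p_U(W)$-invariants, hence zero) for $g\in N_G(U)$, passes to $\widecheck{\varphi}_{\xi,\xi}$ via Lemma \ref{les fonction U invarainte de LS U on inverse dans LsU}, and matches matrix coefficients. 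Note, however, that GNS uniqueness only requires cyclicity of $\phi_\xi$, which Lemma \ref{la correspondance Hsigma D} gives because $\omega$ is irreducible; you do not need irreducibility of $T(U,\omega)$ at this stage, and your parenthetical justification of irreducibility (via ``the first part of this proposition'' plus an unproved claim that every invariant subspace meets $\mathfrak{D}^{\omega\circ p_U}_{N_G(U)}$) is circular as you have ordered things, since in the paper that claim is obtained only after the embedding, by pushing an invariant subspace into $\mathcal{L}_{\mathcal{S}}(U)$ and applying Lemma \ref{Every invariant space in the regular contains a non trivial invariant vector}.

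The genuine gap is your first part. The equality $\mathfrak{D}^{\omega\circ p_U}_{N_G(U)}=\Hr{T(U,\omega)}^U$ is not a bookkeeping fact about induction, and your orbit/averaging argument fails: since $N_G(U)$ is open and $U$ compact, the $U$-orbit of any coset $gN_G(U)$ is \emph{finite}, so square-integrability places no constraint on a $U$-invariant section supported on a nontrivial orbit. The actual constraint at a coset $gN_G(U)$ with $g\notin N_G(U)$ is that the value $\phi(g)$ be fixed by $\omega\circ p_U\big(g^{-1}Ug\cap N_G(U)\big)$, and these subgroups of $\Aut_G(\mathcal{C}(U))$ are not among the subgroups $p_U(W)$, $W\in\tilde{\mathfrak{H}}_{\mathcal{S}}(U)$, excluded by $\mathcal{S}$-standardness, so nothing in your argument rules out such invariant vectors. (Your stabilizer computation is also off: $U$ fixes the coset $gN_G(U)$ if and only if $g^{-1}Ug\subseteq N_G(U)$, which does not imply $gUg^{-1}\subseteq U$.) The paper's route is the reverse of yours precisely to avoid this: it first establishes $T(U,\omega)\hookrightarrow T_{\mathcal{S},U}$, and then uses the support constraint of Lemma \ref{les fonction continue de LSS sont a support compact} -- a $gUg^{-1}$-left-invariant element of $\mathcal{L}_{\mathcal{S}}(U)$ is supported in $N_G(gUg^{-1},U)=gN_G(U)$ -- applied to the images of $\big\lb T(U,\omega)(g)\big\rb\mathfrak{D}^{\omega\circ p_U}_{N_G(U)}$ under the intertwiner, to force any $U$-invariant vector orthogonal to $\mathfrak{D}^{\omega\circ p_U}_{N_G(U)}$ to vanish; the same mechanism then yields irreducibility via Proposition \ref{a criterion so that the induced of an irreducible is irreducible}. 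To repair your proof you must reorder it accordingly, or else supply a genuinely new argument excluding $\omega\circ p_U\big(g^{-1}Ug\cap N_G(U)\big)$-invariant vectors, which you have not done.
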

	\begin{proof}
		We start by showing that $T(U,\omega)$ is equivalent to a subrepresentation of $T_{\mathcal{S},U}$. Let $\xi\in \Hr{\omega}$ be non-zero and consider the function 
		$$\phi_\xi:G\rightarrow \C : g\mapsto \begin{cases}
		\omega\circ p_U(g^{-1})\xi &\mbox{ if }g\in N_G(U)\\
		\q \q \qq 0  &\mbox{ if }g\not\in N_G(U)
		\end{cases}.$$
		By the definition, we have that $\phi_\xi \in \Hr{T(U,\omega)}$ and since $\omega$ is irreducible, Lemma \ref{la correspondance Hsigma D} ensures that $\phi_\xi$ is cyclic for $T(U,\omega)$. We consider the diagonal matrix coefficient $\varphi_{\phi_\xi,\phi_\xi}(\cdot)=\prods{T(U,\omega)(\cdot)\phi_\xi}{\phi_\xi}$ of $T(U,\omega)$ and let 
		\begin{equation*}
		\varphi_{\xi,\xi}:G\rightarrow \C : g\mapsto\begin{cases}
		\prods{\omega \circ p_U(g)\xi}{\xi} &\mbox{ if }g\in N_G(U)\\
		\q\q \q 0 &\mbox{ if }g\not\in N_G(U)
		\end{cases}.
		\end{equation*} A straight computation shows for every $g\in G$ that
		\begin{equation}\label{equation piece 1 of GNS implication}
		\begin{split}
		\varphi_{\phi_\xi,\phi_\xi}(g)&=\prods{T(U,\omega)(g)\phi_\xi}{\phi_\xi}_{\Ind_{N_G(U)}^G(\Hr{\omega \circ p_U})}=\s{tH\in G/N_G(U)}{}\prods{T(U,\omega)(g)\phi_\xi(t)}{\phi_\xi(t)}\\
		&=\s{tH\in G/N_G(U)}{}\prods{\phi_\xi(g^{-1}t)}{\phi_\xi(t)} =\prods{\phi_\xi(g^{-1})}{\phi_\xi(1_G)}=\varphi_{\xi,\xi}(g).
		\end{split}
		\end{equation}
		We are going to show that $\varphi_{\xi,\xi}$ is a $U$-left-invariant function of $\mathcal{L}_{\mathcal{S}}(U)$. It is clear from the definition that $\varphi_{\xi,\xi}$ is $U$-bi-invariant, compactly supported and hence square integrable. Now, let $W$ be conjugate to an element of $\mathcal{S}\lb l-1\rb$ and such that $U\leq W$. Since $\mathcal{S}$ factorizes$^+$ at depth $l$, notice that $W\leq N_G(U)$. In particular, for every $g\not\in N_G(U)$ we have that $gN_G(U)\cap N_G(U)=\es$ and therefore $\int_W\varphi_{\xi,\xi}(gh)\diff\mu(h)=0$. On the other hand, for every $g\in N_G(U)$, since $\omega$ is an $\mathcal{S}$-standard representation of $\Aut_G(\mathcal{C}(U))$ we have 
		$$\int_W\varphi_{\xi,\xi}(gh)\diff\mu(h)=\prods{\int_W\omega\circ p_U(h)\xi\diff\mu(h)}{\omega\circ p_U(g^{-1})\xi}=0.$$
		This proves, as desired, that $\varphi_{\xi,\xi}$ is a $U$-left-invariant function of $\mathcal{L}_{\mathcal{S}}(U)$. In particular, Lemma \ref{les fonction U invarainte de LS U on inverse dans LsU} ensures that $\widecheck{\varphi}_{\xi,\xi}\in \mathcal{L}_\mathcal{S}(U)$. Now, consider the diagonal matrix coefficient $\psi(\cdot)=\prods{T_{\mathcal{S},U}(\cdot)\widecheck{\varphi}_{\xi,\xi}}{\widecheck{\varphi}_{\xi,\xi}}$ of $T_{\mathcal{S},U}$, let $\mu$ be the Haar measure of $G$ renormalised in such a way that $\mu(N_G(U))=1$ and notice from \cite[Theorem 14.3.3]{Dixmier1977} that
		\begin{equation}\label{equation piece 2 of GNS implication}
		\begin{split}
		\psi(g)&=\int_G T_{\mathcal{S},U}(g)\widecheck{\varphi}_{\xi,\xi}(h)\overline{\widecheck{\varphi}_{\xi,\xi}(h)}\diff \mu (h)=\int_{N_G(U)} {\varphi}_{\xi,\xi}(h^{-1}g)\overline{{\varphi}_{\xi,\xi}(h^{-1})}\diff \mu (h)\\&=\int_{N_G(U)} {\varphi}_{\xi,\xi}(hg)\overline{{\varphi}_{\xi,\xi}(h)}\diff \mu (h) =d_{\omega}^{-1} \norm{\xi}{}^2\varphi_{\xi,\xi}(g)
		\end{split}
		\end{equation}
		where $d_\omega$ is the dimension of $\omega$. In particular, renormalising $\xi$ if needed, the equalities  \eqref{equation piece 1 of GNS implication}, \eqref{equation piece 2 of GNS implication} and the uniqueness of the GNS construction \cite[Theorem C.4.10]{BekkaHarpeValette2008} imply that $T(U,\omega)$ is a subrepresentation of $T_{\mathcal{S},U}$.
		
		Now, let us show that $\mathfrak{D}^{\omega \circ p_U}_{N_G(U)}=\big(\Ind_{N_G(U)}^G(\Hr{\omega\circ p_{U}})\big)^U=\Hr{T(U,\omega)}^U$. Let  $\mathcal{U}: \Hr{T(U,\omega)}\rightarrow \mathcal{L}_\mathcal{S}(U)$ be a unitary operator intertwining $T(U,\omega)$ and $T_{\mathcal{S},U}$ and notice that $\mathfrak{D}^{\omega \circ p_U}_{N_G(U)}\subseteq\Hr{T(U,\omega)}^U$. In particular, for every $g\in G$, we have that $\lb T(U,\omega)\big\rb(g)\mathfrak{D}^{\omega\circ p_U}_{N_G(U)}\subseteq\Hr{T(U,\omega)}^{gUg^{-1}}$. Therefore, $\forall \phi\in \lb T(U,\omega)\big\rb(g)\mathfrak{D}^{\omega\circ p_U}_{N_G(U)}$, the function $\mathcal{U}(\phi)$ is a $gUg^{-1}$-left invariant function of $\mathcal{L}_\mathcal{S}(U)$ and Lemma \ref{les fonction continue de LSS sont a support compact} ensures that $\mathcal{U}(\phi)$ is supported inside $$N_G(gUg^{-1},U)=\{t\in G\lvert t^{-1}gUg^{-1}t\subseteq U\}=g N_G(U).$$
		Now, let $\phi\in \Big(\mathfrak{D}^{\omega \circ p_U}_{N_G(U)}\Big)^\perp\cap\Hr{T(U,\omega)}^U$. As a consequence of Lemma \ref{la correspondance Hsigma D}, we have that
		$$  \Big(\mathfrak{D}^{\omega \circ p_U}_{N_G(U)}\Big)^\perp = \overline{\underset{tN_G(U)\in G/N_G(U)-\{N_G(U)\}}{\bigoplus}\qq\big\lb T(U,\omega)\big\rb(g)\mathfrak{D}^{\omega\circ p_U}_{N_G(U)}}.$$
		In particular, the above discussion implies that  $\mbox{\rm supp}(\mathcal{U}(\phi))\subseteq G-N_G(U).$ On the other hand, since $\phi\in \Hr{T(U,\omega)}^U$, the function $\mathcal{U}(\phi)$ is a $U$-left invariant function of $\mathcal{L}_\mathcal{S}(U)$ and is therefore supported inside $N_G(U)$. This implies that $\mathcal{U}(\phi)=0$. Hence, $\phi=0$ and $\mathfrak{D}^{\omega \circ p_U}_{N_G(U)}= \Hr{T(U,\omega)}^U$.
		
		Finally, we prove the irreducibility of $T(U,\omega)$ with Lemma \ref{a criterion so that the induced of an irreducible is irreducible}. Let $M$ be a non-zero closed invariant subspace of $\Hr{T(U,\omega)}$. Then $\mathcal{U}(M)$ is a non-zero closed invariant subspace of $T_{\mathcal{S},U}$ and Lemma \ref{Every invariant space in the regular contains a non trivial invariant vector} ensures the existence of a non-zero $U$-bi-invariant function $\varphi\in \mathcal{U}(M)$. In particular, $\mathcal{U}^{-1}(\varphi)$ is a non-zero $U$-invariant function of $\Hr{T(U,\omega)}$ contained in $M$. The result follows from the fact that $\Hr{T(U,\omega)}^U=\mathfrak{D}^{\omega \circ p_U}_{N_G(U)}$.
	\end{proof}
 	Together with Corollary \ref{LSS is the finite sum of irreducible rep with seed S}, this proves \circled{2}. Now let us prove \circled{3}.
	\begin{lemma}\label{every irred rep with seed S is the induced rep}
		Let $U\leq G$ be conjugate to an element of $\mathcal{S}\lb l \rb$, suppose that $\mathcal{S}$ factorizes$^+$ at depth $l$ and let $\pi$ be an irreducible representation of $G$ with seed $\mathcal{C}(U)$. Then $\omega_\pi$ is an $\mathcal{S}$-standard representation of $\Aut_G(\mathcal{C}(U))$ (in particular it is irreducible) and
		\begin{equation*}
		\pi \qq \cong \qq T(U,\omega_\pi)\qq =\qq \Ind_{N_G(U)}^G(\omega_\pi \circ p_{U}).
		\end{equation*}
	\end{lemma}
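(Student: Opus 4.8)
The plan is to produce an explicit unitary intertwiner realizing $\pi$ as the induced representation $\Ind_{N_G(U)}^{G}(\omega_\pi\circ p_U)$, and then to read off irreducibility of $\omega_\pi$ and the $\mathcal{S}$-standard property from the fact that $\pi$ sits at depth $l$.

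First I would collect what is already known about $\pi$. Since $\pi$ has seed $\mathcal{C}(U)$ it is at depth $l$, so $\mathcal{H}_\pi^W=\{0\}$ for every $W$ conjugate to an element of $\mathcal{S}\lb l-1\rb$; it is square-integrable by Corollary \ref{LSS is the finite sum of irreducible rep with seed S}; and $\mathcal{H}_\pi^U$ is finite-dimensional, since by Theorem \ref{theorem de correspondance entre les subre repre se LS U et les irrep avec seed CU} we may take $\pi$ to be a subrepresentation of $T_{\mathcal{S},U}$, and then $\mathcal{H}_\pi^U$ consists of $U$-bi-invariant functions of $\mathcal{L}_{\mathcal{S}}(U)$, a finite-dimensional space by Proposition \ref{The space of invariant vectors is finite dimensional}. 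Recall also that $N_G(U)$ is a compact open subgroup, so $N_G(U)/U$ is finite. The crucial preliminary observation is that for all $\xi,\eta\in\mathcal{H}_\pi^U$ the coefficient $\varphi_{\xi,\eta}\colon g\mapsto\prods{\pi(g)\xi}{\eta}$ is $U$-bi-invariant and satisfies $\int_W\varphi_{\xi,\eta}(gh)\diff\mu(h)=0$ for every $W$ conjugate to an element of $\mathcal{S}\lb l-1\rb$ with $U\leq W$ (because $\int_W\pi(h)\xi\,\diff\mu(h)$ is the orthogonal projection of $\xi$ onto $\mathcal{H}_\pi^W=\{0\}$); hence it satisfies the hypotheses of Lemma \ref{les fonction continue de LSS sont a support compact} with $V=U$, so $\support(\varphi_{\xi,\eta})\subseteq N_G(U,U)=N_G(U)$.

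The core of the argument is the map $\Phi\colon\mathcal{H}_\pi\to\Ind_{N_G(U)}^{G}(\mathcal{H}_\pi^U)$ given by $\Phi(\xi)(g)=P\,\pi(g^{-1})\xi$, where $P$ is the orthogonal projection onto $\mathcal{H}_\pi^U$ and $\mathcal{H}_\pi^U$ is regarded as an $N_G(U)$-module through $\restr{\pi}{N_G(U)}$; this module is trivial on $U$, hence is exactly $\omega_\pi\circ p_U$. Since $\mathcal{H}_\pi^U$ is $N_G(U)$-invariant, $P$ commutes with $\pi(k)$ for $k\in N_G(U)$, so $\Phi(\xi)$ satisfies the covariance relation of the induced representation and $\Phi$ is $G$-equivariant. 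Fixing an orthonormal basis $e_1,\dots,e_n$ of $\mathcal{H}_\pi^U$, the function $g\mapsto\norm{\Phi(\xi)(g)}{}^2=\sum_i\modu{\prods{\pi(g^{-1})\xi}{e_i}}^2$ is constant on left cosets of $N_G(U)$, whence $\norm{\Phi(\xi)}{}^2=\mu(N_G(U))^{-1}\sum_i\int_G\modu{\prods{\pi(g)\xi}{e_i}}^2\diff\mu(g)$, which the orthogonality relations for the square-integrable representation $\pi$ evaluate to $n\,\mu(N_G(U))^{-1}d_\pi^{-1}\norm{\xi}{}^2$; in particular $\Phi(\xi)$ really lies in the induced space, and $\Phi$ is a non-zero scalar multiple of an isometry, hence injective with closed $G$-invariant image. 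For $\xi\in\mathcal{H}_\pi^U$ the support statement above forces $\Phi(\xi)$ to vanish off $N_G(U)$ and to equal $g\mapsto(\omega_\pi\circ p_U)(g^{-1})\xi$ there, so $\Phi$ carries $\mathcal{H}_\pi^U$ onto $\mathfrak{D}^{\omega_\pi\circ p_U}_{N_G(U)}$ compatibly with the isomorphism of Lemma \ref{la correspondance Hsigma D}. As the image of $\Phi$ is closed, $G$-invariant, and contains $\mathfrak{D}^{\omega_\pi\circ p_U}_{N_G(U)}$, the decomposition of $\Ind_{N_G(U)}^{G}(\mathcal{H}_\pi^U)$ into translates of $\mathfrak{D}^{\omega_\pi\circ p_U}_{N_G(U)}$ given by Lemma \ref{la correspondance Hsigma D} shows $\Phi$ is onto; after renormalisation it is a unitary intertwiner, so $\pi\cong T(U,\omega_\pi)$.

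It remains to identify $\omega_\pi$ as an $\mathcal{S}$-standard representation. Induction of a non-zero representation from the open subgroup $N_G(U)$ is non-zero by Lemma \ref{la correspondance Hsigma D}, so the irreducibility of $\pi\cong\Ind_{N_G(U)}^{G}(\omega_\pi\circ p_U)$ forces $\omega_\pi$ to be irreducible. Now let $W$ be conjugate to an element of $\mathcal{S}\lb l-1\rb$ with $U\subseteq W$; since $\mathcal{S}$ factorizes$^+$ at depth $l$ we have $W\subseteq N_G(U)$, so $p_U(W)\in\mathfrak{H}_{\mathcal{S}}(\mathcal{C}(U))$, and a non-zero $p_U(W)$-invariant vector of $\omega_\pi$ is precisely a non-zero vector of $\mathcal{H}_\pi^W$, which does not exist because $\pi$ is at depth $l$. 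Hence $\omega_\pi$ has no non-zero $H$-invariant vector for any $H\in\mathfrak{H}_{\mathcal{S}}(\mathcal{C}(U))$, i.e.\ $\omega_\pi$ is $\mathcal{S}$-standard. I expect the main obstacle to be the third paragraph — verifying that $\Phi$ genuinely takes values in the induced space and exhausts it — the first point being where finite-dimensionality of $\mathcal{H}_\pi^U$ together with square-integrability of $\pi$ are essential, and the second being where the support lemma of the first step does its work.
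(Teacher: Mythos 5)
Your proof is correct, but it follows a genuinely different route from the paper's. The paper first proves that $\omega_\pi$ is $\mathcal{S}$-standard (irreducibility of $\omega_\pi$ is obtained directly, by combining cyclicity of a vector $\xi\in \Hr{\pi}^U$ with the support Lemma \ref{les fonction continue de LSS sont a support compact} to show that any non-zero $\Aut_G(\mathcal{C}(U))$-invariant subspace of $\Hr{\pi}^U$ has trivial orthogonal complement), and only then identifies $\pi$ with $T(U,\omega_\pi)$: it observes that $\varphi_{\xi,\xi}$ is simultaneously a diagonal matrix coefficient of $\pi$ and, by the computation \eqref{equation piece 1 of GNS implication}, of $\Ind_{N_G(U)}^G(\omega_\pi\circ p_U)$, and then invokes the irreducibility of the latter (Proposition \ref{les induite de S standard sont des sous rep de TS}, which is where $\mathcal{S}$-standardness is needed) together with uniqueness of the GNS construction. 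You instead build the explicit intertwiner $\xi\mapsto\big(g\mapsto P\pi(g^{-1})\xi\big)$, prove it is a scalar multiple of a unitary equivalence onto $\Ind_{N_G(U)}^G(\omega_\pi\circ p_U)$ using Schur orthogonality for the square-integrable representation $\pi$, finite-dimensionality of $\Hr{\pi}^U$, and the same support lemma (with $V=U$) to match $\Hr{\pi}^U$ with $\mathfrak{D}^{\omega_\pi\circ p_U}_{N_G(U)}$, and only afterwards deduce irreducibility of $\omega_\pi$ from that of $\pi$ (induction from an open subgroup preserves direct sums and is faithful on non-zero pieces); the $\mathcal{S}$-standardness argument, using factorization$^+$ to get $W\leq N_G(U)$ and the vanishing of $\Hr{\pi}^W$, coincides with the paper's. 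What your approach buys is independence from the GNS uniqueness theorem and from the irreducibility statement of Proposition \ref{les induite de S standard sont des sous rep de TS}, at the price of invoking the orthogonality relations and of checking by hand that the intertwiner lands in, and exhausts, the induced space; the paper's route is shorter given that Proposition \ref{les induite de S standard sont des sous rep de TS} is already available, but reverses the logical order (standardness first, equivalence second).
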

	\begin{proof}
		We start by showing that $(\omega_\pi, \Hr{\pi}^{U})$ is an $\mathcal{S}$-standard representation of $\Aut_G(\mathcal{C}(U))$. Let $M$ be a non-zero closed $\Aut_G(\mathcal{C}(U))$-invariant subspace of $\Hr{\pi}^{U}$ for $\omega_\pi$ and let $\xi\in \Hr{\pi}^U$ be non-zero. Since $\xi$ is cyclic for $\pi$ notice that, for every non-zero $\eta\in M$, the function $\varphi_{\xi, \eta }\fct{G}{\C}{g}{\prods{\pi(g)\xi}{\eta}}$ is not identically zero. In particular, since it is supported inside $N_G(U)$ by Lemma \ref{les fonction continue de LSS sont a support compact}, there exists an element $g\in N_G(U)$ such that $0\not= \varphi_{\xi,\eta}(g)=\prods{\xi}{\omega_{\pi}\circ p_U(g^{-1})\eta}$. Since $M$ is $\Aut_G(\mathcal{C}(U))$-invariant this proves the existence of a vector $\eta'\in M$ such that $\prods{\xi}{\eta'}\not=0$. It follows that the orthogonal complement of $M$ in $\Hr{\pi}^{U}$ is trivial. Hence, we obtain that $M=\Hr{\pi}^{U}$ and $\omega_\pi$ is irreducible. Now, let $W\in \tilde{\mathfrak{H}}_{\mathcal{S}}(U)$ where we recall that 
		\begin{equation*}
		\tilde{\mathfrak{H}}_{\mathcal{S}}(U)= \{W \mid \exists g\in G \mbox{ s.t. } gWg^{-1}\in \mathcal{S}\lb l-1 \rb \mbox{ and } U \subseteq  W \}.
		\end{equation*}
		Since $\pi(g)=\omega_\pi\circ p_{U}(g)$ for every $g\in N_G(U)$ and $\pi$ has seed $C$, there exists no non-zero $W$-invariant vector in $\Hr{\pi}^{U}$ for $\omega_\pi \circ p_{U}$. This proves that $\omega_\pi$ is $\mathcal{S}$-standard. 
		
		We now prove that $\pi\cong T(U,\omega_\pi).$ To this end, let $\xi\in \Hr{\pi}^U$ and let us consider the diagonal matrix coefficient $\varphi_{\xi,\xi}:G\rightarrow\C: g\mapsto \prods{ \pi(g)\xi}{\xi}$. The proof of Lemma \ref{Lemma existence of a seed} ensures that $\varphi_{\xi,\xi}$ is a $U$-bi-invariant function of $\mathcal{L}_{\mathcal{S}}(U)$ and is therefore compactly supported inside $N_G(U)$. In particular, we obtain that
		\begin{equation*}
		\varphi_{\xi,\xi}(g)=\begin{cases}
		\prods{\omega_\pi \circ p_U(g)\xi}{\xi} &\mbox{ if }g\in N_G(U)\\
		\q\q \q 0 &\mbox{ if }g\not\in N_G(U)
		\end{cases}.
		\end{equation*}
		On the other hand, the Equality \eqref{equation piece 1 of GNS implication} in the proof of Proposition \ref{les induite de S standard sont des sous rep de TS} ensures that $\varphi_{\xi,\xi}$ is a diagonal matrix coefficient of $\text{ \rm Ind}_{N_G(U)}^{G}(\omega_\pi\circ p_{U})$. Since $\text{ \rm Ind}_{N_G(U)}^{G}(\omega_\pi\circ p_{U})$ is irreducible by Proposition \ref{les induite de S standard sont des sous rep de TS}, the result follows from the
		uniqueness of the GNS construction \cite[Theorem C.4.10]{BekkaHarpeValette2008}.
	\end{proof}
	\begin{lemma}\label{unicitee des induite par res S standard}
		Let $U\leq G$ be conjugate to an element of $\mathcal{S}\lb l \rb$, suppose that $\mathcal{S}$ factorizes$^+$ at depth $l$ and let $\omega_1,\qq \omega_2$ be $\mathcal{S}$-standard representations of $\Aut_G(\mathcal{C}(U))$. Then, $T(U,\omega_1)$ and $T(U,\omega_2)$ are equivalent if and only if $\omega_1$ and $\omega_2$ are equivalent.
	\end{lemma}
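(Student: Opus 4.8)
The plan is to deduce both implications almost immediately from results already in hand, with essentially no new computation.

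For the easy direction, suppose $\omega_1\cong\omega_2$. A unitary intertwiner $A\colon\Hr{\omega_1}\to\Hr{\omega_2}$ between $\omega_1$ and $\omega_2$ is automatically an intertwiner between the lifts $\omega_1\circ p_U$ and $\omega_2\circ p_U$, since these representations act through $p_U$ and $A$ did not see $p_U$. Post-composition $\phi\mapsto A\circ\phi$ then gives a unitary operator $\Ind_{N_G(U)}^G(\Hr{\omega_1\circ p_U})\to\Ind_{N_G(U)}^G(\Hr{\omega_2\circ p_U})$ intertwining $T(U,\omega_1)$ and $T(U,\omega_2)$; this is just the functoriality of induction, which I would spell out in a line or cite from \cite{KaniuthTaylor2013}.

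For the converse I would argue by contraposition and invoke Lemma \ref{les induites dinequivalente sont inequivalente}. Assume $\omega_1\not\cong\omega_2$. Because $p_U\colon N_G(U)\to N_G(U)/U$ is a surjective homomorphism, any intertwiner between $\omega_1\circ p_U$ and $\omega_2\circ p_U$ would descend to an intertwiner between $\omega_1$ and $\omega_2$; hence $\omega_1\circ p_U$ and $\omega_2\circ p_U$ are inequivalent, and they are irreducible because irreducibility of a lift is equivalent to irreducibility of the original and the $\omega_i$ are irreducible (being $\mathcal{S}$-standard). Thus $\sigma_i:=\omega_i\circ p_U$ are two inequivalent irreducible representations of the open subgroup $H:=N_G(U)$ of $G$. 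I would then apply Lemma \ref{les induites dinequivalente sont inequivalente} with $K=U$: its hypothesis $\mathfrak{D}^{\sigma_i}_H=\big(\Ind_H^G(\Hr{\sigma_i})\big)^{U}$ is precisely the identity $\mathfrak{D}^{\omega_i\circ p_U}_{N_G(U)}=\Hr{T(U,\omega_i)}^U$ established in Proposition \ref{les induite de S standard sont des sous rep de TS}. The lemma then yields that $T(U,\omega_1)=\Ind_{N_G(U)}^G(\omega_1\circ p_U)$ and $T(U,\omega_2)=\Ind_{N_G(U)}^G(\omega_2\circ p_U)$ are inequivalent, which is the desired contrapositive.

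Combining the two implications finishes the proof. I do not expect any real obstacle here: all the analytic content, namely the identification of the $U$-fixed vectors of the induced representation with $\mathfrak{D}^{\omega\circ p_U}_{N_G(U)}$, is already carried out in Proposition \ref{les induite de S standard sont des sous rep de TS}, and the clean inequivalence criterion is Lemma \ref{les induites dinequivalente sont inequivalente}; the only mild point requiring care is the passage between $\omega_i$ and its lift $\omega_i\circ p_U$, which is harmless precisely because $p_U$ is onto, so that equivalence and irreducibility are transported faithfully in both directions.
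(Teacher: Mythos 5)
Your proof is correct and follows essentially the same route as the paper: both rest on the identity $\mathfrak{D}^{\omega_i \circ p_U}_{N_G(U)}=\Hr{T(U,\omega_i)}^U$ from Proposition \ref{les induite de S standard sont des sous rep de TS} and then invoke Lemma \ref{les induites dinequivalente sont inequivalente} with $H=N_G(U)$ and $K=U$. The only difference is that you spell out the easy direction (functoriality of induction) and the harmless passage between $\omega_i$ and its lift $\omega_i\circ p_U$, which the paper leaves implicit.
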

	\begin{proof}
		Notice from the Proposition \ref{les induite de S standard sont des sous rep de TS} that  $\mathfrak{D}^{\omega_i \circ p_U}_{N_G(U)}=\Hr{T(U,\omega_i)}^U$. The result therefore follows from Lemma \ref{les induites dinequivalente sont inequivalente} applied with $H=N_G(U)$, $K=U$.
	\end{proof}
	\subsection{Existence criteria}\label{section existence des supercuspidal}
	Let $G$ be a non-discrete unimodular totally disconnected locally compact group and let $\mathcal{S}$ be a generic filtration of $G$. If $\mathcal{S}$ factorizes$^+$ at depth $l$, Theorem \ref{la version paki du theorem de classification} provides a bijective correspondence between the equivalence classes of irreducible representations of $G$ at depth $l$ with seed $C\in \mathcal{F}_{\mathcal{S}}$ and the $\mathcal{S}$-standard representations of $\Aut_G(C)$. However, it does not guarantee the existence of an irreducible representations of $G$ at depth $l$. The purpose of this section is to provide some existence criteria that will be used in the second parts of the paper. The proofs of the following results are essentially covered by \cite[Lemma 3.6, Lemma 3.7, Lemma 3.8 and Theorem 3.9]{FigaNebbia1991} but we recall them for completeness of the argument. The following results were used in \cite{FigaNebbia1991} to prove the existence cuspidal representations of the full group of automorphisms $\Aut(T)$ of a regular tree. 
	\begin{lemma}\label{les rep de Qsur Qi}
		Let $Q$ be a finite group with $\modu{Q}\gneq 2$ acting $2$-transitively on a finite set $X=\{1,...,d\}$, then there exists an irreducible representation of $Q$ without non-zero $\Fix_Q(i)$-invariant vector for all $i\in X$. 
	\end{lemma}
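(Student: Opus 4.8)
The plan is to use the permutation representation of $Q$ on $X$ and its decomposition into the trivial representation and a complementary subrepresentation. Since $Q$ acts $2$-transitively on $X = \{1,\dots,d\}$, the standard fact is that the permutation representation $\mathbb{C}[X]$ decomposes as $\mathbf{1} \oplus \rho$ where $\rho$ is an irreducible representation of $Q$ of dimension $d-1$ (this is precisely the characterisation of $2$-transitivity in terms of the permutation character having norm $2$). I would take $\rho$ to be the candidate and show it has no non-zero $\Fix_Q(i)$-invariant vector for any $i \in X$.

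\medskip

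\noindent First I would realise $\rho$ concretely as the subspace $V_0 = \{ f : X \to \mathbb{C} \mid \sum_{j \in X} f(j) = 0 \}$ of $\mathbb{C}[X]$, with $Q$ acting by permuting coordinates. Then I would compute the space of $\Fix_Q(i)$-invariant vectors in $V_0$. A function $f \in \mathbb{C}[X]$ is fixed by $\Fix_Q(i)$ if and only if $f$ is constant on the orbits of $\Fix_Q(i)$ on $X$. Since $Q$ acts $2$-transitively, $\Fix_Q(i)$ acts transitively on $X \setminus \{i\}$, so its orbits on $X$ are exactly $\{i\}$ and $X \setminus \{i\}$. Hence the $\Fix_Q(i)$-invariant functions form the $2$-dimensional space spanned by $\mathds{1}_{\{i\}}$ and $\mathds{1}_{X \setminus \{i\}}$ (equivalently by $\mathds{1}_{\{i\}}$ and the constant function $\mathds{1}_X$). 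Intersecting with the zero-sum condition $V_0$: a vector $a\,\mathds{1}_{\{i\}} + b\,\mathds{1}_{X\setminus\{i\}}$ has sum $a + (d-1)b$, which vanishes only for $a = -(d-1)b$, giving a $1$-dimensional space of $\Fix_Q(i)$-invariants in $\mathbb{C}[X]$ — but this line is spanned by $\mathds{1}_{\{i\}} - \tfrac{1}{d-1}\mathds{1}_{X\setminus\{i\}}$...

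\medskip

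\noindent Wait — that computation shows $V_0^{\Fix_Q(i)}$ is actually $1$-dimensional, not zero, so $\rho$ alone does \emph{not} work. The fix is to instead consider the tensor/alternative: the point is that $V_0$ decomposes further when restricted, and one must use a different constituent. The correct approach: since $|Q| > 2$ and $Q$ is $2$-transitive on $X$, consider the action of $Q$ on ordered pairs $X^{(2)} = \{(i,j) : i \neq j\}$, which is transitive, and decompose $\mathbb{C}[X^{(2)}]$; alternatively, recall that a $2$-transitive group of order $> 2$ (equivalently $d \geq 2$ with a non-trivial point stabiliser, which holds since $|Q|>2$ forces $d\geq 3$ in the cases of interest, or one argues directly) admits an irreducible representation not appearing in $\mathbb{C}[X]$. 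The cleanest route: the representation $\rho$ \emph{does} have the desired property if we instead look at $\Fix_Q(i)$-invariants correctly — reconsidering, a $\Fix_Q(i)$-fixed vector in $V_0$ corresponds to an element whose matrix coefficient against the cyclic vector is $\Fix_Q(i)$-bi-invariant, and by Frobenius reciprocity $\dim V_0^{\Fix_Q(i)} = \langle \mathrm{Res}_{\Fix_Q(i)} \rho, \mathbf{1}\rangle = \langle \rho, \mathrm{Ind}_{\Fix_Q(i)}^Q \mathbf{1}\rangle = \langle \rho, \mathbb{C}[X]\rangle = 1$. So indeed $\rho$ fails.

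\medskip

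\noindent So the actual plan is: let $P = \Fix_Q(1)$ be a point stabiliser and consider the permutation representation $\mathbb{C}[Q/P] = \mathbb{C}[X]$. I claim there is an irreducible $\sigma$ of $Q$ with $\langle \mathrm{Res}_P \sigma, \mathbf{1}_P \rangle = 0$, i.e. $\langle \sigma, \mathbb{C}[X]\rangle = 0$, i.e. $\sigma$ does not occur in $\mathbb{C}[X]$. Such $\sigma$ exists \textbf{unless} every irreducible of $Q$ occurs in $\mathbb{C}[X]$; but the number of irreducibles occurring in $\mathbb{C}[X]$ is at most $\dim \mathrm{End}_Q(\mathbb{C}[X])$, and by $2$-transitivity $\dim \mathrm{End}_Q(\mathbb{C}[X]) = 2$ (the rank of the action on pairs), so $\mathbb{C}[X] \cong \mathbf{1} \oplus \rho$ accounts for only $2$ irreducibles. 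Since $|Q| > 2$, the group $Q$ is non-abelian or abelian of order $> 2$; in either case $Q$ has at least $3$ conjugacy classes hence at least $3$ irreducible representations, unless $Q \cong \mathbb{Z}/2$ — excluded by hypothesis. Therefore at least one irreducible $\sigma$ of $Q$ does not occur in $\mathbb{C}[X]$, and by Frobenius reciprocity $\sigma^P = 0$. Since all point stabilisers $\Fix_Q(i)$ are conjugate to $P$, and having no $P$-fixed vector is a conjugacy-invariant property, $\sigma$ has no non-zero $\Fix_Q(i)$-invariant vector for every $i \in X$, as required. \textbf{The main obstacle} is simply verifying that $|Q| > 2$ together with $2$-transitivity forces at least three irreducible representations — this is immediate since a $2$-transitive group with $|Q| > 2$ has $|X| \geq 2$ and $|Q| \geq |X|(|X|-1) \geq 3$ when $|X| \geq 3$, while $|X| = 2$ forces $Q$ to surject onto $\mathbb{Z}/2$ with kernel of order $> 1$, again giving $\geq 3$ classes; so one only needs the classical counting $\#\{\text{irreducibles in }\mathbb{C}[X]\} = \dim\mathrm{End}_Q\mathbb{C}[X] = 2$ under $2$-transitivity.
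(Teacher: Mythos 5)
Your final argument is correct and essentially the paper's own proof: both use Frobenius reciprocity to identify the irreducibles with non-zero $\Fix_Q(1)$-invariant vectors as the constituents of $\Ind_{\Fix_Q(1)}^Q(\mathds{1})\cong\mathbb{C}[X]$, use $2$-transitivity to see this permutation module has exactly two irreducible constituents, and use $\modu{Q}\gneq 2$ (hence at least three conjugacy classes) to produce a third irreducible, which then has no fixed vectors for any point stabiliser by conjugacy. The abandoned first attempt with the standard $(d-1)$-dimensional constituent is correctly diagnosed as failing, and the self-correction lands on the paper's argument.
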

	\begin{proof}
		Since $Q$ acts transitively on $X$, notice that $\Fix_Q(i)$ and $\Fix_Q(j)$ are conjugate to one an other for all $i,j\in X$. In particular, a representation $\pi$ of $Q$ admits a non-zero $\Fix_Q(i)$-invariant vector for all $i\in X$ if and only if it admits a non-zero $\Fix_Q(1)$-invariant vector. In light of those considerations we are going to prove the existence of an irreducible representation of $Q$ without non-zero $\Fix_Q(1)$-invariant vectors.
		We recall that the quasi-regular representation $\sigma$ of $Q/\Fix_{Q}(1)$ is the representation $\text{\rm Ind}_{\Fix_{Q}(1)}^{Q}(\mathds{1}_{\Fix_{Q}(1)})$ of $Q$ induced by the trivial representation of $\Fix_{Q}(1)$. On the other hand, for every representation $\pi$ of $Q$, the Frobenius reciprocity implies that 
		\begin{equation*}
		\prods{\text{\rm Res}_{\Fix_{Q}(1)}^Q(\pi)}{\mathds{1}_{\Fix_{Q}(1)}}_{\Fix_{Q}(1)}\qq=\qq \prods{\pi}{\text{\rm Ind}_{\Fix_{Q}(1)}^{Q}(\mathds{1}_{\Fix_{Q}(1)})}_{Q}\qq=\qq  \prods{\pi}{\sigma}_{Q}.
		\end{equation*}
		In particular, every irreducible representation $\pi$ of $Q$ with a non-zero $\Fix_{Q}(1)$-invariant vector is a subrepresentation of $\sigma$. Moreover, since $Q$ acts $2$-transitively on $X$, \cite[Corollary 5.17]{Isaacs1976} ensures the existence of an irreducible representation $\psi$ of $Q$ such that $\sigma= \mathds{1}_{Q} \oplus \psi$. Suppose for a contradiction that every irreducible representation of $Q$ has a non-zero $\Fix_{Q}(1)$-invariant vector and is therefore contained in $\sigma$. This implies that $Q$ has two conjugacy classes and is therefore isomorphic to the cyclic group of order two which contradicts our hypothesis that $\modu{Q}\gneq2$.
	\end{proof}
	An other useful criterion that we adapt to the context of trees below is given by the following result.
	\begin{lemma}[\cite{FigaNebbia1991}, Lemma 3.7]\label{critede de k non deg rep for finite group}
		Let $Q$ be a finite group, let $H\leq Q$ be a direct product $H_1\times H_2\times ...\times H_s$ of non-trivial subgroups $H_i$ of $Q$ and suppose that the group of inner automorphisms of $Q$ acts by permutation on the set $\{H_1,...,H_s\}$. Then, there exists an irreducible representation $\pi$ of $Q$ without non-zero $H_i$-invariant vectors for every $i=1,...,s$. 
	\end{lemma}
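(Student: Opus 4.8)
The plan is to reduce the statement to Clifford theory for the \emph{normal} subgroup $H:=H_1\cdots H_s$ of $Q$. First I would check that $H$ is indeed normal: conjugation by any $g\in Q$ permutes $\{H_1,\dots,H_s\}$ by hypothesis, say $gH_ig^{-1}=H_{\sigma_g(i)}$, and since the $H_i$ commute pairwise (the sum being direct) one gets $gHg^{-1}=\prod_{i}gH_ig^{-1}=\prod_i H_{\sigma_g(i)}=\prod_i H_i=H$. Thus $H\triangleleft Q$, which is exactly what makes Clifford theory for the pair $H\leq Q$ available.

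Next I would record the elementary representation theory of the direct product $H=H_1\times\cdots\times H_s$: every $\theta\in\mathrm{Irr}(H)$ is an exterior tensor product $\theta=\theta^{(1)}\boxtimes\cdots\boxtimes\theta^{(s)}$ with $\theta^{(i)}\in\mathrm{Irr}(H_i)$, and since $H_i$ embeds in $H$ acting only on the $i$-th tensor factor, one has $\Hr{\theta}^{H_i}=\Hr{\theta^{(1)}}\otimes\cdots\otimes\Hr{\theta^{(i)}}^{H_i}\otimes\cdots\otimes\Hr{\theta^{(s)}}$, which is zero precisely when $\theta^{(i)}$ is non-trivial. Call $\theta$ \emph{totally non-trivial} when all of $\theta^{(1)},\dots,\theta^{(s)}$ are non-trivial; by the previous sentence these are exactly the $\theta\in\mathrm{Irr}(H)$ with $\Hr{\theta}^{H_i}=0$ for every $i$. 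The set $\mathcal{T}_0$ of totally non-trivial irreducibles of $H$ is non-empty, because each non-trivial finite group $H_i$ admits a non-trivial irreducible representation (its regular representation being faithful), and $\mathcal{T}_0$ is invariant under $Q$-conjugation, because conjugation by $g\in Q$ carries $H_i$ isomorphically onto $H_{\sigma_g(i)}$ and hence merely permutes the tensor factors of $\theta$ up to isomorphism, preserving the property that every factor is non-trivial.

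The representation is then produced as follows: choose any $\theta\in\mathcal{T}_0$ and let $\pi$ be any irreducible constituent of $\Ind_H^Q(\theta)$. By Frobenius reciprocity $\theta$ occurs in $\Res_H^Q(\pi)$, so Clifford's theorem gives that $\Res_H^Q(\pi)$ is a positive integer multiple of the sum of the (finitely many) $Q$-conjugates of $\theta$. All of these conjugates lie in $\mathcal{T}_0$ by the invariance just noted, and hence $\Hr{\pi}^{H_i}$, being a direct sum of copies of the spaces $\Hr{\theta'}^{H_i}$ with $\theta'$ running over those conjugates, is $\{0\}$ for every $i=1,\dots,s$. This $\pi$ is the desired irreducible representation of $Q$.

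The main obstacle — if it deserves that name — is entirely structural: verifying that $H$ is normal and that $Q$-conjugation permutes the tensor factors of an irreducible of $H$ rather than mixing them. Both follow at once from the standing hypothesis that the inner automorphisms of $Q$ permute $\{H_1,\dots,H_s\}$, combined with the pairwise commutativity of the $H_i$; the remaining ingredients (tensor decomposition of $\mathrm{Irr}$ of a finite direct product, Frobenius reciprocity, Clifford's theorem) are standard, so beyond this bookkeeping I expect no serious difficulty.
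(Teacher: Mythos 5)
Your argument is correct. Each step checks out: the permutation hypothesis makes $H=H_1\cdots H_s$ normal in $Q$; irreducible representations of the internal direct product $H$ are exterior tensor products, and having no non-zero $H_i$-invariant vector for any $i$ is equivalent to every tensor factor being non-trivial; this class is non-empty and stable under $Q$-conjugation (indeed $\Hr{\theta^g}^{H_i}=\Hr{\theta}^{gH_ig^{-1}}=\Hr{\theta}^{H_{\sigma_g(i)}}$); and Clifford's theorem then forces every irreducible constituent of $\Res_H^Q(\pi)$, for $\pi$ a constituent of $\Ind_H^Q(\theta)$, to be such a conjugate, so $\Hr{\pi}^{H_i}=\{0\}$ for all $i$.

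For comparison: the paper does not prove this lemma at all; it is quoted verbatim from Fig\`a-Talamanca and Nebbia (Lemma 3.7 of \cite{FigaNebbia1991}) and used as a black box to derive Proposition \ref{existence criterion}. So there is no in-paper argument to measure yours against, but your Clifford-theoretic route (induce a ``totally non-trivial'' character of the normal subgroup $H$ and take any irreducible constituent) is essentially the standard proof of that cited lemma, and it is self-contained: the only inputs are the tensor decomposition of $\mathrm{Irr}(H_1\times\cdots\times H_s)$, Frobenius reciprocity, and Clifford's theorem, all at the level of finite groups. Including it would make the paper's existence criteria independent of the reference.
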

	\noindent  If $T$ is a locally finite tree and if  $\mathcal{T}$ is a subtree of $T$ we set 
	$$\Stab_G(\mathcal{T})=\{g\in G\mid g\mathcal{T}\subseteq\mathcal{T}\}\mbox{ and }\Fix_G(\mathcal{T})=\{g\in G\mid gv=v\qq\forall v\in V(\mathcal{T})\}.$$
	We obtain the following proposition.
	\begin{proposition}\label{existence criterion}
		Let $T$ be a locally finite tree, let $G\leq \Aut(T)$ be a closed subgroup, let $\mathcal{T}$ be a finite subtree of $T$ and let $\{\mathcal{T}_1,\mathcal{T}_2,...,\mathcal{T}_s\}$ be a set of distinct finite subtrees of $T$ contained in $\mathcal{T}$ such that $\mathcal{T}_i\cup\mathcal{T}_j=\mathcal{T}$ for every $i\not=j$. Suppose that $\Stab_G(\mathcal{T})$ acts by permutation on the set $\{\mathcal{T}_1,\mathcal{T}_2,...,\mathcal{T}_s\}$ and that $\Fix_G(\mathcal{T})\subsetneq \Fix_G(\mathcal{T}_i)\subsetneq \Stab_G(\mathcal{T})$. Then, there exists an irreducible representation of $\Stab_G(\mathcal{T})/\Fix_G(\mathcal{T})$ without non-zero $\Fix_G(\mathcal{T}_i)/\Fix_G(\mathcal{T})$-invariant vector for every $i=1,...,s$.
	\end{proposition}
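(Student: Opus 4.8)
The plan is to apply Lemma~\ref{critede de k non deg rep for finite group} to the finite group $Q=\Stab_G(\mathcal{T})/\Fix_G(\mathcal{T})$. First I would record why this is legitimate: $\Fix_G(\mathcal{T})$ is normal in $\Stab_G(\mathcal{T})$ since $g\Fix_G(\mathcal{T})g^{-1}=\Fix_G(g\mathcal{T})=\Fix_G(\mathcal{T})$ for every $g\in\Stab_G(\mathcal{T})$, and the index $[\Stab_G(\mathcal{T}):\Fix_G(\mathcal{T})]$ divides $\lvert\Sym(V(\mathcal{T}))\rvert$ because $\Stab_G(\mathcal{T})$ acts on the finite set $V(\mathcal{T})$ with kernel exactly $\Fix_G(\mathcal{T})$, so $Q$ is finite. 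For $i=1,\dots,s$ set $H_i=\Fix_G(\mathcal{T}_i)/\Fix_G(\mathcal{T})$; the assumption $\Fix_G(\mathcal{T})\subsetneq\Fix_G(\mathcal{T}_i)\subsetneq\Stab_G(\mathcal{T})$ makes each $H_i$ a non-trivial (proper) subgroup of $Q$.

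The crux is to exhibit $H:=\langle H_1,\dots,H_s\rangle$ as the internal direct product $H_1\times\cdots\times H_s$. The action of $\Stab_G(\mathcal{T})$ on $V(\mathcal{T})$ has kernel $\Fix_G(\mathcal{T})$, hence induces an embedding $Q\hookrightarrow\Sym(V(\mathcal{T}))$. Put $L_i=V(\mathcal{T})\setminus V(\mathcal{T}_i)$. Since $\Fix_G(\mathcal{T}_i)\subseteq\Stab_G(\mathcal{T})$ fixes $V(\mathcal{T}_i)$ pointwise, under this embedding $H_i$ is carried into the subgroup of $\Sym(V(\mathcal{T}))$ consisting of permutations supported on $L_i$. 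The hypothesis $\mathcal{T}_i\cup\mathcal{T}_j=\mathcal{T}$ for $i\ne j$ forces the sets $L_1,\dots,L_s$ to be pairwise disjoint; therefore the subgroups $H_1,\dots,H_s$ of $\Sym(V(\mathcal{T}))$ have pairwise disjoint supports, so they commute pairwise, each is normal in $H$, and $H_i\cap\prod_{j\ne i}H_j=\{1\}$. Consequently $H=H_1H_2\cdots H_s$ is their internal direct product.

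It remains to verify the last hypothesis of Lemma~\ref{critede de k non deg rep for finite group}, namely that the inner automorphisms of $Q$ permute $\{H_1,\dots,H_s\}$. Let $g\in\Stab_G(\mathcal{T})$ with image $\bar g\in Q$. Since $\Stab_G(\mathcal{T})$ permutes $\{\mathcal{T}_1,\dots,\mathcal{T}_s\}$ by assumption, $g\mathcal{T}_i=\mathcal{T}_{\sigma(i)}$ for some permutation $\sigma$, whence $g\Fix_G(\mathcal{T}_i)g^{-1}=\Fix_G(g\mathcal{T}_i)=\Fix_G(\mathcal{T}_{\sigma(i)})$ and therefore $\bar g H_i\bar g^{-1}=H_{\sigma(i)}$. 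As every element of $Q$ arises in this way, $\mathrm{Inn}(Q)$ acts on $\{H_1,\dots,H_s\}$ by permutations. Lemma~\ref{critede de k non deg rep for finite group} then produces an irreducible representation $\pi$ of $Q=\Stab_G(\mathcal{T})/\Fix_G(\mathcal{T})$ with no non-zero $H_i$-invariant vector for each $i$; since $H_i=\Fix_G(\mathcal{T}_i)/\Fix_G(\mathcal{T})$ this is precisely the claim. The main obstacle is the second paragraph: the point is that it is the hypothesis $\Fix_G(\mathcal{T}_i)\subsetneq\Stab_G(\mathcal{T})$ --- not merely that $\Fix_G(\mathcal{T}_i)$ fixes $\mathcal{T}_i$ --- which guarantees that an element of $\Fix_G(\mathcal{T}_i)$ moves only the complementary vertices $L_i$ inside $V(\mathcal{T})$, so that the disjoint-support argument applies; without it one would only control the behaviour on $V(\mathcal{T}_i)\cap V(\mathcal{T}_j)$.
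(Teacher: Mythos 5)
Your proof is correct and takes essentially the same route as the paper: both reduce the statement to Lemma \ref{critede de k non deg rep for finite group} applied to $Q=\Stab_G(\mathcal{T})/\Fix_G(\mathcal{T})$ with $H_i=\Fix_G(\mathcal{T}_i)/\Fix_G(\mathcal{T})$, using $\mathcal{T}_i\cup\mathcal{T}_j=\mathcal{T}$ to obtain disjoint supports (hence the internal direct product) and the permutation action of $\Stab_G(\mathcal{T})$ on $\{\mathcal{T}_1,\dots,\mathcal{T}_s\}$ to verify the inner-automorphism hypothesis. Your phrasing of the support argument through the faithful action of $Q$ on the finite set $V(\mathcal{T})$, rather than through supports in all of $V(T)$, is a slightly more careful rendering of the disjointness claim made in the paper's proof.
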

	\begin{proof}
		Since $\mathcal{T}$ is a finite subtree of $T$, $\Stab_G(\mathcal{T})$ and $\Fix_G(\mathcal{T})$ are compact open subgroups of $G$. Since $\Stab_G(\mathcal{T})$ is the normalizer of $\Fix_G(\mathcal{T})$ notice that $\Stab_G(\mathcal{T})/\Fix_G(\mathcal{T})$ is a finite group and our hypotheses ensures that every $H_i=\Fix_G(\mathcal{T}_i)/\Fix_G(\mathcal{T})$ is a non-trivial  subgroup of $\Stab_G(\mathcal{T})/\Fix_G(\mathcal{T})$. On the other hand, for every $i\not=j$, $\mathcal{T}_i\cup \mathcal{T}_j=\mathcal{T}$ which ensures that $H_i\cap H_j=\{1_{\Stab_G(\mathcal{T})/\Fix_G(\mathcal{T})}\}$ and that the supports of elements of $\Fix_G(\mathcal{T}_i)$ and $\Fix_G(\mathcal{T}_j)$ are disjoint from one an other. This implies that the elements of $H_i$ and $H_j$ commute with one another and that the subgroup of $\Stab_G(\mathcal{T})/\Fix_G(\mathcal{T})$ generated by $\bigcup_{i=1}^s H_i$ is isomorphic to $H_1\times H_2\times ...\times H_s$. Since the elements of $\Stab_G(\mathcal{T})$ act by permutation on  $\{\mathcal{T}_1,...,\mathcal{T}_s\}$, notice that the group of inner automorphisms of $\Stab_G(\mathcal{T})$ acts by permutation on $\{\Fix_G(\mathcal{T}_1),...,\Fix_G(\mathcal{T}_s)\}$. The result follows from Lemma \ref{critede de k non deg rep for finite group}.
	\end{proof}
	\newpage
	
	\part{Applications}
	\section{The full group of automorphisms of a semi-regular tree}\label{Application to Aut T}
	
	The purpose of this chapter is to apply our axiomatic framework to the full group of automorphisms $\Aut(T)$ of a thick semi-regular tree $T$ and more generally to groups of automorphisms of trees satisfying the Tits independence property (Definition \ref{equation prop ind de tits}). This chapter is therefore redundant from the point of view of new results (see \cite{Ol'shanskii1977},\cite{Amann2003}). It serves instead as a section allowing the reader to interpret the machinery developed in Chapters \ref{chaptitre intro} and \ref{section generalisation of Ol'shanskii machinery} in concrete and well understood cases.
	
	Let $T$ be a $(d_0,d_1)$-semi-regular tree with $d_0,d_1\geq 3$, let $\Aut(T)$ be the group of automorphisms of $T$ equipped with the permutation topology and let $\mathfrak{T}$ be the set of non-empty complete finite subtrees of $T$. 
	\begin{definition}\label{definition Hypoth H}
		A closed subgroup $G\leq \Aut(T)$ is said to satisfy the hypothesis \ref{Hypothese H Tree} if for all $\mathcal{T},\mathcal{T}'\in \mathfrak{T}$ we have 
		\begin{equation}\tag{$H$}\label{Hypothese H Tree}
		\Fix_G(\mathcal{T}')\leq \Fix_G(\mathcal{T})\mbox{ if and only if }\mathcal{T}\subseteq \mathcal{T}'. 
		\end{equation}
	\end{definition}
	\noindent For such groups, we are going to show that $\mathcal{S}$ is a generic filtration and that:
	\begin{itemize}
		\item $\mathcal{S}\lb 0\rb=\{\Fix_{G}(v)\lvert v\in V(T)\}$.
		\item $\mathcal{S}\lb 1\rb=\{\Fix_{G}(e)\lvert e\in E(T)\}$.
		\item $\mathcal{S}\lb l\rb=\{\Fix_{G}(\mathcal{T})\lvert \mathcal{T}\in \mathfrak{T} \mbox{ and } \mathcal{T}\mbox{ has }l-1 \mbox{ interior vertices}\}\q \forall l\geq 2.$
	\end{itemize}
	In particular, the spherical, special and cuspidal representations as defined on page \pageref{definition de spheric special cuspidal} correspond respectively to the irreducible representations at depth $0$, $1$ and bigger than $2$. The purpose of this chapter is to prove the following theorem.
	\begin{theorem}\label{Theorem classif pour Aut(T)}
		Let $G\leq \Aut(T)$ be a closed non-discrete unimodular subgroup satisfying the hypothesis \ref{Hypothese H Tree} and the Tits independence property, then $\mathcal{S}$ is a generic filtration of $G$ that factorizes$^+$ at all depth $l\geq 2$. 
	\end{theorem}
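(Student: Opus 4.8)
The plan is to verify, in order, that $\mathcal{S}$ is a generic filtration with the stated stratification, and then that $\mathcal{S}$ factorizes$^{+}$ at every depth $l\geq 2$; throughout, \eqref{Hypothese H Tree} is used to translate between inclusions of the groups $\Fix_G(\mathcal{T})$ and reverse inclusions of subtrees. First, $\mathcal{S}=\{\Fix_G(\mathcal{T})\mid \mathcal{T}\in\mathfrak{T}\}$ is a basis of neighbourhoods of the identity consisting of compact open subgroups of $G$ (a basic neighbourhood $\Fix_G(F)$, with $F\subseteq V(T)$ finite, contains $\Fix_G(\mathcal{T})$ for $\mathcal{T}$ the completion of the convex hull of $F$). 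By \eqref{Hypothese H Tree}, $\mathcal{C}(\Fix_G(\mathcal{T}))\leq\mathcal{C}(\Fix_G(\mathcal{T}'))$ in $\mathcal{F}_{\mathcal{S}}$ precisely when some $G$-translate of $\mathcal{T}$ lies inside $\mathcal{T}'$, and the two classes coincide precisely when $\mathcal{T},\mathcal{T}'$ are $G$-translates of one another. Using that a complete finite subtree with an interior vertex is of the form $(\mathcal{T}^{\circ})^{(1)}$ for an arbitrary finite subtree $\mathcal{T}^{\circ}$ — the set of its interior vertices — and that $\mathcal{A}^{(1)}\subseteq\mathcal{B}^{(1)}$ iff $\mathcal{A}\subseteq\mathcal{B}$, one sees that a strictly increasing chain in $\mathcal{F}_{\mathcal{S}}$ below $\mathcal{C}(\Fix_G(\mathcal{T}))$ contains at most one vertex-class, at most one edge-class, and at most $|V(\mathcal{T}^{\circ})|$ classes of subtrees with an interior vertex (the number of interior vertices being strictly increasing along such a chain); this bound is attained, so the height equals $0$, $1$, or $|V(\mathcal{T}^{\circ})|+1$ according to whether $\mathcal{T}$ is a vertex, an edge, or has $|V(\mathcal{T}^{\circ})|\geq 1$ interior vertices. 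In particular every height is finite, so $\mathcal{S}$ is a generic filtration, and $\mathcal{S}\lb l\rb$ is as stated.

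Fix $l\geq 2$, $U=\Fix_G(\mathcal{T})\in\mathcal{S}\lb l\rb$ (so $\mathcal{T}=(\mathcal{T}^{\circ})^{(1)}$, $|V(\mathcal{T}^{\circ})|=l-1$), and let $V=\Fix_G(\mathcal{R})$ with $\mathcal{R}\in\mathfrak{T}$. By \eqref{Hypothese H Tree}, $g^{-1}Vg=\Fix_G(g^{-1}\mathcal{R})\subseteq U$ iff $g\mathcal{T}\subseteq\mathcal{R}$; as $\mathcal{R}$ is finite, $g\mathcal{T}$ then ranges over finitely many subtrees of $\mathcal{R}$, so $N_G(U,V)$ is a finite union of left cosets of the compact group $\Stab_G(\mathcal{T})$ and hence compact — this is condition $2$. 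If $W=\Fix_G(\mathcal{T}_W)$ is conjugate to an element of $\mathcal{S}\lb l-1\rb$ with $U\subseteq W$, then $\mathcal{T}_W\subseteq\mathcal{T}$ and $\mathcal{T}_W$ has $l-2$ interior vertices, so $\mathcal{T}$ is obtained from $\mathcal{T}_W$ by adjoining, at a single vertex $x$ of $\mathcal{T}_W$, the neighbours of $x$ not already in $\mathcal{T}_W$ (which become leaves of $\mathcal{T}$); any $g\in\Fix_G(\mathcal{T}_W)$ fixes $x$ and its neighbour inside $\mathcal{T}_W$, hence permutes those adjoined leaves, so $g\mathcal{T}=\mathcal{T}$. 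Thus $W\subseteq\Stab_G(\mathcal{T})=N_G(\Fix_G(\mathcal{T}))=N_G(U)$, which (as $N_G(U,U)=N_G(U)$ by unimodularity) is the extra clause in the definition of factorization$^{+}$.

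For condition $1$, assume $V\not\subseteq U$, i.e.\ $\mathcal{T}\not\subseteq\mathcal{R}$. I would pick an edge $e'=[v,w]$ of $\mathcal{T}$ — with $v$ a leaf of $\mathcal{T}^{\circ}$ and $w$ its unique $\mathcal{T}^{\circ}$-neighbour when $l\geq 3$, and with $w$ the unique interior vertex $c$ of $\mathcal{T}$ and $v$ a neighbour of $c$ when $l=2$ — and put $\mathcal{T}_W=(\mathcal{T}^{\circ}\setminus\{v\})^{(1)}$ (resp.\ $\mathcal{T}_W=[c,v]$); then $\mathcal{T}_W$ is a complete subtree of $\mathcal{T}$ with $l-2$ interior vertices, so $W:=\Fix_G(\mathcal{T}_W)\in\mathcal{S}\lb l-1\rb$ and $U\subseteq W$. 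Write $T$ as the union of the two halftrees $C,C'$ cut off by $e'$, labelled so that the leaves of $\mathcal{T}$ lying outside $\mathcal{T}_W$ belong to $C$. For $h\in\Fix_G(\mathcal{T}_W)\subseteq\Fix_G(e')$, the Tits independence property applied to $e'$ factors $h=ab$ with $a$ fixing $C$ pointwise, $b$ fixing $C'$ pointwise, and $a,b$ commuting. A short verification — using $\mathcal{T}\setminus\mathcal{T}_W\subseteq C$, that both $a,b$ fix the endpoints of $e'$, and that $a$ coincides with $h$ on $C'$ (so fixes $\mathcal{T}_W$) — shows $a\in\Fix_G(\mathcal{T})=U$; and $b\in\Fix_G(\mathcal{R})=V$ as soon as no vertex of $\mathcal{R}$ lies in $C$ except the endpoint of $e'$ lying in $C$. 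Since $a$ and $b$ commute, $h=ab=ba\in VU$, so condition $1$ comes down to choosing $e'$ with this last property.

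Finally, such an $e'$ exists. When $l\geq 3$: if for every leaf $v$ of $\mathcal{T}^{\circ}$ some vertex of $\mathcal{R}$ lay strictly beyond $v$, then — $\mathcal{R}$ being connected — $\mathcal{R}$ would contain $\mathcal{T}^{\circ}$ and every leaf of $\mathcal{T}^{\circ}$ would have at least two neighbours in $\mathcal{R}$, so — $\mathcal{R}$ being complete — these leaves would be interior vertices of $\mathcal{R}$, giving $\mathcal{T}^{\circ}\subseteq\mathcal{R}^{\circ}$ and $\mathcal{T}=(\mathcal{T}^{\circ})^{(1)}\subseteq(\mathcal{R}^{\circ})^{(1)}=\mathcal{R}$, contradicting $\mathcal{T}\not\subseteq\mathcal{R}$. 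When $l=2$ (so $\mathcal{T}^{\circ}=\{c\}$): if $c\notin\mathcal{R}$, take $v$ to be the neighbour of $c$ through which $\mathcal{R}$ is reached; if $c\in\mathcal{R}$, then completeness of $\mathcal{R}$ together with $\mathcal{T}\not\subseteq\mathcal{R}$ forces $c$ to be a leaf of $\mathcal{R}$ (or $\mathcal{R}=\{c\}$), and one takes $v$ to be the unique $\mathcal{R}$-neighbour of $c$ (or any neighbour). The genuine work, as I see it, is concentrated in this combinatorial selection of $e'$ and in the bookkeeping showing that the Tits-independence splitting lands $a$ in $U$ and $b$ in $V$; the generic-filtration/stratification statement, condition $2$, and the $^{+}$-clause are comparatively formal, resting on \eqref{Hypothese H Tree} and elementary tree combinatorics.
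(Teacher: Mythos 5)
Your proposal is correct, and its skeleton coincides with the paper's: the stratification of $\mathcal{S}$ under hypothesis \ref{Hypothese H Tree} is the paper's Lemma \ref{Lemma la startification de S pour Aut(T)}, and your treatment of the compactness of $N_G(U,V)$ and of the factorization$^+$ clause (every interior vertex of $\mathcal{T}$ lies in $\mathcal{T}_W$, so $\Fix_G(\mathcal{T}_W)$ stabilizes $\mathcal{T}$) is the same as in the paper's proof. Where you genuinely diverge is in condition $1$ of Definition \ref{definition olsh facto}. The paper channels it through Lemma \ref{independence figa nebbia 3.1}, whose proof uses the factorization \eqref{equation pour la prop d'indep de tits} of $\Fix_G(\mathcal{R})$ as a product over \emph{all} terminal edges of a complete subtree (itself deduced from the Tits property by the induction of Lemma \ref{independence on trees} or Proposition \ref{alternative definition iof property IPk} with $k=1$), the factors then being sorted into $\Fix_G(\mathcal{T}')$ and $\Fix_G(\mathcal{T})$. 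You instead split each $h\in\Fix_G(\mathcal{T}_W)$ only once, at a single well-chosen edge $e'$ of $\mathcal{T}$, using the two-halftree Tits independence property verbatim, and check directly that the factor supported beyond $e'$ lies in $V=\Fix_G(\mathcal{R})$ while the other lies in $U=\Fix_G(\mathcal{T})$; the combinatorial weight then falls on producing a peripheral interior vertex $v$ of $\mathcal{T}$ beyond which $\mathcal{R}$ does not reach, which you establish by contradiction (the paper makes the analogous choice of $w_\mathcal{T}$ with only a one-line justification). So your route is more elementary --- it never invokes the multi-terminal-edge decomposition --- at the price of the explicit $l=2$ versus $l\geq 3$ case split and the halftree bookkeeping, which you carry out correctly. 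One small elision: in the contradiction argument you only justify that the \emph{leaves} of $\mathcal{T}^{\circ}$ become interior vertices of $\mathcal{R}$; the remaining vertices of $\mathcal{T}^{\circ}$ have at least two neighbours in $\mathcal{T}^{\circ}\subseteq\mathcal{R}$, so the same completeness argument yields $\mathcal{T}^{\circ}\subseteq\mathcal{R}^{\circ}$ (and only the inclusion $(\mathcal{R}^{\circ})^{(1)}\subseteq\mathcal{R}$ is needed afterwards); this is a one-clause fix, not a gap.
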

	\noindent Together with Theorem \ref{la version paki du theorem de classification}, this provides a description of the equivalence classes of cuspidal representations in terms of irreducible representations of a family of finite groups; that is the family of $\mathcal{S}$-standard representations of the group of automorphisms the corresponding seed. In Chapters \ref{application IPk}, we will give examples of different generic filtrations that also factorizes$^+$ for $\Aut(T)$. In particular, different descriptions of the equivalence classes of cuspidal representations of $\Aut(T)$ can be found in these notes.
	\begin{remark}
		If $G$ is locally $2$-transitive, the above generic filtration $\mathcal{S}$ does not factorizes at depth $1$. Indeed, let $e$ and $f$ be two different edges containing a common vertex $v\in V(T)$, let $U=\Fix_G(e)$, $V=\Fix_G(f)$ and let $W$ be in the conjugacy class of an element of $\mathcal{S}\lb 0 \rb$ such that $U\subseteq W$. We are going to show that $W \not\subseteq V U$. From the definition of $W$, Lemma \ref{Lemma la startification de S pour Aut(T)} below ensures the existence of a vertex $w\in V(T)$ such that $W=\Fix_G(w)$. Since $U\subseteq W$ and since $G$ satisfies the hypothesis \ref{Hypothese H Tree}, we must have $w\in e$. However, if $w\not=v$, there exists an element $g\in W$ that does not fix $v$ (because $\Fix_G(e)\not=\Fix_G(v)$) but every element of $VU$ fixes $v$. Moreover, if $w=v$, there exists $g\in W$ such that $ge=f$ (because $G$ is locally $2$-transitive) but no element of $VU$ maps $e$ to $f$. In both cases, this proves that $W\not\subseteq VU$ and therefore that $\mathcal{S}$ does not factorize at depth $1$.
	\end{remark}

	\subsection{Preliminaries}\label{section application to aut T termino}
	The purpose of this section is to settle our formalism on trees and their groups of automorphisms. If $\Gamma$ is a graph, we denote by $V(\Gamma)$ its set of vertices, by $E(\Gamma)$ its set of edges and we equip $V(\Gamma)$ with the metric $d_\Gamma$ given by the length of geodesics. An isometry $g:V(\Gamma)\rightarrow V(\Gamma)$ for this metric is called an \tg{automorphism} of $\Gamma$ and we denote by $\Aut(\Gamma)$ the group of automorphisms of $\Gamma$. This group embeds naturally in $\Sym(V(\Gamma))$ and is therefore naturally equipped with the \tg{permutation topology}. A basis of neighbourhoods of the identity of $\Aut(\Gamma)$ for this topology is given by the sets
	$$\Fix_{\Aut(\Gamma)}(F)=\{g\in \Aut(\Gamma)\mid gv=v\qq \forall v\in F \}$$
	where $F\subseteq V(\Gamma)$ is a finite set of vertices. We recall that for a locally finite graph $\Gamma$, each $\Fix_{\Aut(\Gamma)}(F)$ with finite $F\subseteq V(\Gamma)$ is a compact open subgroup of $\Aut(\Gamma)$ and that $\Aut(\Gamma)$ is a second-countable totally disconnected locally compact group. A \tg{tree} is connected graph with no loop nor cycles and it is called \textbf{thick} if the degree of each vertex is bigger than $3$. If $T$ is a tree, a graph $\mathcal{T}$ is called a \tg{subtree} of $T$ if $\mathcal{T}$ is a tree and  $V(\mathcal{T})\subseteq V(T)$. Notice that a subtree $\mathcal{T}\subseteq T$ is completely determined by its set of vertices. Therefore, when it leads to no confusion, we identify $\mathcal{T}$ with its set of vertices. A tree $T$ is called $(d_0,d_1)$-\tg{semi-regular} if there exists a bipartition $V(T)=V_0\sqcup V_1$ of $T$ such that each vertex of $V_i$ has degree $d_i$ and every edge of $T$ contains exactly one vertex in each $V_i$. Finally, we recall that for a thick semi-regular tree  $T$, the group $\Aut(T)$ is non-discrete and unimodular. 
	 
	\subsection{Factorization of the generic filtration $\mathcal{S}$}\label{section generic filtration of Aut(T)}
	The purpose of this section is to prove Theorem \ref{Theorem classif pour Aut(T)}. Let $T$ be a thick semi-regular tree. Let $G$ be a closed subgroup of $\Aut(T)$, let $\mathfrak{T}$ be the set of non-empty complete finite subtrees of $T$ and let $\mathcal{S}=\{\Fix_{G}(\mathcal{T})\lvert \mathcal{T}\in \mathfrak{T}\}$. The following lemma ensures that $\mathcal{S}$ is a generic filtration of $G$ and provides a description of the conjugacy classes at height $l$ with respect to $\mathcal{S}$ if $G$ satisfies the hypothesis \ref{Hypothese H Tree}.
	\begin{lemma}\label{Lemma la startification de S pour Aut(T)}
		Let $G$ be a closed non-discrete unimodular subgroup of $\Aut(T)$ satisfying the hypothesis \ref{Hypothese H Tree}. Then, $\mathcal{S}$ is a generic filtration of $G$ and we have:
		\begin{itemize}
			\item $\mathcal{S}\lb 0\rb=\{\Fix_{G}(v)\lvert v\in V(T)\}.$
			\item $\mathcal{S}\lb 1\rb=\{\Fix_{G}(e)\lvert e\in E(T)\}.$
			\item $\mathcal{S}\lb l\rb=\{\Fix_{G}(\mathcal{T})\lvert \mathcal{T}\in \mathfrak{T} \mbox{ has }l-1 \mbox{ interior vertices}\}\q \forall l\geq 2$
		\end{itemize}
	\end{lemma}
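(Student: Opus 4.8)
The plan is to reduce the entire statement to a computation of ranks in the poset of complete finite subtrees of $T$, and then to carry out that combinatorial computation.

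First I would dispatch the easy parts. That $\mathcal{S}$ is a basis of neighbourhoods of the identity consisting of compact open subgroups follows because, given a finite set $F\subseteq V(T)$, its convex hull $\mathcal{A}$ is a finite subtree and its completion $\widehat{\mathcal{A}}$ — obtained by adjoining, in a single pass, all $T$-neighbours of every vertex of $\mathcal{A}$ whose degree in $\mathcal{A}$ is neither $1$ nor its degree in $T$ — is a complete finite subtree (the newly adjoined vertices are leaves of $\widehat{\mathcal{A}}$, as $T$ has no cycles, so all other vertices either keep their old degree, or become of full degree), whence $\Fix_G(\widehat{\mathcal{A}})\subseteq\Fix_G(F)$ with $\widehat{\mathcal{A}}\in\mathfrak{T}$. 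Next I would use the hypothesis \ref{Hypothese H Tree} to see that $\mathcal{T}\mapsto\Fix_G(\mathcal{T})$ induces a bijection from the set of $G$-orbits of complete finite subtrees onto $\mathcal{F}_{\mathcal{S}}$ under which $\mathcal{C}(\Fix_G(\mathcal{T}))\leq\mathcal{C}(\Fix_G(\mathcal{T}'))$ if and only if $\mathcal{T}\subseteq g\mathcal{T}'$ for some $g\in G$ (using $\Fix_G(g\mathcal{T}')=g\Fix_G(\mathcal{T}')g^{-1}$). I would also observe that, by unimodularity, one never has $\mathcal{T}\subsetneq g\mathcal{T}$ for a complete finite subtree $\mathcal{T}$ and $g\in G$: otherwise $\Fix_G(g\mathcal{T})\subsetneq\Fix_G(\mathcal{T})$ would be a proper inclusion of conjugate compact open subgroups, contradicting the usual index argument (as in the proof of Lemma \ref{Lemma existence of a seed}). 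Combining these, a strictly increasing chain of conjugacy classes below $\mathcal{C}(\Fix_G(\mathcal{T}))$ lifts, conjugating representatives one step at a time, to a strictly nested chain $\mathcal{T}_0\subsetneq\dots\subsetneq\mathcal{T}_n$ of complete finite subtrees with $\mathcal{T}_n$ a $G$-translate of $\mathcal{T}$, and conversely; hence the height of $\mathcal{C}(\Fix_G(\mathcal{T}))$ in $\mathcal{F}_{\mathcal{S}}$ is exactly the largest $n$ for which such a chain with $\mathcal{T}_n=\mathcal{T}$ exists.

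The combinatorial core is then to show this maximal $n$ equals $\rho(\mathcal{T})$, where $\rho(\mathcal{T})=0$ if $\mathcal{T}$ is a single vertex and $\rho(\mathcal{T})=1+\#\{\text{interior vertices of }\mathcal{T}\}$ otherwise (note that a complete finite subtree which is neither a single vertex nor a single edge has at least one interior vertex, and that its interior vertices span a subtree). For $n\leq\rho(\mathcal{T})$ it suffices to show $\rho$ strictly increases along proper inclusions: if $\mathcal{A}\subsetneq\mathcal{B}$ and $\mathcal{A}$ is not a single vertex, pick $w\in\mathcal{B}\setminus\mathcal{A}$ adjacent to $\mathcal{A}$ with neighbour $a\in\mathcal{A}$; then $a$ has at least two neighbours in $\mathcal{B}$, hence is interior in $\mathcal{B}$, whereas $a$ has a $T$-neighbour outside $\mathcal{A}$, hence is merely a leaf of $\mathcal{A}$; since moreover every interior vertex of $\mathcal{A}$ stays interior in $\mathcal{B}$, this yields $\#\mathrm{int}(\mathcal{A})<\#\mathrm{int}(\mathcal{B})$. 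For the reverse inequality I would induct on $\rho(\mathcal{T})$: a single vertex and a single edge are immediate; if the interior of $\mathcal{T}$ is a single vertex $y$ then $\mathcal{T}=\{y\}\cup N_T(y)$ and $\{u\}\subsetneq\{u,y\}\subsetneq\mathcal{T}$ works for any $u\in N_T(y)$; otherwise I would choose a leaf $y$ of the nontrivial subtree of interior vertices of $\mathcal{T}$, delete the $\deg_T(y)-1$ leaves of $\mathcal{T}$ attached to $y$ to get a complete finite subtree $\mathcal{T}'\subsetneq\mathcal{T}$ with $\rho(\mathcal{T}')=\rho(\mathcal{T})-1$, and prepend the chain of length $\rho(\mathcal{T}')$ supplied by the induction hypothesis.

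With this in hand the argument closes quickly: the height of $\mathcal{C}(\Fix_G(\mathcal{T}))$ equals $\rho(\mathcal{T})\leq|V(\mathcal{T})|<\infty$, so $\mathcal{S}$ is a generic filtration, and $\mathcal{S}\lb l\rb=\{\Fix_G(\mathcal{T})\mid\rho(\mathcal{T})=l\}$; reading off $\rho$ gives $\rho(\mathcal{T})=0$ exactly for single vertices, $\rho(\mathcal{T})=1$ exactly for single edges, and $\rho(\mathcal{T})=l\geq2$ exactly when $\mathcal{T}$ has $l-1$ interior vertices, which is the asserted stratification. I expect the main obstacle to be proving the combinatorial identity sharply — in particular the strict monotonicity of $\rho$, which forces one to pin down a vertex that is interior in the larger subtree but only a leaf of the smaller one — together with the bookkeeping that transfers chains between $\mathcal{F}_{\mathcal{S}}$ and nested families of subtrees, where unimodularity is precisely what prevents a chain from collapsing.
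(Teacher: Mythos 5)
Your proposal is correct and follows essentially the same route as the paper: use hypothesis \ref{Hypothese H Tree} to translate the order on conjugacy classes into inclusions of complete finite subtrees up to $G$-translation, identify the height of $\mathcal{C}(\Fix_G(\mathcal{T}))$ with the maximal length of a strictly increasing chain of non-empty complete subtrees inside $\mathcal{T}$, and then compute that maximum combinatorially. The only difference is that you spell out details the paper leaves as observations (the basis-of-neighbourhoods verification, strictness of the lifted chains, and the induction establishing that the maximal chain length is $1+\#\mathrm{int}(\mathcal{T})$ for non-vertex $\mathcal{T}$), all of which are sound.
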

	\begin{proof}
		For every $\mathcal{T}\in \mathfrak{T}$ and every $g\in \Aut(T)$, $g\Fix_{G}(\mathcal{T})g^{-1}$ coincides with the fixator $\Fix_{G}(g\mathcal{T})$. In particular, the elements of $\mathcal{F}_\mathcal{S}=\{\mathcal{C}(U)\lvert U\in \mathcal{S}\}$ are of the form
		\begin{equation*}
		\mathcal{C}(\Fix_{G}(\mathcal{T}))=\{\Fix_{G}(g\mathcal{T})\lvert g\in G\}
		\end{equation*}
		with $\mathcal{T}\in \mathfrak{T}$. Therefore, $\forall \mathcal{T},\mathcal{T}'\in \mathfrak{T}$ we have that $\mathcal{C}(\Fix_{G}(\mathcal{T}'))\leq \mathcal{C}(\Fix_{G}(\mathcal{T}))$ if and only if there exists $g\in G$ such that $\Fix_{G}(\mathcal{T})\leq \Fix_{G}(g\mathcal{T}')$. Since $G$ satisfies the hypothesis \ref{Hypothese H Tree}, this implies that $\mathcal{C}(\Fix_{G}(\mathcal{T}'))\leq \mathcal{C}(\Fix_{G}(\mathcal{T}))$ if and only if there exists $g\in G$ such that $g\mathcal{T}'\subseteq \mathcal{T}$. Since $\mathfrak{T}$ is stable under the action of $\Aut(T)$, for every chain $C_0\lneq C_1\lneq...\lneq C_{n-1}\lneq C_{n}$ of elements of $\mathcal{F}_{\mathcal{S}}$, there exists a chain $\mathcal{T}_0\subsetneq\mathcal{T}_1\subsetneq...\subsetneq\mathcal{T}_{n-1}\subsetneq\mathcal{T}_n$ of elements of $\mathfrak{T}$ such that $C_r=\mathcal{C}(\Fix_G(\mathcal{T}_r))$. Reciprocally, for every chain $\mathcal{T}_0\subsetneq\mathcal{T}_1\subsetneq...\subsetneq\mathcal{T}_{n-1}\subsetneq\mathcal{T}$ of elements of $\mathfrak{T}$ contained in a subtree $\mathcal{T}\in \mathfrak{T}$ we obtain a chain $\mathcal{C}(\Fix_G(\mathcal{T}_0))\lneq\mathcal{C}(\Fix_G(\mathcal{T}_1))\lneq...\lneq\mathcal{C}(\Fix_G(\mathcal{T}_{n-1}))\lneq\mathcal{C}(\Fix_G(\mathcal{T}))$ of elements of $\mathcal{F}_{\mathcal{S}}$ with maximal element $\mathcal{C}(\Fix_G(\mathcal{T}))$. This proves that the height of $\mathcal{C}(\Fix_G(\mathcal{T}))$  is the maximal length of a strictly increasing chain of elements of $\mathfrak{T}$ contained in $\mathcal{T}$. The result then follows from the following observations. If $\mathcal{T}$ is a vertex, it does not contain any non-empty proper subtree. If $\mathcal{T}$ is an edge, every maximal strictly increasing chain of non-empty complete subtree of $\mathcal{T}$ is of the form $\mathcal{T}_0\subsetneq \mathcal{T}$ where $\mathcal{T}_0$ is a vertex. If $\mathcal{T}$ is a complete finite subtrees of $T$ with $r\geq 1$ interior vertices, every maximal strictly increasing chain of non-empty complete subtrees of $\mathcal{T}$ is of the form $\mathcal{T}_0\subsetneq\mathcal{T}_1\subsetneq...\subsetneq\mathcal{T}_{n-1}\subsetneq\mathcal{T}$ 
		where $\mathcal{T}_0$ is a vertex, $\mathcal{T}_1$ is an edge and $\mathcal{T}_i$ contains $i-1$ interior vertices for every $i\geq 2$.
	\end{proof}
	
	\begin{lemma}\label{la diffenrence entre les fixing group alors la difference entre les graph}
		 $\Aut(T)$ satisfies the hypothesis \ref{Hypothese H Tree}. 
	\end{lemma}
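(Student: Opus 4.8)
The plan is to verify both implications of the biconditional \eqref{Hypothese H Tree} for $G=\Aut(T)$. The implication ``$\mathcal{T}\subseteq\mathcal{T}'\Rightarrow\Fix_{\Aut(T)}(\mathcal{T}')\leq\Fix_{\Aut(T)}(\mathcal{T})$'' is immediate, since an automorphism fixing every vertex of $\mathcal{T}'$ a fortiori fixes every vertex of the smaller tree $\mathcal{T}$. All the content lies in the converse, which I would prove by contraposition: assuming $\mathcal{T}\not\subseteq\mathcal{T}'$, equivalently (a subtree being determined by its vertex set) $V(\mathcal{T})\not\subseteq V(\mathcal{T}')$, I will exhibit an automorphism lying in $\Fix_{\Aut(T)}(\mathcal{T}')$ but not in $\Fix_{\Aut(T)}(\mathcal{T})$.

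Fix a vertex $v\in V(\mathcal{T})\setminus V(\mathcal{T}')$. As $\mathcal{T}'$ is a non-empty subtree it is convex, so there is a unique vertex $w\in V(\mathcal{T}')$ realising $d_T(v,\mathcal{T}')\geq 1$; let $v'$ be the neighbour of $w$ on the geodesic from $v$ to $w$. Minimality of $w$ forces $v'\notin V(\mathcal{T}')$. The key point, which I expect to be the crux of the argument, is that $w$ has at least \emph{two} neighbours outside $V(\mathcal{T}')$: if $v'$ were the only one, convexity of $\mathcal{T}'$ would make every other edge of $T$ at $w$ an edge of $\mathcal{T}'$, so that $w$ has degree $\deg_T(w)-1$ inside $\mathcal{T}'$; but $\deg_T(w)\geq 3$ by thickness, whence $\deg_T(w)-1\notin\{0,1,\deg_T(w)\}$, contradicting the completeness of $\mathcal{T}'$. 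So choose a second neighbour $v''\neq v'$ of $w$ with $v''\notin V(\mathcal{T}')$.

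Finally I would build $g$ by swapping the two half-trees hanging from $w$. Let $B'$ (resp.\ $B''$) be the connected component of $T$, with the edge $\{w,v'\}$ (resp.\ $\{w,v''\}$) removed, that contains $v'$ (resp.\ $v''$). Since $v'$ and $v''$ are both neighbours of $w$ they lie in the same part of the bipartition of $T$, so semi-regularity provides a rooted-tree isomorphism $\phi\colon(B',v')\to(B'',v'')$. Put $g=\phi$ on $V(B')$, $g=\phi^{-1}$ on $V(B'')$ and $g=\id$ on the remaining vertices; these three sets partition $V(T)$, and one checks that $g\in\Aut(T)$, the only nontrivial verification being that at the seam edges $\{w,v'\}\mapsto\{w,v''\}$ and $\{w,v''\}\mapsto\{w,v'\}$. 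Convexity of $\mathcal{T}'$ together with $v',v''\notin V(\mathcal{T}')$ forces $V(\mathcal{T}')$ to avoid $V(B')\cup V(B'')$, so $g$ fixes $\mathcal{T}'$ pointwise; and $v\in V(B')$ because $v'$ lies on the geodesic $[v,w]$, hence $g(v)=\phi(v)\in V(B'')$ is different from $v$. As $v\in V(\mathcal{T})$, this $g$ witnesses $\Fix_{\Aut(T)}(\mathcal{T}')\not\leq\Fix_{\Aut(T)}(\mathcal{T})$, completing the contrapositive. The remaining graph-theoretic facts used along the way — that subtrees of $T$ are convex, and that distinct half-trees at $w$ are vertex-disjoint and meet the rest of $T$ only along their seam edge — require no real work; the genuinely delicate moment is the degree/completeness argument producing the second neighbour $v''$.
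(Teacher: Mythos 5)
Your proof is correct, and while it follows the same overall strategy as the paper — argue the contrapositive, pick $v\in V(\mathcal{T})\setminus V(\mathcal{T}')$, and work at the gate $w$ of $v$ in $\mathcal{T}'$ — the element of $\Fix_{\Aut(T)}(\mathcal{T}')$ moving a vertex of $\mathcal{T}$ is produced by a genuinely different mechanism. The paper observes that $\mathcal{T}'$ lies in the half-tree $T(w,v)$ of vertices closer to $w$ than to $v$, and invokes the (standard, there unproved) transitivity of $\Fix_{\Aut(T)}(T(w,v))$ on the sphere of radius $d_T(w,v)$ about $w$ in the complement, which by thickness contains $v$ and at least one other vertex; notably, completeness of $\mathcal{T}'$ is never used. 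You instead build the automorphism explicitly as a swap of two sibling branches at $w$ itself, and completeness of $\mathcal{T}'$ together with thickness is precisely what produces the second neighbour $v''$ of $w$ outside $\mathcal{T}'$ — that degree count ($\deg_{\mathcal{T}'}(w)=\deg_T(w)-1\notin\{0,1,\deg_T(w)\}$) is correct and is indeed the crux of your argument. What your route buys is self-containedness: the only homogeneity input is the rooted isomorphism of the two branches, immediate from semi-regularity since $v'$ and $v''$ lie in the same bipartition class, and the fixing of $\mathcal{T}'$ and the displacement of $v$ are verified directly. What it costs is the use of the completeness hypothesis, which the paper's argument shows is not actually needed for this implication. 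One cosmetic remark: the fact that every edge of $T$ at $w$ towards a $\mathcal{T}'$-vertex is an edge of $\mathcal{T}'$ is not convexity but simply the fact that a subtree of $T$ is the subgraph induced on its vertex set; the substance of that step is unaffected.
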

	\begin{proof}
		It is clear that $\Fix_{\Aut(T)}(\mathcal{T}')\leq \Fix_{\Aut(T)}(\mathcal{T})$ if $\mathcal{T}\subseteq \mathcal{T}'$. Now, suppose that $\mathcal{T}\not\subseteq \mathcal{T}'$ and let us show that $\Fix_{\Aut(T)}(\mathcal{T}')\not\subseteq \Fix_{\Aut(T)}(\mathcal{T})$. Since $\mathcal{T}\not\subseteq\mathcal{T}'$ there exists a vertex $v_\mathcal{T}\in V(\mathcal{T})-V(\mathcal{T}')$ and since $\mathcal{T}$,$\mathcal{T}'$ are subtrees of $T$, there exists a unique vertex $w_{\mathcal{T}}\in V(\mathcal{T'})$ that is closer to $v_{\mathcal{T}}$ than every other vertex of $\mathcal{T}'$. In particular $\mathcal{T}'$ is a subtree of the half-tree $$T(w_{\mathcal{T}},v_\mathcal{T})=\{v\in V(T)\lvert d_T(w_\mathcal{T},v)\lneq d_T(v_\mathcal{T},v)\}$$  and we have that $\Fix_{\Aut(T)}(T(w_{\mathcal{T}},v_\mathcal{T}))\subseteq \Fix_{\Aut(T)}(\mathcal{T}')$. On the other hand, $\Fix_{\Aut(T)}(T(w_{\mathcal{T}},v_\mathcal{T}))$ acts transitively on the set  $$\{v\in V(T)-V(T(w_{\mathcal{T}},v_\mathcal{T}))\lvert d_T(w_\mathcal{T},v)=d_T(w_\mathcal{T},v_\mathcal{T})\}.$$ 
		Since $T$ is thick, this set of vertices contains $v_{\mathcal{T}}$ and at least one other vertex. This proves the existence of an element of $\Fix_{\Aut(T)}(T(w_{\mathcal{T}},v_\mathcal{T}))$ which does not fix $v_\mathcal{T}$ and that $\Fix_{\Aut(T)}(\mathcal{T}')\not\subseteq \Fix_{\Aut(T)}(\mathcal{T})$. 
	\end{proof}
	
	Our next task is to prove that $\mathcal{S}$ factorizes$^+$ at all depth $l\geq 2$ for groups satisfying the Tits independence property.
	\begin{definition}\label{equation prop ind de tits}
		a group $G\leq \Aut(T)$ satisfies the \tg{Tits independence property} if for any two adjacent vertices $v,v'\in V(T)$, the fixator of the edge $\{v,v'\}$ satisfies 
		$$\Fix_G(\{v,v'\})= \Fix_G(T(v,v')) \Fix_G(T(v',v))$$ where $T(w,v)=\{u\in V(T)\lvert d_T(w,u)\lneq d_T(v,u)\}$.
	\end{definition}
	
	In fact, if $G$ satisfies the Tits independence property and if $\mathcal{T}$ is a complete proper subtree of $T$ containing an edge we have that
	\begin{equation}\label{equation pour la prop d'indep de tits}
	\Fix_G(\mathcal{T})=\prod_{f\in \partial E_o(\mathcal{T})} \Fix_{G}(Tf)\cap \Fix_G(\mathcal{T})
	\end{equation}
	where $ \partial E_o(\mathcal{T})$ denotes the set of terminal edges of $\mathcal{T}$ oriented in such a way that the terminal vertex of any $f\in  \partial E_o(\mathcal{T})$ is a leaf of $\mathcal{T}$ and where $Tf$ denotes the half-tree of $T$ of vertices which are closer to the origin of $f$ than to its terminal vertex. A more general version of this result is given by Lemma \ref{independence on trees} or by Proposition \ref{alternative definition iof property IPk} below. The following result is well-known by the expert but we prove it for completeness of the argument. We refer to \cite[Lemma $3.1$]{FigaNebbia1991} for the full group of automorphisms of a regular tree and to \cite[Lemma $19$]{Amann2003} for groups satisfying the Tits independence property.
	
	\begin{lemma}\label{independence figa nebbia 3.1}
		Let $G\leq \Aut(T)$ be a subgroup satisfying the Tits independence property, let $\mathcal{T}$, $\mathcal{T}'$ be a complete finite subtrees of $T$ such that $\mathcal{T}$ contains at least one interior vertex and such that $\mathcal{T}'$ does not contain $\mathcal{T}$. Then, there exists a complete proper subtree $\mathcal{R}$ of $\mathcal{T}$ such that $\Fix_{G}(\mathcal{R})\subseteq \Fix_{G}(\mathcal{T}') \Fix_{G}(\mathcal{T})$. Furthermore, if $G$ satisfies in addition the hypothesis \ref{Hypothese H Tree}, $\mathcal{R}$ can be chosen in such a way that $\mathcal{C}(\Fix_{G}(\mathcal{R}))$ has the height of $\mathcal{C}(\Fix_G(\mathcal{T}))$ less $1$ in $\mathcal{F}_\mathcal{S}$.
	\end{lemma}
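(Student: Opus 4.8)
The plan is to produce $\mathcal{R}$ by deleting from $\mathcal{T}$ the ``corolla'' of leaves around a single, carefully chosen, interior vertex $v^\ast$, in such a way that $\mathcal{T}'$ does not intrude into the deleted half-trees; the inclusion of fixators will then follow from one application of the Tits independence property in the form of \eqref{equation pour la prop d'indep de tits}.

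First recall the shape of a complete finite subtree $\mathcal{T}$ with at least one interior vertex: every vertex is either interior (full $T$-degree) or a leaf (degree $1$), each leaf is adjacent to a unique interior vertex, and the interior vertices span a finite subtree $\mathcal{T}_\circ$ with $r:=|V(\mathcal{T}_\circ)|\geq 1$ vertices. The crucial (and, I expect, most laborious) step is the following geometric claim: there exist an interior vertex $v^\ast$ of $\mathcal{T}$ --- a leaf of $\mathcal{T}_\circ$ when $r\geq2$ --- and a neighbour $y$ of $v^\ast$ in $\mathcal{T}$ (the unique $\mathcal{T}_\circ$-neighbour of $v^\ast$ when $r\geq2$; a well chosen leaf-neighbour when $r=1$) such that, denoting by $\ell_1,\dots,\ell_k$ the leaf-neighbours of $v^\ast$ in $\mathcal{T}$ distinct from $y$, one has $\mathcal{T}'\cap T(\ell_i,v^\ast)=\varnothing$ for every $i$, where $T(x,z):=\{u\in V(T)\mid d_T(x,u)<d_T(z,u)\}$. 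This is proved by a short case analysis on how $\mathcal{T}'$ meets $\mathcal{T}$. When $\mathcal{T}_\circ\cap\mathcal{T}'\neq\varnothing$ but $\mathcal{T}_\circ\not\subseteq\mathcal{T}'$, one picks for $v^\ast$ a leaf of $\mathcal{T}_\circ$ that is not in $\mathcal{T}'$; then, since every $\ell_i$ is separated from each vertex of $\mathcal{T}_\circ\cap\mathcal{T}'$ by $v^\ast$, connectedness of $\mathcal{T}'$ forbids $\mathcal{T}'$ from containing $\ell_i$ or any vertex beyond $\ell_i$. The remaining configurations are handled in the same spirit, using that $\mathcal{T}_\circ$ has at least two leaves when $r\geq2$ and that $T$ is thick: if $\mathcal{T}_\circ\cap\mathcal{T}'=\varnothing$ then $\mathcal{T}\cap\mathcal{T}'$, being a connected subset of the (pairwise non-adjacent) leaves of $\mathcal{T}$, consists of at most one leaf of $\mathcal{T}$, and one takes $v^\ast$ to be a leaf of $\mathcal{T}_\circ$ not adjacent to it; if $\mathcal{T}_\circ\subseteq\mathcal{T}'$ then $\mathcal{T}\not\subseteq\mathcal{T}'$ forces some vertex of $\mathcal{T}_\circ$ to be a leaf of $\mathcal{T}'$, and that vertex works; and the case $r=1$ (where $\mathcal{T}$ is a star and $\mathcal{T}_\circ=\{v^\ast\}$ is forced) is dealt with directly, noting that the complete connected set $\mathcal{T}'$ can ``leave'' the star through at most one leaf-direction, which one takes as $y$.

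Granting the claim, set $\mathcal{R}:=\mathcal{T}\setminus\{\ell_1,\dots,\ell_k\}$. Then $\mathcal{R}$ is a complete proper subtree of $\mathcal{T}$: the vertex $v^\ast$ now has degree $1$ and becomes a leaf, every other vertex retains its $\mathcal{T}$-degree, so $\mathcal{R}$ is complete; it is a single edge when $r=1$, and it has interior set $V(\mathcal{T}_\circ)\setminus\{v^\ast\}$, hence $r-1$ interior vertices, when $r\geq2$; and $\mathcal{R}\subsetneq\mathcal{T}$ because $T$ is thick, so $k=\deg_T(v^\ast)-1\geq 2$. To prove $\Fix_G(\mathcal{R})\subseteq\Fix_G(\mathcal{T}')\Fix_G(\mathcal{T})$, apply \eqref{equation pour la prop d'indep de tits} to $\mathcal{R}$: it expresses $\Fix_G(\mathcal{R})$ as the product of the local subgroups $\Fix_G(Tf)\cap\Fix_G(\mathcal{R})$ over the terminal edges $f$ of $\mathcal{R}$, and these subgroups pairwise commute because their supports lie in the pairwise disjoint parts of $T$ lying strictly beyond the distinct leaves of $\mathcal{R}$. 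For a terminal edge $f$ of $\mathcal{R}$ coming from a leaf $\ell\neq\ell_i$ of $\mathcal{T}$ we have $\mathcal{T}\subseteq Tf\cup\{\ell\}$, so the corresponding factor fixes $\mathcal{T}$ and is contained in $\Fix_G(\mathcal{T})$. The only other terminal edge of $\mathcal{R}$ is $\{v^\ast,y\}$ with terminal vertex $v^\ast$; its factor equals $\Fix_G(T(y,v^\ast))$ --- this subgroup already fixes $v^\ast$, since it fixes $y$ and every neighbour of $y$ other than $v^\ast$ --- and $\Fix_G(T(y,v^\ast))$ moves points only inside $T(\ell_1,v^\ast)\cup\dots\cup T(\ell_k,v^\ast)$, a region that the claim makes disjoint from $\mathcal{T}'$; hence $\Fix_G(T(y,v^\ast))\subseteq\Fix_G(\mathcal{T}')$. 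Since the local factors commute, $\Fix_G(\mathcal{R})$ is the product of $\Fix_G(T(y,v^\ast))\subseteq\Fix_G(\mathcal{T}')$ with a subgroup of $\Fix_G(\mathcal{T})$, which yields the desired inclusion. Note that this part of the argument does not invoke \ref{Hypothese H Tree}.

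Finally, for the ``furthermore'' part, assume in addition that $G$ satisfies \ref{Hypothese H Tree}. By Lemma \ref{Lemma la startification de S pour Aut(T)}, the height of $\mathcal{C}(\Fix_G(\mathcal{Q}))$ in $\mathcal{F}_\mathcal{S}$ is $0$ if $\mathcal{Q}$ is a single vertex, $1$ if $\mathcal{Q}$ is an edge, and $1$ plus the number of interior vertices of $\mathcal{Q}$ if $\mathcal{Q}$ has at least one interior vertex. Since $\mathcal{T}$ has $r\geq1$ interior vertices, $\mathcal{C}(\Fix_G(\mathcal{T}))$ has height $r+1$; and the subtree $\mathcal{R}$ constructed above is an edge when $r=1$ (height $1$) and has $r-1$ interior vertices when $r\geq2$ (height $r$). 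In both cases $\mathcal{C}(\Fix_G(\mathcal{R}))$ has height exactly one less than $\mathcal{C}(\Fix_G(\mathcal{T}))$, as claimed.
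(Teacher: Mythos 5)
Your proof is correct and is essentially the paper's own argument: you pick an interior vertex all of whose neighbours but one are leaves of $\mathcal{T}$, positioned so that $\mathcal{T}'$ avoids the leaf-branches at that vertex, cut off that corolla to get a complete proper subtree $\mathcal{R}$, and split the Tits-independence factorization \eqref{equation pour la prop d'indep de tits} of $\Fix_G(\mathcal{R})$ into one factor lying in $\Fix_G(\mathcal{T}')$ and the remaining factors lying in $\Fix_G(\mathcal{T})$. The only minor differences are that you secure the ``furthermore'' directly by forcing $y$ to be the interior neighbour so that $\mathcal{R}$ has exactly one interior vertex fewer (the paper instead takes an arbitrary suitable $\mathcal{P}$ and enlarges it to a complete subtree with one fewer interior vertex, using that $\Fix_G$ reverses inclusions), and that in your case analysis the sub-case $\mathcal{T}\cap\mathcal{T}'=\varnothing$ needs the extra (easy, same-spirit) observation that $\mathcal{T}'$ then hangs off a single leaf of $\mathcal{T}$ and $v^\ast$ must also be chosen non-adjacent to that leaf.
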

	\begin{proof}
		Since $\mathcal{T}$ and $\mathcal{T}'$ are complete and since $\mathcal{T}'$ does not contain $\mathcal{T}$ there exists a vertex $w_\mathcal{T}$ of $\mathcal{T}$ such that all vertices adjacent to $w_\mathcal{T}$ but possibly one are leaves of $\mathcal{T}$ and none of those leaves is a vertex of $\mathcal{T}'$. Since $\mathcal{T}$ is complete it contains every of the oriented edges of $T$ with terminal vertex $w_\mathcal{T}$. Furthermore, for one of those oriented edges that we denote by $e$, the half-tree $Te\cup e$ contains $\mathcal{T}'$. Let $\mathcal{R}=\lb Te\cup e\rb\cap \mathcal{T}$. Since $\mathcal{R}$ is complete, contains an edge and since $G$ satisfies the Tits independence property, notice that $$\Fix_G(\mathcal{R})=\big\lb\Fix_{G}(Te)\cap  \Fix_G(\mathcal{R})\big\rb\prod_{f\in \partial E_o(\mathcal{R})-\{e\}} \big\lb\Fix_{G}(Tf)\cap \Fix_G(\mathcal{R})\big\rb.$$ On the other hand, $\forall f\in \partial E_o(\mathcal{R})-\{e\}$ we have that $\mathcal{T}\subseteq Tf$ and therefore that $\Fix_{G}(Tf)\cap \Fix_G(\mathcal{R})\subseteq \Fix_G(\mathcal{T})$. Furthermore, since $\mathcal{T}'\subseteq Te\cup e$, we obtain that $\Fix_{G}(Te)\cap \Fix_G(\mathcal{R})\subseteq \Fix_G(\mathcal{T}')$. The desired inclusion follows.
		Now, let us prove the second part of the lemma. Suppose that $G$ satisfies the hypothesis \ref{Hypothese H Tree} and let $\mathcal{P}$ be any complete proper subtree of $\mathcal{T}$ such that $$\Fix_{G}(\mathcal{P})\subseteq \Fix_{G}(\mathcal{T}') \Fix_{G}(\mathcal{T})$$ (notice that the existence of such $\mathcal{P}$ is guaranteed by the first part of the proof). If $\mathcal{T}$ has exactly one interior vertex, there exists an edge $\mathcal{R}$ in $\mathcal{T}$ such that $\mathcal{P}\subseteq \mathcal{R} \subsetneq \mathcal{T}$. On the other hand, if $\mathcal{T}$ has more than one interior vertex, there exists a complete subtree $\mathcal{R}$ of $\mathcal{T}$ with one less interior vertex than $\mathcal{T}$ such that $\mathcal{P}\subseteq \mathcal{R}\subsetneq \mathcal{T}$. In both cases, this implies that $\Fix_{G}(\mathcal{R})\subseteq \Fix_{G}(\mathcal{P})\subseteq \Fix_{G}(\mathcal{T}')\Fix_{G}(\mathcal{T})$ and $\mathcal{R}$ is as desired. 
	\end{proof}
	\begin{proof}[Proof of Theorem \ref{Theorem classif pour Aut(T)}]
		To prove that $\mathcal{S}$ factorizes$^+$ a depth $l\geq 2$, we shall successively verify the three conditions of the Definition \ref{definition olsh facto}. 
		
		First, we need to prove that for all $U$ in the conjugacy class of an element of $\mathcal{S}\lb l \rb$ and every $V$ in the conjugacy class of an element of $\mathcal{S}$ such that $V \not\subseteq U$, there exists a $W$ in the conjugacy class of an element of $\mathcal{S}\lb l -1\rb$ and $U\subseteq W \subseteq V U.$
		Let $U$ and $V$ be as above. Since $\mathfrak{T}$ is stable under the action of $G$, since $G$ satisfies the hypothesis \ref{Hypothese H Tree} and as a consequence of Lemma \ref{Lemma la startification de S pour Aut(T)} there exists subtrees $\mathcal{T},\mathcal{T}'\in \mathfrak{T}$ such that $\mathcal{T}$ has  $l-1$ interior vertices, $\mathcal{T}'$ does not contain $\mathcal{T}$, $U=\Fix_{G}(\mathcal{T})$ and $V=\Fix_{G}(\mathcal{T}')$. Hence, as a consequence of Lemma \ref{independence figa nebbia 3.1}, there exists a proper subtree $\mathcal{R}$ of $\mathcal{T}$ such that $\Fix_{G}(\mathcal{R})$ is conjugate to an element of $\mathcal{S}\lb l-1 \rb$ and  $$\Fix_{G}(\mathcal{R})\subseteq \Fix_{G}(\mathcal{T}')\Fix_{G}(\mathcal{T}).$$ This proves the first condition. Next, we need to prove that 
				\begin{equation*}
				N_{G}(U, V)= \{g\in {G} \lvert g^{-1}Vg\subseteq U\}
				\end{equation*}
				is compact for every $V$ in the conjugacy class of an element of $\mathcal{S}$. Just as before, notice that $V=\Fix_{G}(\mathcal{T}')$ for some $\mathcal{T}'\in \mathfrak{T}$. Furthermore, since $G$ satisfies the hypothesis \ref{Hypothese H Tree} we have 
				\begin{equation*}
				\begin{split}
				N_{G}(U, V)&= \{g\in {G} \lvert g^{-1}Vg\subseteq U\}= \{g\in {G} \lvert g^{-1}\Fix_{G}(\mathcal{T}')g\subseteq \Fix_{G}(\mathcal{T})\}\\
				&= \{g\in {G} \lvert \Fix_G(g^{-1}\mathcal{T}')\subseteq \Fix_G(\mathcal{T})\}= \{g\in {G} \lvert g\mathcal{T}\subseteq \mathcal{T}'\}
				\end{split}
				\end{equation*}
				and this set is compact since both $\mathcal{T}$ and $\mathcal{T}'$ are finite. This proves the second condition. Finally, we need to prove that for any subgroup $W$ in the conjugacy class of an element of $\mathcal{S}\lb l-1\rb$ such that $U\subseteq W$ we have
				\begin{equation*}
				W\subseteq N_{G}(U,U) =\{g\in {G} \mid g^{-1}Ug\subseteq U\}.
				\end{equation*} 
				For the same reasons as before, there exist a complete subtree $\mathcal{R}\in \mathfrak{T}$ such that $W = \Fix_{G}(\mathcal{R})$. Furthermore, since $U\subseteq W$ and since $G$ satisfies the hypothesis \ref{Hypothese H Tree}, we have that $\mathcal{R}\subseteq \mathcal{T}$. On the other hand, since $\mathcal{R}$ and $\mathcal{T}$ are both complete finite subtrees and since $\mathcal{R}$ has exactly one less interior vertex than $\mathcal{T}$, every interior vertex of $\mathcal{T}$ belongs to $\mathcal{R}$. Hence, $g\mathcal{T}\subseteq  \mathcal{T}$ for every $g\in \Fix_{G}(\mathcal{R})$. This implies that 
				\begin{equation*}
				\begin{split}
				W= \Fix_{G}(\mathcal{R})&\subseteq \{g\in G \mid g\mathcal{T}\subseteq \mathcal{T}\}= \{g\in {G} \lvert \Fix_{G}(\mathcal{T})\subseteq \Fix_{G}(g\mathcal{T})\}\\
				&=\{g\in {G} \lvert g^{-1}\Fix_{G}(\mathcal{T})g\subseteq \Fix_{G}(\mathcal{T})\}\\
				&=N_{G}(\Fix_{G}(\mathcal{T}),\Fix_{G}(\mathcal{T}))= N_{G}(U,U).
				\end{split}
				\end{equation*} 
	\end{proof}
	In particular, for every complete finite subtree $\mathcal{T}$ containing an interior vertex, Theorem \ref{la version paki du theorem de classification} provides a bijective correspondence between the equivalence classes of cuspidal representations of $G$ with seed $\mathcal{C}(\Fix_G(\mathcal{T}))$ and the equivalence classes of $\mathcal{S}$-standard representations of $\Aut_{G}(\mathcal{C}(\Fix_{G}(\mathcal{T})))$. On the other hand, notice from the above computations that $\Aut_{G}(\mathcal{C}(\Fix_{G}(\mathcal{T})))$ can be be identified with the group of automorphisms of $\mathcal{T}$ coming from the action of $\Stab_{G}(\mathcal{T})=\{g\in G\lvert g\mathcal{T}\subseteq \mathcal{T}\}$ on $\mathcal{T}$ and that, under this identification, the $\mathcal{S}$-standard representations of $\Aut_{G}(\mathcal{C}(\Fix_{G}(\mathcal{T})))$ are the irreducible representations that do not admit non-zero invariant vectors for the fixator of any maximal proper complete subtree  of $\mathcal{T}$. The existence of $\mathcal{S}$-standard representations of $\Aut_{G}(C)$ is shown for any seed $C$ at height $l\geq 2$ in \cite[Theorem 3.9]{FigaNebbia1991}.
	\newpage

	\section{Groups of automorphisms of trees with the property ${\rm IP}_k$}\label{application IPk}
	In this chapter, we apply our machinery to groups of automorphisms of semi-regular trees satisfying the property \ref{IPk}(Definition \ref{definition IPk}) for some positive integer $k\geq 1$ as introduced in \cite{BanksElderWillis2015}. We use the same notations and terminology as in Chapter \ref{Application to Aut T}. Let $T$ be a $(d_0,d_1)$-semi-regular tree with $d_0,d_1\geq 3$. For every finite subtree $\mathcal{T}$ of $T$ we denote by $B_T(\mathcal{T},r)$ or simply by $\mathcal{T}^{(r)}$ the ball of radius $r\geq 0$ around $\mathcal{T}$ that is 
	$$\mathcal{T}^{(r)}=\{v\in V(T)\lvert \exists w\in V(\mathcal{T})\qq\mbox{s.t.}\qq d_T(v,w)\leq r\}.$$
	The purpose of this chapter is to prove Theorem \ref{Theorem IPK avec un arbre} and Theorem \ref{le theorem pour IP_k}. The following proves Theorem \ref{thm B1}.
	\begin{theorem}\label{Theorem IPK avec un arbre}
		Let $G\leq \Aut(T)$ be a closed non-discrete unimodular subgroup satisfying the property \ref{IPk} for some integer $k\geq1$. Let $ \mathcal{P}$ be a complete finite subtree of $T$ containing an interior vertex, let $\Sigma_{\mathcal{P}}$ be the set of maximal complete proper subtrees of $ \mathcal{P}$ and let 
		$$ \mathfrak{T}_{\mathcal{P}}=\{\mathcal{R}\in \Sigma_{\mathcal{P}}\lvert \Fix_G((\mathcal{R}')^{(k-1)})\not\subseteq \Fix_G(\mathcal{R}^{(k-1)})\qq \forall \mathcal{R}'\in \Sigma_{\mathcal{P}}-\{\mathcal{R}\}\}.$$ 
		We suppose that: \begin{enumerate}
			\item $\forall \mathcal{R},\mathcal{R}'\in \mathfrak{T}_{ \mathcal{P}}, \forall g\in G$, we do not have $\Fix_G(\mathcal{R}^{(k-1)})\subsetneq\Fix_G(g(\mathcal{R}')^{(k-1)})$.
			\item For all $\mathcal{R}\in \mathfrak{T}_{ \mathcal{P}}$, $\Fix_G( \mathcal{P}^{(k-1)})\not=\Fix_G(\mathcal{R}^{(k-1)})$. 
			Furthermore, if $\Fix_G( \mathcal{P}^{(k-1)})\subsetneq\Fix_G(g\mathcal{R}^{(k-1)})$ we have $ \mathcal{P}\subseteq g\mathcal{R}^{(k-1)}$. 
			\item $\forall n\in \N,\forall v\in V(T)$, $\Fix_G(v^{(n)})\subseteq \Fix_G( \mathcal{P}^{(k-1)})$ implies $ \mathcal{P}^{(k-1)}\subseteq v^{(n)}$.  
			\item For every $g\in G$ such that $g \mathcal{P}\not= \mathcal{P}$, $\Fix_G( \mathcal{P}^{(k-1)})\not= \Fix_G(g \mathcal{P}^{(k-1)})$.
		\end{enumerate}  Then, there exists a generic filtration $\mathcal{S}_{ \mathcal{P}}$ of $G$ that factorizes$^+$ at depth $1$ such that 
		$$\mathcal{S}_{ \mathcal{P}}\lb 0\rb =\{\Fix_G(\mathcal{R}^{(k-1)})\lvert \mathcal{R}\in \mathfrak{T}_{ \mathcal{P}}\}$$
		$$\mathcal{S}_{ \mathcal{P}}\lb 1\rb=\{\Fix_G( \mathcal{P}^{(k-1)})\}.$$
	\end{theorem}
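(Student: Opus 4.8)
The plan is to write down $\mathcal{S}_{\mathcal{P}}$ explicitly, to check it is a generic filtration by a Haar-measure argument, to identify its two lowest strata, and finally to verify the three conditions of Definition~\ref{definition olsh facto} at depth $l=1$, the last being the real work and modelled on Lemma~\ref{independence figa nebbia 3.1}. I would set
\[
\mathcal{S}_{\mathcal{P}}=\{\Fix_G(\mathcal{R}^{(k-1)})\mid \mathcal{R}\in\mathfrak{T}_{\mathcal{P}}\}\ \cup\ \{\Fix_G(\mathcal{P}^{(k-1)})\}\ \cup\ \{\Fix_G(g\mathcal{Q}^{(k-1)})\mid g\in G,\ \mathcal{Q}\text{ complete finite},\ \mathcal{P}\subsetneq\mathcal{Q}\}.
\]
Since the fixators of finite vertex sets form a basis of neighbourhoods of the identity and $\mathcal{Q}^{(k-1)}$ exhausts $V(T)$, this is such a basis, made of compact open subgroups; and every member of the third family lies in the conjugate $\Fix_G(g\mathcal{P}^{(k-1)})$ of $\Fix_G(\mathcal{P}^{(k-1)})$, whose measure is conjugation-invariant because $G$ is unimodular, so all members of $\mathcal{S}_{\mathcal{P}}$ have measure at most $\max\!\big(\mu(\Fix_G(\mathcal{P}^{(k-1)})),\ \max_{\mathcal{R}\in\mathfrak{T}_{\mathcal{P}}}\mu(\Fix_G(\mathcal{R}^{(k-1)}))\big)$; Lemma~\ref{lemma bounded basis implies generic filtration} then shows $\mathcal{S}_{\mathcal{P}}$ is a generic filtration.

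For the strata I would use repeatedly that, $G$ being unimodular, a compact open subgroup cannot be strictly contained in a conjugate of itself. Given $\mathcal{R}\in\mathfrak{T}_{\mathcal{P}}$, the class $\mathcal{C}(\Fix_G(\mathcal{R}^{(k-1)}))$ has height $0$: conjugates of the groups $\Fix_G((\mathcal{R}')^{(k-1)})$ do not strictly contain $\Fix_G(\mathcal{R}^{(k-1)})$ by Hypothesis~$1$, and conjugates of $\Fix_G(\mathcal{P}^{(k-1)})$ — hence the deeper groups, which lie inside them — are ruled out using $\Fix_G(g\mathcal{Q}^{(k-1)})\subseteq\Fix_G(g\mathcal{P}^{(k-1)})$ together with the remark and Hypotheses~$2$ and~$4$. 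Next, $\mathcal{R}\subseteq\mathcal{P}$ yields $\Fix_G(\mathcal{P}^{(k-1)})\subseteq\Fix_G(\mathcal{R}^{(k-1)})$, strict by Hypothesis~$2$, so $\mathcal{C}(\Fix_G(\mathcal{P}^{(k-1)}))$ has height $\ge1$; any conjugate of a member of $\mathcal{S}_{\mathcal{P}}$ strictly containing $\Fix_G(\mathcal{P}^{(k-1)})$ already has height $0$, so $\mathcal{C}(\Fix_G(\mathcal{P}^{(k-1)}))$ has height exactly $1$, while each group of the third family sits strictly inside a conjugate of $\Fix_G(\mathcal{P}^{(k-1)})$ (or coincides with one), hence has height $\ge2$. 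This gives $\mathcal{S}_{\mathcal{P}}\lb 0\rb$ and $\mathcal{S}_{\mathcal{P}}\lb 1\rb$ as stated.

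It then remains to verify factorization$^+$ at depth $1$. Put $U=\Fix_G(\mathcal{P}^{(k-1)})$; every group conjugate to a member of $\mathcal{S}_{\mathcal{P}}$ has the form $\Fix_G(\mathcal{N})$ with $\mathcal{N}$ the $(k-1)$-thickening of a translate $\mathcal{N}'$ of a complete finite subtree. Condition~$2$: such a $V=\Fix_G(\mathcal{N})$ contains $\Fix_G(v^{(n)})$ for $v\in\mathcal{N}$ and $n$ large, so $N_G(U,V)\subseteq N_G(U,\Fix_G(v^{(n)}))$; for $g$ in the latter one has $\Fix_G((g^{-1}v)^{(n)})\subseteq U$, which by Hypothesis~$3$ forces $\mathcal{P}^{(k-1)}\subseteq(g^{-1}v)^{(n)}$, confining $g^{-1}v$ to a finite set and hence $g$ to a compact set. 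Condition~$3$: if $W=\Fix_G(g\mathcal{R}^{(k-1)})$ ($\mathcal{R}\in\mathfrak{T}_{\mathcal{P}}$) is conjugate to a member of $\mathcal{S}_{\mathcal{P}}\lb 0\rb$ and $U\subseteq W$, then $U=W$ is impossible (Hypotheses~$2$,~$4$ and the remark), so the inclusion is strict, Hypothesis~$2$ gives $\mathcal{P}\subseteq g\mathcal{R}^{(k-1)}$, every element of $W$ fixes $\mathcal{P}$ pointwise, hence stabilizes $\mathcal{P}^{(k-1)}$ setwise and normalizes $U$, i.e.\ $W\subseteq N_G(U)$. Condition~$1$ is the crux and follows Lemma~\ref{independence figa nebbia 3.1}: given $V=\Fix_G(\mathcal{N})$ with $V\not\subseteq U$, one has $\mathcal{N}'\not\supseteq\mathcal{P}$ (else $\mathcal{N}\supseteq\mathcal{P}^{(k-1)}$ and $V\subseteq U$); one then finds a vertex of $\mathcal{P}$ all but one of whose neighbours are leaves of $\mathcal{P}$ outside $\mathcal{N}'$, cuts off that leaf-star to obtain a maximal proper complete subtree $\mathcal{R}$ of $\mathcal{P}$, and arranges $\mathcal{R}\in\mathfrak{T}_{\mathcal{P}}$ via the defining minimality of $\mathfrak{T}_{\mathcal{P}}$ and Hypothesis~$1$; the independence property~\ref{IPk} (Proposition~\ref{alternative definition iof property IPk}, the $(k-1)$-thickened analogue of~\eqref{equation pour la prop d'indep de tits}) then decomposes $\Fix_G(\mathcal{R}^{(k-1)})$ as a product of subgroups supported on the branches hanging off $\mathcal{R}$, the one through the chosen edge lying in $V$ and the others lying in $U$, whence $U\subseteq\Fix_G(\mathcal{R}^{(k-1)})\subseteq VU$ and $W:=\Fix_G(\mathcal{R}^{(k-1)})$ is as required.

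The hard part is condition~$1$: one must correctly locate the direction in which $V$ protrudes from $\mathcal{P}$, verify that the corresponding cut of $\mathcal{P}$ can be chosen inside $\mathfrak{T}_{\mathcal{P}}$ and not merely in $\Sigma_{\mathcal{P}}$ — which is precisely the purpose of the definition of $\mathfrak{T}_{\mathcal{P}}$ and of Hypothesis~$1$ — and feed the resulting configuration into the independence decomposition of~\ref{IPk} in the exact shape needed. A secondary technicality, used at several points above, is transferring the rigidity encoded by Hypotheses~$2$–$4$, stated for balls around single vertices or for $\mathcal{P}^{(k-1)}$ itself, to the $(k-1)$-thickened subtrees that arise as defining sets of conjugates of members of $\mathcal{S}_{\mathcal{P}}$; this is handled by sandwiching the relevant fixators between fixators of balls around vertices, as in condition~$2$.
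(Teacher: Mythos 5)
Your overall strategy coincides with the paper's: append a ``tail'' of small subgroups above $\Fix_G(\mathcal{P}^{(k-1)})$ to get a generic filtration via Lemma \ref{lemma bounded basis implies generic filtration}, identify the two lowest strata by measure monotonicity (Hypothesis $2$ plus unimodularity), and verify the three conditions of Definition \ref{definition olsh facto} exactly as in the paper (the cut-and-decompose argument of Proposition \ref{G(W) for Sk-1} for condition $1$, Hypothesis $3$ for compactness, Hypotheses $2$ and $4$ for the factorization$^+$ condition; your uniform treatment of condition $2$ by sandwiching a ball fixator inside $V$ is if anything cleaner than the paper's three-case argument). However, there is a genuine gap in your choice of tail. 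You take all $\Fix_G(g\mathcal{Q}^{(k-1)})$ with $\mathcal{P}\subsetneq\mathcal{Q}$ and claim each such group ``sits strictly inside a conjugate of $\Fix_G(\mathcal{P}^{(k-1)})$ (or coincides with one), hence has height $\ge 2$''. The parenthetical case destroys the conclusion: if $\Fix_G(g\mathcal{Q}^{(k-1)})=\Fix_G(g\mathcal{P}^{(k-1)})$ for some $g$ with $g\mathcal{P}\neq\mathcal{P}$, that member of your filtration has height exactly $1$ and, by Hypothesis $4$, is a group different from $\Fix_G(\mathcal{P}^{(k-1)})$, so your claimed equality $\mathcal{S}_{\mathcal{P}}\lb 1\rb=\{\Fix_G(\mathcal{P}^{(k-1)})\}$ fails. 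Nothing in Hypotheses $1$--$4$ excludes such an equality for a general complete finite $\mathcal{Q}\supsetneq\mathcal{P}$ (Hypothesis $3$ only constrains balls $v^{(n)}$, and even there gives no strictness). Tellingly, you never invoke the non-discreteness of $G$, which is precisely what the paper uses at this point: it chooses one vertex $v$ and an $N$ with $\mathcal{P}\subsetneq v^{(N)}$ and $\Fix_G(v^{(N+k-1)})\subsetneq\Fix_G(\mathcal{P}^{(k-1)})$ \emph{strictly}, and takes the tail $\{v^{(n)}:n\geq N\}$, so that every tail fixator has strictly smaller measure and hence height at least $2$. Your construction is repaired the same way: use non-discreteness to find one $\mathcal{Q}_0\supsetneq\mathcal{P}$ with strictly smaller fixator and restrict the tail to $\mathcal{Q}\supseteq\mathcal{Q}_0$.

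A second, more minor point: in condition $1$ you say the cut of $\mathcal{P}$ ``can be chosen inside $\mathfrak{T}_{\mathcal{P}}$'' and then decompose $\Fix_G(\mathcal{R}^{(k-1)})$ for that $\mathcal{R}\in\mathfrak{T}_{\mathcal{P}}$, with the branch through the chosen edge landing in $V$. That is not quite right: the geometric cut dictated by the direction in which $\mathcal{N}'$ protrudes is some $\mathcal{Q}\in\Sigma_{\mathcal{P}}$ which need not belong to $\mathfrak{T}_{\mathcal{P}}$, and if you replace it by an $\mathcal{R}\in\mathfrak{T}_{\mathcal{P}}$ you can no longer claim the decomposition of $\Fix_G(\mathcal{R}^{(k-1)})$ interacts correctly with $V$. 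The correct order of operations (and the paper's) is: apply the {\rm IP}$_k$-decomposition to the geometric cut $\mathcal{Q}$ to get $U\subseteq\Fix_G(\mathcal{Q}^{(k-1)})\subseteq VU$, and only then use the defining minimality of $\mathfrak{T}_{\mathcal{P}}$ to find $\mathcal{R}\in\mathfrak{T}_{\mathcal{P}}$ with $\Fix_G(\mathcal{R}^{(k-1)})\subseteq\Fix_G(\mathcal{Q}^{(k-1)})$, noting that $U\subseteq\Fix_G(\mathcal{R}^{(k-1)})$ holds automatically because $\mathcal{R}\subseteq\mathcal{P}$; no fresh decomposition of $\Fix_G(\mathcal{R}^{(k-1)})$ is needed. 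With these two corrections your argument matches the paper's proof; your simpler derivation of $\mathcal{P}\not\subseteq\mathcal{N}'$ from $V\not\subseteq U$ (by thickening monotonicity) is a legitimate shortcut past the paper's case analysis.
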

	Together with Theorem \ref{la version paki du theorem de classification} this lead to a description of the irreducible representations of $G$ that admit non-zero $\Fix_G( \mathcal{P}^{(k-1)})$ invariant vectors but do not admit a non-zero $\Fix_G(\mathcal{R}^{(k-1)})$-invariant vectors for any $\mathcal{R}\in \Sigma_{\mathcal{P}}$.
	
	Furthermore, under additional hypothesis on $G$ we are able to construct an explicit generic filtration that factorizes$^+$ at all depth bigger than a certain constant. Let $q\in \N$ be a non-negative integer. If $q$ is even, let $$\mathfrak{T}_{q}=\bigg\{B_T(v,r)\Big\lvert v\in V(T), r\geq \frac{q}{2}+1\bigg\}\sqcup\bigg\{B_T(e,r)\Big\lvert e\in E(T), r\geq \frac{q}{2}\bigg\}.$$
	If $q$ is odd, let  $$\mathfrak{T}_{q}=\bigg\{B_T(v,r)\Big\lvert v\in V(T), r\geq \frac{q+1}{2}\bigg\}\sqcup\bigg\{B_T(e,r)\Big\lvert e\in E(T), r\geq \frac{q+1}{2}\bigg\}.$$ 
	For any closed subgroup $G\leq \Aut(T)$, we consider the set 
	\begin{equation*} \label{page Sq filtration}
	\mathcal{S}_{q}=\{\Fix_G(\mathcal{T})\lvert \mathcal{T}\in \mathfrak{T}_{q}\}
	\end{equation*}
	of fixators of those subtrees. 
	\begin{definition}\label{definition de Hq}
		A group $G\leq\Aut(T)$ is said to satisfy the hypothesis \ref{Hypothese Hq} if for all $\mathcal{T},\mathcal{T'}\in \mathfrak{T}_{q}$ we have
		\begin{equation}\tag{$H_q$}\label{Hypothese Hq}
		\Fix_G(\mathcal{T}')\leq \Fix_G(\mathcal{T})\mbox{ if and only if }\mathcal{T}\subseteq \mathcal{T}'. 
		\end{equation}
	\end{definition}
	\noindent If $G\leq \Aut(T)$ is a closed non-discrete unimodular subgroup of $\Aut(T)$ satisfying the hypothesis \ref{Hypothese Hq}, Lemma \ref{la forme des Sl pour IPk} below ensures that $\mathcal{S}_{q}$ is a generic filtration of $G$ and:
	\begin{itemize}
		\item  $\mathcal{S}_{q}\lb l\rb = \{\Fix_G(B_T(e,\frac{l+q}{2}))\lvert e\in E(T)\}$ if $q+l$ is even.
		\item  $\mathcal{S}_{q}\lb l\rb = \{\Fix_G(B_T(v,\frac{l+q+1}{2}))\lvert v\in V(T)\}$ if $q+l$ is odd.
	\end{itemize}
	The following proves Theorem \ref{THM B}.
	\begin{theorem}\label{le theorem pour IP_k}
		Let $T$ be a $(d_0,d_1)$-semi-regular tree with $d_0,d_1\geq 3$ and let $G\leq \Aut(T)$ be a closed non-discrete unimodular subgroup satisfying the hypothesis \ref{Hypothese Hq} and the property \ref{IPk} for some integers $q\geq 0$ and $k\geq 1$. Then $\mathcal{S}_{q}$ factorizes$^+$ at all depth $l\geq L_{q,k}$ where $$L_{q,k}=\begin{cases}
		\max\{1,2k-q-1\} \qq& \mbox{if } q\mbox{  is even.} \\
		\max\{1,2k-q\} & \mbox{if } q\mbox{  is odd.} 
		\end{cases}$$
	\end{theorem}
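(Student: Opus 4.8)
The plan is to fix $l\ge L_{q,k}$ and to verify, one by one, the three requirements for $\mathcal{S}_q$ to factorize$^+$ at depth $l$ listed in Definition~\ref{definition olsh facto}. Throughout I would use Lemma~\ref{la forme des Sl pour IPk}, which asserts that $\mathcal{S}_q$ is a generic filtration and computes its strata; combined with the $\Aut(T)$-stability of $\mathfrak{T}_q$ and with hypothesis~\ref{Hypothese Hq}, this lets me translate every statement about subgroups conjugate to elements of $\mathcal{S}_q$ into a statement about balls in $\mathfrak{T}_q$: such a subgroup is $\Fix_G(\mathcal{T})$ for some $\mathcal{T}\in\mathfrak{T}_q$; a subgroup conjugate to an element of $\mathcal{S}_q[l]$ is $\Fix_G(\mathcal{T})$ for $\mathcal{T}$ a ball of the prescribed radius $r_l$ — around an edge if $l+q$ is even, around a vertex if $l+q$ is odd, with $r_l=(l+q)/2$ in the first case and $r_l=(l+q+1)/2$ in the second; and, for $\mathcal{T},\mathcal{T}'\in\mathfrak{T}_q$, one has $\Fix_G(\mathcal{T}')\subseteq\Fix_G(\mathcal{T})$ if and only if $\mathcal{T}\subseteq\mathcal{T}'$.

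The second condition of Definition~\ref{definition olsh facto} is immediate and holds for all $l\ge1$: writing $U=\Fix_G(\mathcal{T})$ and $V=\Fix_G(\mathcal{T}')$ with $\mathcal{T},\mathcal{T}'\in\mathfrak{T}_q$, hypothesis~\ref{Hypothese Hq} identifies $N_G(U,V)$ with $\{g\in G\mid g\mathcal{T}\subseteq\mathcal{T}'\}$, which is compact because $\mathcal{T}$ and $\mathcal{T}'$ are finite. The factorization$^+$ clause is nearly as easy, again for all $l\ge1$: if $W=\Fix_G(\mathcal{R})$ is conjugate to an element of $\mathcal{S}_q[l-1]$ with $U=\Fix_G(\mathcal{T})\subseteq W$, then hypothesis~\ref{Hypothese Hq} gives $\mathcal{R}\subseteq\mathcal{T}$, and a direct inspection of balls shows that $\mathcal{R}$ is then centred at a vertex of the centre of $\mathcal{T}$ (if $l+q$ is even, $\mathcal{T}=B_T(e,r_l)$ and $\mathcal{R}=B_T(v,r_l)$ with $v\in e$; if $l+q$ is odd, $\mathcal{T}=B_T(v,r_l)$ and $\mathcal{R}=B_T(e,r_l-1)$ with $v\in e$). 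In either case any $g\in\Fix_G(\mathcal{R})$ fixes the centre of $\mathcal{T}$ (the edge or the vertex) and is an isometry, hence stabilises the metric ball $\mathcal{T}$ setwise, hence by~\ref{Hypothese Hq} normalises $U$; so $W\subseteq N_G(U)$.

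The heart of the argument is the first condition of Definition~\ref{definition olsh facto}, the analogue of Lemma~\ref{independence figa nebbia 3.1}: given $U=\Fix_G(\mathcal{T})\in\mathcal{S}_q[l]$ and $V=\Fix_G(\mathcal{T}')$ conjugate to an element of $\mathcal{S}_q$ with $V\not\subseteq U$, i.e.\ $\mathcal{T}\not\subseteq\mathcal{T}'$, I must produce $W=\Fix_G(\mathcal{R})$ with $\mathcal{R}$ a depth-$(l-1)$ ball and $U\subseteq W\subseteq VU$. The tool is a product decomposition of fixators of balls, which I would first isolate as a lemma by iterating the edge-wise independence of property~\ref{IPk} (via Proposition~\ref{alternative definition iof property IPk} and Lemma~\ref{independence on trees}): once the radius is at least $k$, $\Fix_G(B_T(v,r))$ is the internal direct product $\prod_{w\sim v}K_w$, where $K_w$ is the subgroup acting only on the branch of $v$ through $w$ at distance $>r$ from $v$; and $\Fix_G(B_T(e,r-1))$ with $r\ge k$ splits, by a single application of~\ref{IPk} at $e$, as the product of the two subgroups fixing pointwise one of the half-trees determined by $e$. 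Granting this, I would take $\mathcal{R}$ to be the depth-$(l-1)$ ball obtained from $\mathcal{T}$ by shrinking its centre towards $\mathcal{T}'$, i.e.\ away from the direction in which $\mathcal{T}$ sticks out of $\mathcal{T}'$ (if $\mathcal{T}=B_T(e,r_l)$, let $\mathcal{R}=B_T(v_0,r_l)$ for the endpoint $v_0\in e$ lying on the $\mathcal{T}'$-side; if $\mathcal{T}=B_T(v_0,r_l)$, let $\mathcal{R}=B_T(e,r_l-1)$ for an edge $e\ni v_0$ pointing towards $\mathcal{T}'$). Then $\mathcal{R}\subseteq\mathcal{T}$, so $U\subseteq W$; the ``shell'' $\mathcal{T}\setminus\mathcal{R}$ lies in the single branch pointing away from $\mathcal{T}'$; and a short computation with balls shows that the inequality $\mathcal{T}\not\subseteq\mathcal{T}'$ is exactly equivalent to $\mathcal{T}'$ being disjoint from the part of that branch on which the ``extra'' factor of $\Fix_G(\mathcal{R})$ (relative to $\Fix_G(\mathcal{T})$) acts. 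Decomposing an arbitrary $g\in\Fix_G(\mathcal{R})$ by the lemma, $g=g'g''$ where $g'$ is the factor supported in that branch — hence $g'\in\Fix_G(\mathcal{T}')$ — and $g''$ is the complementary factor, which one checks fixes all of $\mathcal{T}$ (here one uses that $g$ fixes the whole of $\mathcal{R}$, not merely the sub-ball of radius $k-1$ on which~\ref{IPk} operates). Thus $g\in VU$ and $W\subseteq VU$.

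Finally, the role of the hypothesis $l\ge L_{q,k}$ is exactly to guarantee ``radius $\ge k$'' wherever the decomposition lemma is invoked: since $r_l$ is $(l+q)/2$ or $(l+q+1)/2$ according to the parity of $l+q$, tracking in each parity which ball is being decomposed turns ``$r_l\ge k$ at the relevant depth'' into a constraint of the form $l\ge 2k-q$ or $l\ge 2k-q-1$, and $L_{q,k}$ is a common bound of this shape in the stated piecewise form (the $\max\{1,\cdot\}$ reflecting that factorization is only defined at depth $\ge1$). I expect the genuinely substantive work to be twofold: proving the branch-product decomposition of fixators of balls from property~\ref{IPk} (best done as an independent lemma beforehand), and the parity bookkeeping that produces the precise constant; once the decomposition lemma is available, the verification of conditions~2 and~3 and the construction of $\mathcal{R}$ are routine.
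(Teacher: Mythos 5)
Your proposal is correct and follows essentially the same route as the paper: the same translation of conjugates of elements of $\mathcal{S}_q$ into balls of $\mathfrak{T}_q$ via Lemma \ref{la forme des Sl pour IPk} and hypothesis \ref{Hypothese Hq}, the same verification of the compactness and factorization$^+$ clauses, and, for the first condition, the same key tool, namely the branch decomposition of fixators of balls obtained by iterating \ref{IPk} (this is exactly Proposition \ref{alternative definition iof property IPk} together with Lemma \ref{IPk then IPk' for k' geq k}, packaged in the paper as Proposition \ref{G(W) for Sk-1}), with the same parity bookkeeping producing $L_{q,k}$. The one organizational difference is in the first condition: the paper proves the factorization only when $V$ sits at depth $l'\geq l$ (Lemma \ref{the lemma imply fertility for G S on IPk groups}) and then reduces the case $l'\lneq l$ to it, either by inserting an intermediate ball $\mathcal{T}'\subseteq\mathcal{R}\subseteq\mathcal{T}$ at depth $l-1$ or by enlarging $\mathcal{T}'$ to a ball $\mathcal{P}\supseteq\mathcal{T}'$ at depth $l$ with $\mathcal{P}\neq\mathcal{T}$; you instead treat all $\mathcal{T}'\in\mathfrak{T}_q$ uniformly by choosing the depth-$(l-1)$ ball $\mathcal{R}\subseteq\mathcal{T}$ so that the support of the ``extra'' branch factor of $\Fix_G(\mathcal{R})$ misses $\mathcal{T}'$. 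This uniform treatment does go through, because for balls (vertex- or edge-centred, of any radius) the condition $\mathcal{T}\not\subseteq\mathcal{T}'$ forces the part of $\mathcal{T}'$ lying outside the shrunken core to be confined to a single branch (or to be empty, in which case $\Fix_G(\mathcal{R})\subseteq V$ outright, e.g.\ when $\mathcal{T}'$ is a small concentric ball); just be aware that your phrase ``exactly equivalent'' should be read existentially --- it is the existence of a good choice of shrinking direction, not a fixed-direction equivalence --- and that this confinement argument must be checked separately for each of the four centre/parity configurations, which is precisely the small case analysis the paper spreads over Proposition \ref{G(W) for Sk-1}, Lemma \ref{the lemma imply fertility for G S on IPk groups} and the two sub-cases in the proof of the theorem.
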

	In particular, for $\Aut(T)$, the generic filtration $\mathcal{S}_{0}$ factorizes$^+$ at all positive depth and Theorem \ref{la version paki du theorem de classification} leads to a second description of the cuspidal representations of $\Aut(T)$. Furthermore, Theorem \ref{le theorem pour IP_k} also applies to certain groups for which the representation theory did not appear in the literature before. For instance we have the following corollary. 
	\begin{corollary}\label{corollary this apply to radu group}
		Let $T$ be a $(d_0,d_1)$-semi-regular tree with $d_0,d_1\geq 6$ and let $G\leq \Aut(T)$ be a closed subgroup acting $2$-transitively on the boundary $\partial T$ and whose local action at each vertex contains the alternating group of corresponding degree. Then, there exists a constant $k\in \N$ such that the generic filtration $\mathcal{S}_{0}$ of $G$ factorizes$^+$ at all depth $l\geq 2k-1$. 
	\end{corollary}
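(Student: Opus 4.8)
The plan is to recognise $G$ as a \emph{Radu group} and to apply Theorem \ref{le theorem pour IP_k} with $q=0$, the bound $2k-1$ then being exactly the value of $L_{0,k}$. Concretely, for $d_0,d_1\geq 6$ the closed subgroups $G\leq\Aut(T)$ acting $2$-transitively on $\partial T$ and with local action at every vertex containing the alternating group of the corresponding degree are the groups classified by Radu in \cite{Radu2017}. I would first quote from \cite{Radu2017} that every such $G$ is non-discrete and unimodular, and that it is $k$-closed for some integer $k=k(G)\geq 1$; by \cite{BanksElderWillis2015} (see also Definition \ref{definition IPk}), a $k$-closed closed subgroup of $\Aut(T)$ satisfies the property \ref{IPk}, so $G$ satisfies \ref{IPk} with this $k$.

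It then remains to verify the hypothesis \ref{Hypothese Hq} for $q=0$, after which Lemma \ref{la forme des Sl pour IPk} makes $\mathcal{S}_0$ a generic filtration and Theorem \ref{le theorem pour IP_k} applies. Recall that for $q=0$ the relevant subtrees are the balls $B_T(v,r)$ with $r\geq 1$ and $B_T(e,r)$ with $r\geq 0$, and that \ref{Hypothese Hq} amounts to the assertion that for every such ball $S$ the set of vertices fixed by all of $\Fix_G(S)$ is exactly $V(S)$. The inclusion of $V(S)$ in the fixed-point set is trivial; for the reverse I would argue as in the proof of Lemma \ref{la diffenrence entre les fixing group alors la difference entre les graph}, the point being that the local action of $G$ is large enough to move any vertex outside $S$. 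If $w\notin V(S)$, let $u$ be the vertex of $S$ closest to $w$; then $u$ lies on the outer sphere of $S$, the stabiliser inside the alternating group at $u$ of the edge pointing into $S$ still acts transitively on the remaining $\deg u-1\geq 5$ edges, and, feeding in the property \ref{IPk} to decouple $\Fix_G(S)$ over the branches of $T$ hanging off $u$, one produces an element of $\Fix_G(S)$ acting nontrivially along the geodesic $[u,w]$ and hence moving $w$. This yields \ref{Hypothese Hq} with $q=0$; alternatively, this is precisely the (known) statement that Radu groups satisfy $H_0$.

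Finally, with $q=0$ and $k=k(G)$, Theorem \ref{le theorem pour IP_k} gives that $\mathcal{S}_0$ factorizes$^+$ at every depth $l\geq L_{0,k}$, and since $q$ is even and $k\geq 1$ we have $L_{0,k}=\max\{1,2k-1\}=2k-1$, as claimed.

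The main obstacle is the verification of \ref{Hypothese Hq}: for a mere subgroup $G$ of $\Aut(T)$ one cannot simply invoke transitivity of fixators on spheres as in $\Aut(T)$, and one must instead combine the local alternating action with the independence coming from \ref{IPk} (i.e.\ $k$-closedness) to show that the action of $\Fix_G(S)$ on each branch beyond a sphere vertex of $S$ is unconstrained enough to move far-away vertices. Getting the interplay of radii right --- $\mathfrak{T}_0$ starts at radius $1$ for vertices and $0$ for edges, while \ref{IPk} controls balls of radius $k-1$ --- is the delicate bookkeeping point; and one should make sure the integer $k$ furnished by Radu's $k$-closedness is literally the one for which \ref{IPk} holds, so that the constant $L_{0,k}$ is not shifted.
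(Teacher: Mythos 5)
Your proposal follows the paper's own route (Example \ref{example Radu}): quote Radu for non-discreteness and $k$-closedness, deduce the property ${\rm IP}_k$ from $k$-closedness via Lemma \ref{lemme Burger mozes 2}, note unimodularity, verify the hypothesis $H_0$ from the local alternating action in the spirit of Lemma \ref{la diffenrence entre les fixing group alors la difference entre les graph}, and then apply Theorem \ref{le theorem pour IP_k} with $q=0$ and $L_{0,k}=2k-1$. On the delicate $H_0$ step the paper is no more detailed than you are (it merely asserts that ``a similar proof'' to that lemma works for such groups), so your argument matches both the approach and the level of rigour of the paper's proof.
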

	In fact, if $G$ is in addition simple, on can show (without relying on the property \ref{IPk}) that the generic filtration $\mathcal{S}_0$ factorizes$^+$ at all positive depth (see \cite{SemalR2022}) which leads to a classification of the cuspidal representations for those groups.
	
	\subsection{Preliminaries}\label{reminders prop IP k}
	The purpose of this section is to recall the property ${\rm IP}_k$ and give an equivalent formulations that will be useful to prove that the generic filtrations below factorize. Let $E_o(T)$ be the set of oriented edges of $T$. For every $e\in E_o(T)$, let $\bar{e}$ be the oriented edge with opposite orientation, let $o(e)$ denotes the origin of $e$ and $t(e)$ its terminal vertex. Recall from page \pageref{equation prop ind de tits} that for $e\in E_o(T)$, $Te$ denotes the set of vertices that are closer to $o(e)$ than to $t(e)$ that is
	\begin{equation*}
	Te= T(o(e),t(e)) =\{ v\in V(T)\mid d_T(o(e),v)\lneq d_T(t(e),v)\}.
	\end{equation*}
	\begin{definition}[\cite{BanksElderWillis2015}]\label{definition IPk} A group $G\leq \Aut(T)$ satisfies the \tg{property ${\rm IP}_k$} for some integer $k\geq 1$ if for all $e\in E_o(T)$ we have 
	\begin{equation}\tag{${\rm IP}_k$}\label{IPk}
		\Fix_{G}(e^{(k-1)})=\big\lb\Fix_{G}(Te)\cap\Fix_{G}(e^{(k-1)}) \big\rb \big\lb \Fix_{G}(T\bar{e})\cap\Fix_{G}(e^{(k-1)})\big\rb.
	\end{equation}
	\end{definition} 
	In particular, under our convention, ${\rm IP}_1$ is the Tits independence property. We now recall a few results from \cite{BanksElderWillis2015}.
	\begin{lemma}[{\cite[Proposition 5.3.]{BanksElderWillis2015}}]\label{IPk then IPk' for k' geq k}
		Let $G\leq \Aut(T)$ satisfies \ref{IPk}. Then, $G$ satisfies ${\rm IP}_{k'}$ for every $ k'\geq k$.
	\end{lemma}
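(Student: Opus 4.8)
The plan is to prove the statement by induction, reducing everything to the single implication ${\rm IP}_k \Rightarrow {\rm IP}_{k+1}$: once that is established, $G$ satisfies ${\rm IP}_{k+1}$, hence ${\rm IP}_{k+2}$, and so on, so that $G$ satisfies ${\rm IP}_{k'}$ for every $k'\geq k$. So I would fix an oriented edge $e\in E_o(T)$ and assume ${\rm IP}_k$. One inclusion in the identity defining ${\rm IP}_{k+1}$ is automatic: both $\Fix_G(Te)\cap\Fix_G(e^{(k)})$ and $\Fix_G(T\bar e)\cap\Fix_G(e^{(k)})$ are subgroups of the group $\Fix_G(e^{(k)})$, hence their product is contained in $\Fix_G(e^{(k)})$. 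The content is therefore to factorize an arbitrary $g\in\Fix_G(e^{(k)})$.

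First I would record the elementary geometric decomposition of the balls around $e$. Since $V(T)=Te\sqcup T\bar e$ and, for a vertex on the $Te$-side, the closest vertex of $e$ is $o(e)$ (and symmetrically on the other side), one gets $e^{(r)}=(e^{(r)}\cap Te)\sqcup(e^{(r)}\cap T\bar e)$ for every $r$; in particular the new layer $S:=e^{(k)}\setminus e^{(k-1)}$ of vertices at distance exactly $k$ from $e$ splits as $S=(S\cap Te)\sqcup(S\cap T\bar e)$ with $S\cap Te\subseteq Te$ and $S\cap T\bar e\subseteq T\bar e$. This is the only place the tree structure enters, and it is routine.

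Then I would take an arbitrary $g\in\Fix_G(e^{(k)})\subseteq\Fix_G(e^{(k-1)})$ and apply ${\rm IP}_k$ to write $g=h_1h_2$ with $h_1\in\Fix_G(Te)\cap\Fix_G(e^{(k-1)})$ and $h_2\in\Fix_G(T\bar e)\cap\Fix_G(e^{(k-1)})$. The key point — and the only place where a moment's thought is needed — is that these factors can be upgraded to fix the larger ball $e^{(k)}$, provided one does it in the right order. I would first observe that $h_2=h_1^{-1}g$ fixes $e^{(k-1)}$ and all of $T\bar e$ (hence $S\cap T\bar e$), and also fixes $S\cap Te$ pointwise, since $g$ fixes $S\cap Te\subseteq e^{(k)}$ while $h_1$ fixes $S\cap Te\subseteq Te$; thus $h_2$ fixes $e^{(k-1)}\cup S=e^{(k)}$, that is, $h_2\in\Fix_G(T\bar e)\cap\Fix_G(e^{(k)})$. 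Then $h_1=gh_2^{-1}$ is a product of two elements of $\Fix_G(e^{(k)})$ and still fixes $Te$, so $h_1\in\Fix_G(Te)\cap\Fix_G(e^{(k)})$, and $g=h_1h_2$ is the required factorization, proving ${\rm IP}_{k+1}$ and closing the induction. I do not anticipate a genuine obstacle; the only thing to watch is the bookkeeping of fixed vertices, namely upgrading $h_2$ before $h_1$ and exploiting that $h_1$ already fixes the whole half-tree $Te$ (which contains the new $Te$-side sphere vertices), so that the upgrade of $h_1$ follows automatically.
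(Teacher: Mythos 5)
Your argument is correct. Note that the paper does not prove this lemma at all: it is quoted directly from \cite[Proposition 5.3]{BanksElderWillis2015}, so there is no internal proof to compare against; your factor-upgrading argument is essentially the standard one behind that citation. Two small remarks. First, the induction is unnecessary: given $k'\geq k$ and $g\in \Fix_G(e^{(k'-1)})\subseteq \Fix_G(e^{(k-1)})$, the very same computation upgrades the ${\rm IP}_k$-factors directly, since for $v\in e^{(k'-1)}\cap Te$ one has $h_2v=h_1^{-1}gv=h_1^{-1}v=v$, and $h_2$ fixes $T\bar e\supseteq e^{(k'-1)}\cap T\bar e$, so $h_2\in \Fix_G(T\bar e)\cap\Fix_G(e^{(k'-1)})$ and then $h_1=gh_2^{-1}\in \Fix_G(Te)\cap\Fix_G(e^{(k'-1)})$. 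Second, the ordering issue you flag (``upgrade $h_2$ before $h_1$'') is not actually essential: the two upgrades are symmetric, since $h_1=gh_2^{-1}$ fixes $e^{(k)}\cap T\bar e$ for the mirror-image reason that $h_2=h_1^{-1}g$ fixes $e^{(k)}\cap Te$; either order closes the argument.
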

	\noindent Furthermore, natural examples of groups satisfying the property \ref{IPk} can be constructed by $k$-closure. We now recall this notion.
	\begin{definition}\label{definition kclosure}
		Let $G\leq \Aut(T)$. The $k$-\tg{closure} $G^{(k)}$ of $G$ in $\Aut(T)$ is 
		\begin{equation*}
		G^{(k)}= \left\{h\in \Aut(T) \mid \forall v\in V(T), \qq \exists g\in G  \mbox{ with } \restriction{h}{B_T(v,k)}=\restriction{g}{B_T(v,k)}  \right\}.
		\end{equation*}
		A group which coincide with its $k$-closure is called a $k$-\tg{closed} group.
	\end{definition}
	\begin{lemma}[{\cite[Lemmas 3.2, 3.4 and Proposition 5.2]{BanksElderWillis2015}}]\label{lemme Burger mozes 2}
		Let $G\leq \Aut(T)$ and let $k\geq 1$ be an integer. The $k$-closure $G^{(k)}$ is a closed subgroup of $\Aut(T)$ that contains $G$ and satisfies the property \ref{IPk}.
	\end{lemma}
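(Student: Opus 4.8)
The plan is to prove the three assertions separately, treating ``$G\subseteq G^{(k)}$'' and ``$G^{(k)}$ is a closed subgroup of $\Aut(T)$'' as routine and putting the weight on property \ref{IPk}. That $G\subseteq G^{(k)}$ is immediate, taking $g=h$ in the definition. For the group laws I would argue pointwise on balls: given $h_1,h_2\in G^{(k)}$ and $v\in V(T)$, pick $g_2\in G$ with $h_2|_{B_T(v,k)}=g_2|_{B_T(v,k)}$; then $h_2$ carries $B_T(v,k)$ onto $B_T(w,k)$ with $w=h_2v=g_2v$, and choosing $g_1\in G$ with $h_1|_{B_T(w,k)}=g_1|_{B_T(w,k)}$ one gets $(h_1h_2)|_{B_T(v,k)}=(g_1g_2)|_{B_T(v,k)}$; inverses follow from the analogous coset manipulation. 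For closedness I would note that, for each $v\in V(T)$, the set $\{h\in\Aut(T)\mid\exists\, g\in G,\ h|_{B_T(v,k)}=g|_{B_T(v,k)}\}$ is clopen in the permutation topology (membership depends only on the restriction to the finite set $B_T(v,k)$), and that $G^{(k)}$ is the intersection of these sets over all $v$.

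The heart of the matter is property \ref{IPk} for $G^{(k)}$. Fix an oriented edge $e$. In the defining identity the inclusion ``$\supseteq$'' is trivial, since both factors on the right lie in the subgroup $\Fix_{G^{(k)}}(e^{(k-1)})$. For ``$\subseteq$'', take $h\in\Fix_{G^{(k)}}(e^{(k-1)})$. Because $h$ fixes $o(e)$ and $t(e)$, it preserves the partition $V(T)=Te\sqcup T\bar e$, which lets me cut and paste: let $h_1$ act as the identity on $Te$ and as $h$ on $T\bar e$, and let $h_2$ act as $h$ on $Te$ and as the identity on $T\bar e$. One checks quickly that $h_1,h_2\in\Aut(T)$ (the only edge joining the two halves is $e$, whose endpoints are fixed by $h$), that $h=h_1h_2$, that $h_1$ fixes $Te$ pointwise and $h_2$ fixes $T\bar e$ pointwise, and that both $h_1$ and $h_2$ fix $e^{(k-1)}$ pointwise (the last point uses that $h$ already fixes $e^{(k-1)}$, so the ``$h$-half'' of each $h_i$ does too). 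It then suffices to prove $h_1\in G^{(k)}$: the same argument applied to $\bar e$ gives $h_2\in G^{(k)}$, and then $h_1\in\Fix_{G^{(k)}}(Te)\cap\Fix_{G^{(k)}}(e^{(k-1)})$, $h_2\in\Fix_{G^{(k)}}(T\bar e)\cap\Fix_{G^{(k)}}(e^{(k-1)})$, and $h=h_1h_2$ is the factorization demanded by \ref{IPk}.

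To show $h_1\in G^{(k)}$ I would use the elementary observation that if $u\in B_T(v,k)$ lies in the half-tree of $e$ not containing $v$, then the geodesic from $v$ to $u$ passes through $e$, so $d_T(u,\{o(e),t(e)\})\le k-1$, i.e.\ $u\in e^{(k-1)}$. Consequently, if $v\in Te$ then $B_T(v,k)\cap T\bar e\subseteq e^{(k-1)}$, where $h$, and therefore $h_1$, acts trivially; combined with $h_1\equiv\mathrm{id}$ on all of $Te$, this gives $h_1|_{B_T(v,k)}=\mathrm{id}=1_G|_{B_T(v,k)}$. If instead $v\in T\bar e$, then $B_T(v,k)\cap Te\subseteq e^{(k-1)}$, where $h$ acts trivially and hence $h_1=h$ there; together with $h_1=h$ on $T\bar e$, this gives $h_1|_{B_T(v,k)}=h|_{B_T(v,k)}$, and since $h\in G^{(k)}$ there is $g\in G$ agreeing with $h$, hence with $h_1$, on $B_T(v,k)$. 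In either case the membership condition at $v$ is satisfied, so $h_1\in G^{(k)}$.

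The only genuine obstacle is spotting the geometric observation of the last paragraph — that a $k$-ball around a vertex on one side of $e$ intrudes into the other side only within $e^{(k-1)}$ — which is exactly what makes the $(k-1)$-neighborhood in the definition of \ref{IPk} the right one; the rest is bookkeeping with half-trees and with the permutation topology, together with checking the degenerate configurations (for instance $v\in\{o(e),t(e)\}$, or $B_T(v,k)$ contained in a single half-tree), all of which are subsumed in the stated case split.
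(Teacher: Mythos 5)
Your argument is correct: the splitting of $h\in\Fix_{G^{(k)}}(e^{(k-1)})$ into the two half-tree pieces $h_1,h_2$, together with the observation that for $v\in Te$ one has $B_T(v,k)\cap T\bar e\subseteq e^{(k-1)}$ (and symmetrically), is exactly what makes each piece locally coincide with either the identity or with $h$ on every $k$-ball, hence lie in $G^{(k)}$; the subgroup and closedness checks are also fine. The paper itself gives no proof of this lemma — it is quoted from Banks--Elder--Willis — and your proof is essentially the standard argument from that reference, so there is nothing to reconcile.
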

	
	The purpose of the rest of this section is to prove Proposition \ref{G(W) for Sk-1} which plays a similar role than Lemma \ref{independence figa nebbia 3.1} in Chapter \ref{Application to Aut T}. Following this purpose, we reformulate \ref{IPk}. For every subtree $\mathcal{T}$ of $T$ we denote by $\partial\mathcal{T}$ the boundary of $\mathcal{T}$ and by $\partial E_o(\mathcal{T})$ the set of oriented edges $e\in E_o(\mathcal{T})$ of $\mathcal{T}$ such that $t(e)\in \partial\mathcal{T}$. For every $H\leq G$, for every complete finite subtree $\mathcal{T}\subseteq  T$ and $\forall f,f'\in \partial E_o(\mathcal{T})$ notice that the elements of $\Fix_{H}(Tf)$ and $\Fix_{H}(Tf')$ commute with one another since their respective support  are disjoint. This lift any abuse of notation in the group equality of the following proposition.  
	\begin{proposition}\label{alternative definition iof property IPk}
		Let $G\leq \Aut(T)$. Then $G$ satisfies the property ${\rm IP}_k$ if and only if for every complete finite subtree $\mathcal{T}\subseteq T$ containing an edge we have 
		\begin{equation*}\label{formula for G(Bx,k-1) if IPk}
			\Fix_G(\mathcal{T}^{(k-1)})=\prod_{f\in \partial E_o(\mathcal{T})}^{}\big\lb \Fix_{G}(Tf)\cap \Fix_G(\mathcal{T}^{(k-1)})\big\rb.
		\end{equation*}
	\end{proposition}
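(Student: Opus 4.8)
The plan is to prove the two implications separately, the substantial one being the forward direction. For $(\Leftarrow)$ I would simply specialise the displayed product formula to the complete finite subtree $\mathcal{T}=e$ consisting of a single geometric edge: both of its vertices are leaves of $\mathcal{T}$, so $\partial E_o(\mathcal{T})=\{e,\bar e\}$, one has $\mathcal{T}^{(k-1)}=e^{(k-1)}$, and the two half-trees that occur are $Te$ and $T\bar e$; thus the formula reads verbatim as property \ref{IPk}.

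For $(\Rightarrow)$, assuming \ref{IPk}, I would induct on the number $r$ of interior vertices of $\mathcal{T}$. When $r=0$ the subtree is a single edge and the formula is again exactly \ref{IPk}. When $r\geq 1$ one first observes that the interior vertices of $\mathcal{T}$ span a subtree (a vertex of degree $2$ on a geodesic joining two interior vertices would contradict completeness), so there is an interior vertex $w$ all but one of whose $T$-neighbours $u_1,\dots,u_m$, with $m=d_w-1$, are leaves of $\mathcal{T}$ — the remaining neighbour $u_0$ being forced when $r\geq 2$ and chosen arbitrarily when $r=1$. Write $f_i$ for the oriented edge $w\to u_i$, let $e_i$ be the underlying geometric edge, let $e_0$ be the oriented edge $u_0\to w$, and put $\mathcal{T}_0=\mathcal{T}\setminus\{u_1,\dots,u_m\}$; this is a complete finite subtree containing an edge with $r-1$ interior vertices, in which $w$ has become a leaf. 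I would record two elementary bookkeeping facts: $\partial E_o(\mathcal{T})\setminus\{f_1,\dots,f_m\}=\partial E_o(\mathcal{T}_0)\setminus\{e_0\}$, with the same half-trees attached; and $\mathcal{T}^{(k-1)}=\mathcal{T}_0^{(k-1)}\cup\bigcup_{i=1}^{m}e_i^{(k-1)}$ with $\mathcal{T}^{(k-1)}\setminus\mathcal{T}_0^{(k-1)}$ contained in the pairwise disjoint half-trees $T\bar{f}_1,\dots,T\bar{f}_m$ hanging beyond the leaves $u_i$.

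The heart of the inductive step is to prove $\Fix_G(\mathcal{T}^{(k-1)})\subseteq\prod_{f\in\partial E_o(\mathcal{T})}\bigl(\Fix_G(Tf)\cap\Fix_G(\mathcal{T}^{(k-1)})\bigr)$, the reverse inclusion being automatic since each factor lies in the group $\Fix_G(\mathcal{T}^{(k-1)})$. Given $g\in\Fix_G(\mathcal{T}^{(k-1)})$, I would peel off the directions $u_1,\dots,u_m$ one at a time: applying \ref{IPk} at $e_i$ splits the current element as $a_ib_i$ with $a_i\in\Fix_G(Tf_i)\cap\Fix_G(e_i^{(k-1)})$ and $b_i\in\Fix_G(T\bar{f}_i)\cap\Fix_G(e_i^{(k-1)})$; since $u_i$ is a leaf one has $\mathcal{T}^{(k-1)}\cap T\bar{f}_i=e_i^{(k-1)}\cap T\bar{f}_i$, so $a_i$ in fact fixes all of $\mathcal{T}^{(k-1)}$, and (using $T\bar{f}_j\subseteq Tf_i$ for $j<i$) the remaining factor then fixes $\mathcal{T}^{(k-1)}$ together with $T\bar{f}_1,\dots,T\bar{f}_i$. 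After the $m$-th step the remaining element fixes $\mathcal{T}^{(k-1)}\supseteq\mathcal{T}_0^{(k-1)}$ and every $T\bar{f}_i$, and the induction hypothesis applied to $\mathcal{T}_0$ factors it over $\partial E_o(\mathcal{T}_0)=\{e_0\}\cup\bigl(\partial E_o(\mathcal{T})\setminus\{f_1,\dots,f_m\}\bigr)$. Since branches beyond distinct leaves are disjoint, the $e_0$-factor has support inside the $w$-side branch $\{w\}\cup\bigcup_i T\bar{f}_i$, fixes $w$ and each $T\bar{f}_i$, hence is trivial; and each surviving factor has support disjoint from the $T\bar{f}_i$ and fixes $\mathcal{T}_0^{(k-1)}$, hence also fixes $\mathcal{T}^{(k-1)}$. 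Collecting $a_1,\dots,a_m$ together with those factors produces the decomposition indexed by $\partial E_o(\mathcal{T})$.

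I expect the only real difficulty to be the support bookkeeping: verifying that deleting the leaf-neighbours of a pendant interior vertex preserves completeness, that the $e_0$-factor supplied by the inductive hypothesis genuinely collapses to the identity, and that the surviving factors — a priori fixing only $\mathcal{T}_0^{(k-1)}$ — actually fix all of $\mathcal{T}^{(k-1)}$; each of these reduces to the observation that $\mathcal{T}^{(k-1)}\setminus\mathcal{T}_0^{(k-1)}$ sits inside the branches hanging beyond the removed leaves. It is worth noting that the argument uses only ${\rm IP}_k$ itself, never ${\rm IP}_{k'}$ for $k'>k$.
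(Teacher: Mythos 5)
Your proof is correct and takes essentially the same approach as the paper: induction on the number of interior vertices, with the base case being ${\rm IP}_k$ itself, the same choice of a pendant interior vertex, the same peeling of the terminal edges via ${\rm IP}_k$ with disjoint-support bookkeeping, and the same trivialization of the inward-pointing factor. The only difference is organizational: the paper treats the one-interior-vertex (ball) case separately and, in the general step, applies the induction hypothesis first and then refines the leftover factor at the star of the chosen vertex, whereas you peel the star first and then invoke the induction hypothesis on $\mathcal{T}_0$; the resulting decomposition is the same.
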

	\begin{proof}
		The reverse implication is trivial. To prove the forward implication we apply an induction on the number of interior vertices of $\mathcal{T}$ and treat multiple cases. If $\mathcal{T}$ has no interior vertex, it is an edge and the group equality corresponds exactly to the property ${\rm IP}_k$. If $\mathcal{T}$ has exactly one interior vertex, there exists $v\in V(T)$ such that $\mathcal{T}=B_T(v,1)$. In particular, we have $\mathcal{T}^{(k-1)}=v^{(k)}$. Let $g\in \Fix_G(v^{(k)})$ and let $\partial E_o(\mathcal{T})=\{f_1,..., f_n\}$. For every $f\in E_o(\mathcal{T})$ notice that $f^{(k-1)} \subseteq v^{(k)}$ and therefore that $\Fix_G(v^{(k)})\leq \Fix_G(f^{(k-1)})$. In particular, since $G$ satisfies the property \ref{IPk}, $g=g_{f_1}g_{\bar{f_1}}$ where $g_{f_1}\in \Fix_{G}(Tf_1)\cap \Fix_G(f_1^{(k-1)})$ and $g_{\bar{f_1}}\in  \Fix_{G}(T\bar{f_1})\cap\Fix_G(f_1^{(k-1)})$. On the other hand, since $f_1\in \partial E_o(\mathcal{T})$ we have that $v^{(k)}\subseteq  T{f_1} \cup f_1^{(k-1)}$ which implies that $\Fix_{G}(T{f_1})\cap \Fix_G(f_1^{(k-1)})\subseteq \Fix_G(v^{(k)})$. This proves that $g_{f_1}\in \Fix_{G}(Tf_1)\cap \Fix_G(v^{(k)})$ and since $g\in \Fix_G(v^{(k)})$ we obtain that $g_{\bar{f_1}}\in  \Fix_{G}(T\bar{f_1})\cap \Fix_G(v^{(k)})$. In particular, $g_{\bar{f_1}}\in \Fix_G(f_2^{(k-1)})$ and since $G$ satisfies the property \ref{IPk} we obtain that $g_{\bar{f_1}}=g_{f_2} g_{\bar{f_2}}$ where $g_{f_2}\in \Fix_{G}(T{f_2})\cap \Fix_G(f_2^{(k-1)})$ and $g_{\bar{f_2}}\in \Fix_{G}(T\bar{f_2})\cap \Fix_G(f_2^{(k-1)})$. Furthermore, just as before, since $f_2\in \partial E_o(\mathcal{T})$, we have that $v^{(k)}\subseteq Tf_2 \cup f_2^{(k-1)}$ which implies that $\Fix_{G}(T\bar{f_2})\cap \Fix_G(f_2^{(k-1)})\subseteq \Fix_G(v^{(k)})$ and we conclude just as above that $g_{\bar{f_2}}\in\Fix_{G}(T{\bar{f_2}})\cap \Fix_G(\mathcal{T}^{(k-1)})$. On the other hand, $T{\bar{f_1}}\subseteq T{f_2}$ which implies that $g_{f_2}\in \Fix_G(T{\bar{f_1}})$. Since $g_{\bar{f_1}}\in \Fix_G(T{\bar{f_1}})$, the above decomposition of $g_{\bar{f_1}}$ implies that $g_{\bar{f_2}}\in \Fix_G(T{\bar{f_1}})$  and we obtain that $g_{\bar{f_2}}\in \Fix_{G}(T\bar{f_1}\cup T\bar{f_2})\cap \Fix_G(v^{(k)})$. Proceeding iteratively, we obtain that 
		\begin{equation*}
			g= g_{f_1}g_{f_2}... g_{f_n}g_{\bar{f_n}}
		\end{equation*}
		for some $g_{f_i}\in \Fix_{}(T{f_i})\cap \Fix_G(v^{(k)})$ and some $g_{\bar{f_n}}\in \Fix_{G}(\bigcup_{i=1}^nT{\bar{f_i}})\cap \Fix_G(v^{(k)})$. In particular since $\{f_1,..., f_n\}=\partial E_o(\mathcal{T})$, notice that $g_{\bar{f_n}}\in \Fix_G(T)=\{1_G\}$. This proves as desired that
		\begin{equation*}
			g\in \prod_{f\in \partial E_o(\mathcal{T})}^{} \big\lb\Fix_{G}(T{f})\cap \Fix_G(v^{(k)})\big\rb.
		\end{equation*}
			
		If $\mathcal{T}$ has two interior vertices or more, the reasoning is similar. We let $g\in \Fix_G(\mathcal{T}^{(k-1)})$, let $v$ be an interior vertex  of $\mathcal{T}$ at distance $1$ form the boundary $\partial\mathcal{T}$ and let $\mathcal{R}$ be the unique maximal proper complete subtree of $\mathcal{T}$ such that $v$ is not an interior vertex. Notice that $\mathcal{R}$ is a complete subtree of $T$ with one interior vertex less. In particular, our induction hypothesis, ensures that
		\begin{equation*}
			\Fix_G(\mathcal{R}^{(k-1)})= \prod_{f\in \partial E_o(\mathcal{R})}^{} \big\lb\Fix_{G}(Tf)\cap \Fix_G(\mathcal{R}^{(k-1)})\big\rb.
		\end{equation*}
		Notice that exactly one extremal edge $e\in \partial E_o(\mathcal{R})$ of $\mathcal{R}$ is not extremal in $\mathcal{T}$; the oriented edge $e$ such that $o(e)=v$. Furthermore, since $\mathcal{R}^{(k-1)}\subseteq \mathcal{T}^{(k-1)}$, we have $g\in\Fix_G(\mathcal{R}^{(k-1)})$ which implies that
			\begin{equation*}
			g= g_e\prod_{f\in \partial E_o(\mathcal{T})\cap \partial E_o(\mathcal{R})}^{} g_f
			\end{equation*} 
			for some $g_e\in \Fix_{G}(Te)\cap \Fix_G(\mathcal{R}^{(k-1)})$ and some $g_f\in \Fix_{G}(Tf)\cap \Fix_G(\mathcal{R}^{(k-1)})$. Furthermore, for every $f\in \partial E_o(\mathcal{T})\cap \partial E_o(\mathcal{R})$ notice that $$\Fix_{G}(Tf)\cap \Fix_G(\mathcal{R}^{(k-1)})=\Fix_{G}(Tf)\cap\Fix_G(\mathcal{T}^{(k-1)})$$ which implies that $g_f \in \Fix_{G}(Tf)\cap \Fix_G(\mathcal{T}^{(k-1)})$. In particular, since $g\in \Fix_G(\mathcal{T}^{(k-1)})$, the above decomposition implies that $g_e\in\Fix_G(Te)\cap \Fix_G(\mathcal{T}^{(k-1)})$. On the other hand, $v^{(k)}\subseteq T{e} \cup\mathcal{T}^{(k-1)}$ which implies that $g_e\in \Fix_G(v^{(k)})$. Hence, by the first part of the proof, we have
			\begin{equation*}
			g_e = g_{\bar{e}} \prod_{b\in \partial E_o(\mathcal{T}) \cap \partial E_o(v^{(1)})} g_{b}
			\end{equation*}
			for some $g_{\bar{e}}\in \Fix_{G}(T{\bar {e}})\cap \Fix_G(v^{(k-1)})$ and some $g_{b}\in \Fix_{G}(Tb)\cap \Fix_G(v^{(k-1)})$. Furthermore, for every $b\in  \partial E_o(\mathcal{T}) \cap \partial E_o(v^{(1)})$, notice that $\mathcal{T}^{(k-1)}\subseteq Tb \cup v^{(1)}$ and therefore that $g_{b}\in \Fix_{G}(Tb)\cap \Fix_G(\mathcal{T}^{(k-1)})$. In particular, since $g_e\in \Fix_{G}(Te)\cap \Fix_G(\mathcal{T}^{(k-1)})$ the above decomposition of $g$ implies that $g_{\bar{e}}=1_G$. This proves as desired that $g$ belongs to 
			\begin{equation*}
			\prod_{f\in \partial E_o(\mathcal{T})\cap \lb \partial E_o(\mathcal{R})\cup\partial E_o(v^{(1)})\rb }^{} \big\lb\Fix_{G}(Tf)\cap \Fix_G(\mathcal{T}^{(k-1)})\big\rb.
			\end{equation*}
	\end{proof}	 
	\begin{proposition}\label{G(W) for Sk-1}
		Let $G\leq \Aut(T)$ be a subgroup satisfying the property \ref{IPk}, let $\mathcal{T}$, $\mathcal{T}'$ be a complete finite subtrees of $T$ such that $\mathcal{T}$ contains at least one interior vertex and such that $\mathcal{T}'$ does not contain $\mathcal{T}$. Then, there exists a maximal complete proper subtree $\mathcal{R}$ of $\mathcal{T}$ such that $$\Fix_G(\mathcal{R}^{(k-1)})\subseteq \Fix_G((\mathcal{T}')^{(k-1)})\Fix_G(\mathcal{T}^{(k-1)}).$$
	\end{proposition}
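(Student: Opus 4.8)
The plan is to follow the proof of Lemma~\ref{independence figa nebbia 3.1}, with the Tits independence property replaced by its ${\rm IP}_k$-reformulation from Proposition~\ref{alternative definition iof property IPk}. Write $S\subseteq\mathcal{T}$ for the subtree of interior vertices of $\mathcal{T}$; it is nonempty since $\mathcal{T}$ has an interior vertex, and $\mathcal{T}=B_T(S,1)$. If $S$ has at least two vertices, the maximal complete proper subtrees of $\mathcal{T}$ are exactly the subtrees $\mathcal{R}_w:=B_T(S\setminus\{w\},1)=\mathcal{T}\setminus\{\text{leaf-neighbours of }w\}$, one for each leaf $w$ of $S$; if $S=\{v\}$ they are the edges $\{v,u\}$ with $u\sim v$. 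The goal is to exhibit such an $\mathcal{R}$ together with a distinguished terminal oriented edge $e\in\partial E_o(\mathcal{R})$ — the one whose terminal vertex is the vertex $w$ (resp.\ $v$) that loses its pendant leaves when passing from $\mathcal{T}$ to $\mathcal{R}$, resp.\ the edge $(u,v)$ — for which
$$\Fix_G(Te)\cap\Fix_G(\mathcal{R}^{(k-1)})\subseteq\Fix_G((\mathcal{T}')^{(k-1)})\quad\text{and}\quad\Fix_G(Tf)\cap\Fix_G(\mathcal{R}^{(k-1)})\subseteq\Fix_G(\mathcal{T}^{(k-1)})\ \ (f\neq e).$$

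Granting this, Proposition~\ref{alternative definition iof property IPk} applied to $\mathcal{R}$ (which contains an edge) writes any $g\in\Fix_G(\mathcal{R}^{(k-1)})$ as a product $g=g_e\prod_{f\neq e}g_f$ of commuting elements $g_f\in\Fix_G(Tf)\cap\Fix_G(\mathcal{R}^{(k-1)})$. Then $g_e\in\Fix_G((\mathcal{T}')^{(k-1)})$, while $\prod_{f\neq e}g_f\in\Fix_G(\mathcal{T}^{(k-1)})$ because the latter is a subgroup containing each $g_f$; hence $g\in\Fix_G((\mathcal{T}')^{(k-1)})\,\Fix_G(\mathcal{T}^{(k-1)})$, as required.

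The two displayed inclusions both follow from inclusions of vertex sets: it suffices to check $(\mathcal{T}')^{(k-1)}\subseteq Te\cup\mathcal{R}^{(k-1)}$ and $\mathcal{T}^{(k-1)}\subseteq Tf\cup\mathcal{R}^{(k-1)}$ for $f\neq e$, and these are routine ball-chasing in $T$ (by a leaf-branch of $w$ I mean a connected component of $T\setminus\{w\}$ not containing $S\setminus\{w\}$; note $Te$ is the remaining component, together with $w$). For $f\neq e$, such an $f$ is a terminal edge of $\mathcal{T}$ whose origin is not $w$; the only vertices of $\mathcal{T}^{(k-1)}$ not already in $\mathcal{R}^{(k-1)}$ are the vertices of the leaf-branches of $w$ lying at distance exactly $k$ from $w$, and for such a vertex $x$ one has $d_T(o(f),x)=d_T(o(f),w)+k<d_T(o(f),w)+1+k=d_T(t(f),x)$, so $x\in Tf$. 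For $e$, once it is known that $\mathcal{T}'\subseteq\{w\}\cup Te$ — that is, $\mathcal{T}'$ meets no leaf-branch of $w$ — any vertex of $(\mathcal{T}')^{(k-1)}$ lying in a leaf-branch of $w$ is at distance at most $k-1$ from $w\in\mathcal{R}$, hence lies in $\mathcal{R}^{(k-1)}$. The case $S=\{v\}$ (with $\mathcal{R}=\{v,u\}$) is dispatched by the same computations.

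The real obstacle is the combinatorial claim that there is a leaf $w$ of $S$ with $\mathcal{T}'\subseteq\{w\}\cup Te$, i.e.\ that $\mathcal{T}'$ meets no leaf-branch of $w$; this is the one place where completeness of $\mathcal{T}'$, not merely of $\mathcal{T}$, is used. I would argue by contradiction: if $\mathcal{T}'$ met some leaf-branch of every leaf of $S$, choose $x_w\in\mathcal{T}'$ in such a branch for each leaf $w$; since $\mathcal{T}'$ is connected, for any two leaves $w\neq w''$ of $S$ the geodesic $[x_w,x_{w''}]\subseteq\mathcal{T}'$ runs through the relevant leaf-neighbours of $w$ and $w''$ and through the whole geodesic $[w,w'']$ inside $S$. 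Taking the union over all pairs of leaves of $S$ forces $S\subseteq\mathcal{T}'$ and a leaf-neighbour of each leaf of $S$ to lie in $\mathcal{T}'$; then every vertex of $S$ has at least two neighbours in $\mathcal{T}'$, so by completeness of $\mathcal{T}'$ it is an interior vertex of $\mathcal{T}'$ and therefore has all of its neighbours in $\mathcal{T}'$. Hence $\mathcal{T}'\supseteq B_T(S,1)=\mathcal{T}$, contradicting the hypothesis. When $S=\{v\}$, the claim reduces to the easy fact that a complete subtree $\mathcal{T}'$ with $\mathcal{T}'\not\supseteq\mathcal{T}$ meets at most one connected component of $T\setminus\{v\}$, since otherwise $v$ would be interior in $\mathcal{T}'$. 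Everything besides this combinatorial claim is bookkeeping with half-trees and metric balls.
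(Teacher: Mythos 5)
Your proof is correct and follows essentially the same route as the paper: you choose the same maximal complete proper subtree $\mathcal{R}$ (obtained by deleting the pendant leaves at a well-chosen interior vertex $w$), apply the decomposition of Proposition~\ref{alternative definition iof property IPk} to $\mathcal{R}^{(k-1)}$, and sort the factors via the inclusions $(\mathcal{T}')^{(k-1)}\subseteq Te\cup\mathcal{R}^{(k-1)}$ and $\mathcal{T}^{(k-1)}\subseteq Tf\cup\mathcal{R}^{(k-1)}$. The only difference is presentational: you spell out (and correctly prove, using completeness of $\mathcal{T}'$) the combinatorial selection of $w$ that the paper asserts in one line via the choice of the extremal edge $b$ with $\mathcal{T}'\subseteq Tb$.
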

	\begin{proof}
		Since $\mathcal{T}$ and $\mathcal{T}'$ are complete and since $\mathcal{T}'$ does not contain $\mathcal{T}$, there exists an extremal edge $b$ of $\mathcal{T}$ which does not belong to $\mathcal{T}'$ and such that $\mathcal{T}'\subseteq T{b}$. Let $\mathcal{R}$ be the maximal complete subtree of $\mathcal{T}$ such that $b\not \subseteq \mathcal{R}$. Notice that there exists a unique $e\in \partial E_o(\mathcal{R})$ which is not extremal in $\mathcal{T}$. Furthermore, we observe that $\mathcal{T}'\subseteq Te\cup e$, that $\mathcal{R}$ is a complete subtree of $T$ containing an edge and that $\mathcal{R}$ has one less interior vertex than $\mathcal{T}$. In particular, Proposition \ref{alternative definition iof property IPk} ensures that 
		\begin{equation*}
		\Fix_G(\mathcal{R}^{(k-1)})= \prod_{f\in \partial E_o(\mathcal{R})} \big \lb\Fix_{G}(Tf)\cap \Fix_G(\mathcal{R}^{(k-1)})\big\rb.
		\end{equation*} 
		However, by construction, we have $(\mathcal{T}')^{(k-1)}\subseteq Te \cup \mathcal{R}^{(k-1)}$ which implies that $$\Fix_{G}(Te)\cap \Fix_G(\mathcal{R}^{(k-1)})\subseteq \Fix_G((\mathcal{T}')^{(k-1)}).$$ 
		On the other hand, notice that $\partial E_o(\mathcal{R})-\{e\}\subseteq \partial E_o(\mathcal{T})$. Therefore, for every $f\in \partial E_o(\mathcal{R})-\{e\}$ we have $\mathcal{T}^{(k-1)}\subseteq Tf \cup \mathcal{R}^{(k-1)}$ which implies that $$\Fix_{G}(Tf)\cap \Fix_G(\mathcal{R}^{(k-1)})\subseteq \Fix_G(\mathcal{T}^{(k-1)}).$$ 
		This proves as desired that
		\begin{equation*}
		\begin{split}
		\Fix_G(\mathcal{R}^{(k-1)})&= \big\lb \Fix_{G}(Te)\cap \Fix_G(\mathcal{R}^{(k-1)})\big\rb \prod_{f\in \partial E_o(\mathcal{R})-\{e\}} \big\lb \Fix_G(Tf)\cap \Fix_G(\mathcal{R}^{(k-1)})\big \rb\\
		& \subseteq  \Fix_G((\mathcal{T}')^{(k-1)})\Fix_G(\mathcal{T}^{(k-1)}).
		\end{split}
		\end{equation*}  
	\end{proof}
	\subsection{Factorization of the generic filtrations}\label{generic filtration fertile IP k}
	Let $T$ be a $(d_0,d_1)$-semi-regular tree with $d_0,d_1\geq 3$. Let $G\leq \Aut(T)$ be a closed non-discrete unimodular subgroup and let $\mu$ be a Haar measure of $G$. The purpose of this section is to prove Theorems \ref{Theorem IPK avec un arbre} and \ref{le theorem pour IP_k}. This requires some preliminaries.
	\begin{lemma}\label{Lemme on peut etendre la la filtration generic de mathcalT}
		 Let $ \mathcal{P}$ be a complete finite subtree of $T$ containing an interior vertex , let $\Sigma_{\mathcal{P}}$ be the set of maximal complete proper subtrees of $ \mathcal{P}$ and let 
		 $$ \mathfrak{T}_{\mathcal{P}}=\{\mathcal{R}\in \Sigma_{\mathcal{P}}\lvert \Fix_G((\mathcal{R}')^{(k-1)})\not\subseteq \Fix_G(\mathcal{R}^{(k-1)})\qq \forall \mathcal{R}'\in \Sigma_{\mathcal{P}}-\{\mathcal{R}\}\}.$$ Suppose that: 
		\begin{enumerate}
			\item $\forall \mathcal{R},\mathcal{R}'\in \mathfrak{T}_{ \mathcal{P}}, \forall g\in G$, we do not have $\Fix_G(\mathcal{R}^{(k-1)})\subsetneq\Fix_G(g(\mathcal{R}')^{(k-1)})$.
			\item For every $\mathcal{R}\in \mathfrak{T}_{ \mathcal{P}}$, $\Fix_G( \mathcal{P}^{(k-1)})\not=\Fix_G(\mathcal{R}^{(k-1)}).$ 
		\end{enumerate}
	Then, there exists a family $\mathfrak{R}\subseteq\mathfrak{T}_{ \mathcal{P}}\cup \{ \mathcal{P}\}\cup\{v^{(n)}\lvert n\in \N, v\in V(T)\}$  of complete finite subtrees of $T$ such that $\mathcal{S}_{ \mathcal{P}}=\{\Fix_G(\mathcal{T}^{(k-1)})\lvert \mathcal{T}\in\mathfrak{R}\}$ is a generic filtration of $G$ such that:
		$$\mathcal{S}_{ \mathcal{P}}\lb 0\rb =\{\Fix_G(\mathcal{R}^{(k-1)})\lvert \mathcal{R}\in \mathfrak{T}_{ \mathcal{P}}\},\qq \mathcal{S}_{ \mathcal{P}}\lb 1\rb=\{\Fix_G( \mathcal{P}^{(k-1)})\}$$
		and $\mu(\Fix_G( \mathcal{P}^{(k-1)})\not = \mu(\Fix_G(\mathcal{T}^{(k-1)}))$ for every $\mathcal{T}\in \mathfrak{R}-\{ \mathcal{P}\}$.
	\end{lemma}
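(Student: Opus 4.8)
The plan is to take $\mathfrak{T}_{\mathcal{P}}$ as the bottom layer, $\mathcal{P}$ directly above it, and to complete this to a neighbourhood basis by adjoining balls around vertices that are so large that their fixators are pushed strictly \emph{inside} $\Fix_G(\mathcal{P}^{(k-1)})$, hence well above $\mathcal{C}(\Fix_G(\mathcal{P}^{(k-1)}))$ in the poset $\mathcal{F}_{\mathcal{S}_{\mathcal{P}}}$. Fix $w_0\in V(\mathcal{P})$ and let $D$ be the radius of $\mathcal{P}^{(k-1)}$ about $w_0$, so $\mathcal{P}^{(k-1)}\subseteq w_0^{(D)}$. For each $v\in V(T)$ pick $N_v$ large enough that for all $n\geq N_v$ one has (i) $\mathcal{P}^{(k-1)}\subseteq v^{(n+k-1)}$, using $(v^{(n)})^{(k-1)}=v^{(n+k-1)}\supseteq w_0^{(D)}\supseteq\mathcal{P}^{(k-1)}$ as soon as $n+k-1\geq D+d_T(v,w_0)$, and (ii) $\mu\bigl(\Fix_G(v^{(n+k-1)})\bigr)<\mu\bigl(\Fix_G(\mathcal{P}^{(k-1)})\bigr)$; (ii) is legitimate because the $\Fix_G(v^{(m)})$ form a decreasing sequence of compact open subgroups with $\bigcap_m\Fix_G(v^{(m)})=\Fix_G(V(T))=\{1_G\}$, so, $G$ being non-discrete and hence $\{1_G\}$ being $\mu$-null, $\mu(\Fix_G(v^{(m)}))\to0$; enlarging $N_v$ if needed we also ensure $v^{(n)}\neq\mathcal{P}$ for $n\geq N_v$. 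Then set
\[
\mathfrak{R}=\mathfrak{T}_{\mathcal{P}}\cup\{\mathcal{P}\}\cup\{v^{(n)}\mid v\in V(T),\ n\geq N_v\},\qquad \mathcal{S}_{\mathcal{P}}=\{\Fix_G(\mathcal{T}^{(k-1)})\mid\mathcal{T}\in\mathfrak{R}\}.
\]

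I would first dispatch the easy points. The subfamily $\{\Fix_G(v^{(n+k-1)})\mid v\in V(T),\ n\geq N_v\}$ is cofinal among fixators of finite subtrees (given such a subtree, choose one of its vertices $v$ and take $n\geq N_v$ with $n+k-1$ at least the radius of the subtree about $v$), so $\mathcal{S}_{\mathcal{P}}$, being a family of compact open subgroups containing a neighbourhood basis, is itself a basis of neighbourhoods of $1_G$. By (ii) every ball contributes a subgroup of measure $<\mu(\Fix_G(\mathcal{P}^{(k-1)}))\leq\mu(\Fix_G(\mathcal{R}^{(k-1)}))$ for $\mathcal{R}\in\mathfrak{T}_{\mathcal{P}}$, so all elements of $\mathcal{S}_{\mathcal{P}}$ have measure at most $\max_{\mathcal{R}\in\mathfrak{T}_{\mathcal{P}}}\mu(\Fix_G(\mathcal{R}^{(k-1)}))$; thus $\mathcal{S}_{\mathcal{P}}$ has uniformly bounded Haar measure, and Lemma \ref{lemma bounded basis implies generic filtration} shows it is a generic filtration. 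Throughout I use that $G$ is unimodular, so that for compact open subgroups strict inclusion is equivalent to strict inequality of measures, and conjugate subgroups have equal measure.

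The core is the computation of the first two levels. One first notes $\mathfrak{T}_{\mathcal{P}}\neq\varnothing$, since otherwise the asserted descriptions of $\mathcal{S}_{\mathcal{P}}\lb 0\rb$ and $\mathcal{S}_{\mathcal{P}}\lb 1\rb$ could not both hold. For $\mathcal{R}\in\mathfrak{T}_{\mathcal{P}}$ we have $\mathcal{R}^{(k-1)}\subseteq\mathcal{P}^{(k-1)}$, hence $\Fix_G(\mathcal{P}^{(k-1)})\subseteq\Fix_G(\mathcal{R}^{(k-1)})$, and assumption $(2)$ makes this strict, so $\mathcal{C}(\Fix_G(\mathcal{R}^{(k-1)}))\lneq\mathcal{C}(\Fix_G(\mathcal{P}^{(k-1)}))$; and by (i)--(ii), $\Fix_G(v^{(n+k-1)})\subsetneq\Fix_G(\mathcal{P}^{(k-1)})$, so $\mathcal{C}(\Fix_G(\mathcal{P}^{(k-1)}))\lneq\mathcal{C}(\Fix_G(v^{(n+k-1)}))$. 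Hence each ball class has height $\geq2$ and $\mathcal{C}(\Fix_G(\mathcal{P}^{(k-1)}))$ has height $\geq1$. Conversely, no class lies strictly below $\mathcal{C}(\Fix_G(\mathcal{R}^{(k-1)}))$: a ball or $\Fix_G(\mathcal{P}^{(k-1)})$ cannot, since its measure is strictly smaller than $\mu(\Fix_G(\mathcal{R}^{(k-1)}))$, and $\mathcal{C}(\Fix_G((\mathcal{R}')^{(k-1)}))\lneq\mathcal{C}(\Fix_G(\mathcal{R}^{(k-1)}))$ with $\mathcal{R}'\in\mathfrak{T}_{\mathcal{P}}$ would yield $g\in G$ with $\Fix_G(\mathcal{R}^{(k-1)})\subsetneq\Fix_G(g(\mathcal{R}')^{(k-1)})$, contradicting $(1)$. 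So the classes of the $\Fix_G(\mathcal{R}^{(k-1)})$, $\mathcal{R}\in\mathfrak{T}_{\mathcal{P}}$, form an antichain of minimal (height $0$) elements; in particular these are the only classes strictly below $\mathcal{C}(\Fix_G(\mathcal{P}^{(k-1)}))$ (anything strictly below is, up to conjugacy, a proper overgroup of $\Fix_G(\mathcal{P}^{(k-1)})$, and among members of $\mathcal{S}_{\mathcal{P}}$ the balls are excluded by measure), whence $\mathcal{C}(\Fix_G(\mathcal{P}^{(k-1)}))$ has height exactly $1$. As the balls have height $\geq2$, this gives $\mathcal{S}_{\mathcal{P}}\lb 0\rb=\{\Fix_G(\mathcal{R}^{(k-1)})\mid\mathcal{R}\in\mathfrak{T}_{\mathcal{P}}\}$ and $\mathcal{S}_{\mathcal{P}}\lb 1\rb=\{\Fix_G(\mathcal{P}^{(k-1)})\}$. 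The remaining assertion $\mu(\Fix_G(\mathcal{P}^{(k-1)}))\neq\mu(\Fix_G(\mathcal{T}^{(k-1)}))$ for $\mathcal{T}\in\mathfrak{R}-\{\mathcal{P}\}$ is then immediate: it is the strict inclusion from $(2)$ when $\mathcal{T}=\mathcal{R}\in\mathfrak{T}_{\mathcal{P}}$, and (ii) when $\mathcal{T}=v^{(n)}$.

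I expect the main obstacle to be precisely this bookkeeping that puts the vertex balls at depth $\geq2$ while keeping $\mathcal{S}_{\mathcal{P}}$ a neighbourhood basis: one has to push each $\Fix_G(v^{(n+k-1)})$ strictly below $\Fix_G(\mathcal{P}^{(k-1)})$ in the poset --- which is exactly what the containment $\mathcal{P}^{(k-1)}\subseteq v^{(n+k-1)}$ for large $n$, together with the unimodular ``index $=$ measure ratio'' trick, achieves --- and simultaneously ensure that no ball becomes comparable or equal to a member of $\mathfrak{T}_{\mathcal{P}}\cup\{\mathcal{P}\}$, which is what hypotheses $(1)$, $(2)$ and the measure bound (ii) rule out. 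Everything else is routine.
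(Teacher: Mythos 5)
Your proof is correct and follows essentially the same route as the paper's: adjoin large vertex-balls whose $(k-1)$-thickened fixators are pushed strictly inside $\Fix_G(\mathcal{P}^{(k-1)})$, bound the Haar measures uniformly so that Lemma \ref{lemma bounded basis implies generic filtration} applies, and use unimodularity together with hypotheses $(1)$ and $(2)$ to place the three layers at heights $0$, $1$ and $\geq 2$. The only cosmetic differences are that the paper takes balls around a single fixed vertex rather than around every vertex, and that both arguments (yours explicitly, the paper's tacitly) rely on $\mathfrak{T}_{\mathcal{P}}\neq\varnothing$.
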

	\begin{proof}
		Since $G$ is non-discrete, there exists a vertex $v\in V(T)$ and an integer $N\geq k$ such that $\mathcal{P}\subsetneq v^{(N)}$ and $\Fix_G(v^{(N+k-1)})\lneq\Fix_G( \mathcal{P}^{(k-1)}).$ 
		We set $\mathfrak{R}=\mathfrak{T}_{ \mathcal{P}}\sqcup\{ \mathcal{P}\}\sqcup\{ v^{(n)}\lvert n\geq N\}$ and let $\mathcal{S}_{ \mathcal{P}}=\{\Fix_G(\mathcal{T}^{(k-1)})\lvert \mathcal{T}\in\mathfrak{R}\}.$ Notice by construction, that for every $\mathcal{T}\in \mathfrak{R}$ there exists $\mathcal{R}\in \mathfrak{T}_{ \mathcal{P}}$ such that $\mathcal{R}\subseteq \mathcal{T}$. In particular, for every $U\in \mathcal{S}_{ \mathcal{P}}$ there exists $\mathcal{R}\in \mathfrak{T}_{ \mathcal{P}}$ such that $U\leq \Fix_G(\mathcal{R}^{(k-1)})$. On the other hand, since $ \mathcal{P}$ is a finite subtree of $T$ notice that $\mathfrak{T}_{ \mathcal{P}}$ is finite. This implies that 
		$$\mu(U)\leq \qq \maxx{\mathcal{R}\in \mathfrak{T}_{ \mathcal{P}}}\big \lb\mu( \Fix_G(\mathcal{R}^{(k-1)}))\big \rb<+\infty\q \q \forall U\in \mathcal{S}_{ \mathcal{P}}$$ 
		and Lemma \ref{lemma bounded basis implies generic filtration} ensures that $\mathcal{S}_{ \mathcal{P}}$ is a generic filtration of $G$. Now, notice for every $\mathcal{T}\in \mathfrak{R}- (\mathfrak{T}_{ \mathcal{P}}\sqcup\{ \mathcal{P}\})$ and every $\mathcal{R}\in \mathfrak{T}_{ \mathcal{P}}$ we have that $\mathcal{R}\subseteq  \mathcal{P}\subseteq \mathcal{T}$. In particular, this implies that $ \Fix_G(\mathcal{T}^{(k-1)}) \subseteq \Fix_G( \mathcal{P}^{(k-1)})\subseteq \Fix_G(\mathcal{R}^{(k-1)})$. On the other hand, by the hypotheses none of those inclusion is an equality which implies that $$\mu(\Fix_G(\mathcal{T}^{(k-1)}))\lneq\mu(\Fix_G( \mathcal{P}^{(k-1)}))\lneq \mu(\Fix_G(\mathcal{R}^{(k-1)})).$$ 
		This proves, as desired, that $\mu(\Fix_G( \mathcal{P})^{(k-1)})\not = \mu(\Fix_G(\mathcal{T}^{(k-1)}))$ for every $\mathcal{T}\in \mathfrak{R}-\{ \mathcal{P}\}$. Furthermore, since $G$ is unimodular, the measure $\mu(U)$ is an invariant of the conjugacy class $\mathcal{C}(U)$ and one realises that $$\mathcal{C}(\Fix_G(\mathcal{R}^{(k-1)}))\lneq\mathcal{C}(\Fix_G( \mathcal{P}^{(k-1)}))\lneq \mathcal{C}(\Fix_G(\mathcal{T}^{(k-1)})).$$ In particular, the depth of every subgroup  $\Fix_G(\mathcal{T}^{(k-1)})\in \mathcal{S}_{ \mathcal{P}}$ for which $\mathcal{T}\in \mathfrak{R} -(\mathfrak{T}_{ \mathcal{P}}\sqcup \{  \mathcal{P}\})$ is at least $2$. Now, let us prove that $$\mathcal{S}_{ \mathcal{P}}\lb 0\rb =\{\Fix_G(\mathcal{R}^{(k-1)})\lvert \mathcal{R}\in \mathfrak{T}_{ \mathcal{P}}\} \mbox{ and }\mathcal{S}_{ \mathcal{P}}\lb 1\rb=\{\Fix_G( \mathcal{P}^{(k-1)})\}.$$
		To this end, let $\mathcal{T}\in \mathfrak{R}$ and let $\mathcal{R}\in \mathfrak{T}_{ \mathcal{P}}$ be such that
		$\mathcal{C}(\Fix_G(\mathcal{T}^{(k-1)}))\leq \mathcal{C}(\Fix_G(\mathcal{R}^{(k-1)})).$ By the definition, this implies the existence of an element $g\in G$ such that $\Fix_G(\mathcal{R}^{(k-1)})\leq  \Fix_G(g\mathcal{T}^{(k-1)})$ and by the first part of the proof we have that $\mathcal{T}\in \mathfrak{T}_{ \mathcal{P}}$. Our hypotheses imply that $\Fix_G(g\mathcal{T}^{(k-1)})=\Fix_G(\mathcal{R}^{(k-1)})$ and therefore that $\mathcal{C}(\Fix_G(\mathcal{T}^{(k-1)}))= \mathcal{C}(\Fix_G(\mathcal{R}^{(k-1)}))$. In particular, this proves that $\{\Fix_G(\mathcal{R}^{(k-1)})\lvert \mathcal{R}\in \mathfrak{T}_{ \mathcal{P}}\}\subseteq \mathcal{S}_{ \mathcal{P}}\lb 0\rb$. On the other hand, we have proved that the depth of every subgroup  $\Fix_G(\mathcal{T}^{(k-1)})$ with  $\mathcal{T}\in \mathfrak{R} -(\mathfrak{T}_{ \mathcal{P}}\sqcup \{  \mathcal{P}\})$ is at least $2$. Since there must exist an element at depth $1$, this implies that 
		$\mathcal{S}_{ \mathcal{P}}\lb 1\rb=\{\Fix_G( \mathcal{P}^{(k-1)})\}$
		and it follows that $\mathcal{S}_{ \mathcal{P}}\lb 0\rb =\{\Fix_G(\mathcal{R}^{(k-1)})\lvert \mathcal{R}\in \mathfrak{T}_{ \mathcal{P}}\}.$
	\end{proof}
	\begin{proof}[Proof of Theorem \ref{Theorem IPK avec un arbre}]
		Let $\mathfrak{R}$ be the family of subtrees of $T$ given by Lemma \ref{Lemme on peut etendre la la filtration generic de mathcalT} and let us consider the generic filtration $\mathcal{S}_{ \mathcal{P}}=\{\Fix_G(\mathcal{T}^{(k-1)})\lvert \mathcal{T}\in \mathfrak{R}\}$ of $G$. In order to prove that $\mathcal{S}_{ \mathcal{P}}$ factorizes$^+$ at depth $1$ we shall successively verify the three conditions of the Definition \ref{definition olsh facto}. 
		
		First, we need to prove that for all $U$ in the conjugacy class of an element of $\mathcal{S}_{\mathcal{P}}\lb l \rb$ and every $V$ in the conjugacy class of an element of $\mathcal{S}_\mathcal{P}$ such that $V \not\subseteq U$, there exists a $W$ in the conjugacy class of an element of $\mathcal{S}_\mathcal{P}\lb l -1\rb$ such that $U\subseteq W \subseteq V U.$
		Let $U$ and $V$ be as above. By the definition of $\mathcal{S}_{ \mathcal{P}}$ there exist $t,h\in G$ and some $\mathcal{T} \in \mathfrak{R}$ such that $U=\Fix_G(t\mathcal{P}^{(k-1)})$ and $V= \Fix_G(h\mathcal{T}^{(k-1)})$. Furthermore, since $V\not \subseteq U$, we have that $ \Fix_G(t^{-1}h\mathcal{T}^{(k-1)})\not \subseteq\Fix_G(\mathcal{P}^{(k-1)})$. In particular, we obtain that $\mathcal{P}\not \subseteq t^{-1}h\mathcal{T}$. This follows from hypothesis $3$ if $\mathcal{T}=v^{(n)}$ for some $n\geq N$, from hypothesis $4$ if $\mathcal{T}= \mathcal{P}$ and from the fact that $t^{-1}h\mathcal{T}^{(k-1)}$ can never contain $ \mathcal{P}^{(k-1)}$ if $\mathcal{T}\in \mathfrak{T}_{ \mathcal{P}}$. In particular, Proposition \ref{G(W) for Sk-1} ensures the existence of a complete finite subtree $\mathcal{Q}\in \Sigma_{ \mathcal{P}}$ such that
		$$\Fix_G(t\mathcal{Q}^{(k-1)})\subseteq \Fix_G(h\mathcal{T}^{(k-1)})\Fix_G( t\mathcal{P}^{(k-1)})= VU.$$
		On the other hand, by the definition of $\mathfrak{T}_\mathcal{P}$ there exists $\mathcal{R}\in \mathfrak{T}_\mathcal{P}$ such that $\Fix_G(t\mathcal{R}^{(k-1)})\subseteq \Fix_G(t\mathcal{Q}^{(k-1)}).$  
		Furthermore, by the definition of $\mathcal{S}_\mathcal{P}$, the group $\Fix_G(t\mathcal{R}^{(k-1)})$ is conjugate to an element of $\mathcal{S}_{ \mathcal{P}}\lb 0\rb$ which proves the first condition. 
		
		Next, we need to prove that $N_{G}(U, V)= \{g\in {G} \lvert g^{-1}Vg\subseteq U\}$
		is compact for every $V$ in the conjugacy class of an element of $\mathcal{S}_\mathcal{P}$. To this end, notice that $V= \Fix_G(h\mathcal{T}^{(k-1)})$ for some some $\mathcal{T} \in \mathfrak{R}$ and some $h\in G$ and that
		\begin{equation*}
		\begin{split}
		N_{G}(U, V)&= \{g\in G \lvert g^{-1}Vg\subseteq U\}\\
		&= \{g\in G \lvert \Fix_{G}(g^{-1}h\mathcal{T}^{(k-1)})\subseteq \Fix_{G}( t\mathcal{P}^{(k-1)})\}
		\end{split}
		\end{equation*}
		This leads to three cases. If $\mathcal{T}=v^{(n)}$ for some $n\in \N$, the hypothesis $3$ implies that $N_{G}(U, V)=\{g\in G\lvert t\mathcal{P}\subseteq g^{-1}hv^{(n)}\}$ which is a compact set. If $\mathcal{T}=\mathcal{T}'$, the hypothesis $4$ ensures that $N_G(V,U)=\{g\in G\lvert g t\mathcal{P}\subseteq  t\mathcal{P}\}$ which is a compact set. If $\mathcal{T}\in \mathfrak{T}_{ \mathcal{P}}$ notice from Lemma \ref{Lemme on peut etendre la la filtration generic de mathcalT} that $\mu(V)$ is strictly smaller than $\mu(U)$ which implies that $N_G(U,V)=\es$. In every cases, this proves the second condition.

		Finally, we need to prove that for any subgroup $W$ in the conjugacy class of an element of $\mathcal{S}_{ \mathcal{P}}\lb 0\rb$ such that $U\subseteq W$ we have
		\begin{equation*}
		W\subseteq N_G(U,U) =\{g\in G \mid g^{-1}Ug\subseteq U\}=\{g\in G \mid g^{-1}Ug=U\}.
		\end{equation*} 
		The hypothesis $4$ of the theorem implies that $N_G(U,U)=\{g\in G\lvert g t\mathcal{P}= t\mathcal{P}\}$. On the other hand, by construction of $\mathcal{S}_{ \mathcal{P}}$, for every $W$ in the conjugacy class of an element of $\mathcal{S}_{ \mathcal{P}}\lb 0\rb$, there exists an element $h\in G$ and a subtree $\mathcal{R}\in \mathfrak{T}_{ \mathcal{P}}$ such that $W=\Fix_G(h\mathcal{R}^{(k-1)})$. Furthermore, if $U\subseteq W$ the hypothesis $2$ implies that $ \mathcal{P}\subseteq t^{-1}h\mathcal{R}^{(k-1)}$. In particular, we obtain that  $W=\Fix_G(h\mathcal{R}^{(k-1)})\subseteq \Fix_G(t\mathcal{P})\subseteq N_G(U,U)$ which proves the third condition. 
	\end{proof}
	
	Our next task is to prove the Theorem \ref{le theorem pour IP_k} concerning groups satisfying the hypothesis \ref{Hypothese Hq}(Definition \ref{definition de Hq}). To this end, let $q\in \N$ be a non-negative integer. If $q$ is even, let $$\mathfrak{T}_{q}=\bigg\{B_T(v,r)\Big\lvert v\in V(T), r\geq \frac{q}{2}+1\bigg\}\sqcup\bigg\{B_T(e,r)\Big\lvert e\in E(T), r\geq \frac{q}{2}\bigg\}.$$
	If $q$ is odd, let  $$\mathfrak{T}_{q}=\bigg\{B_T(v,r)\Big\lvert v\in V(T), r\geq \frac{q+1}{2}\bigg\}\sqcup\bigg\{B_T(e,r)\Big\lvert e\in E(T), r\geq \frac{q+1}{2}\bigg\}.$$ 
	For any closed subgroup $G\leq \Aut(T)$, we consider the set $$\mathcal{S}_{q}=\{\Fix_G(\mathcal{T})\lvert \mathcal{T}\in \mathfrak{T}_{q}\}$$
	of fixators of those subtrees.  
	\begin{lemma}\label{la forme des Sl pour IPk}
		Let $G\leq \Aut(T)$ be a closed non-discrete unimodular subgroup satisfying the hypothesis \ref{Hypothese Hq} for some integer $q\geq 0$. Then, $\mathcal{S}_{q}$ is a generic filtration of $G$ and:
		\begin{itemize}
			\item $\mathcal{S}_{q}\lb l\rb = \{\Fix_G(B_T(e,\frac{l+q}{2}))\lvert e\in E(T)\}$ if $q+l$ is even.
			\item $\mathcal{S}_{q}\lb l \rb=\{ \Fix_G(B_T(v,(\frac{l+q+1}{2})))\mid v\in V(T)\}$ if $q+l$ is odd.
		\end{itemize} 
	\end{lemma}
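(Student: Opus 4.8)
The plan is to follow the template of the proof of Lemma~\ref{Lemma la startification de S pour Aut(T)}, but with the invariant ``number of interior vertices'' replaced by the \emph{diameter} of the subtree, which is what controls the poset structure of $\mathfrak{T}_q$ under inclusion. First I would record the routine facts: each $\Fix_G(\mathcal{T})$ with $\mathcal{T}\in\mathfrak{T}_q$ is a compact open subgroup of $G$ (since $G$ is closed in $\Aut(T)$ and $\mathcal{T}$ is finite), and since $\mathfrak{T}_q$ is cofinal among finite subtrees (it contains $B_T(v,r)$ for all large $r$), $\mathcal{S}_q$ is a basis of neighbourhoods of the identity. Moreover $\mathfrak{T}_q$ is stable under $\Aut(T)$, so $\mathcal{F}_{\mathcal{S}_q}=\{\mathcal{C}(\Fix_G(\mathcal{T}))\mid\mathcal{T}\in\mathfrak{T}_q\}$, and the hypothesis~\ref{Hypothese Hq} gives, exactly as in Chapter~\ref{Application to Aut T}, that $\mathcal{C}(\Fix_G(\mathcal{T}'))\leq\mathcal{C}(\Fix_G(\mathcal{T}))$ if and only if $g\mathcal{T}'\subseteq\mathcal{T}$ for some $g\in G$, the two classes being equal precisely when $g\mathcal{T}'=\mathcal{T}$ for some $g$.

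The core is the following elementary observation. For $\mathcal{T}\in\mathfrak{T}_q$ put
\[
\rho(\mathcal{T})=\operatorname{diam}(\mathcal{T})-q-1,
\]
where $\operatorname{diam}(B_T(v,s))=2s$ and $\operatorname{diam}(B_T(e,s))=2s+1$. Then $\rho$ takes non-negative integer values on $\mathfrak{T}_q$, attains the value $0$ (on the smallest admissible balls), and $\mathcal{T}\subsetneq\mathcal{T}'$ in $\mathfrak{T}_q$ forces $\rho(\mathcal{T})<\rho(\mathcal{T}')$. This last point is the only step needing a genuine, if mild, argument: if one ball is centred at a vertex and the other at an edge, the parity of the diameters already forces strictness; if both are centred at vertices one shows the centres must coincide (test the larger ball against the vertex at distance $s$ from the smaller centre along the ray pointing directly away from the other centre), whence the radii strictly increase; the edge/edge case is handled by the same ``far point'' test, using that $T$ is thick. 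Since $\rho$ is $\Aut(T)$-invariant, the correspondence of the previous paragraph shows $\rho$ descends to a strictly increasing $\mathbb{Z}_{\geq0}$-valued function on the poset $\mathcal{F}_{\mathcal{S}_q}$; hence every height in $\mathcal{F}_{\mathcal{S}_q}$ is finite — so $\mathcal{S}_q$ is a generic filtration — and the height of $\mathcal{C}(\Fix_G(\mathcal{T}))$ is at most $\rho(\mathcal{T})$.

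For the matching lower bound I would exhibit, for each $\mathcal{T}\in\mathfrak{T}_q$ with centre $v_\ast$ or $e_\ast$, a chain realising length $\rho(\mathcal{T})$: keeping the centre fixed, climb by alternately enlarging to the edge ball and then to the vertex ball of the next radius, i.e. $\dots\subsetneq B_T(v_\ast,s)\subsetneq B_T(e,s)\subsetneq B_T(v_\ast,s+1)\subsetneq\dots$ (with $v_\ast\in e$), starting from the minimal ball of $\mathfrak{T}_q$ with that centre. Each inclusion is proper and raises $\rho$ by exactly one, so after $\rho(\mathcal{T})$ steps one reaches $\mathcal{T}$; passing to $\mathcal{F}_{\mathcal{S}_q}$ via $\mathcal{T}_i\mapsto\mathcal{C}(\Fix_G(\mathcal{T}_i))$ yields a strictly increasing chain below $\mathcal{C}(\Fix_G(\mathcal{T}))$ of the required length (strictness because $\rho$ separates the classes). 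Hence the height of $\mathcal{C}(\Fix_G(\mathcal{T}))$ equals $\rho(\mathcal{T})=\operatorname{diam}(\mathcal{T})-q-1$. Reading this off: an element of depth $l$ has diameter $q+l+1$, which is odd exactly when $q+l$ is even — forcing an edge ball of radius $\tfrac{q+l}{2}$ — and even exactly when $q+l$ is odd — forcing a vertex ball of radius $\tfrac{q+l+1}{2}$; a one-line check that these radii meet the lower bounds defining $\mathfrak{T}_q$ (for both parities of $q$) produces exactly the two displayed descriptions of $\mathcal{S}_q\lb l\rb$. The only real obstacle is the strict monotonicity of $\rho$, i.e.\ that the diameter strictly increases under proper inclusion of balls; everything else is bookkeeping parallel to the $\Aut(T)$ case.
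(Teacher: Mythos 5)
Your proposal is correct and follows essentially the same route as the paper: both use the hypothesis \ref{Hypothese Hq} and the $G$-invariance of $\mathfrak{T}_{q}$ to identify the height of $\mathcal{C}(\Fix_G(\mathcal{T}))$ with the maximal length of a strictly increasing chain of elements of $\mathfrak{T}_{q}$ inside $\mathcal{T}$, and then count such chains via the alternating vertex-ball/edge-ball structure. Your diameter function $\rho$ is just an explicit bookkeeping device (upper bound by strict monotonicity of the diameter, lower bound by the same alternating chain) for the combinatorial observation the paper states at the end of its proof.
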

	\begin{proof}
		For shortening of the formulation and readability, for all $v\in V(T)$ and every $e\in E(T)$ we denote by $Gv$ and $Ge$ their respective orbit under the action of $G$ on $V(T)$ and $E(T)$ and by $v^{(r)}$ and $e^{(r)}$ the balls of radius $r$ around $v$ and $e$ respectively. Since $g\Fix_G(v^{(r)})g^{-1}=\Fix_G(gv^{(r)})$ $\forall g\in G, \forall v\in V(T)$ and $\forall r\in \N$, notice that $\mathcal{C}(\Fix_G(v^{(r)})=\{\Fix_G(w^{(r)})\lvert w\in Gv\}$. Similarly, we have that  $\mathcal{C}(\Fix_G(e^{(r)}))=\{\Fix_G(f^{(r)})\lvert f\in Ge\}$ $\forall e\in E(T)$. Furthermore, for every $\mathcal{T}, \mathcal{T}'\in \mathfrak{T}_{q}$, we have that $\mathcal{C}(\Fix_G(\mathcal{T}'))\leq \mathcal{C}(\Fix_G(\mathcal{T}))$ if and only if there exist $g\in G$ such that $\Fix_G(\mathcal{T})\leq \Fix_G(g\mathcal{T}')$. Therefore, since $G$ satisfies the hypothesis \ref{Hypothese Hq},  we have $\mathcal{C}(\Fix_G(\mathcal{T}'))\leq \mathcal{C}(\Fix_G(\mathcal{T}))$ if and only if there exists $g\in G$ such that $g\mathcal{T}'\subseteq \mathcal{T}$. Furthermore, notice that  $\mathcal{T}_{q}$ is stable under the action of $G$. In particular, for every increasing chain $C_0\lneq C_1\lneq...\lneq C_{n-1}\lneq C_{n}$ of elements of $\mathcal{F}_{\mathcal{S}_{q}}$ there exists a strictly increasing chain $\mathcal{T}_0\subsetneq\mathcal{T}_1\subsetneq...\subsetneq\mathcal{T}_{n-1}\subsetneq\mathcal{T}_n$ of elements of $\mathfrak{T}_{q}$ such that $C_t=\mathcal{C}(\Fix_G(\mathcal{T}_t))$. It follows that the height of an element $\mathcal{C}(\Fix_G(\mathcal{T}))\in \mathcal{F}_{\mathcal{S}_{q}}$ is bounded above by the maximal length of a strictly increasing chain of elements of $\mathfrak{T}_{q}$ contained in $\mathcal{T}$. On the other hand, for every strictly increasing chain $\mathcal{T}_0\subsetneq\mathcal{T}_1\subsetneq...\subsetneq\mathcal{T}_{n}\subsetneq\mathcal{T}$ of elements of $\mathfrak{T}_{q}$ contained in $\mathcal{T}$, we can build a strictly increasing chain $\mathcal{C}(\Fix_G(\mathcal{T}_0))\lneq\mathcal{C}(\Fix_G(\mathcal{T}_1))\lneq...\lneq\mathcal{C}(\Fix_G(\mathcal{T}_{n}))\lneq\mathcal{C}(\Fix_G(\mathcal{T}))$ of elements of $\mathcal{F}_{\mathcal{S}_{q}}$. This proves that the height of $\mathcal{C}(\Fix_G(\mathcal{T}))$ is the maximal length of a strictly increasing chain of elements of $\mathfrak{T}_{q}$ contained in $\mathcal{T}$. The results therefore follows from the following observation.
		If $q$ is even, every such chain is of the form
		$$e_1^{(\frac{q}{2})}\subseteq v_{1}^{(\frac{q}{2}+1)}\subseteq e_2^{(\frac{q}{2}+1)}\subseteq v_{2}^{(\frac{q}{2}+2)}\subseteq...\subseteq\mathcal{T}$$
		where $v_t\in V(T)$ and $e_t\in E(T)$ for all $t$ and if $q$ is odd, every such chain is of the form
			$$v_1^{(\frac{q+1}{2})}\subseteq e_{1}^{(\frac{q+1}{2})}\subseteq v_2^{(\frac{q+1}{2}+1)}\subseteq e_{2}^{(\frac{q+1}{2}+1)}\subseteq...\subseteq\mathcal{T}$$
			where the $v_t\in V(T)$ and $e_t\in E(T)$ for every $t$. 
	\end{proof}
	\begin{lemma}\label{the lemma imply fertility for G S on IPk groups}
		Let $G\leq \Aut(T)$ be a closed unimodular subgroup satisfying the hypothesis \ref{Hypothese Hq} and the property \ref{IPk} for some integers $q\geq 0$, $k\geq 1$ and let $$L_{q,k}=\begin{cases}
		\max\{1,2k-q-1\} \qq& \mbox{if } q\mbox{  is even.} \\
		\max\{1,2k-q\} & \mbox{if } q\mbox{  is odd.} 
		\end{cases}$$ Suppose further that $l,l'\in \N$ are such that $l\geq L_{q,k}$ and $l \leq l'$. Then, for every $U$ in the conjugacy class of an element of $\mathcal{S}_q\lb l \rb$ and every $V$ in the conjugacy class of an element of $\mathcal{S}_q\lb l'\rb$ such that $V\not \subseteq U$, there exists $W\in\mathcal{S}_q\lb l-1\rb$ such that $U\subseteq W\subseteq VU$. 
	\end{lemma}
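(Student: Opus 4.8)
The plan is to represent $U=\Fix_G(\mathcal{B}_U)$, $V=\Fix_G(\mathcal{B}_V)$ with $\mathcal{B}_U,\mathcal{B}_V\in\mathfrak{T}_q$ balls around vertices or edges (this is exactly what ``conjugate to an element of $\mathcal{S}_q\lb l\rb$'' means, via Lemma \ref{la forme des Sl pour IPk}), and to build $W$ as the fixator of a ball obtained from $\mathcal{B}_U$ by one step of the chain of radii underlying $\mathcal{S}_q$. By Lemma \ref{la forme des Sl pour IPk} we may write $\mathcal{B}_U=B_T(c_U,r_U)$ with $c_U$ a vertex when $q+l$ is odd and an edge when $q+l$ is even, $\mathcal{B}_V=B_T(c_V,r_V)$ similarly, and $r_V\geq r_U$ because the radii occurring in $\mathcal{S}_q\lb \cdot\rb$ are non-decreasing. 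Since $G$ satisfies the hypothesis \ref{Hypothese Hq}, the assumption $V\not\subseteq U$ is equivalent to $\mathcal{B}_U\not\subseteq\mathcal{B}_V$, which by the triangle inequality forces $d_T(c_U,c_V)\geq r_V-r_U+1$ (here $d_T$ denotes the minimal distance between the two centres, be they vertices or edges). Finally, a short case check on the parity of $q+l$ and the sign of $2k-q$ shows that $l\geq L_{q,k}$ implies $r_U\geq k$; hence by Lemma \ref{IPk then IPk' for k' geq k} the group $G$ satisfies ${\rm IP}_{r_U}$, and this is the independence we shall feed in.

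Suppose first that $q+l$ is odd, so $\mathcal{B}_U=B_T(v_U,r_U)$ with $v_U$ a vertex and, by Lemma \ref{la forme des Sl pour IPk}, $\mathcal{S}_q\lb l-1\rb$ is the set of fixators of radius-$(r_U-1)$ balls around edges. Let $e_W=\{v_U,w\}$ be the first edge of a geodesic from $v_U$ towards $c_V$, and put $\mathcal{B}_W=B_T(e_W,r_U-1)$, $W=\Fix_G(\mathcal{B}_W)$. Then $W\in\mathcal{S}_q\lb l-1\rb$, and $\mathcal{B}_W\subseteq\mathcal{B}_U$ gives $U\subseteq W$. Applying ${\rm IP}_{r_U}$ to the oriented edge $e_W$ (from $v_U$ to $w$) writes $\Fix_G(\mathcal{B}_W)=P\cdot Q$ with $P=\Fix_G(T(v_U,w))\cap\Fix_G(\mathcal{B}_W)$ and $Q=\Fix_G(T(w,v_U))\cap\Fix_G(\mathcal{B}_W)$, where $T(v_U,w)$ and $T(w,v_U)$ are the half-trees of Definition \ref{equation prop ind de tits}. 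A vertex of $\mathcal{B}_U$ is either on the $v_U$-side of $e_W$ (fixed by $P$) or on the $w$-side at distance $\leq r_U-1$ from $e_W$ (fixed by $\Fix_G(\mathcal{B}_W)$), so $P\subseteq\Fix_G(\mathcal{B}_U)$; a vertex of $\mathcal{B}_V$ is either on the $w$-side (fixed by $Q$) or on the $v_U$-side, where $d_T(\,\cdot\,,e_W)=d_T(\,\cdot\,,v_U)\leq r_V-d_T(v_U,c_V)\leq r_U-1$, hence lies in $\mathcal{B}_W$, so $Q\subseteq\Fix_G(\mathcal{B}_V)$. As $P$ and $Q$ have disjoint supports they commute, whence $\Fix_G(\mathcal{B}_W)=PQ=QP\subseteq\Fix_G(\mathcal{B}_V)\Fix_G(\mathcal{B}_U)=VU$.

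Suppose now that $q+l$ is even, so $\mathcal{B}_U=B_T(e_U,r_U)$ with $e_U=\{v_W,u\}$ an edge, and $\mathcal{S}_q\lb l-1\rb$ is the set of fixators of radius-$r_U$ balls around vertices. Let $v_W$ be the endpoint of $e_U$ closer to $c_V$ and put $\mathcal{B}_W=B_T(v_W,r_U)$, $W=\Fix_G(\mathcal{B}_W)$; again $W\in\mathcal{S}_q\lb l-1\rb$ and $U\subseteq W$. Here independence must be applied indirectly: since $B_T(e_U,r_U-1)\subseteq\mathcal{B}_W$, every $g\in\Fix_G(\mathcal{B}_W)$ lies in $\Fix_G(e_U^{(r_U-1)})$, so ${\rm IP}_{r_U}$ applied to $e_U$ yields $g=g_1g_2$ with $g_1$ fixing $T(v_W,u)$, $g_2$ fixing $T(u,v_W)$, and both fixing $e_U^{(r_U-1)}$; a support argument, using that $g$ fixes all of $B_T(v_W,r_U)$ and that $g_1$ acts trivially on the $v_W$-side, shows $g_1,g_2\in\Fix_G(\mathcal{B}_W)$. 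Thus $\Fix_G(\mathcal{B}_W)=A\cdot B$ with $A=\Fix_G(\mathcal{B}_W)\cap\Fix_G(T(v_W,u))$ and $B=\Fix_G(\mathcal{B}_W)\cap\Fix_G(T(u,v_W))$. As before, a vertex of $\mathcal{B}_U$ on the $u$-side is fixed by $B$ and one on the $v_W$-side lies in $B_T(v_W,r_U)$, so $B\subseteq\Fix_G(\mathcal{B}_U)$; a vertex of $\mathcal{B}_V$ on the $v_W$-side is fixed by $A$ and one on the $u$-side satisfies $d_T(\,\cdot\,,v_W)\leq r_V-d_T(v_W,c_V)\leq r_U-1$, hence lies in $\mathcal{B}_W$, so $A\subseteq\Fix_G(\mathcal{B}_V)$. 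Therefore $\Fix_G(\mathcal{B}_W)=AB\subseteq\Fix_G(\mathcal{B}_V)\Fix_G(\mathcal{B}_U)=VU$, as required.

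The main obstacle is precisely this asymmetry: ${\rm IP}_k$ --- and its reformulation in Proposition \ref{alternative definition iof property IPk} --- decomposes fixators of balls around \emph{edges}, whereas in the even case the one-step-down subgroup $W$ is the fixator of a ball around a \emph{vertex}, which cannot be split by independence directly; one must enlarge to a ball around $e_U$, run the decomposition there, and intersect back, checking that the two factors remain fixators of the appropriate half-trees. Everything else is a bookkeeping of distances in $T$, the only quantitative inputs being $r_U\geq k$ (so that ${\rm IP}_{r_U}$ holds, which is where the bound $L_{q,k}$ enters) and the inequality $d_T(c_U,c_V)\geq r_V-r_U+1$ extracted from $\mathcal{B}_U\not\subseteq\mathcal{B}_V$.
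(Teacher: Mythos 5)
Your proof is correct, and the underlying mechanism is the one the paper uses: pass to the sub-ball of $\mathcal{B}_U$ obtained by discarding the direction pointing towards $\mathcal{B}_V$, and split its fixator by independence into a factor fixing $\mathcal{B}_V$ and a factor fixing $\mathcal{B}_U$. The difference is packaging. The paper's proof of this lemma is a short reduction to Proposition \ref{G(W) for Sk-1}, which performs that splitting for arbitrary complete finite subtrees via the full product decomposition of Proposition \ref{alternative definition iof property IPk} over all terminal edges; you instead inline the argument, specialized to balls, using only the raw two-factor form of property \ref{IPk} applied to a single well-chosen edge ($e_W$ in the vertex-centred case, $e_U$ in the edge-centred case), together with the restriction argument needed to split $\Fix_G(B_T(v_W,r_U))$ when the intermediate ball is centred at a vertex — that last step is the analogue, for two factors only, of the first induction step in the proof of Proposition \ref{alternative definition iof property IPk}. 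Your version is more self-contained and makes the metric bookkeeping explicit; the paper's is uniform over all complete subtrees and reusable. Two minor points, neither of which breaks the argument. First, the blanket inequality $d_T(c_U,c_V)\geq r_V-r_U+1$ is only guaranteed when $c_U$ is a vertex; when $c_U$ is an edge and $d_T$ is the minimum over its endpoints, $\mathcal{B}_U\not\subseteq\mathcal{B}_V$ only yields $d_T(c_U,c_V)\geq r_V-r_U$ — but that is exactly what your even case requires, since there the intermediate ball $B_T(v_W,r_U)$ has radius $r_U$ and not $r_U-1$ as written. Second, in the case where $q$ and $l$ are both even, the bound $r_U\geq k$ uses that $l$, being even and at least the odd number $2k-q-1$, is in fact at least $2k-q$; this is the one spot where the "short case check" genuinely needs the parity.
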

	\begin{proof} For every $t\in \N$, let $\mathfrak{T}_q\lb t\rb=\{\mathcal{T}\in \mathfrak{T}_q\lvert \Fix_G(\mathcal{T})\in \mathcal{S}\lbrack t\rbrack\}$.
		Notice that $\mathcal{T}_q\lb t\rb$ is stable under the action of $G$. In particular, there exist $\mathcal{T}\in \mathfrak{T}_q\lbrack l\rbrack$ and $\mathcal{T}'\in \mathfrak{T}_{q}\lb l'\rb$ such that $U= \Fix_G(\mathcal{T})$ and $V= \Fix_G(\mathcal{T}')$. Since $V\not \subseteq U$, we have that $\mathcal{T}\not\subseteq \mathcal{T}'$.
		There are four cases to treat depending on the parity of $q$ and $l$. We suppose that $q$ is even (the reasoning with odd $q$ is similar). If $l$ is even, let $k'=\frac{l+q}{2}$. Lemma \ref{la forme des Sl pour IPk} implies the existence of an edge $e\in E(T)$ such that $U=\Fix_G(B_T(e, k'))$. Furthermore, since $l\geq L_{q,k}$ and since $l$ is even, we have $k'\geq k$ and Lemma \ref{IPk then IPk' for k' geq k} implies that $G$ satisfies the property ${\rm IP}_{k'}$. Therefore, Proposition \ref{G(W) for Sk-1} ensures the existence of a vertex $v\in e$ such that 
		$$\Fix_G(B_T(v,k'))\subseteq VU$$
		and Lemma \ref{la forme des Sl pour IPk} ensures that $\Fix_G(B_T(v,k'))\in \mathcal{S}_q\lb l-1\rb$. Finally, notice that $U=\Fix_G(B_T(e,k'))\subseteq \Fix_G(B_T(v,k'))$ since $v\in e$. 
		
		If $l$ is odd, let $k'=\frac{l+q+1}{2}$. Lemma \ref{la forme des Sl pour IPk} implies the existence of some  $v\in V(T)$  such that $U=\Fix_G(B_T(v, k'))$. Furthermore, since $l\geq L_{q,k}$, we have $k'\geq k$ and Lemma \ref{IPk then IPk' for k' geq k} implies that $G$ satisfies the property ${\rm IP}_{k'}$. Therefore, Proposition \ref{G(W) for Sk-1} ensures the existence of an edge $e\subsetneq B_T(v,1)$ such that
		$$\Fix_G(B_T(e,k'-1))\subseteq VU$$
		and Lemma \ref{la forme des Sl pour IPk} ensures that $\Fix_G(B_T(e,k'-1)\in \mathcal{S}_q\lb l-1\rb$. Finally, since $e\in E(B_T(v,1))$, notice that $\Fix_G(B_T(v,k'))\subseteq \Fix_G(B_T(e,k'-1))$. 
	\end{proof}
	\begin{proof}[Proof of Theorem \ref{le theorem pour IP_k}]
		To prove that $\mathcal{S}_{q}$ factorizes$^+$ at depth $l\geq L_{q,k}$ we shall successively verify the three conditions of Definition \ref{definition olsh facto}.
		
		First, we need to prove that for every $U$ in the conjugacy class of an element of $\mathcal{S}_q\lb l \rb$ and every $V$ in the conjugacy class of an element of $\mathcal{S}_q$ with $V \not\subseteq U$, there exists a $W$ in the conjugacy class of an element of $\mathcal{S}_q\lb l -1\rb$ such that $U\subseteq W \subseteq V U.$ Let $U$ and $V$ be as above. If $V$ is conjugate to an element of $\mathcal{S}_q\lb l'\rb$ for some $l'\geq l$ the result follows directly from Lemma \ref{the lemma imply fertility for G S on IPk groups}. Therefore, we suppose that $l'\lneq l$. By the definition of $\mathcal{S}_q$ and since $\mathfrak{T}_q$ is stable under the action of $G$, there exist $\mathcal{T},\mathcal{T}'\in \mathfrak{T}_q$ such that $U=\Fix_G(\mathcal{T})$ and $V=\Fix_G(\mathcal{T}')$. We have two cases. Either, $\mathcal{T}'\subseteq \mathcal{T}$ and there exists a subtree $\mathcal{R}\in \mathfrak{T}_{q}$ such that $\mathcal{T}'\subseteq \mathcal{R}\subseteq \mathcal{T}$ and $\Fix_G(\mathcal{R})\in \mathcal{S}_q\lb l-1\rb$. In that case $$\Fix_G(\mathcal{T})\subseteq \Fix_G(\mathcal{R})\subseteq \Fix_G(\mathcal{T}')\subseteq \Fix_G(\mathcal{T}')\Fix_G(\mathcal{T}).$$ Or else, $\mathcal{T}'\not \subseteq \mathcal{T}$ and since $l'\lneq l$, this implies the existence of a subtree $ \mathcal{P}\in \mathfrak{T}_q$ such that $\mathcal{T}'\subseteq  \mathcal{P}\not= \mathcal{T}$ and $\Fix_G( \mathcal{P})\in \mathcal{S}_q\lb l\rb$. In particular, Lemma \ref{the lemma imply fertility for G S on IPk groups} ensures the existence of a $W\in \mathcal{S}_q\lb l-1\rb$ such that $U\subseteq W \subseteq \Fix_G( \mathcal{P})U$. Since $\Fix_G( \mathcal{P}) \subseteq \Fix_G(\mathcal{T}')$, this proves the first condition. 
		
		Next, we need to prove that $N_{G}(U, V)= \{g\in {G} \lvert g^{-1}Vg\subseteq U\}$ is compact for every $V$ in the conjugacy class of an element of $\mathcal{S}_q$. Just as before, notice that $V=\Fix_G(\mathcal{T}')$ for some $ \mathcal{T}'\in \mathfrak{T}_q$. Since $G$ satisfies the hypothesis \ref{Hypothese Hq} notice that
			\begin{equation*}
			\begin{split}
			N_{G}(U, V)&= \{g\in G \lvert g^{-1}Vg\subseteq U\}=\{g\in G \lvert g^{-1}\Fix_G(\mathcal{T}')g\subseteq \Fix_G(\mathcal{T})\}\\ 
			&=\{g\in G \lvert \Fix_G(g^{-1}\mathcal{T}')\subseteq \Fix_G(\mathcal{T})\}= \{g\in G \lvert g\mathcal{T}\subseteq \mathcal{T}'\}.
			\end{split}
			\end{equation*}
			In particular, since both $\mathcal{T}$ and $\mathcal{T}'$ are finite subtrees of $T$, $N_G(U,V)$ is a compact subset of $G$ which proves the second condition.
			
			Finally, we need to prove that for every $W$ in the conjugacy class of an element of $\mathcal{S}_q\lb l-1\rb$ with $U\subseteq W$ we have
			\begin{equation*}
			W\subseteq N_G(U,U) =\{g\in G \mid g^{-1}Ug\subseteq U\}.
			\end{equation*} 
			For the same reasons as before, there exists $\mathcal{R}\in \mathfrak{T}_q$ such that $W= \Fix_G(\mathcal{R})$. On the other hand, since $U\subseteq W$ and since $G$ satisfies the hypothesis \ref{Hypothese Hq}, notice that $\mathcal{R}\subseteq \mathcal{T}$. Furthermore, notice that $\Fix_G(\mathcal{R})$ has depth $l-1$ and therefore that $\mathcal{R}$ contains every interior vertex of $\mathcal{T}$. Since $G$ is unimodular and satisfies the hypothesis \ref{Hypothese Hq} this implies that
			\begin{equation*}
			\begin{split}
			\Fix_G(\mathcal{R})&\subseteq \{h\in G \lvert h\mathcal{T}\subseteq \mathcal{T}\}= \{h\in G \lvert \Fix_G(\mathcal{T})\subseteq\Fix_G(h\mathcal{T})\}\\
			&=\{h\in G \lvert h^{-1}\Fix_G(\mathcal{T})h\subseteq\Fix_G(\mathcal{T})\}= N_G(U,U)
			\end{split}
			\end{equation*}
			which proves the third condition.
	\end{proof}
	In particular Theorem \ref{la version paki du theorem de classification} provides a bijective correspondence between the equivalence classes of irreducible representations of $G$ at depth $l\geq L_{q,k}$ with seed $C\in \mathcal{F}_{\mathcal{S}_q}$ and the $\mathcal{S}_q$-standard representations of $\Aut_G(C)$. We now give examples of groups which satisfy the hypotheses of Theorem \ref{le theorem pour IP_k}. 
	\begin{example}\label{example autT pour changer}
		The full group $\Aut(T)$ of automorphisms of $T$. By Lemma \ref{la diffenrence entre les fixing group alors la difference entre les graph}, $\Aut(T)$ satisfies the hypothesis \ref{Hypothese H Tree} and therefore the hypothesis $H_0$. Furthermore, $\Aut(T)$ coincides with its $1$-closure and therefore satisfies the property \ref{IPk} for every integer  $k\geq 1$ (Lemmas \ref{IPk then IPk' for k' geq k} and \ref{lemme Burger mozes 2}). Since $\Aut(T)$ is non-discrete and unimodular Theorem \ref{le theorem pour IP_k} applies and the generic filtrations $\mathcal{S}_0$ factorizes$^+$ at all depth $l\geq 1$. Notice that $\mathcal{S}_{0}$ is quite different from the generic filtration $\mathcal{S}$ of $\Aut(T)$ considered in Chapter \ref{Application to Aut T}. On the other hand, the cuspidal representations of $\Aut(T)$ are still exactly the representations at depth $l\geq 1$ for $\mathcal{S}_0$. In particular, this new generic filtration also leads to a complete description of the cuspidal representations of $\Aut(T)$.
	\end{example} 
	\begin{example}[Proof of corollary \ref{corollary this apply to radu group}]\label{example Radu} Let $T$ be a $(d_0,d_1)$-semi-regular tree with $d_0,d_1\geq 6$. Then, any closed subgroup $G\leq \Aut(T)$ that acts transitively on the boundary and whose local action at every vertex contains the alternating group of the corresponding degree satisfies the hypotheses of Theorem \ref{le theorem pour IP_k}. Those groups were extensively studied by Radu in \cite{Radu2017}. Among other things, he showed that every such group $G$ is non-discrete  \cite[Theorem G]{Radu2017} and  $k$-closed for some $k$ depending on the group (Definition \ref{definition kclosure}) \cite[Theorem H]{Radu2017}. On the other hand, such a group is unimodular and since the local action at each vertex contains the alternating group a similar proof than the one of Lemma \ref{la diffenrence entre les fixing group alors la difference entre les graph} shows that $G$ satisfies the hypothesis \ref{Hypothese H Tree} (hence \ref{Hypothese Hq} $\forall q\in \N$).  In particular, Theorem \ref{le theorem pour IP_k} applies and the generic filtrations $\mathcal{S}_{0}$ factorizes$^+$ at all depth $l\geq L_{0,k}=2k-1$. In a forthcoming paper \cite{SemalR2022}, we show without use of the property \ref{IPk} that the generic filtration $\mathcal{S}_0$ factorizes$^+$ at all positive depth if the group is in addition simple. The proof of that last statement is quite technical, relies heavily on Radu's classification and is unrelated to the property \ref{IPk}. Furthermore, since various important consequences such as Nebbia's CCR conjecture on trees need to be discussed in light of the result, the author decided to not present a proof in this notes. 
	\end{example}
	\begin{example}\label{example fixateur point au bords}
		Let $G\leq \Aut(T)$ be a $k$-closed subgroup (Definition \ref{definition kclosure}), let $\omega\in \partial T$ be an end $T$ and consider the stabiliser of the $\omega$-horocycles $$G_\omega^0 =\{g\in G\lvert g\omega = \omega \mbox{ and }\exists v\in V(T)\mbox{ s.t. }gv=v\}.$$ Notice that $G_\omega^0$ is still $k$-closed and hence satisfies the property \ref{IPk}. Furthermore, $G_\omega^0$ is a union of compact groups and is therefore unimodular. In particular, if $G_\omega^0$ is non-discrete and satisfies the hypothesis \ref{Hypothese Hq}, it satisfies the hypotheses of Theorem \ref{le theorem pour IP_k}. However, notice that $G_\omega^0$ never satisfies the hypothesis $H_0$ since for any edges $e,f\in E(T)$ along an infinite geodesic with end $\omega$ we have either that $\Fix_{G^0_\omega}(e)\subseteq \Fix_{G^0_\omega}(f)$ or that $\Fix_{G^0_\omega}(f)\subseteq \Fix_{G^0_\omega}(e)$. Nevertheless, in certain cases, a description of the remaining cuspidal representations of $G$ can be obtained using Theorem \ref{Theorem IPK avec un arbre}. For instance let $G=\Aut(T)$. In that case, $G_\omega^0$ satisfies the hypothesis $H_1$ and the generic filtration $\mathcal{S}_1$ factorizes$^+$ at all depths $l\geq 1$. In particular, by Theorem \ref{le theorem pour IP_k} we obtain a description of the cuspidal representations admitting non-zero invariant vectors for the fixator of a ball of radius one around an edge or bigger but not for the fixator of a ball of radius one around a vertex. To obtain a description of the cuspidal representations admitting non-zero invariant vectors for the fixator of a ball $B_T(v,1)$ of radius $1$ around a vertex $v\in V(T)$, we let $\mathcal{P}=B_T(v,1)$ we notice that $\Sigma_{\mathcal{P}}=\{e\}$ where $e$ is the only edge of $B_T(v,1)$ contained in the geodesic $\lb v,\omega\rb$  and we apply Theorem \ref{Theorem IPK avec un arbre}. Notice that the reaming irreducible representations of $G^0_\omega$ are spherical (for $G=\Aut(T)$ they are classified in \cite{Nebbia1990}).
	\end{example}
	Other applications of Theorem \ref{Theorem IPK avec un arbre} and Theorem \ref{le theorem pour IP_k} could be made for instance on the $k$-closure of certain groups of automorphisms of trees and on the generalisation of Burger-Mozes groups described in \cite{Tornier2020}. 
	\subsection{Existence of $\mathcal{S}_q$-standard representations}\label{existence of representaions ot depth l for IPk}
	Let $T$ be a $(d_0,d_1)$-semi-regular tree with $d_0,d_1\geq 3$ and let $q\in \N$ be a non-negative integer. If $q$ is even, let $$\mathfrak{T}_{q}=\bigg\{B_T(v,r)\Big\lvert v\in V(T), r\geq \frac{q}{2}+1\bigg\}\sqcup\bigg\{B_T(e,r)\Big\lvert e\in E(T), r\geq \frac{q}{2}\bigg\}.$$
	If $q$ is odd, let  $$\mathfrak{T}_{q}=\bigg\{B_T(v,r)\Big\lvert v\in V(T), r\geq \frac{q+1}{2}\bigg\}\sqcup\bigg\{B_T(e,r)\Big\lvert e\in E(T), r\geq \frac{q+1}{2}\bigg\}.$$ 
	For any closed non-discrete unimodular subgroup $G\leq\Aut(T)$ satisfying the hypothesis \ref{Hypothese Hq}, we have shown that  $$\mathcal{S}_q=\{\Fix_G(\mathcal{T})\lvert \mathcal{T}\in \mathfrak{T}_q\}$$ is a generic filtration of $G$. Furthermore, if $G$ satisfies the property \ref{IPk} for some integer $k\geq 1$ we have shown that $\mathcal{S}_q$ factorizes$^+$ at all even depth $l\geq L_{q,k}$ where$$L_{q,k}=\begin{cases}
	\max\{1,2k-q-1\} \qq& \mbox{if } q\mbox{  is even.} \\
	\max\{1,2k-q\} & \mbox{if } q\mbox{  is odd.} 
	\end{cases}$$ 
	In particular, Theorem \ref{la version paki du theorem de classification} provides a bijective correspondence between the equivalence classes of irreducible representations of $G$ at depth $l\geq L_{q,k}$ with seed $C\in \mathcal{F}_{\mathcal{S}_q}$ and the $\mathcal{S}_q$-standard representations of $\Aut_G(C)$. The purpose of the present chapter is to study the existence of those ${\mathcal{S}_q}$-standard representations. The following result ensures the existence of ${\mathcal{S}_q}$-standard representations of $\Aut_{G}(C)$ for all $C\in \mathcal{F}_{\mathcal{S}_q}$ with height $l\geq L_{q,k}$ if $q$ and $l$ have the same parity.
	\begin{proposition}\label{existence of standard for edges for IPK}
		Let $G\leq \Aut(T)$ be a closed non-discrete unimodular subgroup satisfying the hypothesis \ref{Hypothese Hq} and the property \ref{IPk} for some integers $q\geq 0$, $k\geq 1$ and let $l\geq L_{q,k}$. Suppose that one of the following happens:
		\begin{itemize}
			\item $q$ and $l$ are even.
			\item $q$ and $l$ are odd but $l\not=1$.
			\item $q$ is odd, $l=1$ and $\Fix_G(v^{(\frac{q}{2}+1)})\not= \{g\in G\lvert ge=e\}$  $\forall e\in E(T)$, $\forall v\in e$.
		\end{itemize} Then, there exists a ${\mathcal{S}_q}$-standard representation of $\Aut_{G}(C)$ for every $C\in \mathcal{F}_{\mathcal{S}_q}$ at height $l$.
	\end{proposition}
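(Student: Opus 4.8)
The plan is to reduce the statement, via Lemma~\ref{la forme des Sl pour IPk}, to a single application of Proposition~\ref{existence criterion} with $\mathcal{T}$ a ball around an edge and $\mathcal{T}_1,\mathcal{T}_2$ the balls of the same radius around its two endpoints. Fix $C\in\mathcal{F}_{\mathcal{S}_q}$ of height $l$. In all three cases $q$ and $l$ have the same parity, so $q+l$ is even, and Lemma~\ref{la forme des Sl pour IPk} provides an edge $e=\{v_1,v_2\}\in E(T)$ with $C=\mathcal{C}(U)$ for $U=\Fix_G(B_T(e,k'))$, where $k'=(l+q)/2\geq 1$. First I would record, using hypothesis~\ref{Hypothese Hq} and the $G$-stability of $\mathfrak{T}_q$ exactly as in Section~\ref{generic filtration fertile IP k}, that $N_G(U)=\Stab_G(B_T(e,k'))$; and since a ball around an edge determines that edge, $\Stab_G(B_T(e,k'))=\{g\in G\mid g\{v_1,v_2\}=\{v_1,v_2\}\}=:\Stab_G(e)$. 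Hence $\Aut_G(C)=\Stab_G(e)/\Fix_G(B_T(e,k'))$.

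Next I would pin down $\mathfrak{H}_{\mathcal{S}_q}(C)$. Since $l\geq L_{q,k}\geq 1$, the set $\mathcal{S}_q[l-1]$ is described by Lemma~\ref{la forme des Sl pour IPk}; as $q+(l-1)$ is odd it consists of the fixators $\Fix_G(B_T(v,k'))$ of balls of radius $k'$ around vertices. Therefore every $W\in\tilde{\mathfrak{H}}_{\mathcal{S}_q}(U)$ is of the form $\Fix_G(B_T(v,k'))$ for some $v\in V(T)$, and hypothesis~\ref{Hypothese Hq} applied to the two balls gives $U\subseteq W$ if and only if $B_T(v,k')\subseteq B_T(e,k')$, which (as $T$ has no leaves) holds if and only if $v\in\{v_1,v_2\}$. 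Thus $\tilde{\mathfrak{H}}_{\mathcal{S}_q}(U)=\{\Fix_G(B_T(v_1,k')),\Fix_G(B_T(v_2,k'))\}$ and $\mathfrak{H}_{\mathcal{S}_q}(C)=\{H_1,H_2\}$ with $H_i=\Fix_G(B_T(v_i,k'))/\Fix_G(B_T(e,k'))$, so an $\mathcal{S}_q$-standard representation of $\Aut_G(C)$ is precisely an irreducible representation of $\Stab_G(e)/\Fix_G(B_T(e,k'))$ with no non-zero $H_i$-invariant vector for $i=1,2$. I would produce such a representation by Proposition~\ref{existence criterion} with $\mathcal{T}=B_T(e,k')$ and $\mathcal{T}_i=B_T(v_i,k')$: these are distinct finite subtrees of $\mathcal{T}$ with $\mathcal{T}_1\cup\mathcal{T}_2=\mathcal{T}$, and $\Stab_G(\mathcal{T})=\Stab_G(e)$ permutes $\{\mathcal{T}_1,\mathcal{T}_2\}$, since an element of $\Stab_G(e)$ either fixes both endpoints of $e$ (fixing each $\mathcal{T}_i$) or swaps them (swapping $\mathcal{T}_1$ and $\mathcal{T}_2$).

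The remaining, and main, point is the strict chain $\Fix_G(\mathcal{T})\subsetneq\Fix_G(\mathcal{T}_i)\subsetneq\Stab_G(\mathcal{T})$. The first inclusion is strict because $\mathcal{C}(\Fix_G(\mathcal{T}))$ and $\mathcal{C}(\Fix_G(\mathcal{T}_i))$ have different heights ($l$ and $l-1$), so these two groups are not conjugate, in particular not equal. For the second inclusion, write $\{i,j\}=\{1,2\}$: since $k'\geq 1$, every $g\in\Fix_G(B_T(v_j,k'))$ fixes both $v_i$ and $v_j$ and hence lies in $\Stab_G(e)$, so $\Fix_G(B_T(v_j,k'))\subseteq\Stab_G(e)$, while hypothesis~\ref{Hypothese Hq} gives $\Fix_G(B_T(v_j,k'))\not\subseteq\Fix_G(B_T(v_i,k'))$ because $B_T(v_i,k')\not\subseteq B_T(v_j,k')$; combining these forces $\Fix_G(\mathcal{T}_i)\subsetneq\Stab_G(e)$. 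This handles the first two bullet cases; in the case $q$ odd, $l=1$, the strict inclusion $\Fix_G(B_T(v_i,k'))\subsetneq\Stab_G(e)$ is exactly the extra hypothesis in the third bullet. With the three hypotheses of Proposition~\ref{existence criterion} verified, it yields an irreducible representation of $\Stab_G(e)/\Fix_G(B_T(e,k'))=\Aut_G(C)$ without non-zero $H_1$- or $H_2$-invariant vectors, i.e.\ an $\mathcal{S}_q$-standard representation. The delicate parts are the identification of $\tilde{\mathfrak{H}}_{\mathcal{S}_q}(U)$ — and hence the correct reading of ``$\mathcal{S}_q$-standard'' in this setting — and the bookkeeping of radii needed to invoke hypothesis~\ref{Hypothese Hq} on balls around vertices, which is where the parity restrictions and the $l=1$ exception enter.
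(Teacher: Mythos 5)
Your proof is correct and follows essentially the same route as the paper's: reduce via Lemma~\ref{la forme des Sl pour IPk} to a seed of the form $\Fix_G(B_T(e,k'))$, identify $N_G(U)=\Stab_G(e)$, $\Aut_G(C)\simeq\Stab_G(e)/\Fix_G(B_T(e,k'))$ and $\tilde{\mathfrak{H}}_{\mathcal{S}_q}(U)=\{\Fix_G(B_T(v,k'))\mid v\in e\}$, then apply Proposition~\ref{existence criterion} to $\mathcal{T}=B_T(e,k')$ with $\mathcal{T}_i=B_T(v_i,k')$. The only difference is that you spell out the strict inclusions $\Fix_G(\mathcal{T})\subsetneq\Fix_G(\mathcal{T}_i)\subsetneq\Stab_G(\mathcal{T})$ (via the height comparison and an \ref{Hypothese Hq}-argument, resp.\ the extra hypothesis of the third bullet), which the paper asserts directly as a consequence of \ref{Hypothese Hq} and the bullet hypotheses.
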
 
	\begin{proof}
		Let $C\in \mathcal{F}_{\mathcal{S}_q}$ be at height $l$. Since $q$ and $l$ have the same parity, Lemma \ref{la forme des Sl pour IPk} ensures the existence of an edge $e\in E(T)$ and an integer $r\geq k$ and such that $B_T(e,r)\in \mathfrak{T}_q$ and $C=\mathcal{C}(\Fix_{G}(B_T(e,r))$. For shortening of the formulation we let $\mathcal{T}$ denote the subtree $B_T(e,r)$. Since $G$ satisfies the hypothesis \ref{Hypothese Hq} and as a consequence of Lemma \ref{la forme des Sl pour IPk}, notice that $N_G(\Fix_G(\mathcal{T}))=\{g\in G \mid g\mathcal{T}\subseteq \mathcal{T}\}=\Stab_{G}(\mathcal{T})=\{g\in G \lvert ge=e\}$, that $\Aut_G(C)\simeq \Stab_G(\mathcal{T})/\Fix_G(\mathcal{T})$ and that 
		\begin{equation*}
		\begin{split}
		\tilde{\mathfrak{H}}_{\mathcal{S}_q}(\Fix_G(\mathcal{T}))&= \{W \mid \exists g\in G \mbox{ s.t. } gWg^{-1}\in \mathcal{S}_q\lb l-1 \rb \mbox{ and } \Fix_G(\mathcal{T}) \subseteq  W \}\\ &=\{\Fix_G(B_T(v,r))\lvert v\in e\}.
		\end{split}
		\end{equation*}
		Let $v_0,v_1$ denote the two vertices of $e$, let $\mathcal{T}_i=B_T(v_i,r)$ and notice that $\mathcal{T}_0\cup \mathcal{T}_1=\mathcal{T}$. In particular, the action of $N_G(\Fix_G(\mathcal{T}))$ on $\mathcal{T}$ permutes the subtrees $\{\mathcal{T}_0, \mathcal{T}_1\}$. On the other hand, since $G$ satisfies the hypothesis \ref{Hypothese Hq} our hypotheses imply that $$\Fix_G(\mathcal{T})\subsetneq \Fix_G(\mathcal{T}_i)\subsetneq \Stab_G(\mathcal{T}).$$  The result therefore follows from Proposition \ref{existence criterion}.
	\end{proof}
	The following two results ensures the existence of ${\mathcal{S}_q}$-standard representations of $\Aut_{G}(C)$ for all $C\in \mathcal{F}_{\mathcal{S}_q}$ with height $l\geq L_{q,k}$ if $q$ and $l$ have opposite parity. We start with the degenerate case $q=0,k=1$ and $l=1$ where Proposition \ref{existence criterion} does not apply.
	\begin{lemma}\label{existence of standard for vertices}
		Let $G\leq \Aut(T)$ be a closed non-discrete unimodular subgroup satisfying the hypothesis $H_0$, the Tits independence property ${\rm IP}_1$ and such that $\Fix_G(v)$ is $2$-transitive on the set of edges of $B_T(v,1)$ for every $v\in V(T)$. Then, there exists an $\mathcal{S}_{0}$-standard representation of $\Aut_{G}(C)$ for every $C\in \mathcal{F}_{\mathcal{S}_0}$ at height $1$. 
	\end{lemma}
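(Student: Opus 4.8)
The plan is to make all the abstract data in the statement concrete and then invoke Lemma~\ref{les rep de Qsur Qi}. First I would use Lemma~\ref{la forme des Sl pour IPk} with $q=0$: a conjugacy class $C\in\mathcal{F}_{\mathcal{S}_0}$ at height $1$ is of the form $C=\mathcal{C}(\Fix_G(B_T(v,1)))$ for some vertex $v$, and moreover $\mathcal{S}_0\lb 0\rb=\{\Fix_G(e)\mid e\in E(T)\}$. Writing $U=\Fix_G(B_T(v,1))$ and $\mathcal{P}=B_T(v,1)$ for the star around $v$, I would note that $g\mathcal{P}=B_T(gv,1)$ has the same cardinality as $\mathcal{P}$ (automorphisms preserve degrees), so $g\mathcal{P}\subseteq\mathcal{P}$ forces $g\mathcal{P}=\mathcal{P}$ and hence $gv=v$ (the centre of the star is the unique vertex of degree $\geq 2$ inside it, since $d_0,d_1\geq 3$). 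Combining this with the hypothesis $H_0$, and using unimodularity so that $N_G(U)=N_G(U,U)=\{g\in G\mid g^{-1}Ug\subseteq U\}=\{g\in G\mid \Fix_G(g^{-1}\mathcal P)\subseteq\Fix_G(\mathcal P)\}=\{g\in G\mid g\mathcal{P}\subseteq\mathcal{P}\}$, I get $N_G(U)=\Stab_G(\mathcal{P})=\Fix_G(v)$; since $\Fix_G(v)$ is compact and $U$ open, $Q:=\Aut_G(C)=\Fix_G(v)/\Fix_G(B_T(v,1))$ is a finite group. Let $X=E(B_T(v,1))$ be the set of $d\geq 3$ edges at $v$; the action of $\Fix_G(v)$ on $X$ descends to $Q$ (the group $U$ fixes every vertex of $\mathcal P$, hence every edge of it), and this $Q$-action is $2$-transitive by hypothesis, so in particular $|Q|\geq d(d-1)>2$.

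Next I would identify $\mathfrak{H}_{\mathcal{S}_0}(C)$ with the set of point stabilisers of this action. By definition, $\tilde{\mathfrak{H}}_{\mathcal{S}_0}(U)$ consists of the $W$ that are conjugate into $\mathcal{S}_0\lb 0\rb$ and contain $U$; since $\mathcal{S}_0\lb 0\rb=\{\Fix_G(e)\mid e\in E(T)\}$ is conjugation-stable and, by $H_0$, the inclusion $U\subseteq\Fix_G(e)$ holds exactly when $e\subseteq\mathcal{P}$, one gets $\tilde{\mathfrak{H}}_{\mathcal{S}_0}(U)=\{\Fix_G(e)\mid e\in X\}$. For $e\in X$ the vertex $v$ is an endpoint of $e$, so any $g\in\Fix_G(v)$ with $ge=e$ also fixes the other endpoint, i.e. $\{g\in\Fix_G(v)\mid ge=e\}=\Fix_G(e)$, and writing $p_U\colon N_G(U)\to Q$ for the quotient map, $p_U(\Fix_G(e))=\Fix_G(e)/U$ is precisely the stabiliser $\Fix_Q(e)$ of the point $e\in X$ (it is a proper, nontrivial subgroup: $\Fix_G(e)\neq U$ by $H_0$ since neither of $e$, $\mathcal P$ contains the other, and it is proper since $Q$ is transitive on $|X|\geq 2$ points). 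Hence $\mathfrak{H}_{\mathcal{S}_0}(C)=\{\Fix_Q(e)\mid e\in X\}$, and an $\mathcal{S}_0$-standard representation of $\Aut_G(C)$ is exactly an irreducible representation of $Q$ with no non-zero $\Fix_Q(e)$-invariant vector for any $e\in X$.

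Finally, since $Q$ acts $2$-transitively on the finite set $X$ with $|X|=d\geq 3$ and $|Q|>2$, Lemma~\ref{les rep de Qsur Qi} produces such a representation, which completes the proof. The argument is essentially bookkeeping: the only point that needs care is the translation between the conjugacy-class / quotient formalism ($\Aut_G(C)$, $\tilde{\mathfrak H}_{\mathcal S_0}$, $\mathfrak{H}_{\mathcal{S}_0}(C)$, $\mathcal{S}_0$-standardness) and the concrete picture of $\Fix_G(v)$ acting on the edges at $v$, and once that dictionary is set up there is no genuine obstacle. It is also worth stressing why the general criterion Proposition~\ref{existence criterion} does \emph{not} apply here: the edges at $v$ do not pairwise union to the whole star $B_T(v,1)$ as soon as $d>2$, so there is no product-of-commuting-subgroups structure to exploit, which is exactly the reason one has to pass through the $2$-transitivity input of Lemma~\ref{les rep de Qsur Qi}.
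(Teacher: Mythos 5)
Your proof is correct and follows essentially the same route as the paper: identify $C=\mathcal{C}(\Fix_G(B_T(v,1)))$ via Lemma \ref{la forme des Sl pour IPk}, compute $N_G(U)=\Fix_G(v)$ and $\tilde{\mathfrak{H}}_{\mathcal{S}_0}(U)=\{\Fix_G(e)\mid e\in E(B_T(v,1))\}$ using \ref{Hypothese Hq} with $q=0$, identify $\mathfrak{H}_{\mathcal{S}_0}(C)$ with the point stabilisers of the $2$-transitive action of $\Aut_G(C)$ on the edges at $v$, and invoke Lemma \ref{les rep de Qsur Qi}. The only blemish is the parenthetical justification that $\Fix_G(e)\neq U$ ``since neither of $e$, $\mathcal{P}$ contains the other'': in fact $e\subseteq\mathcal{P}$, and the correct (and immediate) reason is that $\mathcal{P}\not\subseteq e$, so hypothesis $H_0$ rules out $\Fix_G(e)\subseteq U$ — a harmless slip, since this nontriviality is not needed to apply Lemma \ref{les rep de Qsur Qi}.
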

	\begin{proof}
		Let $C\in \mathcal{F}_{\mathcal{S}_{0}}$ be at height $1$. Lemma \ref{la forme des Sl pour IPk} ensures the existence of a vertex $v\in V(T)$ such that $C=\mathcal{C}(\Fix_{G}(B_T(v,1))$. Let $U=\Fix_{G}(B_T(v,1))$ and notice that
		$$N_G(U)= \{g\in G \mid gB_T(v,1)\subseteq B_T(v,1)\}= \{ g\in G \lvert gv=v\}=\Fix_G(v).$$
		Furthermore, since $G$ satisfies the hypothesis $H_0$, Lemma \ref{la forme des Sl pour IPk} implies that 
		\begin{equation*}
		\begin{split}
		\tilde{\mathfrak{H}}_{\mathcal{S}_{0}}(U)&= \{W \mid \exists g\in G \mbox{ s.t. } gWg^{-1}\in \mathcal{S}_q\lb l-1 \rb \mbox{ and } \Fix_G(B_T(v,1)) \subseteq  W \}\\ &= \{\Fix_G(e)\mid e\in E(B_T(v,1))\}
		\end{split}
		\end{equation*}
		where $E(B_T(v,1))$ denotes the set of edges of $B_T(v,1)$. Let $d$ be the degree of $v$ in $T$, let $X=E(B_T(v,1))$ and notice that our hypotheses imply that $\Aut_{G}(C)\simeq \Fix_G(v)/\Fix_G(B_T(v,1))$ is $2$-transitive $X$. In particular, Lemma \ref{les rep de Qsur Qi} implies the existence of an irreducible representation $\sigma$ of $\Aut_{G}(C)$ without non-zero $\Fix_{\Aut_{G}(C)}(e)$-invariant vectors for all $e\in X$. Since $\mathfrak{H}_{\mathcal{S}_{0}}(U)=\{p_U(\Fix_G(e))\lvert e\in X\}=\{\Fix_{\Aut_{G}(C)}(e)\lvert e\in X\}$, this proves the existence of an $\mathcal{S}_{0}$-standard representation of $\Aut_{G}(C)$.  
	\end{proof}
	\noindent The following result treats the remaining cases.
	\begin{lemma}\label{existence of standard for odd depth}
		Let $G\leq \Aut(T)$ be a closed non-discrete unimodular subgroup satisfying the hypothesis \ref{Hypothese Hq} and the property \ref{IPk} for some integers $q\geq 0$, $k\geq 1$ and let $l\geq L_{q,k}$. Suppose further that  $$\Fix_G((B_T(v,1)\backslash\{w\})^{(r)})\not= \Fix_G(B_T(v,r+1))\qq \forall v\in V(T),\forall w\in B_T(v,1)-\{v\}$$ for all $r\geq \frac{q}{2}$ if $q$ is even and for all $r\geq \frac{q-1}{2}$ if $q$ is odd and that one of the following happens:
		\begin{itemize}
			\item $q$ is odd and $l$ is even.
			\item $q$ is even, $l$ is odd and $l\not=1$.
			\item $q$ is even, $q\not=0$, $l=1$ and $$\Fix_G((B_T(v,1)\backslash\{w\})^{(\frac{q}{2})})\not= \Fix_G(v)\q \forall v\in V(T),\qq \forall w\in  B_T(v,1)-\{v\}.$$ 
		\end{itemize} 
	 Then, there exists an $\mathcal{S}_q$-standard representation of $\Aut_{G}(C)$ for every $C\in \mathcal{F}_{\mathcal{S}_q}$ at height $l$. 
	\end{lemma}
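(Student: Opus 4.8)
The plan is to exhibit, for each conjugacy class $C\in \mathcal{F}_{\mathcal{S}_q}$ at height $l$, a concrete finite subtree $\mathcal{T}$ with $\Fix_G(\mathcal{T})\in C$, identify $\Aut_G(C)\cong \Stab_G(\mathcal{T})/\Fix_G(\mathcal{T})$ together with the family $\mathfrak{H}_{\mathcal{S}_q}(C)$ of ``forbidden'' subgroups, and then apply Proposition \ref{existence criterion} with a suitable collection of proper subtrees of $\mathcal{T}$. The parity hypotheses dictate which representative $\mathcal{T}$ to use: since $q$ and $l$ have opposite parity, Lemma \ref{la forme des Sl pour IPk} gives either $\mathcal{T}=B_T(v,r)$ for a vertex $v$ (when $q+l$ is odd, i.e.\ always here) or $\mathcal{T}=B_T(e,r)$ for an edge $e$; the cases listed in the statement are exactly those where one of these representatives can be realised with $r\geq k$ and with the depth-$(l-1)$ neighbours being fixators of smaller balls around the \emph{sub}-objects of $\mathcal{T}$.

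First I would, as in the proof of Proposition \ref{existence of standard for edges for IPK}, use the hypothesis \ref{Hypothese Hq} to compute $N_G(\Fix_G(\mathcal{T}))=\Stab_G(\mathcal{T})$ and hence $\Aut_G(C)\cong \Stab_G(\mathcal{T})/\Fix_G(\mathcal{T})$, and then compute $\tilde{\mathfrak{H}}_{\mathcal{S}_q}(\Fix_G(\mathcal{T}))$ explicitly from Lemma \ref{la forme des Sl pour IPk}: its elements are the fixators of the balls around the ``one-step-smaller'' subtrees of $\mathcal{T}$, namely $\Fix_G(B_T(w,r'))$ for the vertices $w$ of the central edge of $\mathcal{T}$ (if $\mathcal{T}$ is an edge-ball) or $\Fix_G(B_T(e,r'))$ for the edges $e$ at the centre of $\mathcal{T}$ (if $\mathcal{T}$ is a vertex-ball), where $r'$ is the appropriate radius. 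The key combinatorial observation is that these ``one-step-smaller'' balls, viewed as subtrees $\mathcal{T}_1,\dots,\mathcal{T}_s$ of $\mathcal{T}$, satisfy $\mathcal{T}_i\cup \mathcal{T}_j=\mathcal{T}$ for $i\neq j$ (two distinct balls of the same radius around the vertices of an edge already cover the edge-ball; two distinct balls around the edges through a vertex already cover the vertex-ball), and that $\Stab_G(\mathcal{T})$ permutes $\{\mathcal{T}_1,\dots,\mathcal{T}_s\}$.

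Next I would verify the strict inclusions $\Fix_G(\mathcal{T})\subsetneq \Fix_G(\mathcal{T}_i)\subsetneq \Stab_G(\mathcal{T})$ that Proposition \ref{existence criterion} requires. The right-hand strictness is automatic (an element of $\Stab_G(\mathcal{T})$ swapping two of the $\mathcal{T}_i$ cannot fix $\mathcal{T}_i$), while the left-hand strictness $\Fix_G(\mathcal{T})\subsetneq \Fix_G(\mathcal{T}_i)$ is precisely what the displayed non-degeneracy hypothesis $\Fix_G((B_T(v,1)\backslash\{w\})^{(r)})\neq \Fix_G(B_T(v,r+1))$ (for the relevant range of $r$) is designed to guarantee, since $\mathcal{T}_i$ is of the form $(B_T(v,1)\backslash\{w\})^{(r)}$ relative to a boundary vertex of $\mathcal{T}$. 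The degenerate case $q$ even, $l=1$ needs the extra hypothesis $\Fix_G((B_T(v,1)\backslash\{w\})^{(q/2)})\neq \Fix_G(v)$ because there the ambient group $\Stab_G(\mathcal{T})$ is $\Fix_G(v)$ rather than a ball-fixator, so the chain of strict inclusions must be checked by hand. Once these are in place, Proposition \ref{existence criterion} produces an irreducible representation of $\Stab_G(\mathcal{T})/\Fix_G(\mathcal{T})\cong \Aut_G(C)$ with no non-zero $\Fix_G(\mathcal{T}_i)/\Fix_G(\mathcal{T})$-invariant vector for any $i$; since $\mathfrak{H}_{\mathcal{S}_q}(C)=\{p_U(\Fix_G(\mathcal{T}_i))\}$, this representation is $\mathcal{S}_q$-standard, and we are done.

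The main obstacle I anticipate is bookkeeping rather than conceptual: correctly matching the three sub-cases in the statement to the correct representative subtree and to the correct radius $r$, and in each sub-case checking that $r\geq k$ (so that Lemma \ref{IPk then IPk' for k' geq k} lets us apply the $\mathrm{IP}_{r}$-type decomposition implicitly through Lemma \ref{la forme des Sl pour IPk}) and that the hypothesis on $\Fix_G((B_T(v,1)\backslash\{w\})^{(r)})$ is being invoked for an $r$ in the stated range. In particular the boundary cases $l=1$ require separating out whether $\Stab_G(\mathcal{T})$ is a ball-fixator or a vertex/edge-fixator, which is exactly why the statement carves them off with an additional inequality.
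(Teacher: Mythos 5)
Your overall route is the paper's route: represent the seed by a vertex-ball $\mathcal{T}=B_T(v,r+1)$ (as you note, $q+l$ is odd in all listed cases), identify $\Aut_G(C)\cong \Stab_G(\mathcal{T})/\Fix_G(\mathcal{T})$ and $\tilde{\mathfrak{H}}_{\mathcal{S}_q}(\Fix_G(\mathcal{T}))=\{\Fix_G(B_T(e,r))\mid e\in E(B_T(v,1))\}$ via Hypothesis \ref{Hypothese Hq} and Lemma \ref{la forme des Sl pour IPk}, and then apply Proposition \ref{existence criterion}. However, your ``key combinatorial observation'' is false as stated, and this is a genuine gap. You propose to feed Proposition \ref{existence criterion} with the one-step-smaller balls themselves, i.e.\ $\mathcal{T}_i=B_T(e_i,r)$ for the edges $e_i=\{v,w_i\}$ through $v$, claiming $\mathcal{T}_i\cup\mathcal{T}_j=\mathcal{T}$. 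But $B_T(e_i,r)\cup B_T(e_j,r)=B_T(v,r)\cup B_T(w_i,r)\cup B_T(w_j,r)$ misses every vertex at distance $r+1$ from $v$ lying behind a third neighbour $w_m$, and such vertices exist since $d_0,d_1\geq 3$. Without the covering condition the supports of elements of $\Fix_G(\mathcal{T}_i)$ and $\Fix_G(\mathcal{T}_j)$ need not be disjoint, the subgroups need not commute or generate a direct product, and Proposition \ref{existence criterion} (which rests on Lemma \ref{critede de k non deg rep for finite group}) does not apply.

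The repair is exactly what the paper does, and what your later appeal to the displayed hypothesis already hints at: take $\mathcal{T}_i=(B_T(v,1)\setminus\{w_i\})^{(r)}$, the ball around the star minus one leaf. For these one does have $\mathcal{T}_i\cup\mathcal{T}_j=\mathcal{T}$ for $i\neq j$, $\Stab_G(\mathcal{T})$ permutes them, and the strict inclusion $\Fix_G(\mathcal{T})\subsetneq\Fix_G(\mathcal{T}_i)$ is precisely the displayed non-degeneracy hypothesis (with the extra condition handling $\Stab_G(\mathcal{T})=\Fix_G(v)$ when $q$ is even and $l=1$). But then an additional step is needed which your proposal omits, because your identification $\mathfrak{H}_{\mathcal{S}_q}(C)=\{p_U(\Fix_G(\mathcal{T}_i))\}$ is incorrect: the forbidden subgroups are the images of $\Fix_G(B_T(e,r))$, not of $\Fix_G(\mathcal{T}_i)$. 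One must observe that for every edge $e\in E(B_T(v,1))$ there is an $i$ with $B_T(e,r)\subseteq\mathcal{T}_i$, hence $p_U(\Fix_G(\mathcal{T}_i))\subseteq p_U(\Fix_G(B_T(e,r)))$, so that absence of non-zero invariant vectors for the smaller groups $p_U(\Fix_G(\mathcal{T}_i))$ forces absence for the actual elements of $\mathfrak{H}_{\mathcal{S}_q}(C)$, making the representation $\mathcal{S}_q$-standard. (A smaller issue: your claim that $\Fix_G(\mathcal{T}_i)\subsetneq\Stab_G(\mathcal{T})$ is ``automatic'' via an element swapping two $\mathcal{T}_i$ presumes a transitivity the hypotheses do not give; strictness instead follows by taking $h\in\Fix_G(\mathcal{T}_j)\setminus\Fix_G(\mathcal{T})$ for some $j\neq i$, which fixes $v$ but moves a vertex of $\mathcal{T}\setminus\mathcal{T}_j\subseteq\mathcal{T}_i$.)
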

	\begin{proof}
		Suppose that $C\in \mathcal{F}_{\mathcal{S}_q}$ is at height $l$. Since $q$ and $l$ have opposite parity, Lemma \ref{la forme des Sl pour IPk} ensures the existence of a vertex $v\in V(T)$ and an integer $r\geq k-1$ such that $B_T(v,r+1)\in \mathfrak{T}_q$ and $C=\mathcal{C}(\Fix_{G}(B_T(v,r+1)))$. For shortening of the formulation we let $\mathcal{T}$ denote the subtree $B_T(v,r+1)$. Since $G$ satisfies the hypothesis \ref{Hypothese Hq} and as a consequence of Lemma \ref{la forme des Sl pour IPk}, notice that $N_G(\Fix_G(\mathcal{T}))=\{g\in G \mid g\mathcal{T}\subseteq\mathcal{T}\}= \Stab_G(\mathcal{T})=\{g\in G \lvert gv=v\}$, that $\Aut_G(C)\simeq \Stab_G(\mathcal{T})/\Fix_G(\mathcal{T})$ and that 
		\begin{equation*}
		\begin{split}
		\tilde{\mathfrak{H}}_{\mathcal{S}_q}(\Fix_G(\mathcal{T}))&= \{W\mid \exists g\in G \mbox{ s.t. }gWg^{-1}\in \mathcal{S}_q\lb l-1 \rb \mbox{ and }\Fix_G(\mathcal{T}) \leq W \}\\
		&= \{\Fix_G(B_T(e,r))\lvert e\in E(B_T(v,1))\}.
		\end{split}
		\end{equation*}
		Now, let $\{w_1,...,w_d\}$ be the leaves of $B_T(v,1)$, let $\mathcal{T}_i= (B_T(v,1)\backslash\{w_i\})^{(r-1)}$ $ i=1,...,d$ and notice that $\mathcal{T}_i\cup \mathcal{T}_j=\mathcal{T}$ $\forall i\not=j$. On the other hand, the action of $\Stab_G(\mathcal{T})$ on $T$ permutes the subtrees $\{\mathcal{T}_1,...,\mathcal{T}_d\}$ and since each $\mathcal{T}_i$ contains $v$ we have that $\Fix_G(\mathcal{T}_i)\subseteq \Stab_G(\mathcal{T})$  $\forall i=1,...,d$. Furthermore, the hypotheses on $G$ imply that $\Fix_G(\mathcal{T})\subsetneq \Fix_G(\mathcal{T}_i)\subsetneq \Stab_G(\mathcal{T})$. In particular, Proposition \ref{existence criterion} ensures the existence of an irreducible representation $\sigma$ of $\Aut_{G}(C)$ without non-zero $p_{\Fix_G(\mathcal{T})}(\Fix_G(\mathcal{T}_i))$-invariant vectors. Moreover, for every edge $e\in E(B_T(v,1))$ there exists some $i\in \{1,...,d\}$ such that $B_T(e,r)\subseteq \mathcal{T}_i$ which implies that $p_{\Fix_G(\mathcal{T})}(\Fix_G(\mathcal{T}_i))\subseteq p_{\Fix_G(\mathcal{T})}(\Fix_G(B_T(e,r))$. Hence, $\sigma$ is an $\mathcal{S}_q$-standard representation of $\Aut_{G}(C)$.
	\end{proof}
	\newpage
	
	\section{Groups of automorphisms of trees with the property ${\rm IP}_{V_1}$}\label{application IPV1}
	In this chapter, we apply our machinery to groups of automorphisms of locally finite trees satisfying the property \ref{IPV1}(Definition \ref{defintion IPV1}). We use the same notations and terminology as in Chapter \ref{Application to Aut T}. Let $T$ be a locally finite tree and let $\Aut(T)^+$ be the group of type-preserving automorphisms of $T$. 
	\begin{definition}\label{defintion IPV1}
		A group $G\leq \Aut(T)^+$ is said to satisfy the \tg{property \ref{IPV1}}, if there exists a bipartition $V(T)=V_0\sqcup V_1$ such that every edge of $T$ contains exactly one vertex in each $V_i$ and such that $\forall w\in V_1$ we have 
		\begin{equation}\tag{${\rm IP }_{V_1}$}\label{IPV1}
		\Fix_G(B_T(w,1))=\prod_{v\in B_T(w,1)-\{w\}}\Fix_G(T(w,v)) 
		\end{equation}
		where $T(w,v)=\{u\in V(T) \lvert d_T(w,u)\lneq d_T(v,u)\}$.
	\end{definition}
	\begin{example}
		Let $T$ be a locally finite semi-regular tree and let $G\leq \Aut(T)^+$ satisfy the property ${\rm IP}_1$. Then, $G$ satisfies the property ${\rm IP}_{V_1}$. 
	\end{example}
	\noindent Other examples are given in Chapter \ref{Application to universal groups of right-angled buildings} where we show that the universal groups of certain semi-regular right-angled buildings can be realised as closed subgroups of $ \Aut(T)^+$ satisfying the property ${\rm IP}_{V_1}$ but where $T$ is in general not semi-regular (Theorem \ref{theorem pour les right angled buildings}).
	 
	The purpose of the present chapter is to prove Theorem \ref{Theorem pour IPV1} which provides an explicit generic filtration that factorizes$^+$ at all positive depth for subgroups $G\leq \Aut(T)^+$ satisfying the property \ref{IPV1} and the hypothesis \ref{Hypothese HV1}(Definition \ref{definition H v1}). This requires some formalism that we now introduce. Let $V(T)=V_0\sqcup V_1$ be a bipartition of $T$ such that every edge of $T$ contains exactly one vertex in each $V_i$. For every subtree $\mathcal{T}\subseteq T$, we set
	$$Q_\mathcal{T}=\{v\in V_0\lvert B_T(v,2) \subseteq \mathcal{T}\}$$
	and we define $\mathfrak{T}_{V_1}$ as follows:
	\begin{enumerate}
		\item $\mathfrak{T}_{V_1}\lb 0\rb=\{ B_T(v,1)\lvert v\in V_1\}.$
		\item For every $l\in \N$ such that $l\geq 0$, we define iteratively
		\begin{equation*}\label{page Tv1}
		\begin{split}
		\mathfrak{T}_{V_1}\lb l+1\rb=\{ \mathcal{T}\subseteq T\mid \exists \mathcal{R}\in \mathfrak{T}_{V_1}\lb l\rb, \exists w\in (V&(\mathcal{R})- Q_{\mathcal{R}})\cap V_0 \\
		&\mbox{ s.t. }  \mathcal{T}=\mathcal{R}\cup B_T(w,2)\}.
		\end{split}
		\end{equation*}
		\item We set $\mathfrak{T}_{V_1} =\bigsqcup_{l\in \N}\mathfrak{T}_{V_1}\lb l\rb$.
	\end{enumerate}
	For every closed subgroup $G\leq\Aut(T)^+$ we set
	\begin{equation*}\label{page de SV1 yesssouille}
	\mathcal{S}_{V_1}=\{\Fix_G(\mathcal{T})\lvert \mathcal{T}\in \mathfrak{T}_{V_1}\}.
	\end{equation*}
	\begin{definition}\label{definition H v1}
		A group $G\leq\Aut(T)^+$ is said to satisfy the hypothesis \ref{Hypothese HV1} if for all $\mathcal{T},\mathcal{T'}\in \mathfrak{T}_{V_1}$ we have
		\begin{equation}\tag{$H_{V_1}$}\label{Hypothese HV1}
		\Fix_G(\mathcal{T}')\leq \Fix_G(\mathcal{T})\mbox{ if and only if }\mathcal{T}\subseteq \mathcal{T}'. 
		\end{equation}
	\end{definition}
	\noindent If $G\leq \Aut(T)^+$ is a closed non-discrete unimodular subgroup of $\Aut(T)^+$ satisfying the hypothesis \ref{Hypothese HV1}, Lemma \ref{la forme des Sl pour IPV1} below ensures that $\mathcal{S}_{V_1}$ is a generic filtration of $G$ and that
	$$\mathcal{S}_{V_1}\lb l\rb =\{\Fix_G(\mathcal{T})\lvert \mathcal{T}\in \mathfrak{T}_{V_1}\lb l \rb\}.$$
	The following proves Theorem \ref{theorem Ipv1 letter}.
	\begin{theorem}\label{Theorem pour IPV1}
		Let $T$ be a locally finite tree and let $G\leq \Aut(T)^+$ be a closed non-discrete unimodular subgroup that satisfies the hypothesis \ref{Hypothese HV1} and the property \ref{IPV1}. Then, the generic filtration $\mathcal{S}_{V_1}$ factorizes$^+$ at all depth $l\geq 1$. 
	\end{theorem}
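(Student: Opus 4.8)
The plan is to check, for every depth $l\geq 1$, the three conditions of Definition~\ref{definition olsh facto} (the two of ``factorizes'' plus the additional one of ``factorizes$^+$''), exactly as was done for Theorems~\ref{Theorem classif pour Aut(T)} and~\ref{le theorem pour IP_k}. Throughout, one uses Lemma~\ref{la forme des Sl pour IPV1}, which identifies $\mathcal{S}_{V_1}\lb l\rb=\{\Fix_G(\mathcal{T})\mid\mathcal{T}\in\mathfrak{T}_{V_1}\lb l\rb\}$, together with the hypothesis~\ref{Hypothese HV1} and the $G$-invariance of $\mathfrak{T}_{V_1}$ (which holds because $G$ is type-preserving), to translate every inclusion between conjugates of elements of $\mathcal{S}_{V_1}$ into a reverse inclusion between the corresponding subtrees. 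Thus for $U$ conjugate to an element of $\mathcal{S}_{V_1}\lb l\rb$ and $V$ conjugate to an element of $\mathcal{S}_{V_1}$ one may write $U=\Fix_G(\mathcal{T})$, $V=\Fix_G(\mathcal{T}')$ with $\mathcal{T}\in\mathfrak{T}_{V_1}\lb l\rb$ and $\mathcal{T}'\in\mathfrak{T}_{V_1}$, and $V\not\subseteq U$ translates to $\mathcal{T}\not\subseteq\mathcal{T}'$.

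Two of the conditions are handled exactly as in Chapter~\ref{application IPk}. For the second condition, by~\ref{Hypothese HV1} one gets $N_G(U,V)=\{g\in G\mid \Fix_G(g^{-1}\mathcal{T}')\subseteq\Fix_G(\mathcal{T})\}=\{g\in G\mid g\mathcal{T}\subseteq\mathcal{T}'\}$, which is compact since $\mathcal{T}$ and $\mathcal{T}'$ are finite subtrees. The ``$+$'' condition reduces, via~\ref{Hypothese HV1}, to proving $\Fix_G(\mathcal{R})\subseteq\Stab_G(\mathcal{T})=N_G(U)$ whenever $\mathcal{R}$ is conjugate to an element of $\mathfrak{T}_{V_1}\lb l-1\rb$ with $\mathcal{R}\subseteq\mathcal{T}$; the key combinatorial input — which I would isolate as a rigidity lemma for the poset $\mathfrak{T}_{V_1}$ — is that such an $\mathcal{R}$ must be a \emph{predecessor} of $\mathcal{T}$ in the recursive construction, i.e.\ $\mathcal{T}=\mathcal{R}\cup B_T(w_0,2)$ for some $w_0\in V_0\cap V(\mathcal{R})$. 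Granting this, any $g\in\Fix_G(\mathcal{R})$ fixes $w_0$, so $gB_T(w_0,2)=B_T(gw_0,2)=B_T(w_0,2)$ and $g\mathcal{T}=g\mathcal{R}\cup gB_T(w_0,2)=\mathcal{T}$, i.e.\ $g\in\Stab_G(\mathcal{T})=N_G(U)$.

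The first condition is the heart of the matter. I would prove it through an independence lemma playing the role of Proposition~\ref{G(W) for Sk-1}: given $\mathcal{T}\in\mathfrak{T}_{V_1}\lb l\rb$ with $l\geq1$ and $\mathcal{T}'\in\mathfrak{T}_{V_1}$ (up to translation) with $\mathcal{T}\not\subseteq\mathcal{T}'$, there is a predecessor $\mathcal{R}\in\mathfrak{T}_{V_1}\lb l-1\rb$ of $\mathcal{T}$ with $\Fix_G(\mathcal{R})\subseteq\Fix_G(\mathcal{T}')\Fix_G(\mathcal{T})$. This rests on a factorization statement analogous to Proposition~\ref{alternative definition iof property IPk}: for every $\mathcal{T}\in\mathfrak{T}_{V_1}$ one has
\begin{equation*}
\Fix_G(\mathcal{T})=\prod_{f\in\partial E_o(\mathcal{T})}\big(\Fix_G(Tf)\cap\Fix_G(\mathcal{T})\big),
\end{equation*}
proved by induction on the depth, the base case $\mathcal{T}=B_T(v,1)$ ($v\in V_1$) being precisely~\ref{IPV1}, and the inductive step $\mathcal{R}\rightsquigarrow\mathcal{R}\cup B_T(w_0,2)$ being obtained by applying~\ref{IPV1} successively at the $V_1$-vertices adjacent to $w_0$ (so that the residual ``towards $w_0$'' factor becomes trivial once all its branches are fixed), just as in the proof of Proposition~\ref{alternative definition iof property IPk}. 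Given the factorization, since $\mathcal{T}\not\subseteq\mathcal{T}'$ one selects a terminal oriented edge $e$ of $\mathcal{T}$ escaping away from $\mathcal{T}'$, i.e.\ with $\mathcal{T}'\subseteq Te$, lying in the portion of $\mathcal{T}$ grown last around some $w_0$; letting $\mathcal{R}$ be the predecessor obtained by deleting $B_T(w_0,2)$, each factor of $\Fix_G(\mathcal{R})$ indexed by a terminal edge of $\mathcal{R}$ still terminal in $\mathcal{T}$ lies in $\Fix_G(\mathcal{T})$, while the factor(s) pointing towards $w_0$ lie in $\Fix_G(\mathcal{T}')$ because $\mathcal{T}'\subseteq Te$. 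Hence $\Fix_G(\mathcal{T})\subseteq\Fix_G(\mathcal{R})\subseteq\Fix_G(\mathcal{T}')\Fix_G(\mathcal{T})$ with $\Fix_G(\mathcal{R})$ conjugate to an element of $\mathcal{S}_{V_1}\lb l-1\rb$, which is exactly the first condition.

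The main obstacle I expect is the bookkeeping forced by the fact that~\ref{IPV1} provides independence only at the $V_1$-vertices and not at the $V_0$-vertices: one must verify that the recursive family $\mathfrak{T}_{V_1}$ is set up so that its atomic growth step $\mathcal{R}\rightsquigarrow\mathcal{R}\cup B_T(w_0,2)$ is exactly compatible with iterated applications of~\ref{IPV1}, and one must establish the rigidity lemma that a depth-$(l-1)$ element $\mathcal{R}\in\mathfrak{T}_{V_1}$ contained in $\mathcal{T}\in\mathfrak{T}_{V_1}\lb l\rb$ is necessarily a predecessor of $\mathcal{T}$ — this is what simultaneously makes the ``$+$'' condition and the choice of $\mathcal{R}$ in the first condition work. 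Once these structural facts about $\mathfrak{T}_{V_1}$ are in place, the verification of the axioms is a direct transcription of the arguments of Chapters~\ref{Application to Aut T} and~\ref{application IPk}.
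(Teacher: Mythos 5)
Your skeleton is the paper's: you verify the three conditions of Definition \ref{definition olsh facto}, you handle the compactness condition and the factorizes$^+$ condition via the hypothesis \ref{Hypothese HV1} exactly as the paper does (your ``rigidity lemma'' -- that an element of $\mathfrak{T}_{V_1}\lb l-1\rb$ contained in $\mathcal{T}\in\mathfrak{T}_{V_1}\lb l\rb$ is obtained by deleting one ball $B_T(w_0,2)$ centred at a vertex $w_0\in V(\mathcal{R})\cap V_0$ -- is true and is what the paper checks inline using Lemma \ref{alternative form of S}), and the factorization of $\Fix_G(\mathcal{T})$ over the terminal edges of $\mathcal{T}$ that you propose to prove by induction along the recursive construction of $\mathfrak{T}_{V_1}$ is equivalent to the one the paper deduces from \ref{IPV1} via Lemma \ref{independence on trees}.

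The gap is in the first condition, precisely at the point you treat as a routine choice. Writing $\mathcal{T}=\mathcal{R}\cup B_T(w_0,2)$, the factor of $\Fix_G(\mathcal{R})$ ``pointing towards $w_0$'' is $\Fix_G(Tf_{w_0})\cap\Fix_G(\mathcal{R})$, where $f_{w_0}$ is the terminal edge of $\mathcal{R}$ with leaf $w_0$; its elements move the whole branch of $\mathcal{R}$ beyond $w_0$, which contains $B_T(w_0,2)-\mathcal{R}$ and the branches of $\mathcal{T}$ at \emph{all} the leaves around $w_0$. Since $Tf_{w_0}\subseteq Te$ for every terminal edge $e$ of $\mathcal{T}$ near $w_0$, the condition $\mathcal{T}'\subseteq Te$ for a single such $e$ does \emph{not} give $\mathcal{T}'\subseteq Tf_{w_0}$, so your conclusion that this factor lies in $\Fix_G(\mathcal{T}')$ does not follow. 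For instance, take $Q_{\mathcal{T}}=\{a,b\}$ with $d_T(a,b)=2$ through a median $m\in V_1$, and $\mathcal{T}'=B_T(a',2)$ with $d_T(a,a')=2$ through a neighbour $m'\neq m$ of $a$: a terminal edge of $\mathcal{T}$ at a leaf at distance $2$ from $a$ through a third neighbour $m''$ satisfies $\mathcal{T}'\subseteq Te$, yet deleting $B_T(a,2)$ fails because the factor of $\Fix_G(B_T(b,2))$ at its leaf $a$ does not fix $\mathcal{T}'$; the deletion that works is $B_T(b,2)$. What is actually needed is an \emph{extremal} vertex $v_{\mathcal{T}}\in Q_{\mathcal{T}}$ such that simultaneously $\mbox{Con}(Q_{\mathcal{T}}-\{v_{\mathcal{T}}\})\cap V_0=Q_{\mathcal{T}}-\{v_{\mathcal{T}}\}$ (so that $\mathcal{R}\in\mathfrak{T}_{V_1}\lb l-1\rb$; your ``last-grown'' $w_0$ need not satisfy this) and $\mathcal{T}'$ avoids the \emph{entire} branch at $v_{\mathcal{T}}$. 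Proving such a $v_{\mathcal{T}}$ exists is the combinatorial heart of the matter (Lemma \ref{the key lemma for fertility of groups with IP V1 (build)}): one takes $v_{\mathcal{T}}\in Q_{\mathcal{T}}$ at maximal distance from $Q_{\mathcal{T}'}$ and exploits the convexity characterization of Lemma \ref{alternative form of S}. That argument requires $Q_{\mathcal{T}'}\neq\varnothing$ and is stated for $l'\geq l$, which is why the proof of Theorem \ref{Theorem pour IPV1} needs a separate treatment when $\mathcal{T}'$ has smaller depth (interpolating an intermediate element when $\mathcal{T}'\subseteq\mathcal{T}$, or enlarging $\mathcal{T}'$ to a depth-$l$ element otherwise); your single uniform lemma silently claims all these cases, but the selection procedure you give neither identifies the right ball to delete nor shows that a suitable one exists.
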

	
	\subsection{Preliminaries}
	Let $T$ be a locally finite tree and let $V(T)=V_0\sqcup V_1$ be a bipartition of $T$ such that every edge of $T$ contains exactly one vertex in each $V_i$. The purpose of the present section is to describe further the elements of $\mathfrak{T}_{V_1}$. For every subtree $\mathcal{T}\subseteq T$, we associated a set $Q_\mathcal{T}=\{v\in V_0\lvert B_T(v,2) \subseteq \mathcal{T}\}$. The purpose of the following two lemmas is to give a characterization of the elements $\mathcal{T}\in \mathfrak{T}_{V_1}$ in terms of their corresponding sets $Q_\mathcal{T}$. 
	\begin{lemma}\label{the form of trees in the family}
		The elements of $\mathfrak{T}_{V_1}$ satisfy the following:
		\begin{enumerate}
			\item[\rm (1)] Every $\mathcal{T}\in\mathfrak{T}_{V_1}$ is a complete finite subtree of $T$ with leaves in $V_0$.
			\item[\rm(2)] For every $\mathcal{T}\in \mathfrak{T}_{V_1}- \mathfrak{T}_{V_1}\lb 0 \rb$ we have that $\mathcal{T}= \bigcup_{v\in Q_\mathcal{T}}B_T(v,2)$. 
			\item[\rm(3)] For every $\mathcal{T}\in \mathfrak{T}_{V_1}$, we have $\mathcal{T}\in \mathfrak{T}_{V_1}\lb l\rb$ if and only if $\modu{Q_\mathcal{T}}=l$.
		\end{enumerate}
	\end{lemma}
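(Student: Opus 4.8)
The plan is to prove the three assertions together by induction on $l\in\N$, following the recursive definition of $\mathfrak{T}_{V_1}\lb l\rb$. For the base case $l=0$ an element is $\mathcal{T}=B_T(v,1)$ with $v\in V_1$: this is a finite star, hence a finite complete subtree of $T$ whose leaves are exactly the neighbours of $v$, all lying in $V_0$ by the bipartition, which gives (1); assertion (2) is vacuous; and for (3) one checks $Q_{\mathcal{T}}=\es$, since a $V_0$-vertex $w$ with $B_T(w,2)\subseteq B_T(v,1)$ would have to be adjacent to $v$, but then $B_T(w,2)$ already contains a vertex of $V_1$ at distance $2$ from $v$, hence outside $B_T(v,1)$ — a contradiction — so $\modu{Q_{\mathcal{T}}}=0$.

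For the inductive step, write $\mathcal{T}=\mathcal{R}\cup B_T(w,2)$ with $\mathcal{R}\in\mathfrak{T}_{V_1}\lb l\rb$ and $w\in(V(\mathcal{R})-Q_{\mathcal{R}})\cap V_0$. I would first record the useful observation that $w$ is necessarily a \emph{leaf} of $\mathcal{R}$: if $w$ had its full degree in $\mathcal{R}$ then $B_T(w,1)\subseteq\mathcal{R}$, and since the neighbours of $w$ lie in $V_1$ while the leaves of $\mathcal{R}$ lie in $V_0$ by the induction hypothesis (1), each such neighbour also has full degree in $\mathcal{R}$, forcing $B_T(w,2)\subseteq\mathcal{R}$ and contradicting $w\notin Q_{\mathcal{R}}$. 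Thus the recursion amounts to growing a radius-$2$ ball at a leaf of $\mathcal{R}$. The heart of the argument is then the identity $Q_{\mathcal{T}}=Q_{\mathcal{R}}\sqcup\{w\}$, where the disjointness is exactly the hypothesis $w\notin Q_{\mathcal{R}}$: the inclusion $\supseteq$ is immediate from $B_T(v,2)\subseteq\mathcal{R}\subseteq\mathcal{T}$ for $v\in Q_{\mathcal{R}}$ and $B_T(w,2)\subseteq\mathcal{T}$, while for $\subseteq$ one takes $u\in Q_{\mathcal{T}}$ with $u\ne w$ and argues, using that $\mathcal{R}=\bigcup_{v\in Q_{\mathcal{R}}}B_T(v,2)$ (induction hypothesis (2), for $l\ge1$; for $l=0$ one has instead $\mathcal{R}=B_T(v_0,1)\subseteq B_T(w,2)$ since $w$ is adjacent to $v_0$), that the extra vertices of $B_T(w,2)\setminus\mathcal{R}$ all lie in a single branch emanating from $w$, so that $B_T(u,2)$ cannot meet them while staying inside $\mathcal{T}$; hence $B_T(u,2)\subseteq\mathcal{R}$, i.e. $u\in Q_{\mathcal{R}}$.

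Granting the identity, (2) follows at once: $\mathcal{T}=\mathcal{R}\cup B_T(w,2)=\bigcup_{v\in Q_{\mathcal{R}}}B_T(v,2)\cup B_T(w,2)=\bigcup_{v\in Q_{\mathcal{T}}}B_T(v,2)$ (with $\mathcal{T}=B_T(w,2)$ in the transitional case $l=0$). For (3), $\modu{Q_{\mathcal{T}}}=\modu{Q_{\mathcal{R}}}+1=l+1$ by the induction hypothesis, and conversely if $\mathcal{T}\in\mathfrak{T}_{V_1}\lb l'\rb$ the same computation gives $\modu{Q_{\mathcal{T}}}=l'$, so $l=l'$; in particular this confirms that the union defining $\mathfrak{T}_{V_1}$ is disjoint. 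Finally, for (1): $\mathcal{T}$ is connected since $w$ lies in both $\mathcal{R}$ and $B_T(w,2)$, hence a finite subtree of $T$; it is complete because every vertex at distance $\le1$ from some centre $v\in Q_{\mathcal{T}}$ has all its neighbours inside the corresponding $B_T(v,2)\subseteq\mathcal{T}$, whereas the leaf-growing description shows the remaining vertices sit on the radius-$2$ spheres of the balls and have degree $\le1$ in $\mathcal{T}$; and these leaves, being at even distance from a $V_0$-vertex, lie in $V_0$.

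The step I expect to be the main obstacle is the identity $Q_{\mathcal{T}}=Q_{\mathcal{R}}\sqcup\{w\}$ and, entangled with it, the completeness in (1): both rest on a precise description of $B_T(w,2)\setminus\mathcal{R}$, and in particular on ruling out a boundary vertex of $\mathcal{T}$ acquiring degree exactly $2$ (which is neither the full degree nor $\le1$, and would destroy completeness). The leaf observation is meant to be the lever here, since it guarantees that the new ball is glued onto the existing tree along a single pendant vertex rather than creating a chord.
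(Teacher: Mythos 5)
Your proposal is correct and follows essentially the same route as the paper: a simultaneous induction on $l$ whose crux is the identity $Q_{\mathcal{T}}=Q_{\mathcal{R}}\sqcup\{w\}$, from which (1), (2) and (3) are deduced exactly as you describe. The only difference is in the local verification of that identity: you first observe that $w$ must be a leaf of $\mathcal{R}$ (so the new ball $B_T(w,2)$ is glued along a single pendant vertex), whereas the paper instead picks a vertex $u\in B_T(w',2)\cap V_0$ outside $\mathcal{R}$, notes $d_T(u,\mathcal{R})=2$, and uses the uniqueness of the $V_0$-vertex of $\mathcal{R}$ within distance $2$ of $u$; both arguments operate at the same level of (implicit) non-degeneracy of the ambient tree as the paper's own proof.
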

	\begin{proof}
		Since each element of $\mathfrak{T}_{V_1}$ belongs to some $\mathfrak{T}_{V_1}\lb l\rb$ for some $l\in \N$, in order to show (3) it is enough to show that  $\forall \mathcal{T}\in \mathfrak{T}_{V_1}\lb l\rb$, $\modu{Q_\mathcal{T}}=l$. We prove (1), (2) and that $\modu{Q_\mathcal{T}}=l$ for every $\mathcal{T}\in \mathfrak{T}_{V_1}\lb l\rb$ by induction on $l$. If $l=0$, $\mathcal{T}=B_T(v,1)$ for some $v\in V_1$. Hence, $\mathcal{T}$ is a complete finite subtree with leaves in $V_0$ and $\modu{Q_\mathcal{T}}=0$. Similarly, if $l=1$, $\mathcal{T}=B_T(v,2)$ for some $v\in V_0$. In particular, $\mathcal{T}$ is a complete finite subtree of $T$ with leaves in $V_0$ and since $Q_\mathcal{T}=\{v\}$ we have that $\mathcal{T}= \bigcup_{v\in Q_\mathcal{T}}B_T(v,2)$ and $\modu{Q_\mathcal{T}}=1$. If $l\geq 2$, by construction, there exist $\mathcal{R}\in \mathfrak{T}_{V_1}\lb l-1\rb$ and $w\in (V(\mathcal{R})-Q_\mathcal{R})\cap V_0$ such that $\mathcal{T}= \mathcal{R}\cup B_T(w,2)$. By the induction hypothesis we have:
		\begin{enumerate}
			\item[\rm (1')] $\mathcal{R}$ is a finite complete subtree of $T$ with leaves in $V_0$.
			\item[\rm(2')] $\mathcal{R}= \bigcup_{v\in Q_{\mathcal{R}}} B_T(v,2).$ 
			\item[\rm(3')] $\modu{Q_{\mathcal{R}}}=l-1.$
		\end{enumerate}
		Since $\mathcal{T}= \mathcal{R}\cup B_T(w,2)$, (1') implies that $\mathcal{T}$ is a complete finite subtree with leaves in $V_0$ which proves (1). On the other hand, (2') implies that $Q_{\mathcal{R}} \cup \{w\}\subseteq Q_\mathcal{T}$ and therefore that $\mathcal{T}\subseteq \bigcup_{v\in Q_\mathcal{T}}B_T(v,2)$. The reverse inclusion follows trivially from the definition of $Q_\mathcal{T}$ which proves (2). Now, let $w'\in Q_{\mathcal{T}} - Q_{\mathcal{R}}$. To prove that $\modu{Q_\mathcal{T}}=l$ we have to prove that $w'=w$. Since $w'\in Q_{\mathcal{T}} - Q_{\mathcal{R}}$, there exists $u\in B_T(w',2)\cap V_0$ such that $u\not \in V(\mathcal{R})$. Moreover, since the leaves of $\mathcal{T}$ belongs to $V_0$ and since the distance between two vertices of $V_0$ is even, notice that $d_T(u,\mathcal{R})=2$. On the other hand, there exists a unique vertex $x\in V(\mathcal{R})-Q_\mathcal{R}$ such that $u\in B_T(x,2)$. Since $\mathcal{T}= \mathcal{R}\cup B_T(w,2)$ this proves that $w=x=w'$, that $Q_\mathcal{T}=Q_{\mathcal{R}}\sqcup \{w\}$ and therefore that $\modu{Q_\mathcal{T}}=l$. 
	\end{proof}
	\begin{lemma}\label{alternative form of S}
		Let $\mathcal{T}= \bigcup_{v\in Q}B_T(v,2)$ for some finite set $Q\subseteq V_0$ of order $l\geq 1$. Then $\mathcal{T}\in \mathfrak{T}_{V_1}\lb l\rb$ if and only if $\mbox{Con}(Q)\cap V_0=Q$ where $\mbox{Con}(Q)$ denotes the convex hull of $Q$ in $T$.
	\end{lemma}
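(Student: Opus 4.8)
The plan is to characterize membership in $\mathfrak{T}_{V_1}\lb l\rb$ entirely in terms of the set $Q$ by building on the structural information already extracted in Lemma \ref{the form of trees in the family}, especially parts (2) and (3), which say that every element of $\mathfrak{T}_{V_1}$ with $\modu{Q_\mathcal{T}}=l$ is recovered from its $Q$-set via $\mathcal{T}=\bigcup_{v\in Q_\mathcal{T}}B_T(v,2)$ and conversely $\mathcal{T}\in \mathfrak{T}_{V_1}\lb l\rb$ iff $\modu{Q_\mathcal{T}}=l$. So it suffices to show: for a finite $Q\subseteq V_0$ with $\modu{Q}=l\geq 1$ and $\mathcal{T}=\bigcup_{v\in Q}B_T(v,2)$, one has $Q_\mathcal{T}=Q$ (equivalently $\mathcal{T}\in \mathfrak{T}_{V_1}$ at the right level) precisely when $\mathrm{Con}(Q)\cap V_0=Q$. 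The first observation I would record is that $Q\subseteq Q_\mathcal{T}$ always (each $v\in Q$ trivially has $B_T(v,2)\subseteq\mathcal{T}$), so the content is the reverse inclusion $Q_\mathcal{T}\subseteq Q$ and its relation to convexity.

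\textbf{The forward direction.} Assume $\mathcal{T}\in \mathfrak{T}_{V_1}\lb l\rb$. By Lemma \ref{the form of trees in the family}(3) this forces $\modu{Q_\mathcal{T}}=l=\modu{Q}$, and since $Q\subseteq Q_\mathcal{T}$ we get $Q=Q_\mathcal{T}$. Now I claim $\mathrm{Con}(Q)\cap V_0=Q$. The inclusion $Q\subseteq\mathrm{Con}(Q)\cap V_0$ is immediate. For the converse, I would argue by induction on $l$ using the recursive definition of $\mathfrak{T}_{V_1}$: write $\mathcal{T}=\mathcal{R}\cup B_T(w,2)$ with $\mathcal{R}\in\mathfrak{T}_{V_1}\lb l-1\rb$ and $w\in (V(\mathcal{R})-Q_\mathcal{R})\cap V_0$, and note $Q_\mathcal{T}=Q_\mathcal{R}\sqcup\{w\}$ from the proof of Lemma \ref{the form of trees in the family}. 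By induction $\mathrm{Con}(Q_\mathcal{R})\cap V_0=Q_\mathcal{R}$. Since $w\in V(\mathcal{R})=V(\bigcup_{v\in Q_\mathcal{R}}B_T(v,2))$ is within distance $2$ of $\mathrm{Con}(Q_\mathcal{R})$ and $w\in V_0$, the convex hull $\mathrm{Con}(Q_\mathcal{R}\cup\{w\})$ adds only the geodesic segment from $w$ to $\mathrm{Con}(Q_\mathcal{R})$, which has length $\leq 2$ and hence contains no new $V_0$-vertex other than $w$ itself (a length-$2$ geodesic between two even-distance vertices has its single interior vertex in $V_1$). This gives $\mathrm{Con}(Q_\mathcal{T})\cap V_0=Q_\mathcal{T}=Q$.

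\textbf{The reverse direction.} Assume $\mathrm{Con}(Q)\cap V_0=Q$ with $\modu{Q}=l$; I want $\mathcal{T}=\bigcup_{v\in Q}B_T(v,2)\in\mathfrak{T}_{V_1}\lb l\rb$. I would again induct on $l$. For $l=1$ this is the base case $\mathcal{T}=B_T(v,2)\in\mathfrak{T}_{V_1}\lb 1\rb$. For $l\geq 2$, pick a vertex $w\in Q$ that is ``extremal'' in the sense that $w$ is a leaf of the tree $\mathrm{Con}(Q)$ restricted to $V_0$-vertices — concretely, choose $w\in Q$ maximizing distance to some fixed $v_0\in Q$, so that removing $w$ keeps $Q':=Q-\{w\}$ with $\mathrm{Con}(Q')\cap V_0=Q'$ (removing a leaf of a convex set of $V_0$-points preserves the ``$V_0$-saturation'' property — this needs a short verification that no vertex of $Q'$ got pruned and no new $V_0$-vertex entered, which holds since $\mathrm{Con}(Q')\subseteq\mathrm{Con}(Q)$). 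By induction $\mathcal{R}:=\bigcup_{v\in Q'}B_T(v,2)\in\mathfrak{T}_{V_1}\lb l-1\rb$ with $Q_\mathcal{R}=Q'$. It remains to check $w\in(V(\mathcal{R})-Q_\mathcal{R})\cap V_0$: clearly $w\in V_0$ and $w\notin Q_\mathcal{R}=Q'$; and $w\in V(\mathcal{R})$ because the nearest point of $\mathrm{Con}(Q')$ to $w$ lies at distance $2$ (it is in $V_0$, being even-distant from $w$, and cannot be distance $0$ since $w\notin Q'$ while $\mathrm{Con}(Q')\cap V_0=Q'$; distance $\geq 4$ would contradict extremality/convexity), so $w\in B_T(v,2)$ for the nearby $v\in Q'$. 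Then $\mathcal{T}=\mathcal{R}\cup B_T(w,2)\in\mathfrak{T}_{V_1}\lb l\rb$ by definition.

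\textbf{Main obstacle.} The routine parts are the inductions; the delicate point in both directions is the geometric bookkeeping around ``distance exactly $2$'' — verifying that a $V_0$-vertex adjacent (in the sense of being within a $B(\cdot,2)$-ball) to the convex hull of the other centers is at distance precisely $2$, with its unique intermediate vertex in $V_1$, so that adjoining it enlarges neither $Q$ nor the $V_0$-part of the convex hull in an uncontrolled way. I would isolate this as a small standalone observation (``if $u\in V_0$, $u\notin\mathrm{Con}(Q)$, and $u$ lies within distance $2$ of $\mathrm{Con}(Q)$, then $d_T(u,\mathrm{Con}(Q))=2$ and the geodesic midpoint is in $V_1$'') and use it uniformly; its proof is just the bipartite parity of $d_T$ on $V_0\times V_0$ together with convexity of $\mathrm{Con}(Q)$.
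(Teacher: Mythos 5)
Your overall strategy is sound and both directions can be made to work, but your route differs from the paper's in the reverse implication. The forward direction is essentially the paper's argument: induct on $l$, write $\mathcal{T}=\mathcal{R}\cup B_T(w,2)$ with $\mathcal{R}\in\mathfrak{T}_{V_1}\lb l-1\rb$, and check that attaching $w$ adds no new $V_0$-vertex to the hull. For the reverse direction the paper does not peel off an extremal vertex; it fixes $v\in Q$, stratifies $Q$ into spheres $Q^n=\{w\in Q\mid d_T(w,v)=2n\}$, and builds $\mathcal{T}$ up layer by layer, using that $\mbox{Con}(Q)\cap V_0=Q$ forces each $w\in Q^n$ to lie at distance $2$ from some $v_w\in Q^{n-1}$ so that $B_T(w,2)$ may be adjoined at the correct stage. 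Your ``remove a farthest point'' induction is a legitimate alternative and arguably shorter, but it shifts the burden onto two verifications that you only sketch. First, the claim $\mbox{Con}(Q-\{w\})\cap V_0=Q-\{w\}$ is not a consequence of $\mbox{Con}(Q-\{w\})\subseteq\mbox{Con}(Q)$ alone, as you suggest; that inclusion only rules out new $V_0$-vertices outside $Q$. The real point is that $w$ itself does not lie in $\mbox{Con}(Q-\{w\})$, and this is where maximality of $d_T(v_0,w)$ is used: if $w$ were interior to a geodesic $\lb a,b\rb$ with $a,b\in Q-\{w\}$, then $d_T(v_0,w)<\max\{d_T(v_0,a),d_T(v_0,b)\}$, a contradiction. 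This is easy but should be said.

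Second, and more importantly, your ``standalone observation'' is false as stated: for $u\in V_0$ with $u\notin\mbox{Con}(Q')$ and $u$ within distance $2$ of $\mbox{Con}(Q')$, the nearest-point projection of $u$ onto $\mbox{Con}(Q')$ need not be at distance $2$ nor in $V_0$. Take a $V_1$-vertex $p$ with three $V_0$-neighbours $a,b,w$ and $Q'=\{a,b\}$: then $\mbox{Con}(Q')=\{a,p,b\}$ and the gate of $w$ is $p\in V_1$ at distance $1$. So the parity argument you invoke (``the nearest point is in $V_0$, being even-distant from $w$'') is circular in exactly the problematic case. The conclusion you actually need, namely that $w$ lies at distance exactly $2$ from some vertex of $Q'=Q-\{w\}$ and hence $w\in V(\mathcal{R})$, is nevertheless true and can be repaired as follows: all interior vertices of the geodesic from $w$ to its gate $p$ in $\mbox{Con}(Q')$ lie in $\mbox{Con}(Q)$ but not in $\mbox{Con}(Q')$, hence not in $Q-\{w\}$ and not equal to $w$, so they cannot lie in $V_0$ (they would violate $\mbox{Con}(Q)\cap V_0=Q$); by bipartiteness this forces $d_T(w,p)\leq 2$. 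If $d_T(w,p)=2$ then $p\in V_0\cap\mbox{Con}(Q')=Q'$ and you are done; if $d_T(w,p)=1$ then $p\in V_1\cap\mbox{Con}(Q')$, and any neighbour of $p$ inside the subtree $\mbox{Con}(Q')$ lies in $V_0\cap\mbox{Con}(Q')=Q'$ and is at distance $2$ from $w$. With these two patches your proof is complete; note that the paper's layered construction avoids the gate analysis altogether because the witness $v_w\in Q^{n-1}$ at distance exactly $2$ is produced directly from $\mbox{Con}(Q)\cap V_0=Q$ along the geodesic $\lb w,v\rb$.
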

	\begin{proof}
		Suppose first that $\mathcal{T}\in \mathfrak{T}_{V_1}\lb l\rb$. The definition of $Q_{\mathcal{T}}$ implies that $Q\subseteq Q_\mathcal{T}$ and Lemma \ref{the form of trees in the family} ensures that $\modu{Q_\mathcal{T}}=l$ which proves that $Q=Q_\mathcal{T}$. We prove that $\mbox{Con}(Q_\mathcal{T})\cap V_0=Q_{\mathcal{T}}$ by induction on $l\geq 1$. If $l=1$ the result is trivial. Suppose that $l\geq 2$. By construction, there exists $\mathcal{R}\in \mathfrak{T}_{V_1}\lb l-1\rb$ and $w\in (V(\mathcal{R})\cap V_0)-Q_{\mathcal{R}}$ such that $\mathcal{T}= \mathcal{R}\cup B_T(w,2)$. Furthermore, since $l-1\geq 1$, Lemma \ref{the form of trees in the family} ensures that  $\mathcal{R}= \bigcup_{v\in Q_{\mathcal{R}}}B_T(v,2)$. Since $T$ is a tree and since $w\in \mathcal{R}$, there exists a unique $u\in Q_{\mathcal{R}}$ such that $d_T(u,w)=2$ and we have that $\mbox{Con}(Q_\mathcal{T})= \mbox{Con}(Q_{\mathcal{R}})\cup \lb u,w\rb$. Finally, notice that $\lb u,w\rb\cap V_0=\{u,w\}$ which proves that $\mbox{Con}(Q_\mathcal{T})\cap V_0= Q_{\mathcal{R}}\cup \{w\}= Q_\mathcal{T}$. 
		
		Now, we show by induction on $l$ that $\mathcal{T}= \bigcup_{v\in Q}B_T(v,2)\in \mathfrak{T}_{V_1}\lb l \rb$ if $Q\subseteq V_0$ is a set of order $l$ such that $\mbox{Con}(Q)\cap V_0=Q$. If $l=1$, the result is trivial. Suppose that $l\geq 2$, choose any vertex $v\in Q$ and let $Q^n=\{w\in Q\lvert d_T(w,v)=2n\}$. Since $Q$ is finite there exists $N\in \N$ such that $Q^N\not = \es$ but $Q^n=\es$ for every $n\gneq N$. In particular, notice that $Q=\bigsqcup_{n\leq N} Q^n$. For every $n\leq N$, we let $S_n=\bigcup_{w\in Q^n, s\leq n}B_T(w,2)$, $l_n=\big\lvert\bigsqcup_{s\leq n}Q^{s-1}\big\rvert$ and we notice by induction on $n$ that $S_n\in \mathfrak{T}_{V_1}\lb l_n\rb$. Notice that the result is trivial for $n=1$, so let $n\geq 2$ and let $Q^n=\{v_1,...,v_{r_n}\}$. Since $\mbox{Con}(Q)\cap V_0=Q$, for all $w\in Q^n$ there exists $v_w\in Q^{n-1}$ such that $d_T(w,v_w)=2$. In particular, starting from our induction hypothesis we obtain iteratively for every $0\leq t\leq r_n$ that 
		$S_n\cup \big( \bigcup_{i\leq t}B_T(v_i,2)\big)\in \mathfrak{T}_{V_1}\lb l_n+ t\rb$. The result follows since $l_N=l$ and $S_N=\mathcal{T}$.
	\end{proof}
	This description of the elements of $\mathfrak{T}_{V_1}$ makes the following easier to prove.
	\begin{lemma}\label{invariance de TTT par rapport a l'action de AutT plus}
		Let $\mathcal{T}\in \mathfrak{T}_{V_1}\lb l \rb$ and $g\in \Aut(T)^+$, then we have $g\mathcal{T}\in \mathfrak{T}_{V_1}\lb l\rb$. 
	\end{lemma}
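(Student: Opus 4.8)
The plan is to reduce everything to the combinatorial description of $\mathfrak{T}_{V_1}$ furnished by Lemmas \ref{the form of trees in the family} and \ref{alternative form of S}, and then to exploit the fact that a type-preserving automorphism $g\in\Aut(T)^+$ preserves, by definition, the bipartition $V(T)=V_0\sqcup V_1$ setwise, while, being a graph automorphism, it commutes with the operations $\mathcal{R}\mapsto B_T(\mathcal{R},r)$ and $Q\mapsto\mbox{Con}(Q)$. First I would dispose of the base case $l=0$: here $\mathcal{T}=B_T(v,1)$ for some $v\in V_1$ by the very definition of $\mathfrak{T}_{V_1}\lb 0\rb$, so $g\mathcal{T}=B_T(gv,1)$, and since $g$ is type-preserving $gv\in V_1$; hence $g\mathcal{T}\in\mathfrak{T}_{V_1}\lb 0\rb$ immediately.

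For $l\geq 1$ I would invoke Lemma \ref{the form of trees in the family}, parts (2) and (3): setting $Q=Q_{\mathcal{T}}\subseteq V_0$, we have $\mathcal{T}=\bigcup_{v\in Q}B_T(v,2)$ with $\modu{Q}=l$. Since $g$ is an automorphism, $gB_T(v,2)=B_T(gv,2)$, so $g\mathcal{T}=\bigcup_{w\in gQ}B_T(w,2)$, and because $g$ is type-preserving $gQ\subseteq V_0$ with $\modu{gQ}=l$. It then remains only to verify the criterion $\mbox{Con}(gQ)\cap V_0=gQ$ of Lemma \ref{alternative form of S}: applying that same lemma to $\mathcal{T}$ gives $\mbox{Con}(Q)\cap V_0=Q$, and pushing this equality through $g$, which satisfies $g\,\mbox{Con}(Q)=\mbox{Con}(gQ)$ and $gV_0=V_0$, yields $\mbox{Con}(gQ)\cap V_0=g\bigl(\mbox{Con}(Q)\cap V_0\bigr)=gQ$. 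Reading Lemma \ref{alternative form of S} in the reverse direction with the set $gQ$ then gives $g\mathcal{T}\in\mathfrak{T}_{V_1}\lb l\rb$, which closes the argument.

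There is no genuine obstacle here; the only point worth emphasising is that the proof uses the full strength of the hypothesis $g\in\Aut(T)^+$, namely that $g$ fixes each $V_i$ setwise rather than possibly interchanging them. Both the condition that the balls $B_T(w,2)$ are centred at vertices of $V_0$ and the condition $\mbox{Con}(Q)\cap V_0=Q$ are transported along $g$ precisely because $gV_0=V_0$; a potential swap of $V_0$ and $V_1$ would destroy the form of the elements of $\mathfrak{T}_{V_1}$, so working inside $\Aut(T)^+$ is exactly what makes the statement hold as phrased.
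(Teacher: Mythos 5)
Your argument is correct and follows essentially the same route as the paper: the base case $l=0$ is handled identically, and for $l\geq 1$ both proofs reduce to the criterion of Lemma \ref{alternative form of S}, transporting the equality $\mbox{Con}(Q_\mathcal{T})\cap V_0=Q_\mathcal{T}$ through $g$ using that $g$ preserves geodesics and fixes $V_0$ setwise. The only cosmetic difference is that you pass through the explicit decomposition $\mathcal{T}=\bigcup_{v\in Q_\mathcal{T}}B_T(v,2)$ of Lemma \ref{the form of trees in the family}, where the paper simply notes $Q_{g\mathcal{T}}=gQ_{\mathcal{T}}$.
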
 
	\begin{proof}
		If $l=0$, $\mathcal{T}=B_T(v,1)$ some $v\in V_1$. Furthermore, $gB_T(v,1)=B_T(gv,1)$ and since the element of $\Aut(T)^+$ are type-preserving, $gv\in V_1$ which proves that $g\mathcal{T}\in \mathfrak{T}_{V_1}\lb 0\rb$.  If $l\geq 1$, Lemma \ref{alternative form of S} ensures that $\mbox{Con}(Q_\mathcal{T})\cap V_0=Q_\mathcal{T}$. It is clear from the definition that $Q_{g\mathcal{T}}=gQ_{\mathcal{T}}$ and since $g$ is a type-preserving automorphism of a tree we have $\mbox{Con}(gQ_\mathcal{T})\cap V_0=gQ_\mathcal{T}$. In particular, Lemma \ref{alternative form of S} ensures that $g\mathcal{T}\in \mathfrak{T}_{V_1}\lb l \rb$.
	\end{proof}
	\subsection{Factorization of the generic filtration $\mathcal{S}_{V_1}$}
	The purpose of this section is to prove Theorem \ref{Theorem pour IPV1}. Just as before, let $T$ be a locally finite tree and let $V(T)=V_0\sqcup V_1$ be a bipartition of $T$ such that every edge of $T$ contains exactly one vertex in each $V_i$, let $\mathfrak{T}_{V_1}$ be the family of subtrees defined page \pageref{page Tv1}, let $G$ be a closed non-discrete unimodular subgroup of $\Aut(T)^+$ and let $\mathcal{S}_{V_1}=\{\Fix_G(\mathcal{T})\lvert \mathcal{T}\in \mathfrak{T}_{V_1}\}$. 
	\begin{lemma}\label{la forme des Sl pour IPV1}
		Let $G\leq \Aut(T)^+$ be a closed non-discrete unimodular subgroup satisfying the hypothesis \ref{Hypothese HV1}. Then, $\mathcal{S}_{V_1}$ is a generic filtration of $G$ and
		$$\mathcal{S}_{V_1}\lb l\rb =\{\Fix_G(\mathcal{T})\lvert \mathcal{T}\in \mathfrak{T}_{V_1}\lb l \rb\}\q \forall l\in \N.$$
	\end{lemma}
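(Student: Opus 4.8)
The plan is to mimic the structure of the proof of Lemma \ref{Lemma la startification de S pour Aut(T)} in Chapter \ref{Application to Aut T}, exploiting the combinatorial description of $\mathfrak{T}_{V_1}$ obtained in Lemmas \ref{the form of trees in the family}, \ref{alternative form of S} and \ref{invariance de TTT par rapport a l'action de AutT plus}. First I would establish that $\mathcal{S}_{V_1}$ is a generic filtration. Since $G$ is non-discrete and unimodular, by Lemma \ref{lemma bounded basis implies generic filtration} it is enough to check that the elements of $\mathcal{S}_{V_1}$ have uniformly bounded Haar measure. I would fix a base vertex $v_0\in V_1$ and observe that every $\mathcal{T}\in\mathfrak{T}_{V_1}$ contains a translate of $B_T(v_0,1)$; more precisely, for any $\mathcal{T}\in\mathfrak{T}_{V_1}\lb l\rb$ with $l\geq 1$, pick $w\in Q_\mathcal{T}$ and a neighbour $v\in V_1$ of $w$, so that $B_T(v,1)\subseteq B_T(w,2)\subseteq\mathcal{T}$ (after noting $B_T(v,1)\in\mathfrak{T}_{V_1}\lb 0\rb$, using that $G$ acts type-preservingly and $v\in V_1$). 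Hence $\Fix_G(\mathcal{T})\leq\Fix_G(B_T(v,1))=g\Fix_G(B_T(v_0,1))g^{-1}$ for some $g\in G$ by Lemma \ref{invariance de TTT par rapport a l'action de AutT plus} and transitivity considerations — but I should be careful here: $G$ need not be transitive on $V_1$. To avoid that issue, it is cleaner to simply note that $\mathfrak{T}_{V_1}\lb 0\rb$ splits into finitely many $G$-orbits is \emph{not} automatic either; instead I would argue directly that each $\Fix_G(\mathcal{T})$ is an open subgroup of the fixed ball group $\Fix_G(B_T(w,2))$ for $w\in Q_\mathcal{T}$, all of which have measure $\mu(\Fix_G(B_T(w,2)))\leq\mu(\Fix_G(w))$, and these are conjugate only within $G$-orbits of vertices. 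The uniform bound then follows if $V_0$ has finitely many $G$-orbits — but without transitivity this may fail, so in fact the correct route is: \emph{every} $U\in\mathcal{S}_{V_1}$ is contained in some $\Fix_G(B_T(w,2))$ with $w\in V_0$, and since $\mathfrak{T}_{V_1}\lb 1\rb=\{B_T(w,2)\mid w\in V_0\}$ consists of a single depth level, the measures $\mu(\Fix_G(B_T(w,2)))$ for $w$ ranging over $V_0$ are all equal to those of their $G$-conjugates; but the issue of finitely many orbits remains. I suspect the paper handles this by a local finiteness argument: $B_T(w,2)$ has bounded size, so $\Fix_G(B_T(w,2))$ has index bounded by $|V(B_T(w,2))|!$-type quantities in $\Fix_G(w)$, and ultimately the cleanest statement is that all elements of $\mathcal{S}_{V_1}$ contain an open subgroup of fixed positive measure, giving the bound.

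Granting that $\mathcal{S}_{V_1}$ is a generic filtration, the second part — computing $\mathcal{S}_{V_1}\lb l\rb$ — follows exactly as in Lemma \ref{Lemma la startification de S pour Aut(T)}. Using the hypothesis \ref{Hypothese HV1}, for $\mathcal{T},\mathcal{T}'\in\mathfrak{T}_{V_1}$ we have $\mathcal{C}(\Fix_G(\mathcal{T}'))\leq\mathcal{C}(\Fix_G(\mathcal{T}))$ if and only if there exists $g\in G$ with $\Fix_G(\mathcal{T})\leq\Fix_G(g\mathcal{T}')$, which by \ref{Hypothese HV1} holds if and only if $g\mathcal{T}'\subseteq\mathcal{T}$. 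Since $\mathfrak{T}_{V_1}$ is $\Aut(T)^+$-stable (Lemma \ref{invariance de TTT par rapport a l'action de AutT plus}), every strictly increasing chain in $\mathcal{F}_{\mathcal{S}_{V_1}}$ lifts to a strictly increasing chain $\mathcal{T}_0\subsetneq\mathcal{T}_1\subsetneq\cdots\subsetneq\mathcal{T}_n$ of elements of $\mathfrak{T}_{V_1}$, and conversely. Thus the height of $\mathcal{C}(\Fix_G(\mathcal{T}))$ in $\mathcal{F}_{\mathcal{S}_{V_1}}$ equals the maximal length of a strictly increasing chain of elements of $\mathfrak{T}_{V_1}$ contained in $\mathcal{T}$. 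The key point is then that this maximal length is exactly the unique $l$ with $\mathcal{T}\in\mathfrak{T}_{V_1}\lb l\rb$, i.e. $|Q_\mathcal{T}|$ by Lemma \ref{the form of trees in the family}(3).

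To see this last claim, I would use Lemma \ref{alternative form of S}: if $\mathcal{T}_0\subsetneq\cdots\subsetneq\mathcal{T}_m=\mathcal{T}$ is a chain in $\mathfrak{T}_{V_1}$, then the associated sets $Q_{\mathcal{T}_0}\subseteq\cdots\subseteq Q_{\mathcal{T}_m}=Q_\mathcal{T}$ are strictly increasing (because distinct elements of $\mathfrak{T}_{V_1}$ not in depth $0$ are determined by their $Q$-sets via $\mathcal{T}=\bigcup_{v\in Q_\mathcal{T}}B_T(v,2)$, and the depth-$0$ elements $B_T(v,1)$ have $Q=\varnothing$ and are minimal), forcing $m\leq|Q_\mathcal{T}|$; in the case $\mathcal{T}\in\mathfrak{T}_{V_1}\lb 0\rb$ the tree $\mathcal{T}=B_T(v,1)$ contains no proper element of $\mathfrak{T}_{V_1}$ so the height is $0$. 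Conversely, the iterative construction of $\mathfrak{T}_{V_1}\lb l+1\rb$ from $\mathfrak{T}_{V_1}\lb l\rb$ exhibits, for any $\mathcal{T}\in\mathfrak{T}_{V_1}\lb l\rb$, a chain $\mathcal{R}_0\subsetneq\mathcal{R}_1\subsetneq\cdots\subsetneq\mathcal{R}_l=\mathcal{T}$ with $\mathcal{R}_i\in\mathfrak{T}_{V_1}\lb i\rb$, by peeling off the vertices of $Q_\mathcal{T}$ one at a time (reversing the construction; here one uses that the convexity condition $\mathrm{Con}(Q_\mathcal{T})\cap V_0=Q_\mathcal{T}$ of Lemma \ref{alternative form of S} guarantees a leaf of $\mathrm{Con}(Q_\mathcal{T})$ can always be removed while staying inside $\mathfrak{T}_{V_1}$). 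Hence the height is exactly $l$, which gives $\mathcal{S}_{V_1}\lb l\rb=\{\Fix_G(\mathcal{T})\mid\mathcal{T}\in\mathfrak{T}_{V_1}\lb l\rb\}$ and finishes the proof. The main obstacle I anticipate is the uniform measure bound needed for the generic filtration claim — i.e. making precise the sense in which all elements of $\mathcal{S}_{V_1}$ contain an open subgroup of fixed positive Haar measure — since unlike the semi-regular case of Chapter \ref{Application to Aut T} there is no a priori bound on the number of $G$-orbits of vertices, and one must instead leverage local finiteness of $T$ together with the fact that every $\mathcal{T}\in\mathfrak{T}_{V_1}$ contains a ball $B_T(w,2)$.
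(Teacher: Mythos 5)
Your computation of the heights is exactly the paper's argument: using hypothesis \ref{Hypothese HV1} together with the $G$-stability of $\mathfrak{T}_{V_1}$ (Lemma \ref{invariance de TTT par rapport a l'action de AutT plus}), strictly increasing chains in $\mathcal{F}_{\mathcal{S}_{V_1}}$ below $\mathcal{C}(\Fix_G(\mathcal{T}))$ correspond to strictly increasing chains of elements of $\mathfrak{T}_{V_1}$ contained in $\mathcal{T}$, and Lemma \ref{the form of trees in the family} then shows that a maximal such chain has the form $\mathcal{T}_0\subsetneq\mathcal{T}_1\subsetneq\dots\subsetneq\mathcal{T}_{l-1}\subsetneq\mathcal{T}$ with $\mathcal{T}_t\in\mathfrak{T}_{V_1}\lb t\rb$; your $Q$-set count gives the upper bound, and the converse chain is already supplied by the iterative definition of $\mathfrak{T}_{V_1}\lb l+1\rb$, so the leaf-peeling argument via Lemma \ref{alternative form of S} is not needed.

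The genuine gap is in how you handle the first claim. The route through Lemma \ref{lemma bounded basis implies generic filtration} requires a uniform \emph{upper} bound on $\mu(\Fix_G(\mathcal{T}))$ over all $\mathcal{T}\in\mathfrak{T}_{V_1}$, and as you concede, nothing in the hypotheses bounds the number of $G$-orbits of vertices, so such a bound is not established by your argument (the measures $\mu(\Fix_G(B_T(v,1)))$, $v\in V_1$, have no a priori common bound for a general closed unimodular subgroup). Your fallback, that every element of $\mathcal{S}_{V_1}$ ``contains an open subgroup of fixed positive measure,'' is moreover the wrong inequality: a lower bound on $\mu(U)$ does not bound the length of chains below $\mathcal{C}(U)$, since such a chain is realized by subgroups \emph{containing} $U$, and it is the measures of those larger subgroups that would need to be bounded from above. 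The paper does not invoke Lemma \ref{lemma bounded basis implies generic filtration} here at all (contrast with Lemma \ref{Lemme on peut etendre la la filtration generic de mathcalT}, where the relevant family is essentially finite): the finiteness of every height is an immediate byproduct of the chain correspondence you prove in your second part, because by \ref{Hypothese HV1} any chain below $\mathcal{C}(\Fix_G(\mathcal{T}))$ is realized by elements of $\mathfrak{T}_{V_1}$ inside the finite tree $\mathcal{T}$ and hence has at most $\modu{Q_\mathcal{T}}+1$ terms. So the repair is simply to drop the measure argument and read the generic-filtration claim off your own height computation, which is what the paper does.
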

	\begin{proof}
		For every $\mathcal{T}\in \mathfrak{T}_{V_1}$ notice that $g\Fix_{G}(\mathcal{T})g^{-1}=\Fix_{G}(g\mathcal{T})$ $\forall g\in G$ and therefore that $\mathcal{C}(\Fix_{G}(\mathcal{T}))=\{\Fix_{G}(g\mathcal{T})\lvert g\in G\}$. In particular, for every $\mathcal{T},\mathcal{T}'\in \mathfrak{T}_{V_1}$ we have that $\mathcal{C}(\Fix_{G}(\mathcal{T}'))\leq \mathcal{C}(\Fix_{G}(\mathcal{T}))$ if and only if there exists $g\in G$ such that $\Fix_{G}(\mathcal{T})\leq \Fix_{G}(g\mathcal{T}')$. Since $G$ satisfies the hypothesis \ref{Hypothese HV1}, this implies that $\mathcal{C}(\Fix_{G}(\mathcal{T}'))\leq \mathcal{C}(\Fix_{G}(\mathcal{T}))$ if and only if there exists some $g\in G$ such that $g\mathcal{T}'\subseteq \mathcal{T}$. On the other hand, Lemma \ref{invariance de TTT par rapport a l'action de AutT plus} ensures that $\mathfrak{T}_{V_1}$ is stable under the action of $G$. In particular, for every strictly increasing chain $C_0\lneq C_1\lneq ...\lneq C_{n-1}\lneq C_{n}$ of elements of $\mathcal{F}_{\mathcal{S}_{V_1}}$ there exists a strictly increasing chain $\mathcal{T}_0\subsetneq\mathcal{T}_1\subsetneq...\subsetneq\mathcal{T}_{n-1}\subsetneq\mathcal{T}_n$ of elements of $\mathfrak{T}_{V_1}$ such that $C_t=\mathcal{C}(\Fix_G(\mathcal{T}_t))$. On the other hand, for every strictly increasing chain $\mathcal{T}_0\subsetneq\mathcal{T}_1\subsetneq...\subsetneq\mathcal{T}_{n}\subseteq\mathcal{T}$ of elements of $\mathfrak{T}_{V_1}$ contained in $\mathcal{T}$ we can build a strictly increasing chain $\mathcal{C}(\Fix_G(\mathcal{T}_0))\lneq\mathcal{C}(\Fix_G(\mathcal{T}_1))\lneq...\lneq\mathcal{C}(\Fix_G(\mathcal{T}_{n}))\lneq\mathcal{C}(\Fix_G(\mathcal{T}))$ of elements of $\mathcal{F}_{\mathcal{S}_{V_1}}$. This proves that the height of $\mathcal{C}(\Fix_G(\mathcal{T}))$ is the maximal length of a strictly increasing chain of elements of $\mathfrak{T}_{V_1}$ contained in $\mathcal{T}$.
		The result therefore follows from the following observation. Lemma \ref{the form of trees in the family} ensures that every maximal strictly increasing chain of elements of $\mathfrak{T}_{V_1}$ contained in $\mathcal{T}$ is of the form $\mathcal{T}_0\subsetneq \mathcal{T}_1\subsetneq...\subsetneq\mathcal{T}_{l-1}\subsetneq\mathcal{T}$ where $\mathcal{T}_t\in \mathfrak{T}_{V_1}\lb t \rb$.
	\end{proof}
	The following lemma shows that independence properties such as ${\rm IP}_1$ or ${\rm IP}_{V_1}$ can be realised as factorization properties for the fixators of larger subtrees than edges and ball of radius $1$ around vertices. 
	\begin{lemma}\label{independence on trees}
		Let $T$ be a locally finite tree, let $G\leq \Aut(T)$, let $\mathcal{A}$ be a family of finite subtrees of $T$ with at least two vertices such that
		\begin{equation*}
		\Fix_G( \mathcal{P})=\prod_{v\in \partial  \mathcal{P}} \Fix_G(T( \mathcal{P},v))\mbox{ } \q\forall \mathcal{P}\in \mathcal{A}
		\end{equation*}  
		where $\partial  \mathcal{P}$ denotes the set of leaves of $ \mathcal{P}$ and $T(\mathcal{P},v)$ denotes the half-tree $T( \mathcal{P},v)=\{w\in V(T)\lvert d_T(w, \mathcal{P})\lneq d_T(w,v)\}$ and let $\mathcal{T}$ be a non-empty complete finite subtree  of $T$ such that for every $v\in \partial \mathcal{T}$, there exists a subtree $\mathcal{T}_v\in \mathcal{A}$ with $v\in \mathcal{T}_v\subseteq  \mathcal{T}$. Then, we have that
		\begin{equation*}
		\Fix_G(\mathcal{T})=\prod_{v\in \partial \mathcal{T}} \Fix_G(T(\mathcal{T},v)).
		\end{equation*}
	\end{lemma}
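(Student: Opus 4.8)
The plan is to establish the non-trivial inclusion $\Fix_G(\mathcal{T})\subseteq \prod_{v\in \partial\mathcal{T}}\Fix_G(T(\mathcal{T},v))$ directly --- no induction is needed --- by first decomposing an element of $\Fix_G(\mathcal{T})$ into its branch components inside $\Aut(T)$, and then using the hypothesis on $\mathcal{A}$ at each leaf to recognise each component as an element of $G$. First I would record some elementary tree-theoretic bookkeeping. For a leaf $v\in\partial\mathcal{T}$ with unique neighbour $v'$ in $\mathcal{T}$, let $H_v:=V(T)\setminus T(\mathcal{T},v)$; this is exactly the vertex set of the component of $T$ minus the edge $\{v,v'\}$ containing $v$, one has $\mathcal{T}\cap H_v=\{v\}$, and completeness of $\mathcal{T}$ gives $V(T)\setminus V(\mathcal{T})=\bigsqcup_{v\in\partial\mathcal{T}}\big(H_v\setminus\{v\}\big)$ with the $H_v$ pairwise disjoint. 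In particular an element of $\Fix_G(T(\mathcal{T},v))$ fixes every vertex outside $H_v$, hence fixes $v'$ and all neighbours of $v'$ except possibly $v$, hence fixes $v$ as well; so $\Fix_G(T(\mathcal{T},v))\leq\Fix_G(\mathcal{T})$, which settles the reverse inclusion of the statement, and these subgroups pairwise commute (disjoint supports), so the right-hand product is indeed a subgroup of $G$.

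Next, given $g\in\Fix_G(\mathcal{T})$, I would write it as a product of branch automorphisms in $\Aut(T)$. Since $g$ fixes both endpoints of each edge $\{v,v'\}$ ($v\in\partial\mathcal{T}$), it preserves $H_v$ setwise; define $g_v\in\Aut(T)$ by $g_v|_{H_v}=g|_{H_v}$ and $g_v=\id$ off $H_v$. One checks that $g_v\in\Fix_{\Aut(T)}(T(\mathcal{T},v))$ and, using the partition of $V(T)$ above, that $g=\prod_{v\in\partial\mathcal{T}}g_v$. It then remains to prove $g_v\in G$ for each $v$. Fix $v$ and choose $\mathcal{T}_v\in\mathcal{A}$ with $v\in\mathcal{T}_v\subseteq\mathcal{T}$. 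Because $\mathcal{T}_v$ is a connected subtree with at least two vertices contained in $\mathcal{T}$ and $v$ has degree $1$ in $\mathcal{T}$, the vertex $v$ has degree $1$ in $\mathcal{T}_v$ too, i.e. $v\in\partial\mathcal{T}_v$, and its neighbour in $\mathcal{T}_v$ is again $v'$; consequently the branch of $\mathcal{T}_v$ at $v$ is the same half-tree, so $T(\mathcal{T}_v,v)=V(T)\setminus H_v=T(\mathcal{T},v)$. Since $\mathcal{T}_v\subseteq\mathcal{T}$ we have $g\in\Fix_G(\mathcal{T})\subseteq\Fix_G(\mathcal{T}_v)$, so the hypothesis on $\mathcal{A}$ furnishes a decomposition $g=\prod_{x\in\partial\mathcal{T}_v}h_x$ with $h_x\in\Fix_G(T(\mathcal{T}_v,x))\leq G$; the factors have pairwise disjoint supports lying in the respective branches of $\mathcal{T}_v$, and the branch at $v$ is $H_v$. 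Comparing the two decompositions of $g$ after restricting to $H_v$ and to its complement forces $g_v=h_v$, whence $g_v\in G$ and in fact $g_v\in\Fix_G(T(\mathcal{T},v))$. Therefore $g=\prod_{v\in\partial\mathcal{T}}g_v\in\prod_{v\in\partial\mathcal{T}}\Fix_G(T(\mathcal{T},v))$, as desired.

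The only genuine point requiring care is the bookkeeping in the last step that identifies the $\Aut(T)$-branch component $g_v$ with the $G$-component $h_v$ produced by the hypothesis; this rests on the uniqueness of the decomposition of an automorphism of $T$ fixing $\mathcal{T}$ (respectively $\mathcal{T}_v$) into pieces supported on the pairwise disjoint branches, together with the identification $T(\mathcal{T}_v,v)=T(\mathcal{T},v)$, which is precisely where the requirements $v\in\mathcal{T}_v\subseteq\mathcal{T}$ are used. Completeness of $\mathcal{T}$ is used only to guarantee that every vertex outside $\mathcal{T}$ lies beyond some leaf, so that the branch decomposition of $g$ accounts for all of its support; one may harmlessly assume $\mathcal{T}$ has at least two vertices. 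I expect all the remaining verifications (that $g_v$ is a well-defined automorphism, the disjointness of the $H_v$, the partition of $V(T)$) to be routine and I would dispatch them in one short paragraph.
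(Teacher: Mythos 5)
Your proof is correct, and it is organised differently from the paper's. The paper argues by successive peeling entirely inside $G$: enumerating $\partial\mathcal{T}=\{v_1,\dots,v_n\}$, it applies the hypothesis to $\mathcal{T}_1\ni v_1$ (using, exactly as you do, that $v_1\in\partial\mathcal{T}_1$ and $T(\mathcal{T}_1,v_1)=T(\mathcal{T},v_1)$) to write $g=h_1g_1$ with $h_1\in\Fix_G(T(\mathcal{T},v_1))$ and $g_1\in\Fix_G(\mathcal{T})\cap\Fix_G(T(\mathcal{T},v_1)^c)$, then iterates, keeping track at each step that the remainder $g_i$ fixes $\mathcal{T}$ together with the complements of the half-trees already treated, until $g_n$ fixes all of $T$ and is trivial. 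You instead front-load the canonical branch decomposition in $\Aut(T)$ --- defining $g_v$ by restriction of $g$ to the branch $H_v$ --- and then use one application of the hypothesis per leaf, via the same identification $T(\mathcal{T}_v,v)=T(\mathcal{T},v)$, together with uniqueness of a decomposition into pieces supported on the pairwise disjoint branches of $\mathcal{T}_v$, to conclude $g_v=h_v\in\Fix_G(T(\mathcal{T},v))$. What your route buys is the elimination of the iteration and of the bookkeeping about what each remainder fixes; what it costs is the (routine, but necessary) verifications that each $g_v$ is a well-defined automorphism of $T$, that $g=\prod_v g_v$, and the disjoint-support comparison identifying $g_v$ with $h_v$, which you correctly single out as the only delicate point. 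Both arguments rest on the same two observations --- completeness of $\mathcal{T}$ forces every exterior vertex to project to a leaf, and a leaf of $\mathcal{T}$ is a leaf of the chosen $\mathcal{T}_v\in\mathcal{A}$ with the same attached half-tree --- so the proofs are close in substance, but yours replaces the inductive factorization inside $G$ by a direct decomposition-plus-recognition argument.
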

	\begin{proof}
		 For every subset $V\subseteq V(T)$, we denote by $V^c$ the complement of $V$ in $V(T)$. First, notice that for every two distinct leaves $v,v'\in \partial \mathcal{T}$, the support of the elements of $\Fix_G(T(\mathcal{T},v))$ and $\Fix_G(T(\mathcal{T},v'))$ are disjoints. In particular, the elements of $\Fix_G(T(\mathcal{T},v))$ and of $\Fix_G(T(\mathcal{T},v'))$ commute with one another and $\prod_{v\in \partial \mathcal{T}} \Fix_G(T(\mathcal{T},v))$ is a well defined subgroup of $G$. On the other hand, $\forall v\in \partial \mathcal{T}$ we have $\Fix_G(\mathcal{T})\supseteq  \Fix_G(T(\mathcal{T},v))$ and therefore that $ \Fix_G(\mathcal{T})\supseteq  \prod_{v\in \partial \mathcal{T}}\Fix_G(T(\mathcal{T},v))$. 
		In order to prove the other inclusion let $g\in \Fix_G(\mathcal{T})$ and let $\partial\mathcal{T}=\{v_1,...,v_n\}$. 
		The hypotheses on $G$ imply the existence of a subtree $\mathcal{T}_{1}\in \mathcal{A}$ such that $v_1\in \mathcal{T}_{1}\subseteq\mathcal{T}$. Furthermore, since $v_1\in \partial \mathcal{T}$ and $\mathcal{T}_{1}\subseteq \mathcal{T}$, we observe that $v_1\in \partial \mathcal{T}_{1}$ and $T(\mathcal{T}_1,v_1)= T(\mathcal{T},v_1)$. Furthermore, since $\mathcal{T}_{1}\subseteq \mathcal{T}$ and $g\in \Fix_G(\mathcal{T})$, notice that $g\in \Fix_G(\mathcal{T}_1)$. Our hypotheses on $G$ ensures the existence of some $h_{1}\in \Fix_G(T(\mathcal{T}_1,v_1))$ and $$g_1\in \prod_{v\in \partial \mathcal{T}_1-\{v_1\}}\Fix_G(T(\mathcal{T}_1,v))\subseteq \Fix_G(T(\mathcal{T}_1,v_1)^c)$$
		such that $g=h_{1}g_1$. Since $g\in \Fix_G(\mathcal{T})$ and $\Fix_G(T(\mathcal{T}_1,v_1))\subseteq \Fix_G(\mathcal{T})$, this decomposition implies that $g_1\in \Fix_G(\mathcal{T})\cap \Fix_G(T(\mathcal{T},v_1)^c)$. Proceeding iteratively, we prove the existence of some $h_i\in \Fix_G(T(\mathcal{T},{v_i}))$ and some $g_i\in \Fix_G(\mathcal{T})\cap\big( \bigcap_{r\leq i}\Fix_G(T(\mathcal{T},v_r)^c)\big)$ such that $g_{i-1}=h_{i} g_i$. To see that $g_i \in \bigcap_{r\leq i}\Fix_G(T(\mathcal{T},v_r)^c)$, notice by induction that $$g_{i-1}\in \bigcap_{r\leq i-1}\Fix_G(T(\mathcal{T},v_r)^c),$$ that $h_{i} \in \Fix_G(T(\mathcal{T},{v_i}))$ and that $\Fix_G(T(\mathcal{T},v_j))\subseteq \Fix_G(T(\mathcal{T},v_i)^c)$  $\forall i\not=j$. This implies that $h_i\in \bigcap_{r\leq i-1}\Fix_G(T(\mathcal{T},v_r)^c)$ and therefore that $g_i \in \bigcap_{r\leq i}\Fix_G(T(\mathcal{T},v_r)^c)$. The result follows since we have by construction that $g= h_1h_2...h_ng_n$, that $h_i\in \Fix_G(T(\mathcal{T},v_i))$ and that $$g_n\in \Fix_G(\mathcal{T})\cap \bigg(\bigcap_{v\in \partial \mathcal{T}}\Fix_G(T(\mathcal{T},v)^c)\bigg)=\Fix_G(T)=\{1_{\Aut(T)}\}.$$
	\end{proof}
	Among other things, this result provides an alternative proof of the group equality \eqref{equation pour la prop d'indep de tits} or equivalently of Proposition \ref{alternative definition iof property IPk} if $k=1$. An other consequence is the following result which is key to the proof of Theorem \ref{Theorem pour IPV1}
	\begin{proposition}
		Let $G\leq \Aut(T)^+$ be a subgroup satisfying the property \ref{IPV1}. Then, for every $\mathcal{T}\in \mathfrak{T}_{V_1}$, we have
		\begin{equation*}
		\Fix_G(\mathcal{T})=\prod_{v\in \partial \mathcal{T}} \Fix_G(T(\mathcal{T},v)).
		\end{equation*}  
	\end{proposition}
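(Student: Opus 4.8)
The plan is to deduce this proposition from Lemma \ref{independence on trees} by exhibiting the right family $\mathcal{A}$ of subtrees. The property \ref{IPV1} says precisely that for every $w\in V_1$ the fixator of $B_T(w,1)$ decomposes as $\prod_{v\in B_T(w,1)-\{w\}}\Fix_G(T(w,v))$, and since every leaf of $B_T(w,1)$ lies in $V_0$ and $T(w,v)=T(B_T(w,1),v)$ for such a leaf $v$, this is exactly the decomposition hypothesis of Lemma \ref{independence on trees} for the single subtree $\mathcal{P}=B_T(w,1)$. So first I would set $\mathcal{A}=\{B_T(w,1)\mid w\in V_1\}$ and record that each $\mathcal{P}\in\mathcal{A}$ satisfies $\Fix_G(\mathcal{P})=\prod_{v\in\partial\mathcal{P}}\Fix_G(T(\mathcal{P},v))$.

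Next I would verify the ``covering'' hypothesis of Lemma \ref{independence on trees}: for every $\mathcal{T}\in\mathfrak{T}_{V_1}$ and every leaf $v\in\partial\mathcal{T}$, there must be a subtree $\mathcal{T}_v\in\mathcal{A}$ with $v\in\mathcal{T}_v\subseteq\mathcal{T}$. Here I would use the structural description from Lemma \ref{the form of trees in the family}: every $\mathcal{T}\in\mathfrak{T}_{V_1}$ is a complete finite subtree of $T$ with leaves in $V_0$, and (for $l\geq 1$) $\mathcal{T}=\bigcup_{u\in Q_\mathcal{T}}B_T(u,2)$ with $Q_\mathcal{T}\subseteq V_0$. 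So a leaf $v\in\partial\mathcal{T}\subseteq V_0$ lies in $B_T(u,2)$ for some $u\in Q_\mathcal{T}$; let $w$ be the unique vertex on the geodesic from $u$ to $v$ at distance $1$ from $v$ (so $w\in V_1$, since $v\in V_0$ and the geodesic alternates types). Then $B_T(w,1)\subseteq B_T(u,2)\subseteq\mathcal{T}$ because $\mathcal{T}$ is complete and contains $B_T(u,2)$ entirely, and $v\in B_T(w,1)$. The case $l=0$ is immediate since then $\mathcal{T}=B_T(w,1)\in\mathcal{A}$ itself and we may take $\mathcal{T}_v=\mathcal{T}$. With both hypotheses checked, Lemma \ref{independence on trees} applied to $\mathcal{T}$ yields $\Fix_G(\mathcal{T})=\prod_{v\in\partial\mathcal{T}}\Fix_G(T(\mathcal{T},v))$, which is the claim.

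The only mildly delicate point — and the place I would be most careful — is confirming that $B_T(w,1)\subseteq\mathcal{T}$ rather than just $w\in\mathcal{T}$ and $v\in\mathcal{T}$; this is where completeness of $\mathcal{T}$ (part (1) of Lemma \ref{the form of trees in the family}, together with part (2) giving the explicit form $\bigcup_{u\in Q_\mathcal{T}}B_T(u,2)$) is essential, since $B_T(u,2)\supseteq B_T(w,1)$ for any neighbour $w$ of $u$. One should also double-check the parity bookkeeping (leaves in $V_0$, hence $w\in V_1$, so that \ref{IPV1} is actually applicable to $B_T(w,1)$) — but this is routine given the bipartition conventions already fixed. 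No genuine obstacle is expected; the proposition is essentially a transcription of \ref{IPV1} through the combinatorial machinery of Section \ref{application IPV1}.
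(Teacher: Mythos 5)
Your proposal is correct and follows essentially the same route as the paper: the paper also handles $\mathfrak{T}_{V_1}\lb 0\rb$ directly from \ref{IPV1} and then applies Lemma \ref{independence on trees} with $\mathcal{A}=\mathfrak{T}_{V_1}\lb 0\rb$, using point (2) of Lemma \ref{the form of trees in the family} to verify the covering hypothesis. Your write-up merely spells out the verification (choice of the $V_1$-vertex $w$ with $v\in B_T(w,1)\subseteq B_T(u,2)\subseteq\mathcal{T}$) that the paper leaves implicit.
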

	\begin{proof}
		If $\mathcal{T}\in \mathfrak{T}_{V_1}\lb 0\rb$, there exists $w\in V_1$ such that $\mathcal{T}=B_T(w,1)$. Notice that $\partial \mathcal{T}=\{v\in V(T)\lvert d_T(v,w)=1\}$ and that $T(\mathcal{T},v)=T(w,v)$  $\forall v\in \partial \mathcal{T}$. Since $G$ satisfies the property \ref{IPV1}, we obtain, as desired, that
		\begin{equation*}
		\begin{split}
		\Fix_G(\mathcal{T})&= \prod_{v\in \partial \mathcal{T}}\Fix_G(T(\mathcal{T},v)).
		\end{split}
		\end{equation*}
		Now, let $\mathcal{A}=\mathfrak{T}_{V_1}\lb 0\rb$ and notice from the point (2) of Lemma \ref{the form of trees in the family} that the hypotheses of Lemma \ref{independence on trees} are satisfied for every $\mathcal{T}\in\mathfrak{T}_{V_1}$. The result follows.
	\end{proof}
	\begin{lemma}\label{the key lemma for fertility of groups with IP V1 (build)}
		Let $G\leq \Aut(T)^+$ and suppose that 
		\begin{equation*}
		\Fix_G(\mathcal{T})=\prod_{v\in \partial \mathcal{T}} \Fix_G(T(\mathcal{T},v))\q \forall \mathcal{T}\in \mathfrak{T}_{V_1}.
		\end{equation*}
		Then, for every integers $ l,l'\geq 1$ with $l'\geq l$, $\forall \mathcal{T}\in \mathfrak{T}_{V_1} \lb l \rb$ and $\forall \mathcal{T}'\in \mathfrak{T}_{V_1}\lb l'\rb$ such that $\mathcal{T}\not \subseteq \mathcal{T}'$, there exists a subtree $\mathcal{R}\subseteq\mathcal{T}$ such that $\mathcal{R}\in \mathfrak{T}_{V_1}\lb l-1\rb$ and
		\begin{equation*}
		\Fix_G(\mathcal{R})\subseteq \Fix_G(\mathcal{T}')\Fix_G(\mathcal{T}).
		\end{equation*}	 
	\end{lemma}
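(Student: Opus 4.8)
The statement is the $\mathrm{IP}_{V_1}$-analogue of Lemma \ref{independence figa nebbia 3.1} (and of Proposition \ref{G(W) for Sk-1}), and the proof should follow the same template: produce an extremal "piece" of $\mathcal{T}$ separating $\mathcal{T}'$ from the rest of $\mathcal{T}$, then invoke the factorization of $\Fix_G(\mathcal{R})$ over the boundary half-trees supplied by the hypothesis. First I would use Lemma \ref{the form of trees in the family}: since $l\geq 1$ we have $\mathcal{T}=\bigcup_{v\in Q_\mathcal{T}}B_T(v,2)$ with $|Q_\mathcal{T}|=l$ and $\mathrm{Con}(Q_\mathcal{T})\cap V_0=Q_\mathcal{T}$. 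Because $\mathcal{T}\not\subseteq\mathcal{T}'$ there is a vertex of $\mathcal{T}$ outside $\mathcal{T}'$; I would take $w\in Q_\mathcal{T}$ to be a vertex of $Q_\mathcal{T}$ that is extremal in the convex hull $\mathrm{Con}(Q_\mathcal{T})$ (a "leaf" of this $V_0$-tree, i.e.\ adjacent within $Q_\mathcal{T}$ to exactly one other element of $Q_\mathcal{T}$) and chosen so that $B_T(w,2)\not\subseteq\mathcal{T}'$ — such a $w$ exists because removing a non-extremal vertex cannot disconnect $Q_\mathcal{T}$ inside $\mathcal{T}'$, and $l\geq 1$ forces at least one extremal vertex; I'd need to argue (using $\mathcal{T}'\in\mathfrak{T}_{V_1}$, hence $\mathcal{T}'$ is also a union of such balls) that the extremal vertex witnessing $\mathcal{T}\not\subseteq\mathcal{T}'$ can be taken extremal.

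Next set $\mathcal{R}=\bigcup_{v\in Q_\mathcal{T}\setminus\{w\}}B_T(v,2)$. By Lemma \ref{alternative form of S}, since $Q_\mathcal{T}\setminus\{w\}$ still satisfies $\mathrm{Con}(Q_\mathcal{T}\setminus\{w\})\cap V_0=Q_\mathcal{T}\setminus\{w\}$ (removing an extremal vertex of $\mathrm{Con}(Q_\mathcal{T})$ keeps the convex-hull condition), we get $\mathcal{R}\in\mathfrak{T}_{V_1}\lb l-1\rb$ and $\mathcal{R}\subsetneq\mathcal{T}$. The plan is then to apply the displayed factorization hypothesis to $\mathcal{R}$:
\begin{equation*}
\Fix_G(\mathcal{R})=\prod_{v\in\partial\mathcal{R}}\Fix_G(T(\mathcal{R},v)).
\end{equation*}
Among the leaves $v\in\partial\mathcal{R}$ there is a unique one, call it $v_0$, lying on the geodesic from $\mathcal{R}$ towards $w$ (the unique $u\in Q_\mathcal{T}\setminus\{w\}$ with $d_T(u,w)=2$ contributes the leaf of $B_T(u,2)$ that points at $w$). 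For every other leaf $v\in\partial\mathcal{R}\setminus\{v_0\}$ the half-tree $T(\mathcal{R},v)$ is disjoint from $B_T(w,2)$ and hence is a half-tree of $T(\mathcal{T},v)$, so $\Fix_G(T(\mathcal{R},v))\subseteq\Fix_G(\mathcal{T})$. For the remaining factor I would show $T(\mathcal{R},v_0)\supseteq\mathcal{T}'$ — this is where the extremality of $w$ is used: the only part of $\mathcal{T}$ not contained in $T(\mathcal{R},v_0)$ is $B_T(w,2)$, and $B_T(w,2)\cap\mathcal{T}'$ is contained in $\mathcal{R}$ (because $w\notin Q_{\mathcal{T}'}$, else $B_T(w,2)\subseteq\mathcal{T}'$), so $\mathcal{T}'\subseteq T(\mathcal{R},v_0)\cup\mathcal{R}\subseteq T(\mathcal{R},v_0)$ after checking the boundary; hence $\Fix_G(T(\mathcal{R},v_0))\subseteq\Fix_G(\mathcal{T}')$. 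Combining the factors gives $\Fix_G(\mathcal{R})\subseteq\Fix_G(\mathcal{T}')\Fix_G(\mathcal{T})$, as required.

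**Main obstacle.** The delicate point is the choice of $w$: I must guarantee simultaneously that $w$ is extremal in $\mathrm{Con}(Q_\mathcal{T})$ (needed so that $\mathcal{R}\in\mathfrak{T}_{V_1}\lb l-1\rb$ via Lemma \ref{alternative form of S}) \emph{and} that $B_T(w,2)$ contains a vertex outside $\mathcal{T}'$ (needed so that $\mathcal{T}'\subseteq T(\mathcal{R},v_0)$). The clean way is: since $\mathcal{T},\mathcal{T}'\in\mathfrak{T}_{V_1}$ with $\mathcal{T}\not\subseteq\mathcal{T}'$ and (when $l'\geq l$) we are \emph{not} assuming $\mathcal{T}'\subseteq\mathcal{T}$, consider the convex hull $\mathrm{Con}(Q_\mathcal{T}\cap Q_{\mathcal{T}'})$ and peel off leaves of $\mathrm{Con}(Q_\mathcal{T})$ that do not lie in $Q_{\mathcal{T}'}$; an extremal such leaf exists because $Q_\mathcal{T}\not\subseteq Q_{\mathcal{T}'}$ together with the fact that $T$ is a tree forces the "excess" of $Q_\mathcal{T}$ over $Q_{\mathcal{T}'}$ to contain an extremal vertex of $\mathrm{Con}(Q_\mathcal{T})$. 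Once $w$ is fixed, everything else is the routine half-tree bookkeeping already done in the proofs of Lemma \ref{independence figa nebbia 3.1} and Proposition \ref{G(W) for Sk-1}, so I expect the write-up to be short.
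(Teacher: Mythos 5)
Your strategy for $l\geq 2$ — peel one ball $B_T(w,2)$ off $\mathcal{T}$, check the convex-hull condition for $Q_\mathcal{T}\setminus\{w\}$ via Lemma \ref{alternative form of S}, and feed $\mathcal{R}$ into the factorization hypothesis with one distinguished leaf absorbing $\Fix_G(\mathcal{T}')$ — is indeed the paper's strategy, but your selection rule for $w$ has a genuine gap. You only require $w$ to be extremal in $\mbox{Con}(Q_\mathcal{T})$ with $B_T(w,2)\not\subseteq\mathcal{T}'$, and you justify $\mathcal{T}'\subseteq T(\mathcal{R},v_0)$ by saying that the only part of $\mathcal{T}$ outside $T(\mathcal{R},v_0)$ is $B_T(w,2)$. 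That controls $\mathcal{T}$, but what must be controlled is $\mathcal{T}'$, which need not be contained in $\mathcal{T}$ and may extend into the branch of $T$ hanging off $w$ away from $\mathcal{R}$. Concretely, take $Q_\mathcal{T}=\{u,w\}$ with $d_T(u,w)=2$ and $Q_{\mathcal{T}'}=\{z_1,z_2\}$ with $w$ between $u$ and $z_1$ and $d_T(w,z_1)=d_T(z_1,z_2)=2$; then $l=l'=2$, $\mathcal{T}\not\subseteq\mathcal{T}'$, both $u$ and $w$ are extremal and neither lies in $Q_{\mathcal{T}'}$, so your rule permits removing $w$. But then $\mathcal{R}=B_T(u,2)$, the distinguished leaf is $w$ itself, and $d_T(z_1,\mathcal{R})=2=d_T(z_1,w)$, so $\mathcal{T}'\not\subseteq T(\mathcal{R},w)$; worse, the conclusion fails for this $\mathcal{R}$: if $G$ is, say, the full type-preserving automorphism group of a thick semi-regular tree (which satisfies the hypothesis), an element $g$ fixing $T(\mathcal{R},w)\cup\{w\}$ pointwise and rotating the branches at $w$ away from $u$ lies in $\Fix_G(\mathcal{R})$ and moves the $V_1$-vertex between $w$ and $z_1$, which belongs to $\mathcal{T}\cap\mathcal{T}'$ and is therefore fixed by every element of $\Fix_G(\mathcal{T}')\Fix_G(\mathcal{T})$. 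So no bookkeeping can rescue this choice of $w$; your fallback suggestion ("peel off extremal leaves not in $Q_{\mathcal{T}'}$") is the same insufficient criterion. The paper instead chooses $v_\mathcal{T}\in Q_\mathcal{T}$ at \emph{maximal} distance from $Q_{\mathcal{T}'}$: this maximality forces all of $\mathcal{T}'$ onto the $\mathcal{R}$-side of the leaf $v_\mathcal{T}$, and it is also what drives the verification that $\mbox{Con}(Q_\mathcal{T}\setminus\{v_\mathcal{T}\})\cap V_0=Q_\mathcal{T}\setminus\{v_\mathcal{T}\}$ (your extremality observation covers that verification, but not the location of $\mathcal{T}'$).

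There is a second, smaller gap: the case $l=1$ is not covered by your construction. There $Q_\mathcal{T}$ is a singleton, your $\mathcal{R}$ would be empty, and the elements of $\mathfrak{T}_{V_1}\lb 0\rb$ are balls $B_T(y,1)$ around vertices $y\in V_1$, which are not of the form handled by Lemma \ref{alternative form of S}. The paper treats this case separately: writing $Q_\mathcal{T}=\{v_\mathcal{T}\}$, it takes the unique $y\in V_1\cap B_T(v_\mathcal{T},1)$ with $\mathcal{T}'\subseteq T(y,v_\mathcal{T})\cup\{v_\mathcal{T}\}$ and sets $\mathcal{R}=B_T(y,1)$, after which the half-tree bookkeeping is as you describe. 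A cosmetic point: the element $u\in Q_\mathcal{T}\setminus\{w\}$ with $d_T(u,w)=2$ need not be unique, and the distinguished leaf of $\mathcal{R}$ is simply $w$ itself; this does not affect the argument.
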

	\begin{proof}
		Let $l,l'\geq 1$ be such that $l'\geq l$, let $\mathcal{T}\in \mathfrak{T}_{V_1} \lb l \rb$ and let $\mathcal{T}'\in \mathfrak{T}_{V_1}\lb l'\rb$ be such that $\mathcal{T}\not \subseteq  \mathcal{T}'$. If $l=1$, $Q_\mathcal{T}=\{v_\mathcal{T}\}$ for some $v_\mathcal{T}\in V_0$. Since $\mathcal{T}\not \subseteq  \mathcal{T}'$, we have that $v_\mathcal{T}\not\in Q_{\mathcal{T}'}$. Hence $d_T(v_\mathcal{T},Q_{\mathcal{T}'})\geq 2$. In particular, there exists a unique vertex $w\in V_1\cap B_T(v_\mathcal{T},1)$ such that $\mathcal{T}'\subseteq T(w,v_\mathcal{T})\cup \{v_\mathcal{T}\}$. Let $\mathcal{R}= B_T(w,1)$ and notice that $\mathcal{R}\in \mathfrak{T}_{V_1}\lb 0\rb$. Our hypotheses on $G$ ensure that
		$$\Fix_G(\mathcal{R})= \prod_{v\in \partial \mathcal{R}} \Fix_G(T(\mathcal{R},v)).$$ 
		On the other hand, $\partial \mathcal{R}\subseteq \partial \mathcal{T} \cup \{v_\mathcal{T}\}$ and $T(\mathcal{R},v)=T(w,v)$ $\forall v\in \partial \mathcal{R}$. This proves that $\Fix_G(T(\mathcal{R},v))\subseteq \Fix_G(\mathcal{T})$ for every leaf $v\in \partial \mathcal{R} - \{v_\mathcal{T}\}$ and since $ \mathcal{T}'\subseteq T(w,v_\mathcal{T})\cup\{v_\mathcal{T}\} =T(\mathcal{R},v_\mathcal{T})\cup\{v_\mathcal{T}\}$ we also have that $\Fix_G(T(\mathcal{R},v_\mathcal{T}))\subseteq \Fix_G(\mathcal{T}')$. This proves, as desired, that $$\Fix_G(\mathcal{R})=\prod_{v\in \partial \mathcal{R}} \Fix_G(T(\mathcal{R},v))\subseteq \Fix_G(\mathcal{T}')\Fix_G(\mathcal{T}).$$ 
		
		If $l\geq 2$, we have that $\modu{Q_\mathcal{T}}\geq 2$ and since $\mathcal{T}\not \subseteq \mathcal{T}'$, $Q_\mathcal{T}\not \subseteq Q_{\mathcal{T}'}$. In particular, there exists $v_\mathcal{T}\in Q_\mathcal{T}$ such that $d_T(v_\mathcal{T},Q_{\mathcal{T}'})=\max\{d_T(v,Q_{\mathcal{T}'})\lvert v\in Q_\mathcal{T}\}$. On the other hand, since $Q_\mathcal{T},Q_{\mathcal{T}'}\subseteq V_0$ the distance $d_T(v_\mathcal{T},Q_{\mathcal{T}'})$ must be even, hence $d_T(v_\mathcal{T},Q_{\mathcal{T}'})\geq 2$. Now, let $Q_{\mathcal{R}}=Q_\mathcal{T}- \{ v_\mathcal{T}\}$. Notice that $\mbox{Con}(Q_{\mathcal{R}})\cap V_0=Q_{\mathcal{R}}$. Indeed, suppose for a contradiction that there exists $w\in \mbox{Con}(Q_{\mathcal{R}})\cap V_0$ such that $w\not\in Q_{\mathcal{R}}$. Since $\mathcal{T}\in \mathfrak{T}_{V_1}\lb \modu{Q_\mathcal{T}}\rb$, Lemma \ref{alternative form of S} guarantees that $\mbox{Con}(Q_\mathcal{T})\cap V_0=Q_\mathcal{T}$. In particular, we observe that $w\in Q_\mathcal{T}-Q_{\mathcal{R}}$ and since $Q_{\mathcal{R}}= Q_\mathcal{T} -\{v_\mathcal{T}\}$ this implies that $w=v_\mathcal{T}$.  In particular, $v_\mathcal{T}\in\mbox{Con}(Q_{\mathcal{R}})\cap V_0$, there exists $w_1,w_2\in Q_\mathcal{T}- \{v_\mathcal{T}\}$ such that $(\lb w_1, w_2\rb\cap V_0)-\{w_1,w_2\} =\{v_\mathcal{T}\}$. If $l=2$, this leads to a clear contradiction since $Q_\mathcal{T}$ contains only two elements. On the other hand, if $l\geq 3$, we obtain a contradiction with our choice of $v_\mathcal{T}$. Indeed, for $i=1,2$ we have that $d_T(w_i, Q_{\mathcal{T}'})\leq d_T(v_\mathcal{T},Q_{\mathcal{T}'})$. Since $v_\mathcal{T}\not \in \mathcal{T}'$, this implies the existence of $\tilde{w}_i\in Q_{\mathcal{T}'}\cap T(w_i,v_\mathcal{T})$ and since there exists a unique simple path between $\tilde{w}_1$ and $\tilde{w}_2$, we obtain that $v_\mathcal{T}\in\lb \tilde{w}_1, \tilde{w}_2 \rb \cap V_0 \subseteq \mbox{Con}(Q_{\mathcal{T}'}) \cap V_0=Q_{\mathcal{T}'}\cap V_0$. This is a contradiction since $v_\mathcal{T}\not \in Q_{\mathcal{T}'}$ which proves proves that $\mbox{Con}(Q_{\mathcal{R}})\cap V_0=Q_{\mathcal{R}}$. In particular, Lemma \ref{alternative form of S} ensures that $\mathcal{R}=\bigcup_{w\in Q_{\mathcal{R}}}B_T(w,2)\in \mathfrak{T}_{V_1}\lb l-1\rb$. On the other hand, by choice of $v_\mathcal{T}$, we have $d_T(v_\mathcal{T}, w)\geq d_T(v,w)$ $\forall v\in Q_\mathcal{T}$, $\forall w\in Q_{\mathcal{T}'}$ which implies that  $\Fix_G(T(\mathcal{R},v_\mathcal{T}))\subseteq  \Fix_G(\mathcal{T}')$. On the other hand, $\partial \mathcal{R} \subseteq \partial \mathcal{T} \sqcup \{v_\mathcal{T}\}$ and $T(\mathcal{R},v)=T(\mathcal{T},v)$  $\forall v\in \partial \mathcal{R} \cap \partial \mathcal{T}$. In particular, since  $\Fix_G(\mathcal{T})=\prod_{v\in \partial \mathcal{T}} \Fix_G(T(\mathcal{T},v))$, we obtain that
		\begin{equation*}
		\begin{split}
		\Fix_G(\mathcal{R})&= \prod_{v\in \partial {\mathcal{R}}} \Fix_G(T(\mathcal{R},v))\subseteq \Fix_G(T(\mathcal{R},v_\mathcal{T}))\Fix_G(\mathcal{T})\subseteq \Fix_G(\mathcal{T}')\Fix_G(\mathcal{T}). 
		\end{split}
		\end{equation*}   
	\end{proof}
	\noindent The following result plays a similar role as Proposition \ref{G(W) for Sk-1} in Chapter \ref{application IPk}.
	\begin{proposition}\label{le lemme ultime de fertilité pour IPV1}
		Let $G\leq \Aut(T)^+$ be a closed subgroup satisfying the hypothesis \ref{Hypothese HV1} and the property \ref{IPV1}. Then, for ever integers $l,l'\geq 1$ such that $l'\geq l$, for every $U$ in the conjugacy class of an element of $\mathcal{S}_{V_1}\lb l \rb$ and every $V$ in the conjugacy class of an element of $\mathcal{S}_{V_1}\lb l'\rb$ such that $V\not \subseteq U$, there exists $W \in \mathcal{S}_{V_1}\lb l-1\rb$ such that $U\subseteq W\subseteq VU$.
	\end{proposition}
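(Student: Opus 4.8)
The plan is to reduce Proposition \ref{le lemme ultime de fertilité pour IPV1} to the combinatorial statement of Lemma \ref{the key lemma for fertility of groups with IP V1 (build)}, exactly as the analogous statement in Chapter \ref{application IPk} is reduced to Proposition \ref{G(W) for Sk-1}. First I would observe that, by the preceding Proposition (the one just above, asserting $\Fix_G(\mathcal{T})=\prod_{v\in\partial\mathcal{T}}\Fix_G(T(\mathcal{T},v))$ for every $\mathcal{T}\in\mathfrak{T}_{V_1}$), the hypothesis of Lemma \ref{the key lemma for fertility of groups with IP V1 (build)} is satisfied whenever $G$ has the property \ref{IPV1}. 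So that lemma is available.

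Next, let $U$ be conjugate to an element of $\mathcal{S}_{V_1}\lb l\rb$ and $V$ conjugate to an element of $\mathcal{S}_{V_1}\lb l'\rb$ with $l'\geq l\geq 1$ and $V\not\subseteq U$. Since $\mathfrak{T}_{V_1}$ is stable under the action of $\Aut(T)^+$ (Lemma \ref{invariance de TTT par rapport a l'action de AutT plus}) and $G\leq\Aut(T)^+$, every conjugate of $\Fix_G(\mathcal{R})$ by an element of $G$ is again of the form $\Fix_G(g\mathcal{R})$ with $g\mathcal{R}\in\mathfrak{T}_{V_1}\lb l\rb$; combined with Lemma \ref{la forme des Sl pour IPV1}, the conjugacy class of an element of $\mathcal{S}_{V_1}\lb l\rb$ consists exactly of the fixators of elements of $\mathfrak{T}_{V_1}\lb l\rb$. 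Hence there exist $\mathcal{T}\in\mathfrak{T}_{V_1}\lb l\rb$ and $\mathcal{T}'\in\mathfrak{T}_{V_1}\lb l'\rb$ with $U=\Fix_G(\mathcal{T})$ and $V=\Fix_G(\mathcal{T}')$. From $V\not\subseteq U$ and the hypothesis \ref{Hypothese HV1} we deduce $\mathcal{T}\not\subseteq\mathcal{T}'$ (if $\mathcal{T}\subseteq\mathcal{T}'$ then $\Fix_G(\mathcal{T}')\leq\Fix_G(\mathcal{T})$, a contradiction). Now Lemma \ref{the key lemma for fertility of groups with IP V1 (build)} applies and yields a subtree $\mathcal{R}\subseteq\mathcal{T}$ with $\mathcal{R}\in\mathfrak{T}_{V_1}\lb l-1\rb$ and
\begin{equation*}
\Fix_G(\mathcal{R})\subseteq\Fix_G(\mathcal{T}')\Fix_G(\mathcal{T})=VU.
\end{equation*}
Setting $W=\Fix_G(\mathcal{R})$, Lemma \ref{la forme des Sl pour IPV1} gives $W\in\mathcal{S}_{V_1}\lb l-1\rb$; and since $\mathcal{R}\subseteq\mathcal{T}$ we get $U=\Fix_G(\mathcal{T})\subseteq\Fix_G(\mathcal{R})=W$. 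Thus $U\subseteq W\subseteq VU$, as required.

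The genuine content is entirely pushed into Lemma \ref{the key lemma for fertility of groups with IP V1 (build)}, whose proof (a case analysis on $l=1$ versus $l\geq 2$, choosing a leaf $v_\mathcal{T}\in Q_\mathcal{T}$ farthest from $Q_{\mathcal{T}'}$ and invoking Lemma \ref{alternative form of S} to verify that $Q_\mathcal{T}\setminus\{v_\mathcal{T}\}$ is again convex) is already given in the excerpt; so here there is no real obstacle — the only point requiring a small verification is that conjugation stays inside $\mathfrak{T}_{V_1}$ with the same depth, which is handled by Lemmas \ref{invariance de TTT par rapport a l'action de AutT plus} and \ref{la forme des Sl pour IPV1}. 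I would also note for completeness that the hypothesis $l\geq 1$ is used twice: once to guarantee that $\mathcal{S}_{V_1}\lb l-1\rb$ is nonempty and meaningful, and once because Lemma \ref{the key lemma for fertility of groups with IP V1 (build)} itself requires $l\geq 1$ (its conclusion produces an element of depth $l-1$, which only makes sense then).
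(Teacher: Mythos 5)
Your proposal is correct and follows essentially the same route as the paper: identify $U=\Fix_G(\mathcal{T})$, $V=\Fix_G(\mathcal{T}')$ via Lemmas \ref{invariance de TTT par rapport a l'action de AutT plus} and \ref{la forme des Sl pour IPV1}, use the hypothesis \ref{Hypothese HV1} to get $\mathcal{T}\not\subseteq\mathcal{T}'$, and then invoke Lemma \ref{the key lemma for fertility of groups with IP V1 (build)} (whose hypothesis is supplied by the proposition stating that \ref{IPV1} yields the wing factorization for all $\mathcal{T}\in\mathfrak{T}_{V_1}$). This matches the paper's argument; the only cosmetic over-statement is the claim that the conjugacy class consists \emph{exactly} of fixators of trees in $\mathfrak{T}_{V_1}\lb l\rb$, where only the inclusion you actually use is needed.
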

	\begin{proof}
		Lemma \ref{invariance de TTT par rapport a l'action de AutT plus} ensures that $\mathfrak{T}_{V_1}$ is stable under the action of $G$. In particular, by Lemma \ref{la forme des Sl pour IPV1} there exists $\mathcal{T}\in \mathfrak{T}_{V_1}\lb l\rb$ and $\mathcal{T}'\in \mathfrak{T}_{V_1}\lb l'\rb$ such that $U=\Fix_G(\mathcal{T})$ and $V=\Fix_G(\mathcal{T}')$. Furthermore, since $G$ satisfies the hypothesis \ref{Hypothese HV1} and since $V\not \subseteq U$ we have $\mathcal{T}\not \subseteq\mathcal{T}'$. In particular, since $G$ satisfies the property ${\rm IP}_{V_1}$, Lemma \ref{the key lemma for fertility of groups with IP V1 (build)} ensures the existence of $\mathcal{R}\in \mathcal{T}\lb l-1\rb$ such that $\mathcal{R}\subseteq \mathcal{T}$ and $\Fix_G(\mathcal{R})\subseteq \Fix_G(\mathcal{T}')\Fix_G(\mathcal{T})$.The result follows since, by  Lemma \ref{la forme des Sl pour IPV1},
		 $W=\Fix_G(\mathcal{R})\in \mathcal{S}_{V_1}\lb l-1\rb$ .
	\end{proof}
	We are finally ready to prove the main result of this Chapter.
	\begin{proof}[Proof of Theorem \ref{Theorem pour IPV1}]
		To prove that $\mathcal{S}_{V_1}$ factorizes$^+$ at all depth $l\geq1$ we shall successively verify the three conditions of Definition \ref{definition olsh facto}. 
		
		First, we need to prove that for every $U$ in the conjugacy class of an element of $\mathcal{S}_{V_1}\lb l \rb$ and every $V$ in the conjugacy class of an element of $\mathcal{S}_{V_1}$ such that $V \not\subseteq U$, there exists a $W$ in the conjugacy class of an element of $\mathcal{S}_{V_1}\lb l -1\rb$ such that $U\subseteq W \subseteq V U$. Let $U$, $V$ be as above. If $V$ is conjugate to an element of $\mathcal{S}_{V_1}\lb l'\rb$ for some $l'\geq l$ the results follows directly from Proposition \ref{le lemme ultime de fertilité pour IPV1}. Therefore, we suppose that $l'\lneq l$. Since $\mathfrak{T}_{V_1}$ is stable under the action of $G$ (Lemma \ref{invariance de TTT par rapport a l'action de AutT plus}), Lemma \ref{la forme des Sl pour IPV1} ensures the existence of $\mathcal{T}\in \mathfrak{T}_{V_1}\lb l \rb$ and $\mathcal{T}'\in \mathfrak{T}_{V_1}\lb l'\rb$ such that $U=\Fix_G(\mathcal{T})$ and $V=\Fix_G(\mathcal{T}')$. We have two cases.
		
		Either $\mathcal{T}'\subseteq \mathcal{T}$. In that case, we prove the existence of a finite subtree $\mathcal{R}\in \mathfrak{T}_{V_1}\lb l-1\rb$ such that $\mathcal{T}'\subseteq \mathcal{R}\subsetneq \mathcal{T}$ by induction on $l-l'$. If $l'-l=1$ we can take $\mathcal{R}=\mathcal{T}'$ and the result is trivial. On the other hand, if $l'-l\geq 2$, notice that $Q_\mathcal{T'}\subseteq Q_{\mathcal{T}'}$ and Lemma \ref{the form of trees in the family} ensures that $Q_{\mathcal{T}}- Q_\mathcal{T'}$ contains $l'-l$ vertices. If $Q_{\mathcal{T}'}$ is empty, there exists $v\in Q_\mathcal{T}$ such that $\mathcal{T}'\subseteq B_T(v,2)$. In particular, we let $\mathcal{P}=B_T(v,2)$ and notice that $\mathcal{P}\in\mathfrak{T}_{V_1}\lb l'+1\rb$ and that $\mathcal{T}'\subseteq \mathcal{P}\subsetneq \mathcal{T}$. If $Q_{\mathcal{T}'}$ is not empty, Lemma \ref{alternative form of S} ensures that $\mbox{Con}(Q_{\mathcal{T}})\cap V_0=Q_\mathcal{T}$ and at least one vertex $v\in Q_{\mathcal{T}}- Q_{\mathcal{T}'}$ satisfies that $d_T(v,Q_{\mathcal{T}'})=2$. We let $Q= Q_{\mathcal{T}'}\cup \{v\}$ and $\mathcal{P}=\bigcup_{w\in Q}B_T(w,2)$. Notice that $\mbox{Con}(Q)\cap V_0=Q$. In particular, Lemma \ref{alternative form of S} ensures that $\mathcal{P}\in \mathfrak{T}_{V_1}\lb l'+1\rb$ and we have, by construction, that $\mathcal{T}'\subseteq \mathcal{P}\subsetneq \mathcal{T}$. In both cases ($Q_\mathcal{T}$ is empty or not) our induction hypothesis ensures the existence of a finite subtree $\mathcal{R}\in \mathfrak{T}_{V_1}\lb l-1\rb$ such that $\mathcal{T}'\subseteq\mathcal{P}\subseteq \mathcal{R}\subsetneq \mathcal{T}$. In particular, we have $\Fix_G(\mathcal{R})\subseteq \Fix_G(\mathcal{T}')$ which implies, as desired, that
		$$\Fix_G(\mathcal{R})\subseteq \Fix_G(\mathcal{T}')\subseteq \Fix_G(\mathcal{T}')\Fix_G(\mathcal{T}).$$
		
		Or else, $\mathcal{T}'\not\subseteq \mathcal{T}$. If $Q_{\mathcal{T}'}=\es$, there exists a vertex $v\in V_0-Q_{\mathcal{T}}$ such that $\mathcal{T}'\subseteq B_T(v,2)$. In particular, we choose a set $Q\subseteq V_0$ of order $l$ containing $v$, such that $\mbox{Con}(Q)\cap V_0=Q$ and we set $\mathcal{P}=\bigcup_{w\in Q}B_T(w,2)$. Similarly, if $Q_{\mathcal{T}'}\not=\es$, since $\mbox{Con}(Q_{\mathcal{T}'})\cap V_0=Q_{\mathcal{T}'}$ by Lemma \ref{the form of trees in the family}, there exists a finite set $Q\subseteq V_0$ of order $l$ containing $Q_{\mathcal{T}'}$ and such that $\mbox{Con}(Q)\cap V_0=Q$. We set $\mathcal{P}=\bigcup_{w\in Q}B_T(w,2)$. In both cases ($Q_\mathcal{T}$ is empty or not), Lemma \ref{alternative form of S} ensures that $\mathcal{P}\in \mathfrak{T}_{V_1}\lb l \rb$ and we have by construction that  $\mathcal{T}'\subseteq \mathcal{P}$. In particular, Proposition \ref{le lemme ultime de fertilité pour IPV1} applied to $\Fix_G(\mathcal{T})$ and $\Fix_G(\mathcal{P})$ ensures the existence of a $W \in \mathcal{S}_{V_1}\lb l-1\rb$ such that $\Fix_G(\mathcal{T})\subseteq W\subseteq \Fix_G(\mathcal{P})\Fix_G(\mathcal{T})$. On the other hand, $\mathcal{T}'\subseteq \mathcal{P}$ which implies that $\Fix_G(\mathcal{P})\leq \Fix_G(\mathcal{T}')$. This proves the first condition.
			
		Next, we need to prove that $N_{G}(U, V)= \{g\in {G} \lvert g^{-1}Vg\subseteq U\}$ is compact for every $V$ in the conjugacy class of an element of $\mathcal{S}_{V_1}$. Just as before, notice that $V=\Fix_G(\mathcal{T}')$ for some $\mathcal{T}'\in \mathfrak{T}_{V_1}\lb l'\rb$. Since $G$ satisfies the hypothesis \ref{Hypothese HV1} notice that
		\begin{equation*}
		\begin{split}
			N_{G}(U, V)&= \{g\in G \lvert g^{-1}Vg\subseteq U\}= \{g\in G \lvert g^{-1}\Fix_{G}(\mathcal{T}')g\subseteq \Fix_{G}(\mathcal{T})\}\\
			&= \{g\in G \lvert \Fix_G(g^{-1}\mathcal{T}')\subseteq \Fix_G(\mathcal{T})\}= \{g\in G \lvert g\mathcal{T}\subseteq \mathcal{T}'\}.
		\end{split}
		\end{equation*}	
		Since both $\mathcal{T}$ and $\mathcal{T}'$ are finite subtrees of $T$, this implies that $N_G(U,V)$ is a compact subset of $G$ which proves the second condition.

		Finally, we need to prove that for every $W$ in the conjugacy class of an element of $\mathcal{S}_{V_1}\lb l-1\rb$ such that $U\subseteq W$ we have
		\begin{equation*}
			W\subseteq N_G(U,U) =\{g\in G \mid g^{-1}Ug\subseteq U\}.
		\end{equation*} 
		The same reasoning as before ensures the existence of some $\mathcal{R}\in \mathfrak{T}_{V_1}$ such that $W = \Fix_G(\mathcal{R})$. On the other hand, since $U\subseteq W$ and since $G$ satisfies the hypothesis \ref{Hypothese HV1}, notice that $\mathcal{R}\subseteq \mathcal{T}$. We have multiple cases. If $l=1$, there exists vertices $v\in V_0$ and $w\in V_1$ such that $\mathcal{T}=B_T(v,2)$ and $\mathcal{R}=B_T(w,1)$. In particular, since $\mathcal{R}\subseteq \mathcal{T}$, this implies that $v\in \mathcal{R}$ and therefore that
		\begin{equation*}
			\begin{split}
			\Fix_G(\mathcal{R})&\subseteq \Fix_G(v)=\{h\in G \lvert hB_T(v,2)\subseteq B_T(v,2)\}=\{h\in G\lvert h\mathcal{T}\subseteq \mathcal{T}\}.
			\end{split}
		\end{equation*} 
		Similarly, if $l\geq 2$, notice that $Q_{\mathcal{T}}$ and $Q_{\mathcal{R}}$ are non-empty sets. Furthermore, since $\mathcal{R}\subseteq \mathcal{T}$, we have that $Q_{\mathcal{R}}\subseteq Q_{\mathcal{T}}$ and there exists a unique $v\in Q_{\mathcal{T}}-Q_{\mathcal{R}}$. On the other hand, Lemma \ref{alternative form of S}, implies that $d_T(v,Q_\mathcal{R})=2$ and since $\mathcal{R}=\bigcup_{w\in Q_{\mathcal{R}}}B_T(w,2)$ we observe that $\Fix_G(\mathcal{R})\subseteq \Fix_G(Q_{\mathcal{T}})$. Since $\mathcal{T}=\bigcup_{w\in Q_{\mathcal{T}}}B_T(w,2)$, this implies that
		\begin{equation*}
			\begin{split}
			\Fix_G(\mathcal{R})\subseteq \{h\in G\lvert h\mathcal{T}\subseteq \mathcal{T}\}.
			\end{split}
		\end{equation*} 
		In both cases, since $G$ satisfies the hypothesis \ref{Hypothese HV1} we obtain that 
		\begin{equation*}
			\begin{split}
			\Fix_G(\mathcal{R})&\subseteq  \{h\in G \lvert h\mathcal{T}\subseteq \mathcal{T}\}= \{h\in G \lvert \Fix_G(\mathcal{T})\subseteq \Fix_G(h\mathcal{T})\}\\
			&=\{h\in G \lvert h^{-1}\Fix_G(\mathcal{T})h\subseteq \Fix_G(\mathcal{T})\}= N_G(U,U)
			\end{split}
		\end{equation*} which proves the third condition.
	\end{proof}
	In particular, if $G\leq\Aut(T)^+$ is a closed non-discrete unimodular subgroup satisfying the hypothesis \ref{Hypothese HV1} and the property \ref{IPV1} Theorem \ref{la version paki du theorem de classification} provides a bijective correspondence between the equivalence classes of irreducible representations of $G$ at depth $l\geq1$ with seed $C\in \mathcal{F}_{\mathcal{S}_{V_1}}$ and the $\mathcal{S}_{V_1}$-standard representations of $\Aut_G(C)$. As a concrete example, the group $\Aut(T)^+$ of type preserving automorphisms of a $(d_0,d_1)$-semi-regular tree $T$ with $d_0,d_1\geq 3$ satisfies the hypotheses of Theorem \ref{Theorem pour IPV1}. Other examples will be constructed in Chapter \ref{Application to universal groups of right-angled buildings}. 
	\subsection{Existence of $\mathcal{S}_{V_1}$-standard representations}\label{existence for IPV1}
	Let $T$ be a locally finite tree and let $V(T)=V_0\sqcup V_1$ be a bipartition of $T$ such that every edge of $T$ contains exactly one vertex in each $V_i$. Let $\mathfrak{T}_{V_1}$ be the family of subtrees defined page \pageref{page Tv1}, let $G$ be a closed non-discrete unimodular subgroup of $\Aut(T)^+$ and let $\mathcal{S}_{V_1}=\{\Fix_G(\mathcal{T})\lvert \mathcal{T}\in \mathfrak{T}_{V_1}\}$. If $G\leq \Aut(T)^+$ is a closed unimodular subgroup  satisfying the hypothesis \ref{Hypothese HV1} and the property \ref{IPV1} we have shown that $\mathcal{S}_{V_1}$ is a generic filtration of $G$ that factorizes$^+$ at all depth $l\geq 1$. In particular, Theorem \ref{la version paki du theorem de classification} ensures the existence of a bijective correspondence between the equivalence classes of irreducible representations of $G$ at depth $l\geq 1$ with seed $C\in \mathcal{F}_{\mathcal{S}_{V_1}}$ and the $\mathcal{S}_{V_1}$-standard representations of $\Aut_{G}(C)$. The following result treats the existence of $\mathcal{S}_{V_1}$-standard representations of $\Aut_{\mathcal{S}_{V_1}}(C)$ for all $C\in \mathcal{F}_{\mathcal{S}_{V_1}}$ at height $l\geq 1$ provided that $G$ satisfy some geometrical property.
	\begin{proposition}
		Let $G\leq \Aut(T)^+$ be a closed non-discrete unimodular subgroup satisfying the hypothesis \ref{Hypothese HV1} and the property \ref{IPV1}, let $l\geq 1$ and let $\mathcal{T}\in \mathfrak{T}_{V_1}\lb l\rb$ be such that for every $\mathcal{R}\in \mathfrak{T}_{V_1} \lb l-1\rb$ with $\mathcal{R}\subseteq \mathcal{T}$ we have $$\Fix_G(\mathcal{R})\subsetneq\Stab_G(\mathcal{T})=\{g\in G\lvert g\mathcal{T}\subseteq \mathcal{T}\}.$$
		Then, there exists an $\mathcal{S}_{V_1}$-standard representation of $\Aut_{\mathcal{S}_{V_1}}(\mathcal{C}(\Fix_G(\mathcal{T})))$.
	\end{proposition}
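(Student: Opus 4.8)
The plan is to pass to the finite quotient $\Aut_G(\mathcal{C}(U))$, where $U=\Fix_G(\mathcal{T})$, and then feed the finite‑group existence criterion, Proposition~\ref{existence criterion}, with $\mathcal{T}$ itself and a suitable family of proper subtrees. First I would set up the dictionary: since $G$ satisfies~\ref{Hypothese HV1}, the computation carried out for the third condition in the proof of Theorem~\ref{Theorem pour IPV1} gives $N_G(U)=\Stab_G(\mathcal{T})$, hence $\Aut_G(\mathcal{C}(U))\cong\Stab_G(\mathcal{T})/\Fix_G(\mathcal{T})$; and combining Lemma~\ref{invariance de TTT par rapport a l'action de AutT plus}, Lemma~\ref{la forme des Sl pour IPV1} and~\ref{Hypothese HV1} one identifies $\tilde{\mathfrak{H}}_{\mathcal{S}_{V_1}}(U)=\{\Fix_G(\mathcal{R})\mid \mathcal{R}\in\mathfrak{T}_{V_1}\lb l-1\rb,\ \mathcal{R}\subseteq\mathcal{T}\}$. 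So the goal becomes: construct an irreducible representation of $\Stab_G(\mathcal{T})/\Fix_G(\mathcal{T})$ with no non‑zero $\Fix_G(\mathcal{R})/\Fix_G(\mathcal{T})$‑invariant vector for every such $\mathcal{R}$.

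In the generic case $l\geq 2$ I would use the family of maximal proper subtrees of $\mathcal{T}$ lying in $\mathfrak{T}_{V_1}\lb l-1\rb$. By Lemmas~\ref{the form of trees in the family} and~\ref{alternative form of S}, $\mathcal{T}=\bigcup_{v\in Q_\mathcal{T}}B_T(v,2)$ with $\mathrm{Con}(Q_\mathcal{T})\cap V_0=Q_\mathcal{T}$ and $|Q_\mathcal{T}|=l$, and the $\mathcal{R}\in\mathfrak{T}_{V_1}\lb l-1\rb$ with $\mathcal{R}\subseteq\mathcal{T}$ are exactly the subtrees $\mathcal{T}_v:=\bigcup_{u\in Q_\mathcal{T}-\{v\}}B_T(u,2)$ as $v$ ranges over the leaves of the finite tree $\mathrm{Con}(Q_\mathcal{T})$; there are at least two such leaves, distinct ones satisfy $\mathcal{T}_v\cup\mathcal{T}_{v'}=\mathcal{T}$, and $\Stab_G(\mathcal{T})=N_G(U)$ acts on $Q_\mathcal{T}$ hence permutes the family $\{\mathcal{T}_v\}$. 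One has $\Fix_G(\mathcal{T})\subsetneq\Fix_G(\mathcal{T}_v)$ by~\ref{Hypothese HV1} (since $\mathcal{T}_v\subsetneq\mathcal{T}$) and $\Fix_G(\mathcal{T}_v)\subsetneq\Stab_G(\mathcal{T})$ by the standing hypothesis applied to $\mathcal{R}=\mathcal{T}_v$. Proposition~\ref{existence criterion} then yields an irreducible representation $\sigma$ of $\Stab_G(\mathcal{T})/\Fix_G(\mathcal{T})\cong\Aut_G(\mathcal{C}(U))$ with no non‑zero $\Fix_G(\mathcal{T}_v)/\Fix_G(\mathcal{T})$‑invariant vector for any leaf $v$; as the $\Fix_G(\mathcal{T}_v)$ exhaust $\tilde{\mathfrak{H}}_{\mathcal{S}_{V_1}}(U)$, this $\sigma$ is $\mathcal{S}_{V_1}$‑standard.

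The degenerate case $l=1$ has to be treated separately, since then $\mathcal{T}=B_T(v,2)$ for a single $v\in V_0$, $\Stab_G(\mathcal{T})=\Fix_G(v)$ and $\tilde{\mathfrak{H}}_{\mathcal{S}_{V_1}}(U)=\{\Fix_G(B_T(w,1))\mid w\text{ adjacent to }v\}$, whose members are fixators of subtrees that do not pairwise reunite to $\mathcal{T}$. Here I would apply Proposition~\ref{existence criterion} to $\mathcal{T}$ and the family $\{\mathcal{S}_w\}$, where $\mathcal{S}_w$ is obtained from $B_T(v,2)$ by deleting the neighbours of $w$ other than $v$: distinct $\mathcal{S}_w$ reunite to $\mathcal{T}$, $\Fix_G(v)$ permutes them, and $B_T(w',1)\subseteq\mathcal{S}_w$ whenever $w'\neq w$ — so that any irreducible representation of $\Aut_G(\mathcal{C}(U))$ killing all $\Fix_G(\mathcal{S}_w)/U$‑invariants is automatically $\mathcal{S}_{V_1}$‑standard — while the two strict inclusions $\Fix_G(\mathcal{T})\subsetneq\Fix_G(\mathcal{S}_w)\subsetneq\Fix_G(v)$ needed to apply the proposition must be extracted from property~\ref{IPV1} at $w$ (which, via $\Fix_G(T(w,v))\subseteq\Fix_G(\mathcal{S}_w)$, supplies elements fixing $\mathcal{S}_w$ but not $B_T(v,2)$) and the standing hypothesis; this is where the $l=1$ case is most delicate and may require a mild transitivity assumption on the local action of $G$ at $v$, exactly as in Lemma~\ref{existence of standard for vertices}.

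I expect the heart of the proof to be the combinatorial bookkeeping of the second paragraph: showing that the elements of $\mathfrak{T}_{V_1}\lb l-1\rb$ below $\mathcal{T}$ are precisely the ``delete one leaf‑cell $B_T(v,2)$'' subtrees, that two distinct such subtrees reunite to $\mathcal{T}$, and that $\mathrm{Con}(Q_\mathcal{T})$ has at least two leaves — all consequences of Lemmas~\ref{the form of trees in the family} and~\ref{alternative form of S} but requiring care. The other genuine obstacle is the $l=1$ case, where the pairwise‑union requirement of Proposition~\ref{existence criterion} forces the alternative family $\{\mathcal{S}_w\}$ and the properness of $\Fix_G(\mathcal{T})\subsetneq\Fix_G(\mathcal{S}_w)$ has to be deduced from~\ref{IPV1}.
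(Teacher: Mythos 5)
For $l\geq 2$ your argument is exactly the paper's: same identification of $N_G(U)=\Stab_G(\mathcal{T})$ and of $\tilde{\mathfrak{H}}_{\mathcal{S}_{V_1}}(U)$ with the fixators of the elements of $\mathfrak{T}_{V_1}\lb l-1\rb$ contained in $\mathcal{T}$, same family of ``delete one leaf of $\mathrm{Con}(Q_\mathcal{T})$'' subtrees fed into Proposition~\ref{existence criterion}, and your combinatorial justification of the pairwise unions and of the two strict inclusions (via \ref{Hypothese HV1} and the standing hypothesis) is correct.

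The genuine gap is in your $l=1$ case. First, the inclusion you invoke is false with the paper's convention: $T(w,v)=\{u\mid d_T(w,u)<d_T(v,u)\}$ is the half-tree \emph{containing} $w$, so an element of $\Fix_G(T(w,v))$ fixes $w$ and everything beyond it but is free to move $v$'s other neighbours and their outer vertices, i.e.\ it need not fix $\mathcal{S}_w$ at all; the inclusion that does hold is $\Fix_G(T(v,w))\subseteq\Fix_G(\mathcal{S}_w)$. Second, and more importantly, even after this correction you have not produced what you need: a non-trivial element of $\Fix_G(\mathcal{S}_w)$ moving an outer neighbour of $w$, i.e.\ the strict inclusion $\Fix_G(B_T(v,2))\subsetneq\Fix_G(\mathcal{S}_w)$, and your fallback of adding ``a mild transitivity assumption on the local action at $v$'' would prove a strictly weaker statement than the proposition, which carries no such hypothesis. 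The paper handles this case without any transitivity: it takes the family $\mathcal{T}_u=\bigcup_{w\in\partial B_T(v,1)-\{u\}}B_T(w,1)$ (deleting the whole ball $B_T(u,1)$ rather than only the outer neighbours of one $w$), checks that $\Fix_G(v)$ permutes these subtrees and that $\mathcal{T}_i\cup\mathcal{T}_j=\mathcal{T}$, and obtains $\Fix_G(\mathcal{T})\subsetneq\Fix_G(\mathcal{T}_i)\subsetneq\Stab_G(\mathcal{T})$ from the \ref{IPV1}-factorization of fixators over leaves (Lemma~\ref{independence on trees}) together with the standing hypothesis, before applying Proposition~\ref{existence criterion} and then passing from the $\Fix_G(\mathcal{T}_i)$ to the groups $\Fix_G(B_T(w,1))\in\tilde{\mathfrak{H}}_{\mathcal{S}_{V_1}}(U)$ by the containment $B_T(w,1)\subseteq\mathcal{T}_i$ for $w\neq u$. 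To complete your proof you would either have to supply that missing factorization argument for your family $\{\mathcal{S}_w\}$ or switch to the paper's family; as written, the $l=1$ case is not proved.
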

	\begin{proof}
	Let $C=\mathcal{C}(\Fix_G(\mathcal{T}))$. Lemma \ref{la forme des Sl pour IPV1} ensures that $C$ has height $l$ in $\mathcal{F}_{\mathcal{S}_{V_1}}$. Since $G$ satisfies the hypothesis \ref{Hypothese HV1} and as a consequence of Lemma \ref{la forme des Sl pour IPV1}, notice that $N_G(\Fix_G(\mathcal{T}),\Fix_G(\mathcal{T}))=\{g\in G \mid g\mathcal{T}\subseteq\mathcal{T}\}=\Stab_G(\mathcal{T})$, that $\Aut_{G}(C)\cong \Stab_G(\mathcal{T})/\Fix_G(\mathcal{T})$ and that
	\begin{equation*}
	\begin{split}
	\tilde{\mathfrak{H}}_{\mathcal{S}_{V_1}}(\Fix_G(\mathcal{T}))&= \{W\mid \exists g\in G \mbox{ s.t. }gWg^{-1}\in \mathcal{S}_{V_1}\lb l-1 \rb \mbox{ and }\Fix_G(\mathcal{T}) \subseteq W \}\\
	&= \{\Fix_G(\mathcal{R})\lvert \mathcal{R}\in \mathcal{S}_{V_1}\lb l-1\rb \mbox{ s.t. }\mathcal{R}\subseteq\mathcal{T}\}.
	\end{split}
	\end{equation*}
	Furthermore, the hypotheses on $G$ imply that $\Fix_G(\mathcal{T})\subsetneq\Fix_G(\mathcal{R})\subsetneq \Stab_G(\mathcal{T})$ for every $\mathcal{R}\in \mathfrak{T}_{V_1}\lb l-1\rb$ with $\mathcal{R}\subseteq \mathcal{T}$.
	
	We have two cases. If $\mathcal{T}\in \mathfrak{T}_{V_1}\lb 1\rb$, there exists $v\in V_0$ such that $\mathcal{T}=B_T(v,2)$ and every subtree $\mathcal{R}$ of $\mathcal{T}$ that belongs to $\mathcal{S}_{V_1}\lb 0\rb$ is of the form $B_T(w,1)$ for some $w\in \partial B_T(v,1)$. Let $\{\mathcal{T}_1,..., \mathcal{T}_d\}$ be the set of subtrees of $T$ of the form $\bigcup_{w\in \partial B_T(v,1)-\{u\}}B_T(w,1)$ for some $u\in \partial B_T(v,1)$. Notice that each element of $\Stab_G(\mathcal{T})=\Fix_G(v)$ permutes the vertices of $\partial B_T(v,1)$ and therefore permutes the elements of $\{\mathcal{T}_1,..., \mathcal{T}_d\}$. On the other hand, for every $i,j\in \{1,...,d\}$ with $i\not= j$, we have that $\mathcal{T}_i\cup  \mathcal{T}_j=\mathcal{T}$ and thanks to Proposition \ref{independence on trees}, $\Fix_G(\mathcal{T})\subsetneq \Fix_G(\mathcal{T}_i)\subsetneq \Stab_{G}(\mathcal{T})$. In particular, Proposition \ref{existence criterion} ensures the existence of an irreducible representation $\sigma$ of $\Aut_{G}(C)\cong \Stab_G(\mathcal{T})/\Fix_G(\mathcal{T})$ without non-zero $p_{\Fix_G(\mathcal{T})}(\Fix_G(\mathcal{T}_i))$-invariant vector and therefore without non-zero $p_{\Fix_G(\mathcal{T})}(\Fix_G(\mathcal{R}))$-invariant vector for every subtree $\mathcal{R}\in \mathcal{S}_{V_1}\lb 0\rb$ such that $\mathcal{R}\subseteq \mathcal{T}$.

	If $\mathcal{T}\in \mathfrak{T}_{V_1}\lb l\rb$ for some $l\geq 2$, every subtree $\mathcal{R}\in \mathcal{S}_{V_1}\lb l-1\rb$ is such that $Q_{\mathcal{R}}\not=\es$. Let $\{\mathcal{T}_1,..., \mathcal{T}_d\}$ be the set of subtrees $\mathcal{R}$ of $\mathcal{S}_{V_1}\lb l-1\rb$ such that $\mathcal{R}\subseteq \mathcal{T}$ and notice that $Q_{\mathcal{T}_i}\subsetneq Q_{\mathcal{T}}$ $\forall i$. Furthermore, notice the elements of  $\Stab_G(\mathcal{T})$ permutes the elements of $Q_{\mathcal{T}}$ and therefore the elements of $\{\mathcal{T}_1,..., \mathcal{T}_d\}$. On the other hand, for every $i,j\in \{1,...,d\}$ with $i\not= j$, we have $\mathcal{T}_i\cup \mathcal{T}_j=\mathcal{T}$. In particular, Proposition \ref{existence criterion} ensures the existence of an irreducible representation $\sigma$ of $\Aut_{G}(C)\cong \Stab_G(\mathcal{T})/\Fix_G(\mathcal{T})$ without non-zero $p_{\mathcal{T}}(\Fix_G(\mathcal{T}_i))$-invariant vectors and the result follows.
	\end{proof}
	\newpage
	
	\section{Universal groups of certain right-angled buildings}\label{Application to universal groups of right-angled buildings}
	The purpose of this chapter is to prove that the universal groups of certain semi-regular right-angled buildings embeds as closed subgroups of the group $\Aut(T)^+$ of type-preserving automorphisms of a locally finite tree and that those subgroups satisfy the hypotheses of Theorem \ref{Theorem pour IPV1} if the prescribed local action is $2$-transitive on panels. In particular, for every of such group we obtain a generic filtration that factorizes$^+$ at all depth and the machinery developed in Chapters \ref{chaptitre intro} and \ref{section generalisation of Ol'shanskii machinery} applies. 
	\subsection{Preliminaries}\label{section application right angle preliminaire}
	In this document, buildings are realised as $W$-metric space for a given Coxeter system $(W,I)$. We refer to \cite{AbramenkoBrown2008} more details on that point of view. We recall that a \textbf{right-angled building} is a building associated with a right-angled Coxeter system that is a Coxeter system with set of generators $I$ such that the order $m_{i,j}$ of $ij$ is either $2$ or $\infty$ for every $i,j\in I$ with $i\not=j$. We denote by $\Aut(\Delta)$ the group of type-preserving automorphisms of the building $\Delta$, by $\delta$ its $W$-metric and by $\Ch(\Delta)$ its set of chambers. We equip $\Ch(\Delta)$ with the natural metric $d_\Delta$ that is $d_\Delta(c,d)$ correspond to the length of a minimal gallery containing both $c$ and $d$. For every subset $J\subseteq I$ we denote by $W_J$ the subgroup of $W$ generated by $J$ and we define the \textbf{residue} of type $J$ containing a chamber $c\in \Ch(\Delta)$ by $$\mathcal{R}_J(c)=\{d\in \Ch(\Delta)\lvert \delta(c,d)\in W_J\}.$$ A residue of type $\{i\}$ is called an $i$-\textbf{panel}. One of the fundamental features of buildings is the existence of combinatorial projections between residues. We refer to \cite{Tits1974} for more details about this notion that we now recall. Given a chamber $c\in \Ch(\Delta)$ and a residue $\mathcal{R}$, the projection $\proj_\mathcal{R}(c)$ of $c$ on the residue $\mathcal{R}$ be the unique chamber $d\in \Ch(\mathcal{R})$ that is closest to $c$ for the chamber metric $d_\Delta$. We also recall that for any two residues $\mathcal{R}$ and $\mathcal{R}'$ of $\Delta$, the set
	$$\{\proj_\mathcal{R}(c)\lvert c\in \Ch(\mathcal{R}')\}$$
	is the chamber-set of a residue contained in $\mathcal{R}$. Furthermore, if $\Delta$ is a right-angled building and if $\mathcal{R}$ is a residue of $\Delta$, this notion of projections on $\mathcal{R}$ allows one to partition the buildings into convex chamber sets called wings. To be more precise, for any subset $J\subseteq I$ and any chamber $c\in \Ch(\Delta)$ of a right-angled building $\Delta$, the \textbf{$J$-wing} containing the chamber $c$ is the set $$X_J(c)=\{d\in \Ch(\Delta)\lvert \proj_{\mathcal{R}_J(c)}(d)=c\}.$$
 	If $J = \{i\}$ is a singleton, we write $X_i(c)$ and call it the \textbf{$i$-wing} of $c$. For right-angled buildings, it is shown in \cite{Caprace2014}, that  wings are convex chamber sets and that for every $J$-residue $\mathcal{R}$, $\Ch(\Delta)$ is partitioned by the $J$-wings $X_J(c)$ with $c\in \mathcal{R}$. We also recall the following few results about wings that will be used later in this chapter to prove that the group of automorphisms of certain semi-regular right-angled buildings $\Delta$ can be `nicely' embedded as a closed subgroup of $\Aut(T)^+$ for some locally finite tree $T$. 
 	\begin{lemma}[{\cite[Lemma $3.1$]{Caprace2014}}]\label{la J wing est l'intersection des j wing wesh wesh wesh}
 		Let $\Delta$ be a right-angled building of type $(W,I)$, let $J\subseteq I$ be a non-empty set and let $c\in \Ch(\Delta)$. Then the $J$-wing $X_J(c)$ containing $c$ satisfies the following properties:
 		\begin{enumerate}
 			\item $X_J(c)=\bigcap_{i\in J}X_i(c)$. 
 			\item $X_J(c) = X_J(c')$ for all $c'\in X_J (c) \cap \mathcal{R}_{J\cup J^\perp}(c)$.
 		\end{enumerate}
 	\end{lemma}
	\begin{lemma}\label{residues of different types intersects in a single chamber}
		Let $\Delta$ be a right-angled building of type $(W,I)$, let $J,J'\subseteq I$ be two disjoint subsets and let $c\in \Ch(\Delta)$. Then $\mathcal{R}_{J}(c)\subseteq X_{J'}(c)$.  
	\end{lemma}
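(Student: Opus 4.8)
\textbf{Proof strategy for Lemma~\ref{residues of different types intersects in a single chamber}.}
The plan is to unpack the definitions of $J'$-wing and of residue and to exploit part~(1) of Lemma~\ref{la J wing est l'intersection des j wing wesh wesh wesh}, together with the defining property of the combinatorial projection. Recall that
$$X_{J'}(c)=\{d\in \Ch(\Delta)\lvert \proj_{\mathcal{R}_{J'}(c)}(d)=c\},$$
so to show $\mathcal{R}_J(c)\subseteq X_{J'}(c)$ it suffices to prove that for every chamber $d$ with $\delta(c,d)\in W_J$ one has $\proj_{\mathcal{R}_{J'}(c)}(d)=c$. First I would reduce, via Lemma~\ref{la J wing est l'intersection des j wing wesh wesh wesh}(1), to the case where $J'=\{i\}$ is a singleton with $i\notin J$: indeed $X_{J'}(c)=\bigcap_{i\in J'}X_i(c)$, so it is enough to show $\mathcal{R}_J(c)\subseteq X_i(c)$ for each $i\in J'$, and since $J\cap J'=\es$ we have $i\notin J$.

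So fix $i\notin J$ and a chamber $d\in\mathcal{R}_J(c)$, i.e.\ $w:=\delta(c,d)\in W_J$. The key step is to identify $\proj_{\mathcal{R}_i(c)}(d)$. By the standard properties of $W$-metric buildings (see \cite{AbramenkoBrown2008}), the projection $e=\proj_{\mathcal{R}_i(c)}(d)$ is the unique chamber of the $i$-panel $\mathcal{R}_i(c)$ minimizing $d_\Delta(d,\cdot)$, and it satisfies the gate property $\delta(d,f)=\delta(d,e)\delta(e,f)$ with $\ell(\delta(d,f))=\ell(\delta(d,e))+\ell(\delta(e,f))$ for all $f\in\mathcal{R}_i(c)$. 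Since $c\in\mathcal{R}_i(c)$, we get $\delta(d,c)=\delta(d,e)\delta(e,c)$ with lengths adding; but $\delta(d,e)\in W_{\{i\}}$ cannot be, unless trivial, since writing $w=\delta(c,d)$ with $w\in W_J$ and $J\cap\{i\}=\es$, the only way to have $w^{-1}=\delta(d,e)\delta(e,c)$ with $\delta(d,e)\in\{1,i\}$ and lengths adding is $\delta(d,e)=1$ (because $w^{-1}\in W_J$ has a reduced expression using no letter $i$, so its leftmost letter is not $i$, forcing $\delta(d,e)=1$). Hence $e=d\cdot \text{(trivial)}$... more precisely $\delta(d,e)=1$ gives $e=d$? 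That is not right since $e\in\mathcal{R}_i(c)$ need not equal $d$; rather $\delta(d,e)=1$ only if $d\in\mathcal{R}_i(c)$. Let me re-order: the correct deduction is that $\delta(e,c)$ must be the part of $w^{-1}$ lying in $W_{\{i\}}$, which is trivial, so $e=c$.

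Thus $\proj_{\mathcal{R}_i(c)}(d)=c$, i.e.\ $d\in X_i(c)$, which is what we wanted. Assembling: $\mathcal{R}_J(c)\subseteq X_i(c)$ for every $i\in J'$, hence $\mathcal{R}_J(c)\subseteq\bigcap_{i\in J'}X_i(c)=X_{J'}(c)$ by Lemma~\ref{la J wing est l'intersection des j wing wesh wesh wesh}(1). The main obstacle is the length/reduced-word bookkeeping in the middle step: one must argue cleanly that for $w\in W_J$ with $i\notin J$, the factorization $w=\delta(c,e)\delta(e,d)$ coming from the gate property of $\proj_{\mathcal{R}_i(c)}(d)$, with $\delta(c,e)\in W_{\{i\}}$ and lengths adding, forces $\delta(c,e)=1$. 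This is immediate from the fact that $w$ admits a reduced expression over the alphabet $J$, so no reduced expression of $w$ begins with $i$; hence the length-additive left factor in $W_{\{i\}}$ is trivial. Everything else is a direct unwinding of definitions.
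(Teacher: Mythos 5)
Your argument is correct, but it takes a more hands-on route than the paper. The paper's proof is two lines: it invokes the structural facts recalled in Section \ref{section application right angle preliminaire} that $\proj_{\mathcal{R}_{J'}(c)}(\mathcal{R}_J(c))$ is the chamber set of a residue contained in $\mathcal{R}_{J'}(c)$ and that residues of disjoint types through the common chamber $c$ meet only in $c$, so this projected residue is $\{c\}$ and the inclusion $\mathcal{R}_J(c)\subseteq X_{J'}(c)$ follows immediately from the definition of a wing. You instead reduce to panels via Lemma \ref{la J wing est l'intersection des j wing wesh wesh wesh}(1) and then compute with the gate property: $\delta(c,d)=\delta(c,e)\delta(e,d)$ with lengths adding, $\delta(c,e)\in W_{\{i\}}$, and the fact that every reduced expression of an element of $W_J$ uses only letters of $J$, forcing $\delta(c,e)=1$, i.e.\ $\proj_{\mathcal{R}_{\{i\}}(c)}(d)=c$. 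That is a valid and essentially self-contained Coxeter-combinatorial proof; what it buys is independence from the quoted building-theoretic facts, at the cost of length. Two small remarks: the detour through singletons is unnecessary, since the identical gate-property argument applied directly to the residue $\mathcal{R}_{J'}(c)$ gives $\delta(c,e)\in W_{J'}$ and disjointness of $J$ and $J'$ kills it just as well; and your middle paragraph contains a false start (the claim that $\delta(d,e)\in W_{\{i\}}$) which you later correct --- in a final write-up keep only the corrected factorization $\delta(c,d)=\delta(c,e)\delta(e,d)$ with $\delta(c,e)\in W_{J'}$, since the group element lying in $W_{J'}$ is the one between the two chambers of the residue, not $\delta(d,e)$.
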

	\begin{proof}
		The result follows directly from the fact the two residues contain $c$ and that the intersection of a $J$-residue and a $J'$-residue is a $J\cap J'$-residue. In particular, this proves that $\proj_{\mathcal{R}_{J'}(c)}(\mathcal{R}_{J}(c))=\{c\}$ and therefore that $\mathcal{R}_{J}(c)\subseteq X_{J'}(c)$. 
	\end{proof}
	\begin{lemma}[{\cite[Lemma $3.4$]{Caprace2014}}]\label{the inclusion of wings}
		Let $\Delta$ be a right-angled building of type $(W,I)$, let $i,i'\in I$ be such that $m_{i,j}=\infty$ and let $c,c'\in \Ch(\Delta)$ be such that $c'\in X_i(c)$ but $c\not\in X_{i'}(c')$. Then, we have $X_{i'}(c')\subseteq X_i(c)$. 
	\end{lemma}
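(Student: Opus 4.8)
The statement to prove is: if $m_{i,j}=\infty$, $c'\in X_i(c)$, and $c\notin X_{i'}(c')$, then $X_{i'}(c')\subseteq X_i(c)$. The plan is to argue by contradiction, exploiting the fact that wings associated to a fixed panel partition the chamber set, together with the convexity of wings. First I would record the two partition facts I need: for the $i'$-panel $\mathcal{R}_{\{i'\}}(c')$, the $i'$-wings $\{X_{i'}(e)\mid e\in \mathcal{R}_{\{i'\}}(c')\}$ partition $\Ch(\Delta)$; and likewise the $i$-wings based at the $i$-panel through $c$ partition $\Ch(\Delta)$. Since $c\notin X_{i'}(c')$, the chamber $c$ lies in some other $i'$-wing $X_{i'}(c'')$ with $c''\in\mathcal{R}_{\{i'\}}(c')$, $c''\ne c'$.

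Next I would take an arbitrary chamber $d\in X_{i'}(c')$ and aim to show $\proj_{\mathcal{R}_{\{i\}}(c)}(d)=c$, i.e. $d\in X_i(c)$. The key geometric input is that projections to a residue are gallery-distance non-increasing and that a minimal gallery from $d$ to a chamber of $\mathcal{R}_{\{i\}}(c)$ can be routed through $c'$: because $m_{i,i'}=\infty$, the residue $\mathcal{R}_{\{i'\}}(c')$ is disjoint (as a type) from $\mathcal{R}_{\{i\}}(c)$, so by Lemma \ref{residues of different types intersects in a single chamber} and the hypothesis $c'\in X_i(c)$, the whole $i'$-panel $\mathcal{R}_{\{i'\}}(c')$ lies in $X_i(c)$, hence $\proj_{\mathcal{R}_{\{i\}}(c)}$ is constant equal to $c$ on that panel and in particular $\proj_{\mathcal{R}_{\{i\}}(c)}(c')=c$ and $\proj_{\mathcal{R}_{\{i\}}(c)}(c'')=c$. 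Then I would use the "gate property" of the projection $\proj_{\mathcal{R}_{\{i'\}}(c')}$: for the chamber $d\in X_{i'}(c')$ one has $\proj_{\mathcal{R}_{\{i'\}}(c')}(d)=c'$, so every minimal gallery from $d$ to any chamber of $\mathcal{R}_{\{i'\}}(c')$ passes through $c'$. Combining this with the transitivity of combinatorial projections (projecting first to $\mathcal{R}_{\{i'\}}(c')$, arriving at $c'$, then noting $\mathcal{R}_{\{i'\}}(c')\subseteq X_i(c)$) yields $\proj_{\mathcal{R}_{\{i\}}(c)}(d)=\proj_{\mathcal{R}_{\{i\}}(c)}(c')=c$, i.e. $d\in X_i(c)$.

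The step I expect to be the main obstacle is making the "route the projection through $c'$" argument rigorous without simply quoting \cite{Caprace2014}: one must justify carefully that $\proj_{\mathcal{R}_{\{i\}}(c)}(d)=\proj_{\mathcal{R}_{\{i\}}(c)}\bigl(\proj_{\mathcal{R}_{\{i'\}}(c')}(d)\bigr)$ whenever $\mathcal{R}_{\{i'\}}(c')\subseteq X_i(c)$. This is where the hypothesis $c\notin X_{i'}(c')$ is actually used: it guarantees that $d$ and $c$ are separated by the $i'$-panel $\mathcal{R}_{\{i'\}}(c')$ in the sense that a minimal gallery from $d$ to $c$ meets that panel only at $c'$ (equivalently $\proj_{\mathcal{R}_{\{i'\}}(c')}(d)=c'=\proj_{\mathcal{R}_{\{i'\}}(c')}(c)$ would fail — rather $\proj_{\mathcal{R}_{\{i'\}}(c')}(c)=c''\ne c'$), forcing the path from $d$ to $c$ through $c'$. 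I would handle this via the standard fact that for a residue $\mathcal{R}$ and chambers $x,y$, if $\proj_{\mathcal{R}}(x)=z$ then $d_\Delta(x,y)=d_\Delta(x,z)+d_\Delta(z,y)$ for all $y\in\mathcal{R}$, applied twice and reconciled using convexity of $X_i(c)$ (Lemma \ref{la J wing est l'intersection des j wing wesh wesh wesh} gives $X_i(c)$ is an intersection-of-wings description; convexity of wings in right-angled buildings is the cited input). The remaining bookkeeping — that $m_{i,i'}=\infty$ forces $\{i\}\cap\{i'\}=\varnothing$ and that $i'\notin\{i\}^\perp$, so the residues genuinely interact as claimed — is routine and I would state it briefly.
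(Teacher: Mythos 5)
The paper gives no proof of this statement; it is quoted verbatim from \cite[Lemma 3.4]{Caprace2014}, so your proposal has to stand on its own, and it has a genuine gap at its central step. You derive the conclusion from the identity $\proj_{\mathcal{R}_{\{i\}}(c)}(d)=\proj_{\mathcal{R}_{\{i\}}(c)}\bigl(\proj_{\mathcal{R}_{\{i'\}}(c')}(d)\bigr)$, justified by the claim that a minimal gallery from $d\in X_{i'}(c')$ to $\mathcal{R}_{\{i\}}(c)$ can be routed through $c'$, i.e.\ that $d_\Delta(d,\mathcal{R}_{\{i\}}(c))=d_\Delta(d,c')+d_\Delta(c',\mathcal{R}_{\{i\}}(c))$. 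That routing claim is false. Take $W=\langle s_i,s_{i'},s_k\rangle$ right-angled with $m_{i,i'}=\infty$ and $m_{i',k}=2$, and let $c,c',d$ lie in a common $\{i',k\}$-residue, whose chamber set is a product of an $i'$-panel and a $k$-panel: say $c=(a_0,b_0)$, $c'=(a_1,b_1)$, $d=(a_1,b_0)$ with $a_0\neq a_1$ and $b_0\neq b_1$. All hypotheses hold: $\delta(c',c)=s_{i'}s_k$ so $c'\in X_i(c)$; $\proj_{\mathcal{R}_{\{i'\}}(c')}(c)=(a_0,b_1)\neq c'$ so $c\notin X_{i'}(c')$; and $\proj_{\mathcal{R}_{\{i'\}}(c')}(d)=c'$ so $d\in X_{i'}(c')$. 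Yet $d_\Delta(d,c)=1$ while $d_\Delta(d,c')+d_\Delta(c',c)=3$, so no minimal gallery from $d$ to $c$ (or to $\mathcal{R}_{\{i\}}(c)$) passes through $c'$. The gate property you invoke controls distances from $d$ only to chambers \emph{inside} $\mathcal{R}_{\{i'\}}(c')$, and the hypothesis $c\notin X_{i'}(c')$ does not separate $d$ from $c$ by that panel in any gallery-metric sense. Closing the argument genuinely requires the wall machinery of \cite{Caprace2014} (convexity of wings plus the fact that adjacent chambers in distinct $i'$-wings of a panel $\tau$ are $i'$-adjacent along a panel parallel to $\tau$), or a direct Weyl-distance computation; the "route through $c'$" heuristic only survives in the tree case.

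A secondary, fixable misstep: Lemma \ref{residues of different types intersects in a single chamber}, applied at the base chamber $c'$, yields $\mathcal{R}_{\{i'\}}(c')\subseteq X_i(c')$, a wing attached to the panel $\mathcal{R}_{\{i\}}(c')$, not to $\mathcal{R}_{\{i\}}(c)$; combined with $c'\in X_i(c)$ this does not give $\mathcal{R}_{\{i'\}}(c')\subseteq X_i(c)$ unless you also know $c'\in\mathcal{R}_{\{i\}\cup\{i\}^\perp}(c)$, by Lemma \ref{la J wing est l'intersection des j wing wesh wesh wesh}(2). The containment you want is nevertheless true: $\proj_{\mathcal{R}_{\{i\}}(c)}(\mathcal{R}_{\{i'\}}(c'))$ is the chamber set of a residue contained in a panel, hence a single chamber unless the two panels are parallel, which is impossible for panels of distinct types in a right-angled building (distinct generators are never conjugate there); that single chamber is $\proj_{\mathcal{R}_{\{i\}}(c)}(c')=c$.
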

	
	An other feature of combinatorial projections is given by the relation of parallelism. Two residues $\mathcal{R}$ and $\mathcal{R}'$ in a building $\Delta$ are said to be \tg{parallel} if $\proj_{\mathcal{R}}(\mathcal{R}')=\mathcal{R}$ and $\proj_{\mathcal{R}'}(\mathcal{R})=\mathcal{R}'$. Notice that the chamber sets of parallel residues are in bijection under the respective projection maps and two parallel residues have the same rank. Caprace showed that the relation of parallelism has a particular flavour in right-angled buildings.
	\begin{lemma}[{\cite[Corollary $2.9$]{Caprace2014}}]
		Let $\Delta$ be a right-angled building. Then the relation of parallelism of residues is an equivalence relation.
	\end{lemma}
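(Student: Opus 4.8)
We must show that the relation of parallelism of residues in a right-angled building $\Delta$ is an equivalence relation. Reflexivity is immediate: for any residue $\mathcal{R}$ one has $\proj_\mathcal{R}(\mathcal{R})=\mathcal{R}$. Symmetry is built into the definition ($\mathcal{R}\parallel \mathcal{R}'$ iff $\proj_\mathcal{R}(\mathcal{R}')=\mathcal{R}$ and $\proj_{\mathcal{R}'}(\mathcal{R})=\mathcal{R}'$, a condition symmetric in $\mathcal{R}$ and $\mathcal{R}'$). So the entire content is transitivity, which is the step I expect to be the main obstacle.

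For transitivity, I would first reduce to the case of \emph{panels}, i.e.\ rank-one residues. The reduction uses two standard facts about projections in buildings: parallel residues have the same type up to the natural identification, and $\mathcal{R}\parallel\mathcal{R}'$ forces the projection maps $\proj_{\mathcal{R}}$ and $\proj_{\mathcal{R}'}$ to be mutually inverse bijections between $\Ch(\mathcal{R})$ and $\Ch(\mathcal{R}')$ that preserve and reflect $i$-adjacency for each type $i$ occurring. Hence a residue $\mathcal{R}$ of type $J$ is parallel to $\mathcal{R}'$ of type $J$ precisely when, for every $i\in J$, each $i$-panel of $\mathcal{R}$ is parallel to an $i$-panel of $\mathcal{R}'$ and these panel-parallelisms are compatible with the gallery structure; this lets one transport a chain $\mathcal{R}\parallel\mathcal{R}'\parallel\mathcal{R}''$ down to the level of panels of a fixed type $i$. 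For a single type this is where right-angledness enters decisively.

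For panels, I would invoke the wing decomposition. Given an $i$-panel $P$, the building is partitioned into the $i$-wings $X_i(c)$ for $c\in\Ch(P)$, and two $i$-panels $P,P'$ are parallel if and only if $\Ch(P)$ and $\Ch(P')$ induce the \emph{same} partition of $\Ch(\Delta)$ into $i$-wings (equivalently, $P'\subseteq\bigcup_{c\in\Ch(P)}$ and each chamber of $P'$ lies in a distinct $i$-wing of $P$, and vice versa, so that $\proj_P$ and $\proj_{P'}$ are inverse bijections matching wings to wings). In right-angled buildings this characterisation follows from Lemma \ref{the inclusion of wings}: if $c'\in X_i(c)$ and $c\notin X_{i'}(c')$ then $X_{i'}(c')\subseteq X_i(c)$, which is exactly the containment needed to compare two $i$-panel partitions and conclude that parallel $i$-panels refine one another and hence coincide as partitions. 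Once parallelism of $i$-panels is identified with equality of the induced $i$-wing partition of $\Ch(\Delta)$, transitivity is immediate because equality of partitions is transitive. Assembling the types back together via the reduction of the previous paragraph then yields transitivity for arbitrary residues, completing the proof that parallelism is an equivalence relation. The delicate point throughout is the passage from "the projection maps compose correctly" to "the wing partitions literally agree", and it is precisely there that the right-angled hypothesis (through Lemma \ref{the inclusion of wings} and Lemma \ref{la J wing est l'intersection des j wing wesh wesh wesh}) is essential, since in a general building parallel residues need not be governed by such a clean wing dichotomy.
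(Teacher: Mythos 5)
The paper gives no proof of this statement at all: it is imported verbatim from \cite{Caprace2014} (Corollary 2.9), and the lemma the paper quotes immediately afterwards (Proposition 2.8 of \cite{Caprace2014}: two $J$-residues are parallel if and only if they lie in a common $J\cup J^{\perp}$-residue) is exactly what makes the statement a two-line consequence, since a $J$-residue is contained in a \emph{unique} residue of type $J\cup J^{\perp}$; given $\mathcal{R}\parallel\mathcal{R}'$ and $\mathcal{R}'\parallel\mathcal{R}''$, the two ambient $J\cup J^{\perp}$-residues both contain $\mathcal{R}'$, hence coincide, hence $\mathcal{R}\parallel\mathcal{R}''$. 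Measured against this, your sketch has a genuine gap precisely at its load-bearing step. Reflexivity, symmetry, and the implication ``equal $i$-wing partitions $\Rightarrow$ parallel'' are fine. But the converse, ``parallel $\Rightarrow$ equal wing partitions'', is where all the content sits, and your justification via Lemma \ref{the inclusion of wings} does not work: that lemma compares wings of two \emph{distinct} types $i,i'$ with $m_{i,i'}=\infty$, whereas here you must compare $i$-wings with $i$-wings ($m_{i,i}\neq\infty$), and the chambers to be compared lie in a common $\{i\}\cup\{i\}^{\perp}$-residue, where every relevant type commutes with $i$, so the hypothesis $m=\infty$ is never available. Nor is the needed containment formal: in a tree (type $D_\infty$), two edges $c,c'$ sharing a vertex of the other type satisfy $c'\in X_i(c)$ and $c\in X_i(c')$ while $X_i(c)\neq X_i(c')$, so turning the projection data into equality of wings requires real input. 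The correct tool is part (2) of Lemma \ref{la J wing est l'intersection des j wing wesh wesh wesh}, and to apply it you must already know that parallel $i$-panels sit in a common $\{i\}\cup\{i\}^{\perp}$-residue --- which is the nontrivial half of Proposition 2.8, i.e.\ the very result your argument was meant to bypass.

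The surrounding reductions are also only asserted. That parallel residues have the same type is false in general buildings (opposite panels of a projective plane are parallel and of different types) and in the right-angled case rests on the fact that distinct generators are never conjugate; and the equivalence between parallelism of $J$-residues and ``compatible'' parallelisms of their $i$-panels, in the direction needed to climb back from panels to residues, is again a statement of the same strength as Proposition 2.8. So as written the plan is circular in disguise: every route you take funnels through a Proposition 2.8-type statement, and once that is granted the wing detour and the panel reduction are unnecessary, transitivity following immediately from uniqueness of the ambient $J\cup J^{\perp}$-residue.
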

	\noindent An equivalence class of $i$-panel in a right-angled building $\Delta$ is called an $i$\tg{-wall-residue} of $\Delta$. 
	\begin{lemma}[{\cite[Proposition $2.8$]{Caprace2014}}]
		Let $\Delta$ be a right-angled building of type $(W,I)$ and let $J\subseteq I$. Then two $J$-residues $\mathcal{R}$ and $\mathcal{R}'$ are parallel if and only if they are both contained in a common $J\cup J^\perp$-residue where $J^\perp=\{i\in I\lvert ij=ji\qq \forall j\in J\}$.
	\end{lemma}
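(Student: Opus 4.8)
The plan is to reduce the statement to two facts about the underlying right-angled Coxeter group $W$: (a) the type of a combinatorial projection between two residues of the same type $J$ is read off from an intersection of a conjugate of $W_J$ with $W_J$; and (b) the normalizer $N_W(W_J)$ of the standard parabolic $W_J$ equals the standard parabolic $W_{J\cup J^\perp}$, which holds because $W$ is right-angled. Concretely, for (a): if $\mathcal{R}=\mathcal{R}_J(c)$, $\mathcal{R}'=\mathcal{R}_J(c')$ and $w=\delta(c,c')$ is the minimal-length element of the double coset $W_JwW_J$ (equivalently $c'=\proj_{\mathcal{R}'}(c)$ and $c=\proj_{\mathcal{R}}(c')$, a configuration one always arranges by the gate property), then $\proj_{\mathcal{R}}(\mathcal{R}')$ is the residue through $c$ of the type $J''\subseteq J$ with $W_{J''}=W_J\cap wW_Jw^{-1}$. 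Fact (b) has an easy half — each generator of $W_J$ normalizes $W_J$, and each generator of $W_{J^\perp}$ commutes with every element of $J$, so $W_{J\cup J^\perp}\leq N_W(W_J)$ — and a harder reverse inclusion, which is where the right-angled hypothesis really enters.

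For the forward implication, assume $\mathcal{R}$ and $\mathcal{R}'$ are parallel. Pick $c\in\mathcal{R}$, set $c'=\proj_{\mathcal{R}'}(c)$, and use that parallel residues have mutually inverse projection maps to get $c=\proj_{\mathcal{R}}(c')$ as well; hence $w=\delta(c,c')$ is $(J,J)$-reduced. Parallelism gives $\proj_{\mathcal{R}}(\mathcal{R}')=\mathcal{R}$, so by (a) we obtain $W_J\cap wW_Jw^{-1}=W_J$, i.e. $W_J\subseteq wW_Jw^{-1}$; applying the same reasoning to $\proj_{\mathcal{R}'}(\mathcal{R})=\mathcal{R}'$ (with $w^{-1}$) yields the reverse inclusion, so $w\in N_W(W_J)=W_{J\cup J^\perp}$ by (b). Thus $\delta(c,c')\in W_{J\cup J^\perp}$, so $c$ and $c'$ lie in a common $(J\cup J^\perp)$-residue $\mathcal{S}$; since $\mathcal{R}=\mathcal{R}_J(c)\subseteq\mathcal{R}_{J\cup J^\perp}(c)=\mathcal{R}_{J\cup J^\perp}(c')=\mathcal{S}$ and likewise $\mathcal{R}'=\mathcal{R}_J(c')\subseteq\mathcal{S}$, we are done.

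Conversely, suppose $\mathcal{R},\mathcal{R}'$ are contained in a common $(J\cup J^\perp)$-residue $\mathcal{S}$; then for any $c\in\mathcal{R}$, $c'\in\mathcal{R}'$ one has $\delta(c,c')\in W_J\cdot W_{J\cup J^\perp}\cdot W_J=W_{J\cup J^\perp}$. Taking $c,c'$ to be mutual gates, $w=\delta(c,c')$ is $(J,J)$-reduced and lies in $W_{J\cup J^\perp}\leq N_W(W_J)$ by the easy half of (b); hence $W_J\cap wW_Jw^{-1}=W_J$, so (a) gives $\proj_{\mathcal{R}}(\mathcal{R}')=\mathcal{R}$ and, symmetrically, $\proj_{\mathcal{R}'}(\mathcal{R})=\mathcal{R}'$, i.e. the residues are parallel. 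Alternatively this direction can be run inside the paper's wing formalism: for $d\in\mathcal{R}$ one has $X_J(d)=\{e\mid\proj_{\mathcal{R}}(e)=d\}$ by definition of the wing, and Lemma~\ref{la J wing est l'intersection des j wing wesh wesh wesh}(2) shows that the wings $X_J(d)$ with $d\in\mathcal{R}$ restrict on $\mathcal{S}$ to the $J^\perp$-residues of $\mathcal{S}$, so that $\mathcal{R}'$ meets every $X_J(d)$ and therefore projects onto all of $\mathcal{R}$.

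The main obstacle is the hard half of fact (b): that any $w$ normalizing $W_J$ actually lies in $W_{J\cup J^\perp}$. This is a genuine feature of right-angled Coxeter groups (it fails for general Coxeter systems) and is best obtained either from the known description of parabolic normalizers specialised to the right-angled case, or by a direct reduced-word argument showing that a $(J,J)$-reduced element normalizing $W_J$ must be supported on $J^\perp$. Once (a) and (b) are in hand, the remaining work — the gate-property bookkeeping and the passage between "$\mathcal{R},\mathcal{R}'$ lie in a common $(J\cup J^\perp)$-residue" and "$\delta(c,c')\in W_{J\cup J^\perp}$" — is routine.
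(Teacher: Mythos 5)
The paper does not actually prove this lemma: it is quoted verbatim from Caprace's article (Proposition~2.8 there), so there is no in-paper argument to measure yours against. That said, your reconstruction follows the route the cited source and the standard literature take: reduce parallelism of two $J$-residues to the condition that the $(J,J)$-reduced element $w=\delta(c,c')$ joining mutual gates satisfies $wW_Jw^{-1}=W_J$, via the standard description of $\proj_{\mathcal{R}}(\mathcal{R}')$ as the residue of type $J''$ with $W_{J''}=W_J\cap wW_Jw^{-1}$, and then identify the relevant normalizer with $W_{J\cup J^\perp}$. The bookkeeping in both directions is correct; in particular you rightly observe that the converse only needs the easy inclusion $W_{J\cup J^\perp}\leq N_W(W_J)$, which follows from $W_{J\cup J^\perp}=W_J\times W_{J^\perp}$.

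The one substantive issue is the one you flag yourself: the inclusion $N_W(W_J)\leq W_{J\cup J^\perp}$ (or rather, that a $(J,J)$-reduced $w$ with $wW_Jw^{-1}=W_J$ lies in $W_{J^\perp}$) is asserted, not proved, and it is precisely where the right-angled hypothesis does all the work -- the statement is false for general Coxeter systems. To close it along the lines you sketch: Kilmoyer's theorem for the $(J,J)$-reduced $w$ gives $W_J=W_J\cap wW_Jw^{-1}=W_{J\cap wJw^{-1}}$, hence $w$ conjugates $J$ onto $J$; since all bonds in a right-angled system are even, distinct generators are never conjugate, so $w$ centralizes each $j\in J$; and the centralizer of a generator $j$ in a graph product is $W_{\{j\}\cup\{j\}^{\perp}}$, so $w\in\bigcap_{j\in J}W_{\{j\}\cup\{j\}^{\perp}}=W_K$ with $K\subseteq J\cup J^{\perp}$. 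Each of these three inputs is standard but none is free, so as written your proof is a correct skeleton whose load-bearing vertebra is cited rather than supplied.
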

	
	We now recall the notion of universal groups of semi-regular right-angled buildings introduced in \cite{Universal2018}. Let $(W,I)$ be a right-angled Coxeter system and let $(q_i)_{i\in I}$ be a tuple of cardinals with bigger than $2$. Haglund and Paulin have proved the existence and uniqueness up to isomorphism of a right-angled building $\Delta$ of type $(W,I)$ such that each $i$-panel of $\Delta$ contains $q_i$ chambers. We refer to \cite[Theorem 5.1]{Davis1994} for the existence and to \cite[Proposition 1.2]{Haglund2003} for the uniqueness. Such a building is called a \tg{semi-regular building of prescribed thickness} $(q_i)_{i\in I}$ and it is locally finite if each cardinal $q_i$ with $i\in I$ is finite. In \cite{Universal2018}, Tom De Medts, Ana C. Silva and Koen Struyve have generalised the concept of universal groups of regular trees introduced by Burger and Mozes to the more general setting of locally finite semi-regular right-angled buildings. Their definition requires the notion of a legal coloring that we now recall. Let $\Delta$ be a semi-regular right-angled building of type $(W, I)$ and prescribed thickness $(q_i)_{i\in I}$. 
	\begin{definition}
		For each $i\in I$, let $Y_i$ be a set of cardinality $q_i$. We will refer to $Y_i$ as the set of $i$-colors. A \tg{set of legal coloring} of $\Delta$ is a tuple $(h_i)_{i\in I}$ of maps $$h_i:\Ch(\Delta) \rightarrow Y_i$$ such that:
		\begin{enumerate}
			\item for every $i$-panel $\tau$, $\restr{h_i}{\Ch(\tau)}:\Ch(\tau) \rightarrow Y_i$ is a bijection.
			\item for every $(I-\{i\})$-residue $\mathcal{R}$ and for every $c, c' \in \Ch(\mathcal{R})$, $h_i (c) = h_i (c' )$.
		\end{enumerate}
	\end{definition}
	\begin{definition}
		For each $i\in I$, let $G_i\leq \Sym(Y_i)$ be a transitive permutation group and let $(h_i)_{i\in I}$ be a set of legal coloring of $\Delta$. Then the universal group $\mathcal{U}((h_i,G_i)_{i\in I})$ of $\Delta$ with respect to the set of legal coloring $(h_i)_{i\in I}$ and the groups $(G_i)_{i\in I}$ is the subgroup of $\Aut(\Delta)$ defined by
		\begin{equation*}
		\{g\in \Aut(\Delta)\lvert (\restriction{h_i}{g\tau_i})\circ g\circ (\restriction{h_i}{\tau_i})^{-1}\in G_i \qq \forall i\in I \mbox{ and every }i-\mbox{panel }\tau_i \mbox{ of }\Delta\}
		\end{equation*}
	\end{definition}
	\noindent Those groups satisfy a particular property that we now introduce.
	\begin{definition}
		Let $\Delta$ be a right-angled buildings of type $(W,I)$ and let $J\subseteq I$. A subgroup $G\leq \Aut(\Delta)$ is said to satisfy the property ${\rm IP}_{J}$ if for all $J\cup J^\perp$-residue $\mathcal{R}$ of $\Delta$ we have 
		\begin{equation*}\tag{${\rm IP}_{J}$}\label{IPJ}
		\Fix_G(\mathcal{R})= \prod_{c\in \mathcal{R}} \Fix_G(V_{J}(c))
		\end{equation*} 
		where $V_{J}(c)=\{ d\in \Ch(\Delta)\lvert \proj_{\mathcal{R}}(d)\not=c\}$ is the complement of the $J$-wing containing $c$ and where $\Fix_G(\mathcal{R})=\{g\in G \lvert gc=c\qq \forall c\in \Ch(\mathcal{R})\}$.
	\end{definition}
	The following proposition ensures that every universal group of right-angled buildings satisfies the property ${\rm IP}_{\{i\}}$ for every $i\in I$. Furthermore, Proposition \ref{independence IPJ for universal} below ensures that they also satisfy the property ${\rm IP}_{J}$ for every finite set $J\subseteq I$ such that $\{i\}\cup \{i\}^\perp=J$ $\forall i\in J$. 
	\begin{proposition}[{\cite[Proposition $3.16$]{Universal2018}}]\label{universal satify IP i de larticle universal}
		Let $G$ be a universal group of a locally finite semi-regular right-angled building $\Delta$. Then $G$ satisfies the property ${\rm IP}_{\{i\}}$ for every $i\in I$. Furthermore,  $\forall i\in I$, $\forall g\in \Fix_G(\mathcal{R}_{\{i\}\cup \{i\}^\perp}(c))$, and $\forall c\in \Ch(\Delta)$ the type preserving automorphism 
		\begin{equation*}
		g^c : \Ch(\Delta)\rightarrow \Ch(\Delta): x \mapsto \begin{cases}
		\qq g x\qq&\mbox{ if }x \in X_{i}(c) \\
		\qq x \qq &\mbox{ if }x\in V_{i}(c)
		\end{cases} 
		\end{equation*}
		is an element of $G$.  
	\end{proposition}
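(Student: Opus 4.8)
The plan is to establish the two assertions simultaneously, deducing property ${\rm IP}_{\{i\}}$ from the statement that the cut-and-paste maps $g^c$ lie in $G$. So for a fixed $i\in I$, a chamber $c$, and $g\in\Fix_G(\mathcal{R})$ with $\mathcal{R}=\mathcal{R}_{\{i\}\cup\{i\}^\perp}(c)$, I would first prove that the map $g^c$ defined in the statement is a well-defined type-preserving automorphism of $\Delta$ lying in $\mathcal{U}((h_j,G_j)_{j\in I})$, and only afterwards write a general element of $\Fix_G(\mathcal{R})$ as a product of such maps. Throughout I would use that $\Ch(\Delta)$ is partitioned into the $i$-wings $X_i(c')$, $c'\in\mathcal{R}_i(c)$, and that $V_{\{i\}}(c)$ is exactly the complement of $X_i(c)$.

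For the first step I would rely on the equivalence between the $W$-metric and chamber-system descriptions of a building: a type-preserving bijection of $\Ch(\Delta)$ which, together with its inverse, preserves every $j$-adjacency relation is an automorphism of $\Delta$. Bijectivity of $g^c$ is immediate once one notes that $g$ stabilises $X_i(c)$ — indeed $g$ fixes $c$ and fixes the panel $\mathcal{R}_i(c)\subseteq\mathcal{R}$ pointwise, hence stabilises the $i$-wing they determine — so that $(g^{-1})^c$ (which exists by the same construction, $g^{-1}$ also fixing $\mathcal{R}$) is a two-sided inverse of $g^c$. Observe also that $g^c$ fixes $\mathcal{R}$ pointwise, since $g$ does and $g^c$ agrees with $g$ on $X_i(c)$ and with the identity off $X_i(c)$.

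The core of the argument is a case analysis over panels $\tau$. Using that the projection of a panel onto a non-parallel panel is constant, that parallel panels have the same type, and Lemma \ref{residues of different types intersects in a single chamber}, one sees that every $j$-panel with $j\neq i$, and every $i$-panel not parallel to $\mathcal{R}_i(c)$, is entirely contained in a single $i$-wing; on such a $\tau$ the map $g^c$ coincides with $g$ (if that wing is $X_i(c)$, in which case $g\tau\subseteq X_i(c)$ as well) or with the identity (otherwise). The only panels straddling $X_i(c)$ and $V_{\{i\}}(c)$ are therefore $\mathcal{R}_i(c)$ itself and the $i$-panels parallel to it; by the parallelism criterion for right-angled buildings the latter are contained in $\mathcal{R}$, which $g^c$ fixes pointwise, and $\mathcal{R}_i(c)$ is also fixed pointwise by $g^c$ (it fixes $c$ and the remaining chambers of that panel lie in $V_{\{i\}}(c)$). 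Thus on every panel $\tau$, $g^c$ restricts either to $\restr{g}{\tau}$ or to the identity; this gives at once that $g^c$ (and $(g^{-1})^c$) preserves all $j$-adjacencies, hence $g^c\in\Aut(\Delta)$, and that the local permutation $(\restr{h_j}{g^c\tau})\circ g^c\circ(\restr{h_j}{\tau})^{-1}$ lies in $G_j$ for every $\tau$ (it equals the corresponding local permutation of $g$, which lies in $G_j$ since $g\in G$, or it is $\id_{Y_j}\in G_j$). Hence $g^c\in\mathcal{U}((h_j,G_j)_{j\in I})=G$.

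Finally, to obtain ${\rm IP}_{\{i\}}$: given an $\{i\}\cup\{i\}^\perp$-residue $\mathcal{R}$ and $g\in\Fix_G(\mathcal{R})$, fix an $i$-panel $\mathcal{R}_i(c_0)\subseteq\mathcal{R}$ with chambers $c_1,\dots,c_{q_i}$; these index the $i$-wings meeting $\mathcal{R}$, and since $\Ch(\Delta)=\bigsqcup_k X_i(c_k)$ the maps $g^{c_k}$ (each in $G$ by the previous step, and each fixing $V_{\{i\}}(c_k)$ pointwise) have pairwise disjoint supports and satisfy $\prod_k g^{c_k}=g$, giving $\Fix_G(\mathcal{R})\subseteq\prod_{c\in\mathcal{R}}\Fix_G(V_{\{i\}}(c))$; the reverse inclusion is a short separate check (an element fixing $V_{\{i\}}(c)$ fixes all but one chamber of each $i$-panel inside $\mathcal{R}$, hence fixes $\mathcal{R}$ pointwise). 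The step I expect to be the main obstacle is the structural lemma on how panels meet wings — in particular verifying that only $i$-panels can straddle $X_i(c)$ and that the straddling ones all lie in $\mathcal{R}_{\{i\}\cup\{i\}^\perp}(c)$; the remainder is bookkeeping around the wing partition of $\Ch(\Delta)$ and the definition of the universal group.
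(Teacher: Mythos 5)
The paper does not prove this statement — it is imported verbatim from \cite[Proposition 3.16]{Universal2018} — so there is no internal proof to compare against; your reconstruction is correct and follows essentially the argument of that reference (panel-by-panel verification that $g^c$ preserves adjacencies and has legal local actions, then decomposition of $\Fix_G(\mathcal{R})$ along the wing partition). The one assertion you should back up is that parallel panels in a right-angled building have the same type (used to conclude that only $i$-panels, necessarily contained in $\mathcal{R}_{\{i\}\cup\{i\}^\perp}(c)$ by the parallelism criterion, can straddle the $i$-wing partition); this holds because distinct generators of a right-angled Coxeter group are never conjugate, but it is not among the facts quoted in the paper and deserves a line or a citation. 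Note also that you read $V_{\{i\}}(c)$ as the complement of the $i$-wing $X_i(c)$, which is the convention the paper itself uses when applying ${\rm IP}_J$ later, even though the displayed formula in its definition literally refers to projection onto the $\{i\}\cup\{i\}^\perp$-residue.
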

	\begin{proposition}\label{independence IPJ for universal}
		Let $G\leq \Aut(\Delta)$ be a universal group of a locally finite semi-regular right-angled building $\Delta$. Then, $G$ satisfies the property ${\rm IP}_{J}$ for every finite set $J\subseteq I$ such that $\{i\}\cup \{i\}^\perp=J$  $\forall i\in J$. 
	\end{proposition}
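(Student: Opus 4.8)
The plan is to reduce the claimed factorization $\Fix_G(\mathcal{R})=\prod_{c\in\mathcal{R}}\Fix_G(V_J(c))$, for a $J\cup J^\perp$-residue $\mathcal{R}$ (where $J=\{i\}\cup\{i\}^\perp$ for every $i\in J$, so $J^\perp\subseteq J$ and $J\cup J^\perp=J$), to the single-index property $\mathrm{IP}_{\{i\}}$ already established in Proposition \ref{universal satify IP i de larticle universal}, by decomposing the wing $X_J(c)$ as the intersection $\bigcap_{i\in J}X_i(c)$ via Lemma \ref{la J wing est l'intersection des j wing wesh wesh wesh}(1). First I would observe that the elements of $\Fix_G(V_J(c))$ and $\Fix_G(V_J(c'))$ for distinct $c,c'\in\Ch(\mathcal{R})$ have disjoint supports: since $\{X_J(c)\mid c\in\Ch(\mathcal{R})\}$ partitions $\Ch(\Delta)$ (this is the property of $J$-wings attached to a $J$-residue, recalled in the preliminaries from \cite{Caprace2014}), an element fixing $V_J(c)$ pointwise is supported inside $X_J(c)$, and these wings are pairwise disjoint. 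Hence the product $\prod_{c\in\mathcal{R}}\Fix_G(V_J(c))$ is a well-defined subgroup, and the inclusion $\supseteq$ is immediate because each factor fixes $V_J(c)\supseteq\Ch(\mathcal{R})$, wait — one must check $\Ch(\mathcal{R})\subseteq V_J(c)$ for all but... actually $\mathrm{proj}_{\mathcal{R}}(d)=d$ for $d\in\Ch(\mathcal{R})$, so $\Fix_G(V_J(c))$ fixes every chamber of $\mathcal{R}$ except possibly $c$ itself; but it also fixes $c$ since $c\notin V_J(c)$ means $c\in X_J(c)$ and the identity on $V_J(c)$ says nothing there — so instead one argues that $g\in\Fix_G(V_J(c))$ fixes $c$ because $c=\mathrm{proj}_{\mathcal{R}}(d)$ for chambers $d$ far inside $X_J(c)$ which need not be moved... the cleaner route is: $g$ preserves $X_J(c)$ and fixes $\mathcal{R}\setminus\{c\}$, hence fixes $c$ as the unique remaining chamber of $\mathcal{R}$. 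This gives $\supseteq$.

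For the nontrivial inclusion $\subseteq$, take $g\in\Fix_G(\mathcal{R})$ and enumerate $\Ch(\mathcal{R})=\{c_1,\dots,c_n\}$. The strategy is a peeling argument analogous to the proof of Lemma \ref{independence on trees}: I want to successively split off factors supported in single wings. Fix $c_1$. Using $\mathrm{IP}_{\{i\}}$ for each $i\in J$ in turn — more precisely, using that $\Fix_G(\mathcal{R})\subseteq\Fix_G(\mathcal{R}_{\{i\}\cup\{i\}^\perp}(c_1))=\Fix_G(\mathcal{R})$ (equality since $\mathcal{R}$ is itself a $J$-residue and $J=\{i\}\cup\{i\}^\perp$) and applying Proposition \ref{universal satify IP i de larticle universal} — I can write $g=\prod_{i\in J} g_i \cdot g'$ where $g_i\in\Fix_G(V_i(c_1))$ is supported in $X_i(c_1)$ and $g'$ is supported in $\bigcup_{i\in J}V_i(c_1)=V_J(c_1)$ (the last equality being the De Morgan dual of Lemma \ref{la J wing est l'intersection des j wing wesh wesh wesh}(1)). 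Here one needs the commutation of the $g_i$ among themselves and the fact that the partial products stay in $\Fix_G(\mathcal{R})$, which follows because the $X_i(c_1)$ for varying $i$ all contain $c_1$ and their complements contain $\mathcal{R}\setminus\{c_1\}$; this requires a small iterative bookkeeping exactly as in Lemma \ref{independence on trees}, keeping track at each stage that the residual factor still fixes $\mathcal{R}$ and lies in an ever-growing intersection of complements of wings. The product $h_1:=\prod_{i\in J}g_i$ then lies in $\Fix_G(V_J(c_1))$ since it is supported in $\bigcap_i X_i(c_1)=X_J(c_1)$ and fixes $\mathcal{R}$. Replacing $g$ by $g'$ (supported in $V_J(c_1)$, still fixing $\mathcal{R}$), I repeat the argument at $c_2$, noting that $g'$ now additionally fixes $X_J(c_1)$ so the new factor $h_2\in\Fix_G(V_J(c_2))$ is automatically supported in $X_J(c_2)\setminus X_J(c_1)$; iterating over $c_1,\dots,c_n$ exhausts $\mathcal{R}$ and the final residual factor is supported in $\bigcap_{j}V_J(c_j)=\Ch(\Delta)\setminus\bigsqcup_j X_J(c_j)=\emptyset$, i.e. is the identity. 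This yields $g=h_1h_2\cdots h_n\in\prod_{c\in\mathcal{R}}\Fix_G(V_J(c))$.

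The main obstacle I anticipate is making the inner decomposition $g=\prod_{i\in J}g_i\cdot g'$ rigorous: $\mathrm{IP}_{\{i\}}$ is stated for a single index, so I must iterate it across the indices $i\in J$ while controlling how each application interacts with the wings $X_{i'}(c_1)$ for the other $i'\in J$. The key geometric input making this work is Lemma \ref{the inclusion of wings} together with Lemma \ref{residues of different types intersects in a single chamber}: because $J=\{i\}\cup\{i\}^\perp$, the panels $\mathcal{R}_{\{i\}}(c_1)$ for $i\in J$ are pairwise "perpendicular" inside $\mathcal{R}$, so fixing $V_i(c_1)$ for one $i$ does not disturb the wing structure for another $i'$, and the supports stay organized. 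Concretely one shows that after splitting off the $X_i(c_1)$-part, the remaining automorphism still fixes $\mathcal{R}_{\{i'\}\cup\{i'\}^\perp}(c_1)$ and hence $\mathrm{IP}_{\{i'\}}$ can be applied again — this is where the hypothesis $\{i\}\cup\{i\}^\perp=J$ for all $i\in J$ is essential, as it guarantees $\mathcal{R}$ is simultaneously a $\{i\}\cup\{i\}^\perp$-residue for every $i\in J$. Once this inner peeling is in place, the outer peeling over the chambers of $\mathcal{R}$ is routine and formally identical to the tree case handled in Lemma \ref{independence on trees}, so I would likely phrase that part by direct analogy rather than repeating every step.
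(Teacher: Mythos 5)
Your overall route is in substance the same as the paper's: both proofs reduce ${\rm IP}_J$ to the single-type Proposition \ref{universal satify IP i de larticle universal} (in particular its second statement, that the piecewise automorphism acting as $g$ on $X_i(c)$ and trivially on $V_i(c)$ again lies in $G$) combined with Lemma \ref{la J wing est l'intersection des j wing wesh wesh wesh}, which gives $X_J(c)=\bigcap_{i\in J}X_i(c)$ and hence $V_J(c)=\bigcup_{i\in J}V_i(c)$; the inclusion $\supseteq$, the disjointness of supports for distinct chambers, and the observation that $\mathcal{R}$ is simultaneously the $\{i\}\cup\{i\}^\perp$-residue of each of its chambers for every $i\in J$ are handled the same way. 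The difference is purely organizational: the paper loops over the types $i_1,\dots,i_n$ of $J$, at each stage decomposing each factor over an entire $i_t$-panel and invoking Lemma \ref{la J wing est l'intersection des j wing wesh wesh wesh}(2) to see that the new pieces stay inside the wings already gained, so that after $n$ stages every factor is supported in some $J$-wing; you instead run an outer loop over the chambers of $\mathcal{R}$ and extract, at each chamber, the factor supported in its $J$-wing. Your variant does work (and, being based at a single chamber, does not even need Lemma \ref{la J wing est l'intersection des j wing wesh wesh wesh}(2) or Lemma \ref{the inclusion of wings}), so the two arguments are the same peeling idea packaged differently.

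There is, however, a concrete misstep in your inner decomposition as written. You claim $g=\prod_{i\in J}g_i\cdot g'$ with each $g_i\in \Fix_G(V_i(c_1))$ supported in $X_i(c_1)$, and then assert that $h_1:=\prod_{i\in J}g_i$ lies in $\Fix_G(V_J(c_1))$ ``since it is supported in $\bigcap_i X_i(c_1)$''. A product of elements supported in the various $X_i(c_1)$ is supported in the \emph{union} of those wings, not their intersection, so this $h_1$ need not fix $V_J(c_1)$ and the step fails as stated. The correct bookkeeping (which is exactly what the paper's index-by-index refinement implements) is an iterated restriction: set $a_0=g$ and, at stage $t$, apply Proposition \ref{universal satify IP i de larticle universal} to $a_{t-1}\in\Fix_G(\mathcal{R})$ over the $i_t$-panel of $c_1$, keep $a_t$ equal to the piece acting as $a_{t-1}$ on $X_{i_t}(c_1)$ --- it is then supported in $X_{i_1}(c_1)\cap\dots\cap X_{i_t}(c_1)$ and still fixes $\mathcal{R}$ --- and push the remaining pieces, which are supported in $X_{i_t}(d)\subseteq V_{i_t}(c_1)\subseteq V_J(c_1)$ for $d\neq c_1$, into the residual. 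After $|J|$ stages one gets $g=a_n\cdot g'$ with $a_n\in\Fix_G(V_J(c_1))$ and $g'\in\Fix_G(\mathcal{R})$ supported in $V_J(c_1)$, and your outer induction over the finitely many chambers of $\mathcal{R}$ then goes through as you describe. With that repair the proposal is a complete proof.
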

	\begin{proof}
		If $\modu{J}=1$, the results follows directly from Proposition \ref{universal satify IP i de larticle universal}. Suppose therefore that $\modu{J}\geq 2$ and let us show that $$\Fix_G(\mathcal{R})= \prod_{c\in \mathcal{R}} \Fix_G(V_{J}(c))$$ for every $J$-residue $\mathcal{R}$. First, notice that $\Fix_G(V_{J}(c))$ is a subgroup of $\Fix_G(\mathcal{R})$ for every $c\in \mathcal{R}$. Lemma \ref{la J wing est l'intersection des j wing wesh wesh wesh} ensures that $X_J(c)=\bigcap_{i\in J}X_i(c)$ for every $c\in \Ch(\mathcal{R})$. In particular, taking the complement, we obtain that $V_J(c)=\bigcup_{i\in J}V_i(c)$ and therefore that $\Fix_G(V_J(c))=\bigcap_{i\in J}\Fix_G(V_i(c))$. Let $i\in J$ and remember from Proposition \ref{universal satify IP i de larticle universal} that $G$ satisfies ${\rm IP}_{\{i\}}$ which implies that $\Fix_G(V_i(c))\subseteq \Fix_G(\mathcal{R}_{\{i\}\cup \{i\}^{\perp}}(c))$. Furthermore, since by the hypothesis $J=\{i\}\cup \{i\}^\perp$, we obtain that $\mathcal{R}=\mathcal{R}_{\{i\}\cup \{i\}^{\perp}}(c)$ and therefore that $\Fix_G(V_J(c))\subseteq \Fix_G(\mathcal{R})$. Notice that for every two distinct $c,d\in \Ch(\mathcal{R})$, the supports of the elements of $\Fix_G(V_i(c))$ and $\Fix_G(V_i(d))$ are disjoint from one another which proves that $\prod_{c\in \mathcal{R}} \Fix_G(V_{J}(c))$ is a well defined subgroup of $G$. The above discussion proves that $\prod_{c\in \mathcal{R}} \Fix_G(V_{J}(c))\subseteq\Fix_G(\mathcal{R})$. To prove the other inclusion, let $g\in \Fix_G(\mathcal{R})$ and let $J=\{i_1,...,i_n\}$. For any $i\in J$ and $c\in \Ch(\mathcal{R})$, let $\tau_i(c)$ be the $i$-panel containing $c$ in $\mathcal{R}$. Let us fix some chamber $c\in \Ch(\mathcal{R})$ and let 
		\begin{equation*}
		g_1^d : \Ch(\Delta)\rightarrow \Ch(\Delta): x \mapsto \begin{cases}
		\qq g x \qq&\mbox{ if }x \in X_{i_1}(d) \\
		\qq x \qq &\mbox{ if }x\in V_{i_1}(d)
		\end{cases} 
		\end{equation*}
		for every $d\in \tau_{i_1}(c)$.  Proposition \ref{universal satify IP i de larticle universal} ensures that $g^d_1\in G$ $\forall d\in \tau_{i_1}(c)$ and that $g=\prod_{d\in \tau_{i_1}(c)}g^d_1$. On the other hand, for every $d\in \tau_{i_1}(c)$, there exists a unique $i_2$-panel $\tau_{i_2}(d)$ such that $d\in \tau_{i_2}(d)$. Since $g^d_1\in \Fix_G(V_{i_1}(d))\subseteq \Fix_G(\mathcal{R})$, we can repeat the above argument and we obtain that $g^d_1 = \prod_{d'\in \tau_{i_2}(d)}g^{d'}_2$ where 
		\begin{equation*}
		g^{d'}_2 : \Ch(\Delta)\rightarrow \Ch(\Delta): x \mapsto \begin{cases}
		\qq g^d_1 x \qq&\mbox{ if }x \in X_{i_2}(d') \\
		\qq x \qq &\mbox{ if }x\in V_{i_2}(d')
		\end{cases} 
		\end{equation*}
		for every $d'\in \tau_{i_2}(d)$. Just as before, Proposition \ref{universal satify IP i de larticle universal} ensures that $g^{d'}_2\in G$ for every $d'\in \tau_{i_2}(d)$. On the other hand, $\tau_{i_1}(c)\cap \tau_{i_2}(d)=\tau_{i_1}(d)\cap \tau_{i_2}(d)=\{d\}$. In particular, this implies that $\proj_{\tau_{i_1}(c)}(\tau_{i_2}(d))=\{d\}$ and $d'\in X_{i_1}(d)$. Since $\mathcal{R}$ is an $\{i_1\}\cup \{i_1\}^\perp$-residue Lemma \ref{la J wing est l'intersection des j wing wesh wesh wesh} ensures that $X_{i_1}(d)=X_{i_1}(d')$. This proves that $g_2^{d'}$ has support in $X_{i_1}(d')\cap X_{i_2}(d')=X_{\{i_1, i_2\}}(d')$ and therefore that $g_{d'}^2\in \Fix_G(V_{\{i_1,i_2\}}(d'))$. Proceeding iteratively, for any of the constructed $g^d_k$ with $k\in \{2,...,n-1\}$, we set
		\begin{equation*}
		g^{d'}_{k+1} : \Ch(\Delta)\rightarrow \Ch(\Delta): x \mapsto \begin{cases}
		\qq g^d_k x \qq&\mbox{ if }x \in X_{i_{k+1}}(d') \\
		\qq x \qq &\mbox{ if }x\in V_{i_{k+1}}(d')
		\end{cases} 
		\end{equation*}
		for every $d'\in \tau_{i_{k+1}}(d)$. Once more, Proposition \ref{universal satify IP i de larticle universal} ensures that $g^{d'}_{k+1}\in G$  $\forall d'\in \tau_{i_{k+1}}(d)$ and that $g^d_k=\prod_{d'\in \tau_{i_{k+1}}(d)}g^{d'}_{k+1}$. On the other hand, $\tau_{i_l}(d)\cap \tau_{i_{k+1}}(d)=\{d\}$ for every $l=1,...,k$ which implies that $\proj_{\tau_{i_l}(d)}(\tau_{i_{k+1}}(d))=\{d\}$ and therefore that $d'\in X_{i_{l}}(d)$. Since $\mathcal{R}$ is an $\{i_l\}\cup \{i_l\}^\perp$-residue, Lemma \ref{la J wing est l'intersection des j wing wesh wesh wesh} ensures that $X_{i_l}(d)=X_{i_l}(d')$. Finally, since $g^d_k$ has support in $X_{i_1}(d)\cap X_{i_2}(d)\cap ...\cap X_{i_k}(d)$ this proves that $g_{k+1}^{d'}$ has support in $X_{i_1}(d') \cap ... X_{i_k}(d')\cap X_{i_{k+1}}(d')=X_{\{i_1,...,i_{k+1}\}}(d')$ and therefore that $g_{d'}\in \Fix_G(V_{\{i_1,i_2,..., i_{k+1}\}}(d'))$. 
	\end{proof}

	\subsection{Groups of automorphisms of certain right-angled buildings as groups of automorphisms of trees.}
	
	The purpose of this section is to show that the group of type preserving automorphisms $\Aut(\Delta)$ of certain semi-regular right-angled buildings $\Delta$ can be realised as closed subgroups of the group $\Aut(T)^+$ of type-preserving automorphisms of a locally finite tree $T$ in such a way that the universal groups of those buildings embedded as closed subgroups $G\leq \Aut(T)^+$ satisfying the property \ref{IPV1}. This applies only to certain Coxeter types and motivates the following definition.
	\begin{definition}
		A right-angled Coxeter system $(W,I)$ is said to satisfy the hypothesis \ref{Hypothese cox star} if it is finitely generated and there exists $r\geq 2$ such that
		\begin{equation}\tag{$\star$}\label{Hypothese cox star}
		I=\bigsqcup_{k=1}^rI_k
		\end{equation}
		for some $I_k=\{i\}\cup \{i\}^\perp$  $\forall i\in I_k$ and $\forall k=1,...,r$.
	\end{definition} 
	\begin{remark}
	A right-angled Coxeter system satisfying the hypothesis \ref{Hypothese cox star} is isomorphic to a free product $W_1*W_2*...*W_r$ where each of the $W_k$ is a direct product of finitely many copies of the group $C_2$ of order $2$. In particular, $W$ is virtually free.
	\end{remark} 
	\noindent Let $(W,I)$ be a right-angled Coxeter system satisfying the hypothesis \ref{Hypothese cox star}, let $(q_i)_{i\in I}$ be a family of finite cardinals bigger than $2$ and let $\Delta$ be a semi-regular building of type $(W,I)$ and prescribed thickness $(q_i)_{i\in I}$. We associate a locally finite bipartite graph to $\Delta$ as follows. We let $V_0=\Ch(\Delta)$, we let $$V_1= \{ \mathcal{R}\lvert \mathcal{R} \mbox{ is an }I_k-\mbox{residue of }\Delta \mbox{ for some }k\in \{1,...,r\}\}$$ and we define $T$ as the bipartite graph with set of vertices $V(T)= V_0 \sqcup V_1$ and where a chamber $c\in V_0$ is adjacent to a residue $\mathcal{R}\in V_1$ if $c\in \mathcal{R}$. 
	\begin{lemma}\label{the tree gamma associated with Delta}
		The graph $T$ is a locally finite tree. 
	\end{lemma}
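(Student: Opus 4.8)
The plan is to verify in turn that $T$ is locally finite, connected, and acyclic. Local finiteness is immediate: a chamber $c\in V_0$ lies in exactly one $I_k$-residue $\mathcal{R}_{I_k}(c)$ for each $k=1,\dots,r$, so $c$ has degree $r$; and an $I_k$-residue $\mathcal{R}\in V_1$ is itself a building of the type $(W_{I_k},I_k)$, where by $(\star)$ the group $W_{I_k}$ is a direct product of finitely many copies of $C_2$, hence finite, so $\mathcal{R}$ is a spherical residue with all panels finite (of sizes $q_i$, $i\in I_k$), and therefore $\Ch(\mathcal{R})$ is finite and $\mathcal{R}$ has finite degree. For connectedness, given two chambers $c,d$ I would pick a gallery $c=e_0,e_1,\dots,e_n=d$ in which $e_{j-1}$ and $e_j$ are $s_j$-adjacent for some $s_j\in I$; if $s_j\in I_{k(j)}$ then $e_{j-1},e_j$ both belong to the $I_{k(j)}$-residue through them, so they are joined by a path of length $2$ in $T$, and concatenating these paths yields a walk from $c$ to $d$. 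Since every residue vertex is adjacent to each of its chambers, $T$ is connected.

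The substance is in showing $T$ has no cycle, and here is where hypothesis $(\star)$ and the resulting free product decomposition $W=W_{I_1}\ast\cdots\ast W_{I_r}$ enter. Suppose $T$ contained a cycle; being bipartite and simple it has length $2m$ with $m\ge 2$, and I may write it as $c_0,\mathcal{R}_1,c_1,\mathcal{R}_2,\dots,\mathcal{R}_m,c_m=c_0$ with the $c_j$ ($0\le j<m$) pairwise distinct and the $\mathcal{R}_j$ pairwise distinct; let $\mathcal{R}_j$ be an $I_{k_j}$-residue. Then $c_{j-1},c_j\in\mathcal{R}_j$ gives $w_j:=\delta(c_{j-1},c_j)\in W_{I_{k_j}}$, and $w_j\ne 1$ since $c_{j-1}\ne c_j$. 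Moreover $k_j\ne k_{j+1}$ for $1\le j\le m-1$: otherwise $\mathcal{R}_j$ and $\mathcal{R}_{j+1}$ would both equal the unique $I_{k_j}$-residue through $c_j$, contradicting $\mathcal{R}_j\ne\mathcal{R}_{j+1}$.

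Now I would concatenate minimal galleries. Choosing for each $j$ a minimal gallery from $c_{j-1}$ to $c_j$, its type is a reduced expression $f_j$ for $w_j$ in $W_{I_{k_j}}$ (with $|f_j|=\ell(w_j)\ge 1$), and the concatenation $\gamma$ of these galleries runs from $c_0$ to $c_m$ with type the word $f_1f_2\cdots f_m$, all of whose consecutive chambers are distinct. Because $w=w_1w_2\cdots w_m$ is in free product normal form in $W_{I_1}\ast\cdots\ast W_{I_r}$ (each $w_j\ne 1$ lies in the factor $W_{I_{k_j}}$ and $k_j\ne k_{j+1}$), the standard normal-form fact gives $\ell_W(w)=\sum_j\ell_{W_{I_{k_j}}}(w_j)=\sum_j|f_j|$, so $f_1\cdots f_m$ is a reduced expression for $w$; by the $W$-metric axioms a gallery whose type is a reduced word is minimal and realizes that word, hence $\delta(c_0,c_m)=w$. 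Since $m\ge 2$ and all $w_j\ne 1$, the normal form of $w$ is nontrivial, so $w\ne 1$ and $c_0\ne c_m$, contradicting $c_m=c_0$. Thus $T$ is a tree. The only nonroutine ingredient is the normal-form lemma for free products of Coxeter groups (the concatenation of reduced expressions of the normal-form factors is reduced), which is classical and is precisely what $(\star)$ furnishes; everything else is bookkeeping with residues and the $W$-metric.
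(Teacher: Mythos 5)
Your proof is correct, but your acyclicity argument takes a genuinely different route from the paper. The paper stays inside the combinatorics of wings: given a simple cycle $c_1-\mathcal{R}_1-\cdots-\mathcal{R}_n-c_1$, it uses Lemma \ref{residues of different types intersects in a single chamber} and Lemma \ref{the inclusion of wings} to propagate inclusions of wings along the cycle and concludes that $\mathcal{R}_n\subseteq X_{J_1}(c_1)\cap X_{J_1}(c_2)$, which is empty since $c_1\neq c_2$ lie in the same $J_1$-residue --- a contradiction. You instead exploit the algebraic content of \ref{Hypothese cox star} directly: the cycle produces an alternating word $w=w_1\cdots w_m$ with $w_j\in W_{I_{k_j}}\setminus\{1\}$ and $k_j\neq k_{j+1}$, and you combine the normal-form/length-additivity fact for the free product $W=W_{I_1}\ast\cdots\ast W_{I_r}$ with the standard $W$-metric fact that a non-stuttering gallery of reduced type realizes the corresponding Weyl distance, to get $\delta(c_0,c_m)=w\neq 1$, contradicting $c_m=c_0$. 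Your steps are all sound (you correctly check $k_j\neq k_{j+1}$ via uniqueness of the $I_k$-residue through a chamber, and you note the concatenated gallery is non-stuttering); the only external ingredients are the free-product normal form with length additivity for the union generating set and the reduced-type gallery lemma, both classical. Comparing the two: the paper's argument is self-contained relative to the wing lemmas it has already imported from Caprace and reuses later in the chapter, and needs no facts about reduced words; yours makes the role of \ref{Hypothese cox star} (i.e.\ that $W$ is the free product $W_1\ast\cdots\ast W_r$, hence virtually free) completely transparent, in effect identifying $T$ with the Bass--Serre-type tree of this free product, and it has the side benefit of showing that the normal form of $\delta(c,d)$ can be read off the geodesic in $T$ --- which is exactly the decomposition used again in the proof of Theorem \ref{theorem pour les right angled buildings} to establish $\delta$-$2$-transitivity.
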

	\begin{proof}
		The graph $T$ is locally finite since each chamber is contained in finitely many residues and since each $I_k$-residue is finite (locally finite and spherical). The graph $T$ is path connected since every two chambers of $\Delta$ are connected by a gallery and since each such gallery corresponds naturally to a path in $T$. We now  show that $T$ does not contain any cycle. Suppose for a contradiction that there is a simple cycle in $T$, say
		\begin{equation*}
		c_1 - \mathcal{R}_1 - c_2 - ... - \mathcal{R}_n - c_1.
		\end{equation*}
		Since each chamber $c\in \Ch(\Delta)$ is contained in a unique residue $\mathcal{R}$ of type $I_t$ and since the cycle is simple, notice that $\mathcal{R}_1$ and $\mathcal{R}_n$ have different types. In particular, Lemma \ref{residues of different types intersects in a single chamber} ensures that $\mathcal{R}_n\subseteq X_{J_1}(c_1)$ where $J_1$ is the type of $\mathcal{R}_1$. On the other hand, as we show below, $\mathcal{R}_n\subseteq X_{J_1}(c_2)$. For now, we assume this inclusion that is $\mathcal{R}_n\subseteq X_{J_1}(c_1)\cap X_{J_1}(c_2)$ and we show this leads to a contradiction. Since the cycle is simple, we have that $c_1\not=c_2$. Furthermore, since $c_1,c_2\in\mathcal{R}_1$, there exists some $i\in J_1$ such that $c_1\not\in X_i(c_2)$. Hence, we have that $X_i(c_1)\cap X_i(c_2)=\es$ and therefore that $X_{J_1}(c_1)\cap X_{J_1}(c_2)=\es$. The desired contradiction follows from our inclusion.
		
		Now, let us prove that $\mathcal{R}_n\subseteq X_{J_1}(c_2)$. Following this purpose, we show by induction on $t$ that $X_{J_{t+1}}(c_{t+2})\subseteq X_{J_t}(c_{t+1})$ for every $t=1,..., n-2$ where $J_t$ is the type of $\mathcal{R}_t$. Since the cycle is simple notice that $\mathcal{R}_t$ and $\mathcal{R}_{t+1}$ have different types and that $\mathcal{R}_{t+1}\subseteq X_{J_t}(c_{t+1})$ for every $t=1,...,n-2$. On the other hand, since $c_{t+1}\not=c_{t+2}$, there exists some $i'\in J_{t+1}$ such that $c_{t+1}\not\in X_{i'}(c_{t+2})$. Notice for every $i\in J_t$ that $m_{i,i'}=\infty$ and that $c_{t+2}\in \mathcal{R}_{t+1}\subseteq X_{i}(c_{t+1})$. In particular, Lemma \ref{the inclusion of wings} implies that $X_{J_{t+1}}(c_{t+2})\subseteq X_{i'}(c_{t+2})\subseteq \bigcap_{i\in J_t}X_i(c_{t+1})= X_{J_t}(c_{t+1})$ which completes the induction. This proves as desired that $\mathcal{R}_n\subseteq X_{J_{n-1}}(c_{n})\subseteq ...\subseteq X_{J_1}(c_2)$.  
	\end{proof}

	Our next goal is to explicit an injective map $\alpha :\Aut(\Delta)\rightarrow\Aut(T)^+$ defining an homeomorphism on its image. To this end notice that any type-preserving automorphism $g\in \Aut(\Delta)$ is bijective on the set of chambers $\Ch(\Delta)$ but also on the $I_k$-residues of $\Delta$ for any fixed $k$. For every $g\in \Aut(\Delta)$ we define the map $\alpha(g): V(T)\rightarrow V(T)$ as follows: 
	\begin{itemize}
		\item If $v\in V_0$ then $v$ is a chamber $c\in \Ch(\Delta)$ and we define $\alpha(g)v=gc$.
		\item If $v\in V_1$ then $v$ is an $I_k$-residue $\mathcal{R}$ of $\Delta$ for some $k\in \{1,...,r\}$ and we define $\alpha(g)v= g\mathcal{R}$.
	\end{itemize} 
	The map $\alpha(g)$ is clearly defines a type preserving bijection on $V(T)$. In fact, $\alpha(g)$ is a tree automorphism of $T$ and $\alpha: \Aut(\Delta)\rightarrow \Aut(T)^+$ is a well defined group homomorphism  since  for every $g\in \Aut(\Delta)$, every residue $\mathcal{R}$ of $\Delta$ and every $ c\in \Ch(\Delta)$, we have that $c\in \mathcal{R}$ if and only if $gc\in g\mathcal{R}$.
	\begin{proposition}\label{the automrophism of buildings as automorphims of locally finite graphs}
		The map $\alpha :\Aut(\Delta)\rightarrow \Aut(T)^+$ is an injective group homomorphism; $\alpha(\Aut(\Delta))$ is a closed subgroup of $\Aut(T)^+$ and $\alpha$ defines an homeomorphism between $\Aut(\Delta)$ and $\alpha(\Aut(\Delta))$. 
	\end{proposition}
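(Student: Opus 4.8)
\emph{Proof proposal.} The plan is to build an explicit continuous inverse for $\alpha$ out of the "restriction to chambers" map, and to identify $\alpha(\Aut(\Delta))$ as the preimage of $\Aut(\Delta)$ under a continuous homomorphism, from which closedness and the homeomorphism statement both fall out. Since $\alpha$ is already seen to be a well-defined group homomorphism into $\Aut(T)^+$, I would first record injectivity: as $V_0=\Ch(\Delta)$ and $\alpha(g)$ acts on $V_0$ by $c\mapsto gc$, if $\alpha(g)=\id_T$ then $g$ fixes every chamber, so $g=\id$. Next, every $h\in\Aut(T)^+$ is type-preserving, hence maps $V_0$ onto $V_0$, so it restricts to a bijection of $\Ch(\Delta)$; this defines a group homomorphism $\beta\colon\Aut(T)^+\to\Sym(\Ch(\Delta))$, $h\mapsto\restr{h}{V_0}$, and by construction $\beta\circ\alpha=\id_{\Aut(\Delta)}$.

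The key algebraic step I would then carry out is the identification $\alpha(\Aut(\Delta))=\beta^{-1}(\Aut(\Delta))$. The inclusion $\subseteq$ is immediate from $\beta\circ\alpha=\id$. For $\supseteq$, take $h\in\Aut(T)^+$ with $g:=\beta(h)\in\Aut(\Delta)$ and check $h=\alpha(g)$: on $V_0$ this is the definition of $\beta$, and on $V_1$ a vertex is an $I_k$-residue $\mathcal{R}$ whose neighbour set in $T$ is exactly $\Ch(\mathcal{R})$, so $h(\mathcal{R})$ is the unique $V_1$-vertex whose neighbour set is $h(\Ch(\mathcal{R}))=g(\Ch(\mathcal{R}))$; since $g\in\Aut(\Delta)$ this set is the chamber set of the $I_k$-residue $g\mathcal{R}$, whence $h(\mathcal{R})=g\mathcal{R}=\alpha(g)(\mathcal{R})$. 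Thus $h=\alpha(g)$.

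The topological assertions then follow quickly. The map $\beta$ is continuous: for finite $\{c_1,\dots,c_n\}\subseteq\Ch(\Delta)$ one has $\beta^{-1}\big(\Fix_{\Sym(\Ch(\Delta))}(\{c_1,\dots,c_n\})\big)=\Fix_{\Aut(T)^+}(\{c_1,\dots,c_n\})$ (viewing the $c_j$ as vertices of $T$), which is open, and such fixators form a neighbourhood basis at the identity. Since $\Aut(\Delta)$ is a closed subgroup of $\Sym(\Ch(\Delta))$ for the permutation topology — a permutation $g$ lies in $\Aut(\Delta)$ iff $\delta(gc,gd)=\delta(c,d)$ for all pairs $(c,d)$, a conjunction of clopen conditions — the set $\alpha(\Aut(\Delta))=\beta^{-1}(\Aut(\Delta))$ is closed in $\Aut(T)^+$; moreover $\alpha^{-1}$ is the restriction of $\beta$ to this image, hence continuous. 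Finally $\alpha$ itself is continuous: for $w_1,\dots,w_m\in V(T)$, $\alpha^{-1}\big(\Fix_{\Aut(T)^+}(\{w_1,\dots,w_m\})\big)=\bigcap_j\{g\in\Aut(\Delta)\mid g\cdot w_j=w_j\}$, where for a chamber $w_j=c$ this is $\Fix_{\Aut(\Delta)}(c)$ and for a residue $w_j=\mathcal{R}$ it is $\Stab_{\Aut(\Delta)}(\mathcal{R})$, which contains $\Fix_{\Aut(\Delta)}(c)$ for any $c\in\Ch(\mathcal{R})$ and is therefore open. Putting the pieces together, $\alpha$ is an injective continuous homomorphism with continuous inverse onto its image, and the image is closed in $\Aut(T)^+$.

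The only non-formal ingredient is the standard fact that $\Aut(\Delta)$ is closed in $\Sym(\Ch(\Delta))$ with the permutation topology; everything else is a careful but routine chase through the identifications $V_0=\Ch(\Delta)$ and "$V_1$-vertex $\leftrightarrow$ its chamber set''. I expect the step requiring the most care to be the identification $\alpha(\Aut(\Delta))=\beta^{-1}(\Aut(\Delta))$, i.e.\ verifying that a tree automorphism which permutes chambers via an element of $\Aut(\Delta)$ is already forced to coincide with $\alpha(g)$ on all the residue-vertices.
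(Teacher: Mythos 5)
Your proposal is correct, and it reaches the conclusion by a somewhat different mechanism than the paper. The paper proves closedness directly: it observes that $h\in\Aut(T)^+$ lies outside $\alpha(\Aut(\Delta))$ if and only if some pair of $i$-adjacent chambers $c,d$ is sent to a non-$i$-adjacent pair, and then $hU_T(\{c,d\})$ is an open neighbourhood of $h$ contained in the complement; for the homeomorphism it computes $\alpha^{-1}$ and $\alpha$ on basic fixator neighbourhoods via the identification $\Phi:\Ch(\Delta)\to V_0$, showing $\alpha$ is both continuous and open onto its image. You instead package the same content into the restriction homomorphism $\beta:\Aut(T)^+\to\Sym(\Ch(\Delta))$, prove $\beta$ continuous, identify $\alpha(\Aut(\Delta))=\beta^{-1}(\Aut(\Delta))$, and then import the standard fact that $\Aut(\Delta)$ is closed in $\Sym(\Ch(\Delta))$; this buys closedness and the continuity of $\alpha^{-1}$ simultaneously (since $\beta$ restricted to the image is the inverse), at the cost of invoking that external closedness fact, while the paper's argument is self-contained. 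Note that the genuinely nontrivial step is the same in both proofs, namely that a tree automorphism whose action on $V_0$ is a building automorphism $g$ must coincide with $\alpha(g)$ on $V_1$ as well (equivalently, the paper's implicit claim that preserving all $i$-adjacencies forces membership in the image); your verification of this is fine, though you should say a word on why a $V_1$-vertex is determined by its neighbour set -- in a tree two distinct vertices share at most one neighbour, and every residue in $V_1$ contains at least two chambers since $q_i\geq 2$ -- and similarly your claim that $\{g\in\Aut(\Delta)\mid g\mathcal{R}=\mathcal{R}\}$ is open is correct because it is a subgroup containing the open subgroup $\Fix_{\Aut(\Delta)}(c)$ for any chamber $c$ of $\mathcal{R}$, exactly as you indicate.
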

	\begin{proof}
		The homomorphism $\alpha$ is injective, since 
		\begin{equation*}
		\begin{split}
		\ker(\alpha)&=\{g\in \Aut(\Delta)\lvert \alpha(g)=1_{\Aut(T)}\}\\
		&\subseteq\{g\in \Aut(\Delta)\lvert gc=c\qq \forall c\in \Ch(\Delta)\}=\{1_{\Aut(\Delta)}\}.
		\end{split}
		\end{equation*}
		Remember that the sets $$U_{T}(F_{T})=\{g \in \Aut(T)^+\lvert g v=v \qq\forall v\in F_T\}$$ 
		where $F_T\subsetneq V_0$ is finite form a basis of open neighbourhoods of the identity in $\Aut(T)^+$. On the other hand, an element $h\in \Aut(T)^+$ belongs to $\Aut(T)^+-\alpha(\Aut(\Delta))$ if and only if there exists $i\in I$ and two $i$-adjacent chambers $c,d\in V_0$ such that $hc$ and $hd$ are not $i$-adjacent. In particular, for every such automorphism $h$, the set $hU_T(\{c,d\})$ is an open neighbourhood of $h$ in $\Aut(T)^+-\alpha(\Aut(\Delta))$. This proves that the complement of $\alpha(\Aut(\Delta))$ is an open set and therefore that $\alpha(\Aut(\Delta))$ is a closed subgroup of $\Aut(T)^+$.
		 
		Let $\Phi: \Ch(\Delta)\rightarrow V_0$ be the map sending a chamber of $\Delta$ to the corresponding vertex of $V_0\subseteq V(T)$ and remember that the sets 
		$$U_\Delta(F_\Delta)=\{g\in \Aut(\Delta)\lvert g c=c \qq \forall c\in F_\Delta\}$$ 
		where $F_\Delta\subseteq \Ch(\Delta)$ is finite form a basis of open neighbourhoods of the identity in $\Aut(\Delta)$.
		In particular, notice that $\alpha : \Aut(\Delta)\rightarrow \alpha (\Aut(\Delta))$ is continuous since for every finite subset $F_T\subsetneq V_0$ we have that $\alpha^{-1}(U_{T}(F_T)\cap \alpha(\Aut(\Delta))=U_\Delta(\Phi^{-1}(F_T))$. Finally, notice that  $\alpha : \Aut(\Delta)\rightarrow \alpha (\Aut(\Delta))$ is an open map since, for every finite set $F_\Delta\subsetneq\Ch(\Delta)$ we have that $\alpha(U_{\Delta}(F_\Delta))=U_\Delta(\Phi(F_\Delta))\cap \alpha(\Aut(\Delta))$.
	\end{proof}
	The following proposition shows that under this correspondence, the property \ref{IPV1} of groups of type-preserving automorphisms of trees is tightly related to the property \ref{IPJ} of groups of type-preserving automorphisms of right-angled buildings.  
	\begin{proposition}\label{lemme de on a IP IK alors IP V1}
		Let $G\leq \Aut(\Delta)$ be a closed subgroup satisfying the property ${\rm IP}_{I_k}$ for every $k=1,...,r$. Then, $\alpha(G)$ is a closed subgroup of $\Aut(T)^+$ satisfying the property ${\rm IP}_{V_1}$. 
	\end{proposition}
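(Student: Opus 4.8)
The plan is to translate the building-theoretic decomposition ${\rm IP}_{I_k}$ into the tree-theoretic decomposition ${\rm IP}_{V_1}$ via the dictionary supplied by the map $\alpha$. Recall that a vertex $w\in V_1$ is an $I_k$-residue $\mathcal{R}$ of $\Delta$ for some $k$, and that $B_T(w,1)$ consists of $w$ together with the chambers $c\in\Ch(\mathcal{R})$; moreover for such $c$, the vertices of $V(T)$ adjacent to $c$ other than $w$ correspond to residues of types $I_t$ with $t\neq k$ containing $c$. First I would identify, for a chamber $c\in\Ch(\mathcal{R})$, the half-tree $T(w,c)$ with the set of vertices of $T$ whose combinatorial projection onto $\mathcal{R}$ is not $c$; concretely, a chamber $d\in\Ch(\Delta)=V_0$ lies in $T(w,c)$ iff $\proj_{\mathcal{R}}(d)\neq c$, i.e.\ $d\in V_{I_k}(c)$ in the notation of the definition of ${\rm IP}_J$, and a residue-vertex lies in $T(w,c)$ iff the chamber of $\mathcal{R}$ closest to it is not $c$. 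Once this correspondence is established, the key observation is that under $\alpha$ the subgroup $\Fix_{\alpha(G)}(T(w,c))$ corresponds exactly to $\Fix_G(V_{I_k}(c))$, because an automorphism of $T$ in the image of $\alpha$ fixes every vertex of the half-tree precisely when the corresponding building automorphism fixes every chamber with projection different from $c$.

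With that dictionary in hand, the second step is the computation itself. By Proposition \ref{independence IPJ for universal} (or rather by the hypothesis that $G$ satisfies ${\rm IP}_{I_k}$ for each $k$, which is what we are given), for the $I_k$-residue $\mathcal{R}$ we have
\begin{equation*}
\Fix_G(\mathcal{R})=\prod_{c\in\mathcal{R}}\Fix_G(V_{I_k}(c)).
\end{equation*}
Now $\Fix_G(\mathcal{R})$, transported by $\alpha$, is the set of tree automorphisms fixing $w$ together with all its neighbours in $V_0$ — but since fixing a residue-vertex $w$ forces the permutation of $\Ch(\mathcal{R})$ induced by $g$ to be trivial only if $g$ also fixes those chambers, one has to be a little careful: $\Fix_{\alpha(G)}(w)$ only fixes the residue $\mathcal{R}$ setwise, whereas $\Fix_{\alpha(G)}(B_T(w,1))$ fixes it chamberwise. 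So the correct statement is $\alpha(\Fix_G(\mathcal{R}))=\Fix_{\alpha(G)}(B_T(w,1))$, and this is precisely the left-hand side of ${\rm IP}_{V_1}$ for the vertex $w$. Combining the displayed product decomposition with the identification of each factor $\Fix_G(V_{I_k}(c))$ with $\Fix_{\alpha(G)}(T(w,c))$ yields
\begin{equation*}
\Fix_{\alpha(G)}(B_T(w,1))=\prod_{v\in B_T(w,1)-\{w\}}\Fix_{\alpha(G)}(T(w,v)),
\end{equation*}
which is exactly ${\rm IP}_{V_1}$ with respect to the bipartition $V(T)=V_0\sqcup V_1$.

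The remaining point is to make sure the hypotheses of Definition \ref{defintion IPV1} are literally met: $\alpha(G)\leq\Aut(T)^+$ is closed by Proposition \ref{the automrophism of buildings as automorphims of locally finite graphs}, the bipartition $V_0\sqcup V_1$ has the property that every edge of $T$ joins a chamber to a residue containing it (hence exactly one vertex in each class), and the decomposition above must hold for \emph{every} $w\in V_1$ — which is guaranteed since every $w\in V_1$ is an $I_k$-residue for some $k$ and $G$ satisfies ${\rm IP}_{I_k}$ for all $k$. I expect the main obstacle to be the careful verification that the half-tree $T(w,c)$ corresponds, vertex by vertex (both the $V_0$-vertices and the $V_1$-vertices), to the complement $V_{I_k}(c)$ of the $I_k$-wing: one must check that a residue $\mathcal{R}'$ of type $I_t$ sits on the $c$-side of $w$ in $T$ iff its chambers project to $c$, which relies on the convexity of wings and on the acyclicity of $T$ established in Lemma \ref{the tree gamma associated with Delta} (so that ``the $c$-side'' is well defined). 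Granting that geometric lemma, the algebraic identity $\alpha(\Fix_G(V_{I_k}(c)))=\Fix_{\alpha(G)}(T(w,c))$ and hence ${\rm IP}_{V_1}$ follow formally.
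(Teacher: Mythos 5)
Your proposal is correct and takes essentially the same route as the paper: closedness via Proposition \ref{the automrophism of buildings as automorphims of locally finite graphs}, then the dictionary $\alpha(\Fix_G(\mathcal{R}))=\Fix_{\alpha(G)}(B_T(\mathcal{R},1))$ and $\alpha(\Fix_G(V_{I_k}(c)))=\Fix_{\alpha(G)}(T(\mathcal{R}_{I_k}(c),c))$, after which ${\rm IP}_{I_k}$ translates directly into ${\rm IP}_{V_1}$. Your extra care about the $V_1$-vertices of the half-tree and the setwise/chamberwise distinction is a correct elaboration of what the paper leaves implicit.
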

	\begin{proof}
		Proposition \ref{the automrophism of buildings as automorphims of locally finite graphs} ensures that $\alpha(G)$ is a closed subgroup of $\alpha(\Aut(\Delta))$ and therefore of $\Aut(T)^+$. Let $\Phi: \Ch(\Delta)\rightarrow V_0$ be the map sending a chamber of $\Delta$ to the corresponding vertex of $V_0\subseteq V(T)$ and notice that:
		\begin{itemize}
			\item For every residue $\mathcal{R}\in V_1$, $\Phi(\mathcal{R})=\{v\in V_0\lvert v\in \mathcal{R}\}=B_T(\mathcal{R},1)\cap V_0$. In particular, we have that
			\begin{equation*}
			\begin{split}
			\alpha(\Fix_{G}(\mathcal{R}))&=\Fix_{\alpha(G)}(\Phi(\mathcal{R}))=\Fix_{\alpha(G)}(B_T(\mathcal{R},1)\cap V_0)\\
			&=\Fix_{\alpha(G)}(B_T(\mathcal{R},1)).
			\end{split}
			\end{equation*} 
			\item For every $k\in \{1,...,r\}$ and every chamber $c\in \Ch(\Delta)$ we have
			\begin{equation*}
			\begin{split}
			\Phi(\Ch(\Delta)-X_{I_k}(c))&=\Phi(\{d\in \Ch(\Delta)\lvert \proj_{\mathcal{R}_{I_k}(c)}(d)\not=c\})\\
			&=\{v\in V_0\lvert d_T(v,\mathcal{R}_{I_k}(c))\lneq d_T(v,c)\}\\
			&=T( \mathcal{R}_{I_k}(c),c)\cap V_0.
			\end{split}
			\end{equation*}
			In particular, we obtain that
			\begin{equation*}
			\begin{split}
			\alpha (\Fix_G(\Ch(\Delta)-X_{I_k}(c)))&=\Fix_{\alpha(G)}(\Phi(\Ch(\Delta)-X_{I_k}(c)))\\
			&=\Fix_{\alpha(G)}(T(\mathcal{R}_{I_k}(c), c)).
			\end{split}
			\end{equation*}
		\end{itemize}
	The results follows from the definitions of the properties ${\rm IP}_{I_k}$ and ${\rm IP}_{V_1}$. 
	\end{proof}
	
	\noindent 
	The proof of Theorem \ref{Theorem E} requires one last preliminary. Let $(W,I)$ be a right-angled Coxeter system. Let $(q_i)_{i\in I}$ be a set of finite cardinals bigger than $2$ and let $\Delta$ be the semi-regular building of type $(W,I)$ and prescribed thickness $(q_i)_{i\in I}$. Let $Y_i$ be a set of cardinal $q_i$ for every $i\in I$, let $(h_i)_{i\in I}$ be a set of legal colorings of $\Delta$ and let $G_i\leq \Sym(Y_i)$ be transitive on $Y_i$ for every $i\in I$. 
	\begin{proposition}\label{les groupes universelles sont unimodular}
		For every $J\subseteq I$, every $(g_{j})_{j\in J}\in \prod_{j\in J}^{}G_j$ and every $c\in \Ch(\Delta)$, there exits $g\in \mathcal{U}((h_i,G_i)_{i\in I})$ such that $g\mathcal{R}_J(c)=\mathcal{R}_{J}(c)$, $h_j\circ g=g_j\circ h_j$ for all $j\in J$ and $h_i\circ g=h_i$ for all $i\in I-J$.
	\end{proposition}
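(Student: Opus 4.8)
The plan is to construct the desired automorphism $g$ directly, by choosing its local action at each $i$-panel so that the prescribed permutations $g_j$ are realised on the $j$-panels inside $\mathcal{R}_J(c)$ and nothing nontrivial happens outside. First I would recall that the universal group $\mathcal{U}((h_i,G_i)_{i\in I})$ is precisely the subgroup of $\Aut(\Delta)$ whose local action at every $i$-panel $\tau_i$, read off through the legal colouring $h_i$, lies in $G_i$; so to build an element of $\mathcal{U}$ it suffices to prescribe, for each $i$-panel, a colour-permutation in $G_i$, subject to the compatibility forced by the colouring axioms, and then invoke the fact (see \cite{Universal2018}) that such a choice can always be extended to a genuine type-preserving automorphism of $\Delta$.

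The key steps, in order, would be as follows. For every $i\in I-J$, assign to every $i$-panel the identity permutation of $Y_i$. For every $j\in J$, assign to every $j$-panel the permutation $g_j\in G_j$ (the same $g_j$ on all $j$-panels, which is legitimate since $G_j\leq\Sym(Y_j)$ acts on the fixed colour set $Y_j$ independently of the panel). One then checks the local consistency required to glue these choices into an automorphism: moving along an $(I-\{i\})$-residue the $i$-colour is constant (colouring axiom 2), so the prescribed permutations are well defined residue by residue, and distinct $j$'s interact only through residues of type $\{i\}\cup\{i\}^\perp$, where the right-angled structure makes the actions commute. This produces an element $g\in\mathcal{U}((h_i,G_i)_{i\in I})$ with $h_j\circ g=g_j\circ h_j$ for all $j\in J$ and $h_i\circ g=h_i$ for all $i\in I-J$. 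Finally, since $g$ acts trivially (in colour) on every $i$-panel for $i\notin J$ and permutes colours within $j$-panels for $j\in J$, it preserves the residue $\mathcal{R}_J(c)$: indeed $\delta(c,gc)\in W_J$ because every elementary step of a minimal gallery from $c$ to $gc$ lies in a $j$-panel with $j\in J$, and more generally $g$ maps $\mathcal{R}_J(c)$ onto the residue $\mathcal{R}_J(gc)=\mathcal{R}_J(c)$.

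The main obstacle I anticipate is the normalisation at $c$ itself, i.e.\ arranging simultaneously that $g$ \emph{stabilises} $\mathcal{R}_J(c)$ as a set and that it realises the \emph{given} permutations $g_j$ rather than $g_j$ composed with some unwanted translation. The clean way around this is to observe that $\mathcal{R}_J(c)=\mathcal{R}_{\bigcup_{j\in J}(\{j\}\cup\{j\}^\perp)}(c)$ decomposes compatibly with the colour-permutation data, and to use the ``partial automorphism'' construction of \cite{Universal2018}: one first picks the colour-permutation at each panel of $\mathcal{R}_J(c)$ to be exactly $g_j$, then extends colour-trivially across the panels leaving $\mathcal{R}_J(c)$ (legal because those panels have types in $I-J$ or are entered through a $j$-panel already handled), and finally extends arbitrarily but colour-legally to the rest of $\Delta$. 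The extension exists precisely because $\Delta$ is the universal (unique semi-regular) building of its type and thickness, so any locally legal prescription on a convex chamber subset lifts. Checking that the resulting $g$ has all three required properties is then a routine verification using the colouring axioms and the convexity of wings recorded in Lemma \ref{la J wing est l'intersection des j wing wesh wesh wesh}.
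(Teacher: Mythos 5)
Your underlying idea---force the local colour action to be $g_j$ on every $j$-panel and the identity on every $i$-panel with $i\in I-J$---is the same mechanism the paper exploits, but two steps of your argument do not hold up, and the first is the crux of the statement. Knowing $h_j\circ g=g_j\circ h_j$ for $j\in J$ and $h_i\circ g=h_i$ for $i\in I-J$ does \emph{not} constrain $\delta(c,gc)$: already for $J=\es$ these identities only say that $g$ preserves all colourings, and the colour-preserving subgroup is transitive on chambers (this is the building analogue of the fact that the colour-preserving automorphisms of a legally coloured regular tree act transitively), so an automorphism satisfying your colour identities may send $c$ to any chamber with the same colours, far outside $\mathcal{R}_J(c)$. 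Your sentence that ``every elementary step of a minimal gallery from $c$ to $gc$ lies in a $j$-panel with $j\in J$'' is a non sequitur: local colour data on panels says nothing about the gallery from $c$ to $gc$. The stabilisation of $\mathcal{R}_J(c)$ has to be \emph{imposed} by normalising the image of the base chamber, not deduced from the colour identities. This is precisely what the paper does: it notes that $h'_j:=g_j\circ h_j$ for $j\in J$ and $h'_i:=h_i$ otherwise is again a set of legal colourings, chooses the chamber $c'\in\mathcal{R}_J(c)$ with $h_j(c')=g_j\circ h_j(c)$ for $j\in J$, and invokes \cite[Proposition 2.44]{Universal2018}, which yields $g\in\Aut(\Delta)$ with $gc=c'$ \emph{and} $h_i\circ g=h'_i$ for all $i$; then $g\mathcal{R}_J(c)=\mathcal{R}_J(c')=\mathcal{R}_J(c)$, and since the local actions are $g_j$ or the identity, $g\in\mathcal{U}((h_i,G_i)_{i\in I})$.

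The second problem is your proposed fix in the last paragraph. The identities $h_j\circ g=g_j\circ h_j$ and $h_i\circ g=h_i$ in the statement are identities of maps on all of $\Ch(\Delta)$ (and the paper uses them globally later, e.g.\ to see that $g$ fixes residues away from $\mathcal{R}_J(c)$ pointwise), so extending ``arbitrarily but colour-legally to the rest of $\Delta$'' destroys exactly what you must prove; the extension is forced to keep the same colour-permutation on every panel of the whole building. Moreover, the gluing step you appeal to (``any locally legal prescription on a convex chamber subset lifts'') is left as an unspecified citation, while the one precise statement that simultaneously produces the automorphism with prescribed global colour behaviour and lets you prescribe the image of one chamber is the proposition quoted above; once you use it, your construction collapses to the paper's proof.
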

	\begin{proof}
		Let $(h_i')_{i\in I} $ be the colorings obtained from $(h_i)_{i\in I}$ by replacing $h_j$ by $g_j\circ h_j$ for all $j\in J$ and leaving the other colorings unchanged. Notice that $h_j'$ is still a legal coloring of $G$ for every $j\in J$ since for all $I-\{j\}$-residue $\mathcal{R}$ and for all $d,d'\in\mathcal{R}$ we have $h_j'(c)=g_j\circ h_j(d)=g_j\circ h_j(d')=h_j'(d')$. Now, let $c'$ be the chamber of $\mathcal{R}_{J}(c)$ with colors $h_j(c')=g_j\circ h_j(c)$ for every $j\in J$. Since $h_i'(c')=h_i(c)$ for every $i\in I$, \cite[Proposition $2.44$]{Universal2018} ensures the existence of an automorphism $g\in \Aut(\Delta)$ mapping $c$ to $c'$ and such that $h_i\circ g=h_i'$ for all $i\in I$. Since $c'\in \mathcal{R}_J(c)$ notice that $g$ stabilizes $\mathcal{R}_J(c)$. Finally, notice that $g$ acts locally as the identity on $i$-panels for all $i\in I-J$ and as $g_j$ on $j$-panels for all $j\in J$. Hence, $g$ is as desired.
	\end{proof}
	The following result proves Theorem \ref{Theorem E}.
	\begin{theorem}\label{theorem pour les right angled buildings}
		 Suppose that $(W,I)$ satisfies the hypothesis \ref{Hypothese cox star}. Then, the group $\alpha(\mathcal{U}((h_i,G_i)_{i\in I}))$ is a closed subgroup of $\Aut(T)^+$ satisfying the property \ref{IPV1}. Furthermore, if $G_i$ is \textit{2}-transitive on $Y_i$ for every $i\in I$, the group automorphism of tree $\alpha(\mathcal{U}((h_i,G_i)_{i\in I}))$ satisfies the hypothesis \ref{Hypothese HV1} and is unimodular.
	\end{theorem}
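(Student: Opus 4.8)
The plan is to treat the four assertions — $\alpha(\mathcal U)$ is closed, it satisfies ${\rm IP}_{V_1}$, and, under $2$-transitivity of the $G_i$, it is unimodular and satisfies $H_{V_1}$ — largely independently, each time reducing to a statement about $\mathcal U:=\mathcal U((h_i,G_i)_{i\in I})$ acting on $\Delta$. \emph{Closedness and ${\rm IP}_{V_1}$:} the group $\mathcal U$ is closed in $\Aut(\Delta)$, since for each $i$ and each $i$-panel $\tau$ the map $g\mapsto(\restriction{h_i}{g\tau})\circ g\circ(\restriction{h_i}{\tau})^{-1}$ is locally constant with values in the finite group $\Sym(Y_i)$, so the defining condition of $\mathcal U$ is a countable intersection of clopen sets; as, by Proposition~\ref{the automrophism of buildings as automorphims of locally finite graphs}, $\alpha$ is a topological embedding with closed image $\alpha(\Aut(\Delta))\leq\Aut(T)^+$, the subgroup $\alpha(\mathcal U)$ is closed in $\Aut(T)^+$. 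For ${\rm IP}_{V_1}$, hypothesis~\ref{Hypothese cox star} makes each $I_k$ finite with $\{i\}\cup\{i\}^\perp=I_k$ for every $i\in I_k$, so Proposition~\ref{independence IPJ for universal} gives that $\mathcal U$ satisfies ${\rm IP}_{I_k}$ for $k=1,\dots,r$, and Proposition~\ref{lemme de on a IP IK alors IP V1} then gives that $\alpha(\mathcal U)$ satisfies ${\rm IP}_{V_1}$.

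\emph{Unimodularity.} Assume each $G_i$ is $2$-transitive (transitivity alone suffices here). Running the construction of Proposition~\ref{les groupes universelles sont unimodular} with $J$ a singleton along a minimal gallery, one checks that $\mathcal U$ acts transitively on $\Ch(\Delta)$ and that $\Stab_{\mathcal U}(\mathcal R)$ acts transitively on $\Ch(\mathcal R)$ for every residue $\mathcal R$ of type $I_k$. The vertices of $T$ are the chambers of $\Delta$ together with the $I_k$-residues ($k=1,\dots,r$), each chamber lying in exactly one residue of each type, so these transitivity statements say that $\alpha(\mathcal U)\backslash T$ is a star with centre the chamber-orbit, one leaf per $k$, and a single edge-orbit for each $k$; in particular it is a finite tree. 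Hence $\alpha(\mathcal U)$ is generated by the stabilizers of the vertices of a fundamental domain — compact open subgroups — and therefore unimodular, the modular function being trivial on each of them.

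\emph{The hypothesis $H_{V_1}$.} Only the implication ``$\mathcal T\not\subseteq\mathcal T'\Rightarrow\Fix_{\alpha(\mathcal U)}(\mathcal T')\not\leq\Fix_{\alpha(\mathcal U)}(\mathcal T)$'' requires proof: given $\mathcal T,\mathcal T'\in\mathfrak T_{V_1}$ with $\mathcal T\not\subseteq\mathcal T'$, produce $g\in\mathcal U$ fixing $\Ch(\mathcal T')$ pointwise with $\alpha(g)$ moving a vertex of $\mathcal T$. By Lemma~\ref{the form of trees in the family}, every element of $\mathfrak T_{V_1}$ is a finite complete subtree with at least two leaves, all of them chambers, and equals the convex hull of its leaves; as the wings beyond distinct leaves of $\mathcal T$ are pairwise disjoint, one can choose a leaf $d\in\Ch(\Delta)$ of $\mathcal T$ with $d\notin V(\mathcal T')$ and $\mathcal T'\not\subseteq X_{I_k}(d)$, where $\mathcal R=\mathcal R_{I_k}(d)$ is the unique $\mathcal T$-neighbour of $d$. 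The explicit description of $V(\mathcal T')$ in Lemma~\ref{the form of trees in the family} forces $\mathcal R\notin V(\mathcal T')$; since $|\Ch(\mathcal R)|=\prod_{i\in I_k}q_i\geq3$ while a connected subtree meeting $\Ch(\mathcal R)$ in two chambers contains $\mathcal R$, the set $\mathcal T'$ meets $\Ch(\mathcal R)$ in at most one chamber, the gate $c_1$ of $\mathcal T'$ on $\mathcal R$, with $\mathcal T'\subseteq X_{I_k}(c_1)$ and $c_1\neq d$. It now suffices to build $g\in\mathcal U$ fixing the wing $X_{I_k}(c_1)$ pointwise — hence fixing $\mathcal T'$ — with $gd\neq d$; since $d$ and $c_1$ are distinct chambers of $\mathcal R$, this is done by acting inside $\mathcal R$ through the surgery maps $g^{c}$ of Proposition~\ref{universal satify IP i de larticle universal} and the $2$-transitivity of the $G_j$, $j\in I_k$, to realise a permutation of an appropriate $I_k$-panel that fixes the $I_k$-colour data of $c_1$ but moves that of $d$ (routing $d$ to a distinct chamber along a gallery in $\mathcal R$ when $d$ and $c_1$ are not panel-adjacent).

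The last step is the main obstacle: one must keep track of exactly which panels and wings of $\mathcal R$ are fixed by a given surgery element, and it is precisely here that $2$-transitivity of the $G_j$ — rather than mere transitivity — is indispensable, since the element must fix the single gate chamber $c_1$ of a panel while still moving another chamber of that panel. Everything else reduces to results established in the previous sections, and (once $\alpha(\mathcal U)$ is moreover non-discrete) feeding these conclusions into Theorem~\ref{Theorem pour IPV1} yields the generic filtration $\mathcal S_{V_1}$ announced after the statement.
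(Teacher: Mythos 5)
Your first part (closedness and \ref{IPV1}) follows the paper's route exactly. Your unimodularity argument is genuinely different and works: the paper verifies that $\mathcal{U}$ is $\delta$-$2$-transitive on chambers by an induction along minimal galleries and then cites \cite[Corollary 5]{BaumgartnerBertrandWillis2007}, whereas you observe that chamber-transitivity and transitivity of residue-stabilizers on residues (both consequences of Proposition \ref{les groupes universelles sont unimodular} and mere transitivity of the $G_i$) make the quotient graph $\alpha(\mathcal{U})\backslash T$ a finite tree, so that $\alpha(\mathcal{U})$ is generated by compact open vertex stabilizers and the modular function is trivial. This is more elementary, avoids the external citation, and correctly isolates that $2$-transitivity is not needed for unimodularity.

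The gap is in the verification of \ref{Hypothese HV1}, precisely at the step you yourself flag as ``the main obstacle''. You need $g\in\mathcal{U}$ fixing the wing $X_{I_k}(c_1)$ pointwise with $gd\neq d$, and you propose to get it from ``the surgery maps $g^c$ of Proposition \ref{universal satify IP i de larticle universal}''. But those maps (and more generally the decomposition of Proposition \ref{independence IPJ for universal}) apply only to elements that already lie in $\Fix_{\mathcal{U}}(\mathcal{R})$ for $\mathcal{R}=\mathcal{R}_{I_k}(d)$ — and every such element fixes $d$. So no amount of surgery on pointwise fixators of $\mathcal{R}$ can move $d$ inside $\mathcal{R}$; the construction as sketched cannot close. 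What is actually needed is a two-step argument. First, use Proposition \ref{les groupes universelles sont unimodular} with $J=\{j\}$ for some $j\in I_k$ with $h_j(d)\neq h_j(c_1)$, together with $2$-transitivity of $G_j$ (and $q_j\geq 3$), to produce $g\in\mathcal{U}$ stabilizing $\mathcal{R}$, fixing $c_1$, and moving $d$; this $g$ does \emph{not} fix $X_{I_k}(c_1)$ pointwise. Second — and this is the missing idea — observe that $g$ preserves all colours $h_i$ with $i\neq j$, hence fixes pointwise the finite residue $\mathcal{R}_{I_{k'}}(c_1)$ for every type $I_{k'}\neq I_k$ (its chambers are determined by their $I_{k'}$-colours); choosing $I_{k'}$ to be the type through which $\mathcal{T}'$ leaves $c_1$, one can then apply the independence property ${\rm IP}_{I_{k'}}$ (equivalently \ref{IPV1} at the vertex $\mathcal{R}_{I_{k'}}(c_1)$ of $T$, via Proposition \ref{lemme de on a IP IK alors IP V1}) to extract the factor of $g$ supported on $X_{I_{k'}}(c_1)$; that factor still moves $d$ but is trivial on $\mathcal{T}'$. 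This localization step is the crux of the paper's proof and is absent from your sketch. (Your preparatory reductions — existence of a suitable leaf $d$, the fact that $\mathcal{T}'$ meets $\Ch(\mathcal{R})$ in at most one chamber $c_1$, and $\mathcal{T}'\subseteq X_{I_k}(c_1)$ — are correct and essentially match the paper's choice of $v_{\mathcal{T}}$ and $w_{\mathcal{T}}$.)
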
  
	\begin{proof}
		Let $G = \mathcal{U}((h_i,G_i)_{i\in I})$. The first part of the theorem follows directly from Proposition \ref{independence IPJ for universal} and Proposition \ref{lemme de on a IP IK alors IP V1}. Now, suppose that $G_i$ is \textit{2}-transitive on $Y_i$ for every $i\in I$, let $\mathfrak{T}_{V_1}$ be the family of subtrees defined on page \pageref{page Tv1} and let $\mathcal{T}, \mathcal{T}'\in \mathfrak{T}_{V_1}$. If $\mathcal{T}\subseteq\mathcal{T}'$ we clearly have that $\Fix_{\alpha(G)}(\mathcal{T}')\subseteq \Fix_{\alpha(G)}(\mathcal{T})$. On the other hand, if $\mathcal{T}\not \subseteq \mathcal{T}'$, let $v_\mathcal{T}$ a vertex of $V_1\cap \mathcal{T}$ that is at maximal distance from $\mathcal{T}'$ and let $w_\mathcal{T}\in B_T(v_\mathcal{T},1)-\{v_\mathcal{T}\}$ be such that  $\mathcal{T}'\subseteq T(w_\mathcal{T}, v_\mathcal{T})$.  In particular, we have that $\Fix_{\alpha(G)}(T(w_\mathcal{T},v_\mathcal{T}))\leq \Fix_{\alpha(G)}(\mathcal{T}')$. Let $I_k$ denote the type of $v_\mathcal{T}$ seen as a residue of $\Delta$. and let $j\in I_k$. Consider an element $g_j\in G_i$ that is not trivial and such that $g_j\circ h_j(w_{\mathcal{T}})=h_j(w_\mathcal{T})$ and let $g_i={\rm id}_{Y_i}$ for every $i\in I-\{j\}$. Proposition \ref{les groupes universelles sont unimodular} ensures the existence of an element $g\in \mathcal{U}((h_i,G_i)_{i\in I})$ such that $g\mathcal{R}_J(c)=\mathcal{R}_{J}(c)$ and $h_i\circ g=g_i\circ h_i$ for every $i\in I$. Now, notice that there exists a unique vertex $v'_{\mathcal{
		T}}$ that is adjacent to $w_{\mathcal{T}}$ and such that $\mathcal{T}'\subseteq T(v'_\mathcal{T},w_\mathcal{T}) \cup \{w_\mathcal{T}\}$. Now, notice that $v'_\mathcal{T}$ is a residue of type $I_{k'}$ with $I_{k'}\not = I_k$ and we realise from the definition that $g$ fixes every chamber of $v'_{\mathcal{T}}$ or equivalently that $\alpha(g)$ fixes $B_T(v'_{\mathcal{T}},1)$ pointwise. Proposition \ref{lemme de on a IP IK alors IP V1} implies the existence of an element $h\in \Fix_{\alpha(G)}(B_T(v'_\mathcal{T}),1)\cap \Fix_G( T(v'_{\mathcal{T}},w_{\mathcal{T}}))$ such that $hv=\alpha(g)v$ for every $v\in T(w_{\mathcal{T}},v'_{\mathcal{T}})$. In particular, we have that $h\in \Fix_{\alpha(G)}(\mathcal{T}')$ but $h\not \in \Fix_{\alpha(G)}(\mathcal{T})$ since by the definition $g$ does not fix every chamber of $v_{\mathcal{T}}$. This proves as desired that $\Fix_{\alpha(G)}(\mathcal{T}')\subsetneq \Fix_{\alpha(G)}(\mathcal{T})$.
		
		To prove that $\alpha(G)$ is unimodular, we apply \cite[Corollary 5]{BaumgartnerBertrandWillis2007}. This result ensures that a group $G$ which acts $\delta$-$2$-transitively on the set of chambers of a locally finite building is unimodular. Choose a chamber $c\in \Ch(\Delta)$. Since $G$ is transitive on the chambers of $\Delta$, we need to show for any two chambers $d_1,d_2\in \Ch(\Delta)$ that are $W$-equidistant from $c$ that there exists an element $g\in \Fix_G(c)$ such that $gd_1=d_2$. First of all, notice from the hypothesis \ref{Hypothese cox star} and the solution of the word problem in Coxeter groups that every $w\in W$ admits a unique decomposition $w=w_1...w_n$ with $w_t\in W_{I_{k_t}}-\{1_W\}$ such that $I_{k_t}\not= I_{k_{t+1}}$ for every $t$. Suppose that $d_1,d_2\in \Ch(\Delta)$ have $W$-distance $w_1...w_n$ from $c$ with $w_t\in W_{I_{k_t}}$ and let us show the existence of $g$ by induction on $n$. If $n=1$, the result follows from Proposition \ref{theorem pour les right angled buildings} since for every $j\in I_{k_1}$, there exists an element $g_j\in G_j$ such that $g_j\circ h_j(c)=h_j(c)$ and $g_j\circ h_j(d_1)=h_j(d_2)$. Hence, there exists an element $g\in G$ such that $h_j\circ g_j=g\circ h_j$ for every $j\in J$. In particular, $gc=c$, $gd_1=d_2$ and the result follows. If $n\geq2$, we let $d_s'=\proj_{\mathcal{R}_{I_{k_n}}(d_s)}(c)$ and notice that $\delta(c,d_s')=w_1...w_{n-1}$. Our induction hypothesis therefore ensures the existence of a $g'\in  G$ such that $g'c=c$ and $g'd_1'=d_2'$. Now, notice that $\delta(d_2',d_2)=w_n$ and $\delta(d_2',g'd_1)=\delta (g'd_1',g'd_1)=\delta (d_1',d_1)=w_n$. In particular, Proposition \ref{theorem pour les right angled buildings} ensures the existences of an element $h\in G$ such that $hd=d$ for every $d\in \mathcal{R}_{I_{k_{(n-1)}}}(d_2')$ and $hg'd_1=d_2$. Since $G$ satisfies the property ${\rm IP}_{I_{k_{(n-1)}}}$ by Proposition \ref{independence IPJ for universal}, this implies the existence of an element $h'\in \Fix_{G}(V_{I_{k_{(n-1)}}}(d_2'))$ such that $h'b=hb$ for every $b\in X_{I_{k_{(n-1)}}}(d_2')$. Since $c\in V_{I_{k_{(n-1)}}}(d_2')$, the automorphism $h'g'\in G$ satisfies that $h'g'c=c$ and that $h'g'd_1=d_2$. The result follows.
	\end{proof}
	In particular, if $\mathcal{U}((h_i,G_i)_{i\in I})$ is non-discrete, Theorem \ref{theorem Ipv1 letter} applies to $\alpha(G)$ and Theorem \ref{la version paki du theorem de classification} provides a bijective correspondence between the equivalence classes of irreducible representations of $\alpha(G)$ at depth $l\geq 1$ with seed $C\in \mathcal{F}_{\mathcal{S}_{V_1}}$ and the $\mathcal{S}_{V_1}$-standard representations of $\Aut_{\alpha(G)}(C)$. We recall further that an existence criterion for those representations was given in Section \ref{existence for IPV1}. 
	
	Since $\alpha$ is a homeomorphism on its image, the same holds for the representations of $G$. Notice that under the correspondence given by $\alpha^{-1}$, the generic filtration $\mathcal{S}_{V_1}$ describes a generic filtration $\mathcal{S}_\Delta$ of $G$ that factorizes$^+$ at all depths $l\geq 1$ and which can be interpreted as follows. We explicit this correspondence bellow. Let $\delta$ denote the $W$-distance of $\Delta$ and let consider the set $$\mathcal{R}'(c)=\{d\in \Ch(\Delta)\lvert \delta(c,d)=w\mbox{ s.t. } \exists k\in \{1,...,r\} \mbox{ for which }w\in W_{I_{k}}\}$$ 
	for every chamber $c\in \Ch(\Delta)$. By use of the correspondence $\Phi: \Ch(\Delta) \rightarrow V_0$ between chambers of $\Delta$ and vertices of $V_0$, notice, for every $c\in V_0$, that  $\Phi^{-1}(B_T(\Phi(c),2 )\cap V_0))=\mathcal{R}'(c)$. We define a family $\mathcal{T}_\Delta$ of subsets of $\Ch(\Delta)$ as follows:
	\begin{enumerate}
		\item $\mathcal{T}_\Delta\lb 0\rb =\{\mathcal{R}_{I_k}(c)\lvert k\in \{1,...,r\}, c\in \Ch(\Delta)\}$.
		\item For every $l$ such that $l\geq 0$, we define iteratively:
		\begin{equation*}
		\begin{split}
		\mathcal{T}_\Delta\lb l+1\rb =\{\mathcal{R}\subseteq \Ch(\Delta)\lvert \exists \mathcal{Q}\in \mathcal{T}_\Delta\lb l\rb, \qq \exists c\in &\mathcal{Q}\mbox{ s.t. }\mathcal{R}'(c)\not\subseteq \mathcal{Q}\\
		&\mbox{ and }\mathcal{R}=\mathcal{Q}\cup\mathcal{R}'(c)) \}.
		\end{split}
		\end{equation*}
		\item We set $\mathcal{T}_\Delta= \bigsqcup_{l\in \N}\mathcal{T}_\Delta\lb l\rb$.
	\end{enumerate}
	It is quite easy to realise that $\mathcal{S}_\Delta=\{\Fix_G(\mathcal{R})\lvert\mathcal{R}\in \mathfrak{T}_\Delta \}$ is the generic filtration of $G$ corresponding to $\mathcal{S}_{V_1}$ under the correspondence given by $\alpha^{-1}$ and that 
	$$\mathcal{S}_\Delta\lb l\rb=\alpha^{-1}\big(\mathcal{S}_{V_1}\lb l\rb \big)=\{\Fix_G(\mathcal{R})\lvert \mathcal{R}\in \mathcal{T}_\Delta\lb l\rb\}.$$
	  
	\clearpage
	\newpage
\addcontentsline{toc}{section}{Bibliography}
\bibliographystyle{alpha}
\bibliography{bibliography}	
\end{document}